\definecolor{slightblue}{rgb}{.8, .8, 1}
\definecolor{tif}{RGB}{10, 186, 181}
\definecolor{hair}{RGB}{100,225,190}
\definecolor{ruby}{RGB}{220,50,120}
\definecolor{grass}{RGB}{150,220,110}
\newtheorem{theorem}{Theorem}[section] 
\newtheorem*{theorem*}{Theorem}
\newtheorem{proposition}[theorem]{Proposition}
\newtheorem{lemma}[theorem]{Lemma}
 \newtheorem{corollary}[theorem]{Corollary}
\theoremstyle{definition} 
\newtheorem{definition}[theorem]{Definition}
\newtheorem{example}[theorem]{Example}
\theoremstyle{remark} \newtheorem{remark}[theorem]{Remark} \numberwithin{equation}{section}
\numberwithin{figure}{section}
\newcommand{\G}{\mathscr{G}}
\newcommand{\Diffeo}{\mathsf{Diffeo}}
\newcommand{\Ends}{\mathsf{Ends}}
\newcommand{\Hom}{\mathrm{Hom}}
\newcommand{\Aut}{\mathrm{Aut}}
\newcommand{\gjphi}{g_{{}_{\ac{J},\ve{\phi}}}}
\newcommand{\pt}{$\bullet$ }
\newcommand{\ve}[1]{{\boldsymbol{#1}}}
\newcommand{\T}{\mathsf{T}}
\newcommand{\dif}{\mathsf{d}}
\newcommand{\vol}{\mathit{vol}}
\newcommand{\ac}[1]{{\boldsymbol{#1}}} %complex structure
\newcommand{\pa}{\partial}
\newcommand{\pai}[1]{\pa_\infty^{(#1)}}
\newcommand{\bpa}{{\bar{\partial}}}
\newcommand{\dz}{{\dif z}}
\newcommand{\dbz}{{\dif\bar z}}
\newcommand{\paz}{\partial_z}
\newcommand{\pabz}{\partial_{\bar z}}
\newcommand{\pazbz}{\partial^2_{z\bar z}}
\newcommand{\ima}{\boldsymbol{i}} %imaginary unit
\newcommand{\SL}{\mathrm{SL}}
\newcommand{\eg}{\textit{e.g. }}
\newcommand{\cf}{\textit{c.f. }}
\newcommand{\ie}{\textit{i.e. }}
\newcommand{\etc}{\textit{etc.}}
\DeclareMathOperator{\Hess}{Hess}
\DeclareMathOperator{\Ad}{Ad}
\DeclareMathOperator{\End}{End}
\DeclareMathOperator{\im}{\mathsf{Im}}
\DeclareMathOperator{\re}{\mathsf{Re}}
\renewcommand{\H}{\mathbf{H}}
\newcommand{\CH}{\overline{\H}}
\newcommand{\para}{\ve{T}}
\renewcommand{\sl}{\mathfrak{sl}}
\newcommand{\X}[1]{X(#1)}
\newcommand{\D}{\mathsf{D}}
\begin{document}

\title{Poles of cubic differentials and ends of convex $\mathbb{RP}^2$-surfaces}
\author{Xin Nie} 
\address{Tsinghua University, Beijing, China}
 \email{nie.hsin@gmail.com}

\maketitle

\begin{abstract}
The affine sphere construction gives, on any oriented surface, a one-to-one correspondence between convex $\mathbb{RP}^2$-structures and holomorphic cubic differentials. Generalizing results of Benoist-Hulin, Loftin and Dumas-Wolf, we show that poles of order less than $3$ of cubic differentials correspond to finite volume ends of convex $\mathbb{RP}^2$-structures, and poles of order $3$ (resp. bigger than $3$) correspond to geodesic (resp. piecewise geodesic) ends. In particular, at a pole of order at least $3$, we bordify the surface by attaching to it a boundary circle in a natural way with respect to the cubic differential, and show that the $\mathbb{RP}^2$-structure extends to the boundary in a metric preserving way. 
\end{abstract}

\section{Statement of results}

\subsection{$\mathbb{RP}^2$-structures and Wang's equation}
In this paper, by a \emph{surface}, we mean an oriented $2$-manifold, without boundary unless otherwise specified, but not necessarily closed.
An \emph{$\mathbb{RP}^2$-structure} on a surface $S$ is a maximal atlas with charts taking values in $\mathbb{RP}^2$ and transition maps in the group $\SL(3,\mathbb{R})$ of projective transformations. Such a structure is determined by a \emph{developing map}, a local homeomorphism from the universal cover $\widetilde{S}$ to $\mathbb{RP}^2$ constructed by assembling the charts. An $\mathbb{RP}^2$-structure $X$ is said to be \emph{convex} if its developing map sends $\widetilde{S}$ bijectively to a bounded convex set $\Omega$ in an affine chart $\mathbb{R}^2\subset\mathbb{RP}^2$. 
%In particular, if $\Omega$ is an ellipse then $X$ amounts to a hyperbolic structure.

One can construct an $\mathbb{RP}^2$-structure from a pair $(g,\ve{\phi})$ on $S$ consisting of a Riemannian metric $g$ and a holomorphic cubic differential $\ve{\phi}$ 
%(under the conformal structure induced by $g$) 
satisfying \emph{Wang's equation}
\begin{equation}\label{eqn_wangintro}
\kappa_g=-1+\|\ve{\phi}\|^2_g.
\end{equation}
%(where $\kappa_g$ is the curvature of $g$ and $\|\ve{\phi}\|_g$ the pointwise norm of $\ve{\phi}$ with respect to $g$).
In fact, such a pair determines, up to $\SL(3,\mathbb{R})$-action, an equivariant immersion $\iota$ of $\widetilde{S}$ into $\mathbb{R}^3$ as a \emph{hyperbolic affine sphere} centered at $0$, for which $g$ and $\ve{\phi}$ are the \emph{Blaschke metric} and (normalized) \emph{Pick differential}, respectively. The projectivized immersion $\mathbb{P}\circ\iota:\widetilde{S}\rightarrow\mathbb{RP}^2$ is the developing map of an $\mathbb{RP}^2$-structure, and is convex if $g$ is complete. Conversely, the unique existence theorem for complete affine spheres  \cite{cheng-yau_1} implies that every convex $\mathbb{RP}^2$-structure is obtained this way.

After the seminal works of Labourie \cite{labourie_cubic} and Loftin \cite{loftin_amer} on closed surfaces, a theme of research \cite{loftin_compactification, loftin_neck, benoist-hulin, benoist-hulin_2, dumas-wolf} consists in first showing that for certain classes of pairs $(\ac{J},\ve{\phi})$ on an open surface $S$ (where $\ac{J}$ is a conformal structure and $\ve{\phi}$ a holomorphic cubic differential under $\ac{J}$), there exists a unique complete metric $g$ conformal to $\ac{J}$ satisfying Eq.(\ref{eqn_wangintro}), then deduce from the affine sphere construction a one-to-one correspondence between those pairs and certain classes of convex $\mathbb{RP}^2$-structures. Combining existing results, we get the following fully general unique existence theorem for $g$, where we allow  $k$-differentials with any $k\geq1$ instead of cubic differentials:
\begin{theorem}[\cite{wan, au-wan, li_on}]\label{thm_intro1}
Given a Riemann surface $\Sigma$ and a nontrivial holomorphic $k$-differential $\ve{\phi}$ on $\Sigma$, there exists a unique complete conformal metric $g$ on $\Sigma$ satisfying
Eq.(\ref{eqn_wangintro}). Moreover, we have either $\kappa_g<0$ on $\Sigma$ or $\kappa_g\equiv0$.
%unless $(\Sigma,\ve{\phi})$ is a quotient of $(\mathbb{C},\dz^k)$.
%in which case $g$ is the flat metric $|\dz|^2$. 
\end{theorem}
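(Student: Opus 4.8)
The plan is to combine the cited results into a single statement about completeness, which is the genuinely new (if modest) contribution here; the hard part is not the existence of a solution but pinning down which solution is complete and establishing the dichotomy on the sign of $\kappa_g$. First I would write the problem as a scalar PDE: writing $g = e^{2u}\,g_0$ for a fixed background metric $g_0$ of constant curvature conformal to $\Sigma$ (hyperbolic if $\Sigma$ is hyperbolic-type, flat on $\mathbb{C}$ or on a torus, spherical only on $\mathbb{P}^1$ — but the last case is excluded since a nontrivial holomorphic $k$-differential with $k\geq 1$ cannot exist on $\mathbb{P}^1$), Wang's equation becomes the semilinear elliptic equation
\begin{equation}\label{eqn_scalarwang}
\Delta_{g_0} u = e^{2u} - \|\ve{\phi}\|^2_{g_0}\, e^{-(k-1)2u} + (\text{const from } \kappa_{g_0}),
\end{equation}
where I must be careful with the conformal weight of $\|\ve{\phi}\|^2$ ($k$-differentials scale with weight $-2k$, so the norm-squared carries $e^{-2ku}$ against $g$, hence the displayed exponent relative to $g_0$; for $k=3$ this is the usual Wang form). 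The right-hand side is monotone increasing in $u$, which is exactly the structure needed for the method of sub- and supersolutions together with a maximum-principle uniqueness argument.

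Next I would invoke the three cited papers to get existence and uniqueness of a \emph{complete} conformal solution. Wan \cite{wan} and Au–Wan \cite{au-wan} handle the harmonic-maps / Hopf-differential reformulation (the $k=2$ case and its relation to minimal/constant-mean-curvature surfaces and to the hyperbolic Gauss map), producing a complete solution on any Riemann surface carrying a nontrivial holomorphic quadratic differential; Li \cite{li_on} treats the general $k$ (cubic and higher) via a barrier construction adapted to the poles and punctures of $\Sigma$. The point to extract from these references is twofold: (i) there exists a complete conformal metric $g$ solving the equation, and (ii) any two complete conformal solutions coincide. For (ii) I would give the short maximum-principle argument directly: if $g_1 = e^{2u_1}g_0$ and $g_2 = e^{2u_2}g_0$ are both complete solutions, then $w = u_1 - u_2$ satisfies a linear equation $\Delta_{g_0} w = c(x) w$ with $c \geq 0$ (by monotonicity of the right side of \eqref{eqn_scalarwang} and the mean value theorem), and completeness of both metrics forces $w$ to be bounded — indeed one uses the Omori–Yau maximum principle or a Cheng–Yau-type argument on a complete manifold of curvature bounded below to conclude $w \equiv 0$. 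This is the standard uniqueness mechanism for Wang-type equations and I expect it to go through verbatim.

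The remaining point is the dichotomy: either $\kappa_g < 0$ everywhere, or $\kappa_g \equiv 0$. From Wang's equation $\kappa_g = -1 + \|\ve{\phi}\|^2_g$, so $\kappa_g \geq 0$ at a point exactly where $\|\ve{\phi}\|^2_g \geq 1$ there. Suppose $\kappa_g(p) \geq 0$ for some $p$. Since $\ve{\phi}$ is holomorphic and nontrivial, $\log\|\ve{\phi}\|_g$ is a function that, away from the zeros of $\ve{\phi}$, satisfies $\Delta_g \log\|\ve{\phi}\|_g = -k\,\kappa_g$ (the curvature of the metric $\|\ve{\phi}\|^{2/k}_g$-type flat structure; more precisely $\Delta_g \log \|\ve\phi\|_g^2 = -2k\kappa_g + 4\pi \sum \delta_{\text{zeros}}$ in the distributional sense, with nonnegative atoms). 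Hence the function $v := \|\ve{\phi}\|^2_g$ satisfies, where positive, a differential inequality of the form $\Delta_g \log v \leq -2k\kappa_g = -2k(-1+v) = 2k(1-v)$; combined with completeness of $g$ and the Omori–Yau principle applied at the supremum of $v$, one finds $\sup v \leq 1$, i.e. $\kappa_g \leq 0$ everywhere. Then the strong maximum principle applied to $\kappa_g = -1+v \leq 0$ satisfying $\Delta_g \log v = -2k\kappa_g \geq 0$ (so $\log v$, hence $v$, is subharmonic on the complement of the zeros and bounded above by $1$): if $v$ attains the value $1$ at an interior point it is constant equal to $1$ there, and a connectedness/unique continuation argument (using that the zero set of $\ve\phi$ is discrete and that a holomorphic $k$-differential with $\|\ve\phi\|_g\equiv 1$ near a point has no zeros) propagates $v\equiv 1$ to all of $\Sigma$, giving $\kappa_g\equiv 0$; otherwise $v<1$ strictly, i.e. $\kappa_g<0$ everywhere. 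The main obstacle is making the Omori–Yau step rigorous near the zeros of $\ve{\phi}$ and near the punctures/poles, where $g$ is complete but $\log\|\ve{\phi}\|_g$ blows up; I would handle this by working on $\Sigma$ minus a small neighborhood of the zeros, checking that the conical singularities of $\log\|\ve\phi\|_g$ at zeros of $\ve\phi$ point the "wrong way" (they are $-\infty$ poles, hence do not obstruct an upper maximum-principle argument), and invoking completeness of $(\Sigma, g)$ — which is exactly what Theorem statement provides and what the cited works establish — to run Omori–Yau globally.
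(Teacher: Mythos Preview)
Your overall architecture is right---sub/supersolutions for existence, a Cheng--Yau/Omori--Yau style maximum principle for both uniqueness and the curvature dichotomy---and this matches the paper's approach. But two concrete steps are broken as written.

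\textbf{The Bochner sign.} Since $|\ve{\phi}|^{2/k} = \|\ve{\phi}\|_g^{2/k}\, g$ is flat, the conformal change formula gives $\Delta_g \log\|\ve{\phi}\|_g = +k\,\kappa_g$, not $-k\,\kappa_g$. With $v=\|\ve{\phi}\|_g^2$ this reads $\Delta_g\log v = 2k\kappa_g = 2k(v-1)$, the opposite of your inequality $\Delta_g\log v\le 2k(1-v)$. Your direction gives no information at a supremum of $v$ (an upper bound on $\Delta_g\log v$ combined with $\Delta_g\log v\lesssim 0$ at an almost-maximum yields nothing). With the correct sign one has $\Delta_g\log v \ge 2k(e^{\log v}-1)$ and Cheng--Yau's theorem then gives $\sup\log v\le 0$. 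The paper (following Li) avoids the $-\infty$ singularities of $\log v$ at the zeros of $\ve{\phi}$ by working instead with $\tau=\log(1+\|\ve{\phi}\|_g^2)$, which is globally smooth and satisfies $\Delta_g\tau\ge 2k(e^\tau-3+2e^{-\tau})$; this is cleaner than your proposed excision of neighborhoods of the zeros.

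\textbf{Uniqueness.} Linearizing to $\Delta_{g_0}w = c(x)w$ and then asserting that ``completeness of both metrics forces $w$ bounded'' is the gap: boundedness of the conformal ratio of two complete metrics is not automatic, and it is exactly what must be proved. The paper does not linearize. Writing $g_-=e^w g_+$ with $g_+$ complete, it uses the already-established bound $\|\ve{\phi}\|_{g_+}^2\le 1$ (from the previous step) to obtain a nonlinear inequality $\tfrac12\Delta_{g_+}w \ge f(w)$ with $f(t)=e^t-1$ for $t\ge 0$, and then Cheng--Yau's theorem yields $\sup w\le 0$ directly, with no a priori boundedness assumed. Note the logical order: the nonpositivity of $\kappa_g$ for complete solutions is an \emph{input} to the uniqueness argument, not independent of it.

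One minor attribution point: Wan and Au--Wan treat $k=2$ under the assumption $\kappa_g\le 0$; the paper adapts their exhaustion-by-disks and patching constructions to general $k$. Li's specific contribution is precisely the removal of the $\kappa_g\le 0$ hypothesis via the $\tau$-computation above, not a barrier construction at poles.
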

The proof is an adaptation of the unique existence results \cite{wan, au-wan} in the case $k=2$ under the assumption $\kappa_g\leq 0$, combined with the method of Qiongling Li \cite{li_on} which removes the assumption. We also extend the result in \cite{li_on} on nonexistence of incomplete solutions for $\Sigma=\mathbb{C}$ to arbitrary Riemann surfaces:
\begin{theorem}\label{thm_introli}
Let $\ve{\phi}$ be a holomorphic $k$-differential with finitely many zeros on a Riemann surface $\Sigma$. Then $\Sigma$ does not admit any incomplete conformal metric $g$ satisfying $\kappa_g=-1+\|\ve{\phi}\|^2_g$ if and only if $\Sigma$ is obtained from a closed Riemann surface by removing a finite (possibly empty) set of punctures and $\ve{\phi}$ has a pole of order at least $k$ at every puncture.
\end{theorem}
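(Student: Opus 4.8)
\emph{Overall plan.}
The plan is to reduce Wang's equation to an autonomous equation by using the flat metric $\tilde g:=|\ve{\phi}|^{2/k}$ attached to $\ve{\phi}$. On $\Sigma^{\circ}:=\Sigma\setminus\ve{\phi}^{-1}(0)$ this $\tilde g$ is smooth and flat, with a conical singularity of angle $2\pi(1+\tfrac1k\operatorname{ord}_{z_j}\ve{\phi})>2\pi$ at each of the finitely many zeros $z_j$; since $\kappa_{\tilde g}\equiv0$ and $\|\ve{\phi}\|_{\tilde g}\equiv1$, it is itself a solution of Wang's equation on $\Sigma^{\circ}$. Writing a conformal metric as $g=e^{2w}\tilde g$ gives $\|\ve{\phi}\|_g=e^{-kw}$ and turns Wang's equation into $\Delta_{\tilde g}w=e^{2w}-e^{(2-2k)w}$, whose right‑hand side is strictly increasing in $w$, vanishes at $0$, grows like $e^{2w}$ as $w\to+\infty$, and (for $k\ge2$) tends to $-\infty$ like $-e^{(2k-2)|w|}$ as $w\to-\infty$. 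I would also record that near a puncture where $\ve{\phi}$ has a pole of order $m$ one has $\tilde g\sim|z|^{-2m/k}|dz|^2$, a complete end exactly when $m\ge k$, whereas at a zero, a removable singularity, a pole of order $<k$, an essential singularity, or a non‑puncture ideal boundary of an infinite‑type surface, $\tilde g$ admits a finite‑length escaping path. Together with the Cohn--Vossen inequality for surfaces with conical singularities — which yields $-\tfrac{2\pi}{k}\,(\text{total order of zeros of }\ve{\phi})=\int_\Sigma\kappa_{\tilde g}\,dA+\sum_j(2\pi-\theta_j)\le2\pi\chi(\Sigma)$, hence $\chi(\Sigma)>-\infty$ and $\Sigma$ of finite type whenever $\tilde g$ is complete — this shows that the hypothesis of the theorem is equivalent to ``$\tilde g$ is complete on $\Sigma$''. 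So it suffices to prove: $\Sigma$ admits no incomplete solution if and only if $\tilde g$ is complete.

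\emph{The ``if'' direction.}
Assuming $\tilde g$ complete, so $\Sigma=\bar\Sigma\setminus\{p_1,\dots,p_n\}$ with $\ve{\phi}$ having a pole of order $m_i\ge k$ at $p_i$, I would take an arbitrary solution $g=e^{2w}\tilde g$ and show it is complete; as $\Sigma$ minus the punctures is compact with boundary, it is enough to show each $p_i$ lies at infinite $g$‑distance. On a punctured‑disk neighbourhood $D^{*}$ of $p_i$ free of zeros of $\ve{\phi}$, the end $(D^{*},\tilde g)$ is complete with polynomial volume growth (a flat half‑cylinder if $m_i=k$, a flat funnel of linear circumference growth if $m_i>k$), and $w$ solves the autonomous equation there. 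The heart of the argument is the a priori bound that $w$ is bounded on $D^{*}$: above by a Keller--Osserman argument using $\Delta_{\tilde g}w\gtrsim e^{2w}$ where $w$ is large, below by the dual one using $\Delta_{\tilde g}w\lesssim-e^{(2k-2)|w|}$ where $w\ll0$ (for $k\ge2$), both relying on completeness and subexponential volume growth of the end. Then $g\asymp\tilde g$ near $p_i$, so $g$ is complete at $p_i$, hence on all of $\Sigma$; by Theorem~\ref{thm_intro1} every solution then equals the unique complete solution, so no incomplete solution exists.

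\emph{The ``only if'' direction.}
Assuming $\tilde g$ incomplete, I would fix a path $\gamma$ escaping $\Sigma$ with $L_{\tilde g}(\gamma)<\infty$, lying outside a compact subsurface $\Sigma_1$ containing all zeros of $\ve{\phi}$. If $\ve{\phi}$ has no zero, $\tilde g$ itself is a smooth incomplete solution. Otherwise I would fix a smooth metric $g_0$ on $\Sigma$ coinciding with $\tilde g$ on $\Sigma\setminus\Sigma_1$, take an exhaustion $\Sigma_1\Subset\Sigma_2\Subset\cdots$, and on each $\Sigma_j$ solve the Dirichlet problem for Wang's equation with $g=g_0$ on $\partial\Sigma_j$ — uniquely solvable by sub/supersolutions since the nonlinearity is monotone in the conformal factor — to get $g_j$, with a locally uniform limit $g_\infty$ (a smooth solution on $\Sigma$) by interior elliptic estimates. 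On the end traversed by $\gamma$, where $\tilde g$ also solves Wang's equation (no zeros), $v_j:=\tfrac12\log(g_j/\tilde g)$ satisfies $\Delta_{\tilde g}v_j=c_jv_j$ with $c_j>0$, hence has no positive interior maximum and no negative interior minimum; since $v_j=0$ on the part of $\partial\Sigma_j$ lying in that end (where $g_j=g_0=\tilde g$) and $v_j$ is uniformly bounded, by local convergence, on the fixed curve separating the end from $\Sigma_1$, the maximum principle forces $|v_j|\le C$ on the end, uniformly in $j$. Passing to the limit, $g_\infty\le e^{2C}\tilde g$ along $\gamma$, so $L_{g_\infty}(\gamma)<\infty$ and $g_\infty$ is the desired incomplete solution.

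\emph{Main obstacle.}
I expect the step requiring real work is the a priori lower bound on $w$ near a pole of order $\ge k$ in the ``if'' direction — equivalently, showing $\|\ve{\phi}\|_g$ stays bounded there, so that the Blaschke‑type metric does not collapse faster than $\tilde g$ as one approaches such a pole. This is an adaptation of Qiongling Li's method in \cite{li_on}, now carried out on a complete end rather than on $\mathbb{C}$, and the case $k=1$ — where the nonlinearity stays bounded as $w\to-\infty$ — will call for a separate treatment. A secondary point of care is the chain of elementary but real facts behind the reformulation (Cohn--Vossen with cone points, and that $\tilde g$ is incomplete at essential singularities and at ideal boundary arcs that are not punctures).
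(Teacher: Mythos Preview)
Your overall plan—recasting the statement as ``no incomplete solution $\Leftrightarrow$ $\tilde g=|\ve{\phi}|^{2/k}$ is complete''—is precisely the paper's strategy (the equivalence (a)$\Leftrightarrow$(b)$\Leftrightarrow$(c) of Theorem~\ref{thm_incomplete}), and your handling of the equivalence between completeness of $\tilde g$ and the finite-type-with-high-order-poles condition via Cohn--Vossen matches the paper's appeal to \cite{hulin-troyanov} and Lemma~\ref{lemma_huber}.

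The ``if'' direction is also close: the paper smooths $\tilde g$ to a complete metric $g_0$ and applies the Cheng--Yau theorem (Theorem~\ref{thm_boundedness}) globally to $-u$, where $g=e^ug_0$; this is your end-by-end Keller--Osserman bound packaged as a single global statement. Your flag on $k=1$ is apt: the nonlinearity in $-u$ stays bounded there, so the superlinear-growth hypothesis of Cheng--Yau is not met and a separate argument is indeed needed.

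The ``only if'' direction is where you diverge. The paper constructs the incomplete solution in one shot via explicit sub/super solutions (Lemma~\ref{lemma_asolution}): taking $U$ a relatively compact neighbourhood of the zeros with complete solution $g_U$, the supersolution $\min(g_U,\,C|\ve{\phi}|^{2/k})$, extended by $C|\ve{\phi}|^{2/k}$ outside $U$, is dominated by $C|\ve{\phi}|^{2/k}$ off a compact set, so the sandwiched solution is manifestly incomplete. Your exhaustion-and-Dirichlet scheme can be made to work, but the phrase ``$v_j$ is uniformly bounded, by local convergence, on the fixed curve'' is circular as written: you need the uniform bound to get convergence, not the other way round. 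The fix is to apply the maximum/minimum principle to $v_j=\tfrac12\log(g_j/g_0)$ directly on all of $\Sigma_j$: since $v_j=0$ on $\partial\Sigma_j$, at an interior extremum the sign of $\Delta_{g_0}v_j$ forces $|v_j|\le M$ with $M$ depending only on $g_0|_{\Sigma_1}$ (and $M=0$ on the flat part where $g_0=\tilde g$). This yields both the compactness and the comparison $g_\infty\le e^{2M}\tilde g$ along $\gamma$ simultaneously. The paper's route is shorter and avoids Dirichlet problems entirely; yours has the minor advantage of not invoking the complete solution on an auxiliary domain.
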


\subsection{Geometry of convex $\mathbb{RP}^2$-surfaces at poles}\label{subsec_intro2}
Theorem \ref{thm_intro1} and the affine sphere construction yield, for any surface $S$, a one-to-one correspondence between convex $\mathbb{RP}^2$-structures and pairs $(\ac{J},\ve{\phi})$ (excluding those such that $(S,\ac{J})$ is parabolic and $\ve{\phi}=0$, see Section \ref{subsec_yau}). One then naturally asks how certain properties of convex $\mathbb{RP}^2$-structures can be read off from the cubic differential side and vice versa. 

The main purpose of this paper is to show that, for an end of $S$, the property of being a pole for $(\ac{J},\ve{\phi})$ (\ie the end is conformally a puncture for $\ac{J}$, at which $\ve{\phi}$ has a pole) corresponds to specific behaviors of the  $\mathbb{RP}^2$-structure:
\begin{theorem}\label{thm_intro2}
Let $\Sigma$ be a Riemann surface endowed with a holomorphic cubic differential $\ve{\phi}$ and $X$ be the convex $\mathbb{RP}^2$-structure on $\Sigma$ given by the complete solution to Wang's equation and the affine sphere construction. For every end $p$ of $\Sigma$, the following equivalences hold:
\begin{enumerate}
\item\label{item_introthm0}
$p$ is a pole of order at most $2$ if and only if it is a finite volume end for $X$;
\item\label{item_introthm2}
$p$ is a pole of order $m\geq 4$ if and only if $X$ extends to a convex $\mathbb{RP}^2$-structure $X'$ with piecewise geodesic boundary on a surface $\Sigma'$ obtained by attaching to $\Sigma$ a boundary circle around $p$, such that the set of vertices $\Lambda\subset\pa\Sigma'$ has $m-3$ points;
\item\label{item_introthm1}
$p$ is a third order pole if and only if $X$ extends to a convex $\mathbb{RP}^2$-structure $X'$ with geodesic boundary on a surface $\Sigma'$ obtained by attaching to $\Sigma$ a boundary circle around $p$. 
\end{enumerate}
Moreover, in the last case, the boundary holonomy of $X'$ has eigenvalues $\exp\!\big[\sqrt{2}\re(2\pi\ima\, a_j)\big]$, $j=1,2,3$, where $a_1$, $a_2$ and $a_3$ are the cubic roots of the residue
%\footnote{
%The cubic differential $\ve{\phi}$ that we consider in this paper is $\sqrt{2}$ times the usual Pick differential (see Section \ref{subsec_affine}), thus the residue of $\ve{\phi}$ is all $\sqrt{2}$ times the Pick residue.
%}
 $R$ of $\ve{\phi}$ at $p$, and we have $\re(R)>0$ if and only if $X'$ has principal geodesic boundary.

\end{theorem}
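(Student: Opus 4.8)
The plan is to work locally near the pole $p$ in a punctured disk coordinate $z$, where $\ve{\phi} = \phi(z)\,\dz^{\otimes 3}$ with $\phi$ having the prescribed pole behavior. For a pole, the three equivalences and the asymptotic statement are all really statements about the asymptotic geometry of the affine sphere near the puncture, so the backbone of the argument is an asymptotic analysis of the complete solution $g = e^{2u}|\dz|^2$ to Wang's equation $\kappa_g = -1 + \|\ve{\phi}\|^2_g$, followed by the corresponding asymptotics of the developing map $\mathbb{P}\circ\iota$ and the holonomy. First I would establish the model solutions: for each order $d$, write down an explicit (or ODE-reduced) $\SL(3,\mathbb{R})$-invariant or rotationally symmetric affine sphere whose Pick differential is the leading term $z^{-d}$ (for $d=3$ this is a cone-type affine sphere over a closed convex curve, governed by the residue $R$; for $d \geq 4$ one gets the ``polygonal'' model with $d-3$ vertices, in the spirit of Dumas--Wolf; for $d \leq 2$ the model is the complete hyperbolic end / cusp). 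Then I would prove that the actual solution is asymptotic to the model, using a barrier/maximum-principle comparison for the PDE $\Delta u = e^{2u}\|\phi\|^2 \,|\dz|^{-2}\cdots$, exploiting convexity of the nonlinearity, exactly as in Theorems \ref{thm_intro1}--\ref{thm_introli} but now extracting rates rather than mere existence.

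Next, with metric asymptotics in hand, I would integrate the structure equations (the $\mathfrak{sl}(3,\mathbb{R})$-valued flat connection $\dif + A$ whose components are built from $u$, $\phi$ and the Blaschke data) along paths escaping to the puncture, and along a loop around $p$, to identify (i) the developing image near $p$, showing in cases \eqref{item_introthm2} and \eqref{item_introthm1} that the boundary of $\Omega$ acquires a piecewise-geodesic (resp.\ geodesic) arc with exactly $d-3$ vertices, and (ii) the conjugacy class of the boundary holonomy. For case \eqref{item_introthm0} I would instead show the metric is asymptotically a finite-volume cusp, hence the $\mathbb{RP}^2$-end has finite volume, and conversely that bounded volume forces the pole order $\leq 2$ via a volume lower bound when $d\geq 3$. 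The residue computation is the most delicate point: the holonomy eigenvalues should come out as $\exp[\sqrt 2\,\re(2\pi\ima\, r_j)]$ where $r_j^3 = R$, because the holonomy is (conjugate to) the exponential of the period $\oint A$ of the connection around $p$, and the leading contribution of that period is $\oint z^{-1}(R)^{1/3}\,\dz$ up to a normalizing constant $\sqrt 2$ coming from the affine-sphere normalization; keeping track of this constant and of which cube root is realized on which eigenline requires care with the precise form of $A$. The sign criterion $\re(R) > 0 \Leftrightarrow$ principal geodesic boundary then follows by checking when the three eigenvalues are distinct real and ordered so that the middle one is between the other two in the way that characterizes a principal (as opposed to non-principal) hyperbolic boundary element of a convex $\mathbb{RP}^2$-surface.

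The main obstacle I anticipate is the uniformity of the asymptotic expansion of $u$ near the puncture: the comparison argument must give not just $u \sim u_{\mathrm{model}}$ but control on $u - u_{\mathrm{model}}$ and its derivatives at a rate fast enough that the period integral $\oint A$ converges to the model value with no residual ambiguity, so that the holonomy class — and in particular the precise eigenvalues involving $R$ — is pinned down. This is where I would need to combine the convexity/maximum-principle machinery with elliptic regularity (Schauder estimates on shrinking annuli) to upgrade $C^0$ closeness to $C^\infty$ closeness with explicit decay, and then a careful bookkeeping of the $\SL(3,\mathbb{R})$ normalization to extract the factor $\sqrt 2$ and the identification $r_j^3 = R$. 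The cases $d \leq 2$ and $d \geq 4$ are comparatively softer, relying on already-known model computations (Benoist--Hulin for finite volume, Dumas--Wolf for polygons), so the heart of the new work is the third-order case and its residue formula.
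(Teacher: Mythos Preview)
Your proposal captures the right spirit for the ``only if'' directions of Parts~(\ref{item_introthm2}) and~(\ref{item_introthm1}): the paper does compare the actual affine sphere to the \c{T}i\c{t}eica model via an \emph{osculation map} $P(z)=\para(z_0,z)\para_0(z,z_0)$, proves exponential decay of $u$ on large disks (Theorem~\ref{thm_diskfine}: $u(0)\le C r^{1/2}e^{-\sqrt{6}r}$), and then shows $P$ has sectorial limits with controlled unipotent jumps across ``unstable'' rays (a Stokes-type phenomenon, Proposition~\ref{prop_osculation1}). Your description of this as ``integrate the structure equations and compare to the model'' is morally correct, though the paper's implementation is more structured: for $d\ge 4$ it first decomposes a neighborhood of $p$ into $d-3$ half-planes glued by $\tfrac{2\pi}{3}$-rotations (Proposition~\ref{prop_halfplanes}), builds the \emph{negative ray bordification} $\Sigma'$ intrinsically from the cubic differential, and then extends the developing map half-plane by half-plane. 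For $d=3$ the decay is weaker (only $e^{-\sqrt{6}\sin\theta\,|z|}$ along rays at angle $\theta$ from the boundary of a half-plane covering the half-cylinder), which forces the extra hypothesis $\sin\theta>\tfrac{\sqrt{3}}{4}$ in Proposition~\ref{prop_osculation2}; your anticipation that this case is the delicate one is accurate.

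However, there is a genuine gap: you do not address the ``if'' directions of Parts~(\ref{item_introthm2}) and~(\ref{item_introthm1}). Given that $X$ extends to a structure with (piecewise) geodesic boundary, why must $\ve{\phi}$ have a pole of order $\ge 3$ at $p$? Your asymptotic-comparison machinery runs in the wrong direction for this. The paper's argument (Chapter~\ref{sec_7}) is quite different and does not appear in your outline: it uses the Benoist--Hulin Hausdorff-continuity of the Blaschke metric (Theorem~\ref{thm_bh}, Corollary~\ref{coro_khausdorff}) to show that near a line segment in $\partial\Omega$, or near a non-$C^1$ boundary point, the curvature $\kappa_\Omega=-1+\|\ve{\phi}_\Omega\|^2_{g_\Omega}$ is close to $0$ (Lemma~\ref{lemma_boundline}, proved by pushing points toward the boundary by projective transformations preserving a triangle and invoking continuity). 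Hence $\|\ve{\phi}\|_g$ is bounded away from $0$ near $\partial\Sigma'$, so $|\ve{\phi}|^{2/3}$ is comparable to $g$ there, hence complete; then a Huber-type lemma (Lemma~\ref{lemma_huber}) forces the puncture to be a pole of order $\ge 3$. The exact order is then pinned down by combining with the ``only if'' direction. You should add this ingredient.

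A smaller point: your proposed holonomy computation via the na\"ive period $\oint z^{-1}R^{1/3}\,\dz$ is too optimistic. The connection $A$ is not diagonal and the correction terms do not obviously integrate to zero around the loop. The paper instead (proof of Proposition~\ref{prop_reduction3}) uses the identity $P(z+h)H_0P(z)^{-1}=H$ relating the actual holonomy $H$ to the model holonomy $H_0$, then takes $z\to\infty$ using the factorization $P=\widehat{P}R$ with $\widehat{P}\to\widehat{P}(\infty)$ and $R\in N_v$, yielding $H=\widehat{P}(\infty)H_0R_0\widehat{P}(\infty)^{-1}$ for some $R_0\in N_v$; since $N_v$ is unipotent this preserves the eigenvalues of $H_0$, which are the claimed $\exp[\sqrt{2}\re(\omega^j h)]$. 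This is where the convergence rate of the osculation map, not a direct period integral, does the work.
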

Here, the notion of \emph{finite volume end} in Part (\ref{item_introthm0}) is defined using the Hilbert metric (see Section \ref{subsec_cusps}). In Parts (\ref{item_introthm2}) and (\ref{item_introthm1}), a convex $\mathbb{RP}^2$-structure $X'$ on a surface $S$ with boundary is said to have \emph{piecewise geodesic boundary} with \emph{vertices} $\Lambda\subset\pa S$ if the $\mathbb{RP}^2$-charts of $X'$ identify a neighborhood of every point in $\pa S\setminus\Lambda$ (resp. $\Lambda$) with a neighborhood of a boundary point of a half-plane (resp. neighborhood of the vertex of an angle) in $\mathbb{RP}^2$. If $\Lambda$ is empty then $X'$ is said to have \emph{geodesic boundary}. When $\pa S$ is homeomorphic to a circle and $X'$ has geodesic boundary, $\pa S$ is developed into an open line segment $I\subset\mathbb{RP}^2$ with endpoints fixed by the boundary holonomy $H$ of $X'$, and the geodesic boundary is said to be \emph{principal} if $H$ is hyperbolic (\ie has distinct positive eigenvalues) and $I$ joins the attracting and repelling fixed points of $H$ (see \cite{goldman_convex}). It follows from the expression of eigenvalues in the theorem that $H$ is hyperbolic if and only if $\re(R)\neq0$, so the last statement of the theorem implies that if $\re(R)<0$ then the boundary is not principal, in which case the end is given by the quotient of a triangle in $\mathbb{RP}^2$ by $H$.

Parts (\ref{item_introthm0}) and (\ref{item_introthm1}) of Theorem \ref{thm_intro2} generalize results of Benoist-Hulin \cite{benoist-hulin} and Loftin \cite{loftin_compactification, loftin_neck}, respectively; whereas Part (\ref{item_introthm2}) generalizes a theorem of Dumas-Wolf \cite{dumas-wolf} in the $\Sigma=\mathbb{C}$ case. The main novelties of this paper are an analytic proof of (\ref{item_introthm0}) which applies to $k$-differentials (see Theorem \ref{thm_finitevolumeend}), and an improvement of techniques in \cite{dumas-wolf} which allows us to explicitly describe the surface $X'$ in Theorem \ref{thm_intro2}. We outline the latter in Sections \ref{subsec_intro_negative} and \ref{subsec_intro_asymp} below.

%\begin{remark}
%Theorem \ref{thm_intro2} does not capture all the local information of the cubic differential $\ve{\phi}$. Indeed, a complete set of invariants of $\ve{\phi}$ at a pole consists of the order $m\in\mathbb{Z}$ together with the translation $\tau$ of the flat holonomy, which is covered by the residue when $m=3$ but also occurs when $d$ is a higher multiple of $3$ (see Section \ref{subsec_normalform}). The theorem does not distinguish different values of $d$ from the $\mathbb{RP}^2$-structure side when $d\leq 2$, and lacks an interpretation of $\tau$ when $m=6,9,\cdots$. These missing pieces of information are worth further investigation.
%\end{remark}
\subsection{Natural bordifications of poles}\label{subsec_intro_negative}
For the ``only if'' statements in Theorem \ref{thm_intro2}, we first construct $\Sigma'$ by specifying how the circle is attached to the pole (\ie specifying a topology on $\Sigma\cup\mathbb{S}^1$), then show that $X$ extends to $\Sigma'$. We explain in this section the idea of the construction and leave the details to Chapter \ref{sec_5}.
%Extensibility of the convex $\mathbb{RP}^2$-structure $X$ to $\Sigma'$ is given by asymptotic properties of charts of $X$ around the pole.

First consider the simplest example $(\Sigma,\ve{\phi})=(\mathbb{C},\dz^3)$, where the cubic differential  has a pole of order $6$ at infinity. The Euclidean metric $|\dz|^2$ is the complete solution to Wang's equation and gives rise to the \emph{\c{T}i\c{t}eica affine spherical embedding} $\iota_0:\mathbb{C}\to\mathbb{R}^3$, which has an explicit expression (see Section \ref{subsec_titeica}), and 
%$$
%\iota_0:\mathbb{C}\rightarrow\mathbb{R}^3,\ \iota_0(z)=(e^{\sqrt{2}\re(z)},e^{\sqrt{2}\re(\omega^2 z)},e^{\sqrt{2}\re(\omega z)}),\mbox{ where }\omega=e^{2\pi\ima/3}.
%$$
the projectivized embedding $\delta_0=\mathbb{P}\circ\iota_0$ sends $\mathbb{C}$ to a triangle $\Delta$ in $\mathbb{RP}^2$, see Figure \ref{figure_dev01}. Thus, the compactification $\mathbb{C}'=\mathbb{C}\cup\pa_\infty\mathbb{C}$ of $\mathbb{C}$ claimed in Theorem \ref{thm_intro2} (\ref{item_introthm2}) can be identified with the closure $\overline{\Delta}$.
\begin{figure}[h]
	\labellist
	\pinlabel {$\delta_0$} at 420 240
	\endlabellist
\centering
\includegraphics[width=3.2in]{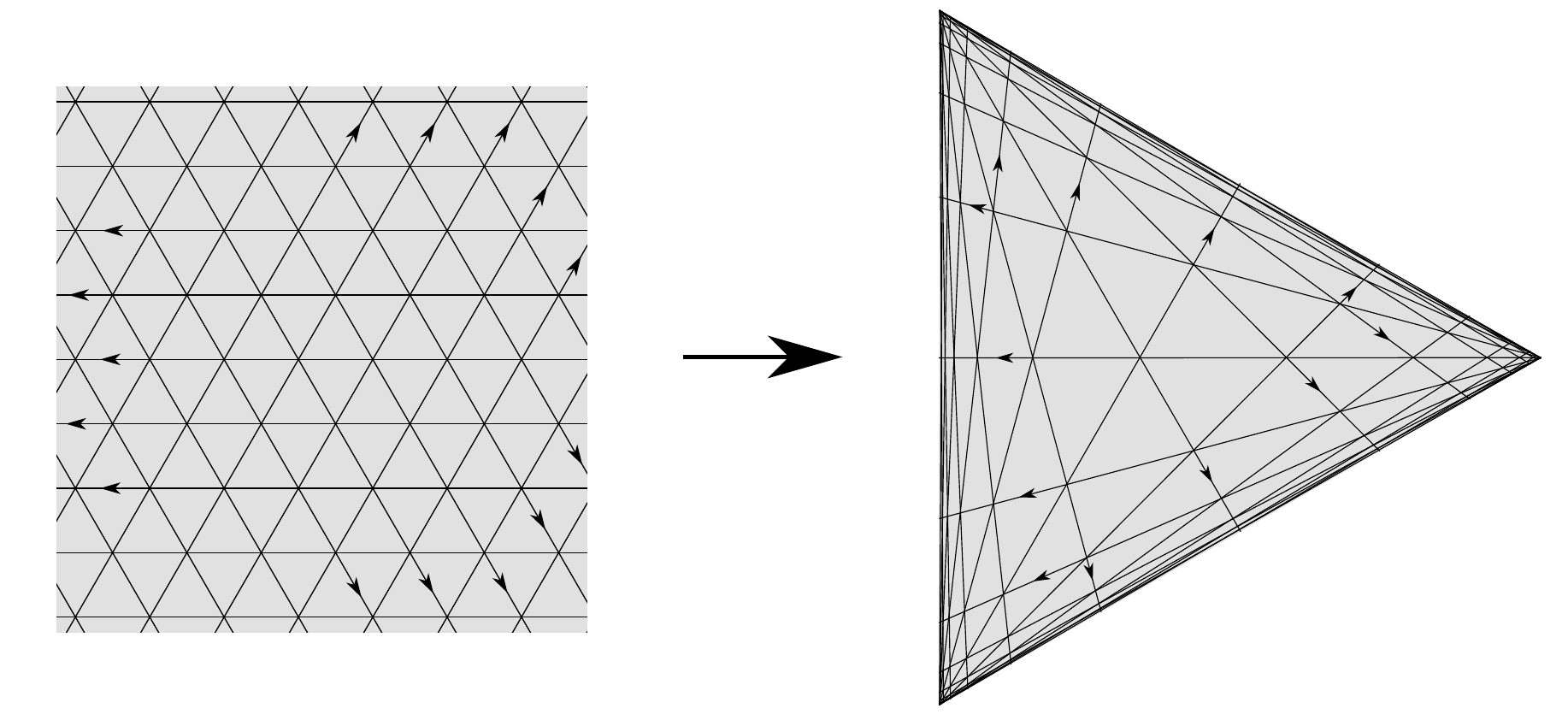}
\caption{Some negative geodesics in $(\mathbb{C},\dz^3)$ and images by $\delta_0$.}
\label{figure_dev01}
\end{figure}
As the figure shows, $\delta_0$ sends every \emph{negative geodesic} -- oriented curve such that the cubic differential evaluated on every tangent vector gives a negative value -- to a line in $\Delta$ going from a vertex to an interior point of an edge. Therefore, each non-vertex point of the boundary $\pa_\infty\mathbb{C}=\pa\Delta$ is the limit of a unique inextensible negative geodesic, while every nonnegative geodesic tends to a vertex.

%Furthermore, every connected component $I$ of $\pa\mathbb{C}'\setminus\Lambda$ carries two natural metrics, one induced from the cubic differential and the other from the $\mathbb{RP}^2$-structure: the former defines the distance between $x,x'\in I$ as the Euclidean distance between the parallel negative geodesics tending to $x$ and $x'$, respectively; the latter is the $1$-dimensional Hilbert metric, defined by $d(x,x'):=|\log[x_0:x:x':x_1]|$, where $x_0,x_1\in\Lambda$ are the vertices of $I$ and $[y_0:y_1:y_2:y_3]$ denotes the cross-ratio of four points $y_0,\cdots, y_3$ lying on a line $\mathbb{RP}^1\subset\mathbb{RP}^2$. Using the above expression of $\delta_0$, one can check that the two metrics are proportional.

For general $(\Sigma, \ve{\phi})$ with a pole $p$ of order $m\geq 4$, we construct $\Sigma'$ by attaching to $p$ a boundary circle $\pa\Sigma'$ with a subset $\Lambda$ of $m-3$ marked points in a specific way, incorporating similar properties relative to (non)negative geodesics as in the above example. Namely, the points in $\Lambda$ are limits of nonnegative geodesics rays on $\Sigma$ tending to $p$; while each connected component of $\pa\Sigma'\setminus\Lambda$ is formed by limits of negative rays parallel to each other, hence carries a natural metric given by the distances between these rays under the flat metric $|\ve{\phi}|^\frac{2}{3}$. We call $\Sigma'$ the \emph{negative ray bordification} of $(\Sigma,\ve{\phi})$ at $p$.  

When $p$ is a third order pole, a neighborhood of $p$ is a cylinder (\ie quotient of a half-plane in $(\mathbb{C},\dz^3)$ by a translation), and the bordification $\Sigma'$ that we should consider is slightly different: we pick a negative ray $\gamma$ tending to $p$, and construct $\Sigma'=\Sigma'_\gamma$ by assigning to each geodesic ray parallel to $\gamma$ (which is also negative) a boundary point. Note that there can exist either one or two parallelism families of negative rays tending to $p$, corresponding to the cases $\re(R)\geq 0$ and $\re(R)<0$, respectively (see Section \ref{subsec_bord2}), so the construction does involve a choice in the latter case. Also note that this time there is no marked boundary points, and the whole boundary circle carries the metric given by the distances between rays.

We show that the $\mathbb{RP}^2$-structure extends to the bordified surface $\Sigma'$ in a metric preserving way, which implies the ``only if'' direction in Theorem \ref{thm_intro2} (\ref{item_introthm2}) and (\ref{item_introthm1}):
\begin{theorem}\label{thm_intro3}
Let $\Sigma$, $\ve{\phi}$, $X$ and $p$ be as in Theorem \ref{thm_intro2} and suppose $p$ is a pole of order $m$.
\begin{enumerate}
\item\label{item_thmintro31}
When $m\geq 4$, $X$ extends to a convex $\mathbb{RP}^2$-structure $X'$ with piecewise geodesic boundary on the negative ray bordification $\Sigma'$ of $(\Sigma,\ve{\phi})$ at $p$. Moreover, the set $\Lambda\subset\pa\Sigma'$ given by nonnegative geodesics is the set of vertices of $X'$, and the metric on each component of $\pa\Sigma'\setminus\Lambda$ is $\frac{1}{\sqrt{6}}$ times the Hilbert metric induced by $X'$. 
\item\label{item_thmintro32}
When $m=3$, if $\gamma$ is a negative ray tending to $p$ such that the angle $\theta$ from $\gamma$ to the geodesic loops around $p$ (see Figure \ref{figure_cylinder}) is bigger than $\arcsin(\frac{\sqrt{3}}{4})\approx 0.14\pi$, then $X$ extends to a convex $\mathbb{RP}^2$-structure $X'$ with geodesic boundary on the bordification $\Sigma'_\gamma$ of $(\Sigma, \ve{\phi})$ relative to $\gamma$. The natural metric on $\pa\Sigma'_\gamma$ is $\frac{1}{\sqrt{6}}$ times the Hilbert metric induced by $X'$. 
\end{enumerate}
\end{theorem} 
\begin{figure}[h]
	\labellist
	\pinlabel {monodromy} at 9 120
	\pinlabel {$z\mapsto z+\tau$} at 18 85
	\pinlabel {$e^{\frac{\pi\ima}{3}}\mathbb{R}_+$} at 265 280
	\pinlabel {$e^{-\frac{\pi\ima}{3}}\mathbb{R}_+$} at 265 110
	\pinlabel {$\theta$} at 145 225
	\pinlabel {$\theta$} at 590 170
	\pinlabel {$\theta$} at 803 170
	\pinlabel {quotient} at 480 240
	\pinlabel {$p$} at 1182 195
	\endlabellist
	\centering
	\includegraphics[width=4in]{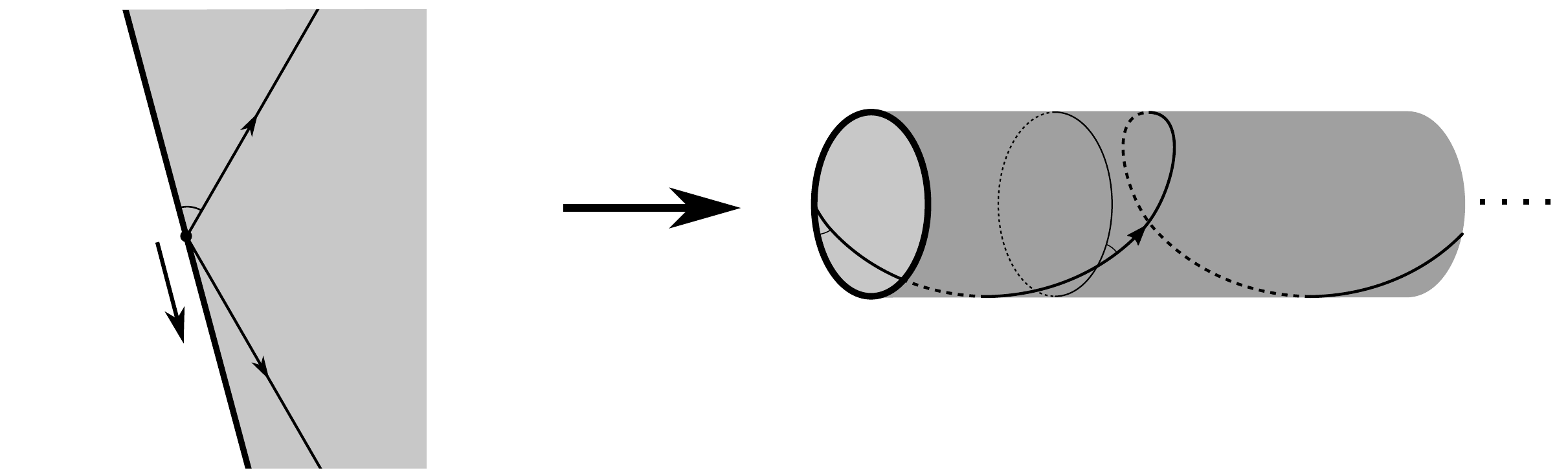}
	\caption{A neighborhood of a third order pole with $\re(R)<0$ as quotient of a half-plane in $(\mathbb{C},\dz^3)$, and two negative rays tending to the pole (only one is drawn in the quotient picture).}
	\label{figure_cylinder}
\end{figure}
Here, the \emph{Hilbert metric} on a geodesic boundary piece of a convex $\mathbb{RP}^2$-surface is given by the metric $d_I(x,y)=\log|[a,x,y,b]|$ on the developing image $I\subset\mathbb{RP}^2$ of that piece, where $a$ and $b$ are the endpoints of the open line segment $I$ and $[a,x,y,b]$ is the cross-ratio. The assumption on $\theta$ in (\ref{item_thmintro32}) comes from an assumption required in one of our results about osculation maps, see the next section. Note that the assumption is weak enough such that the statement implies the ``only if'' part of Theorem \ref{thm_intro2}  (\ref{item_introthm1}), because one readily checks that
\begin{itemize}
	\item when $\re(R)\geq0$, the unique (up to parallelism) negative ray $\gamma$ tending to $p$  satisfies the assumption;
	\item when $\re(R)<0$, between the only two negative rays tending to $p$ (up to parallelism), at least one satisfies the assumption.
\end{itemize}
%to imply Part (\ref{item_introthm1}) of Theorem \ref{thm_intro2}. More precisely,
%\begin{remark}
%A third order pole with $\re(R)\geq 0$ has a unique negative ray bordification, so we can replace ``at least one'' by ``the'' in Part (\ref{item_thmintro32}). However, if $\re(R)<0$, so that there are two parallelism families of negative geodesics tending to $p$, due to technical limitations explained below, we are only able to show that $X$ extends to the bordification associated to a family with angle bigger than $\arcsin(\frac{\sqrt{3}}{4})\approx 0.14\pi$ from the boundary geodesic loop of a  cylinder neighborhood of $p$. This angle condition is satisfied by at least one of the two families, but possibly not both, depending on the residue $R$.
%\end{remark}

\subsection{Asymptotics of osculation map}\label{subsec_intro_asymp}
The proof of Theorem \ref{thm_intro3} is bases on the following construction:
Let $U\subset\Sigma$ be an open set conformally identified with a domain in $\mathbb{C}$ such that $\ve{\phi}|_U=\dz^3$, and let $\iota_0:U\rightarrow\mathbb{R}^3$ be the \c{T}i\c{t}eica affine sphere from the last section. Then any affine spherical immersion $\iota:U\rightarrow\mathbb{R}^3$ with normalized Pick differential $\dz^3$ tangent to $\iota_0$ at a point $z_0\in U$ can be written as
$$
\iota(z)=P(z)\iota_0(z)\ \text{ for all } z\in U,
$$ 
where the \emph{osculation map} $P:U\rightarrow\SL(3,\mathbb{R})$, measuring the deviation of $\iota$ from $\iota_0$, is the solution to a specific Pfaffian system whose coefficients are given by the Blaschke metric of $\iota$, with initial condition $P(z_0)=I$. 

A chart of the convex $\mathbb{RP}^2$-structure $X$ on $U$ is the projectivization $\delta=\mathbb{P}\circ\iota$ of an affine spherical immersion $\iota$, hence can be written as $\delta(z)=P(z)\delta_0(z)$. As $\delta_0$ is well understood, key to the proof of Theorem \ref{thm_intro3} is to understand the asymptotic behavior of $P(z)$ when $z\rightarrow+\infty$, for certain $U$ around the puncture $p$. Results of this kind are first established in \cite{dumas-wolf}, revealing a \emph{Stokes-type phenomenon}: If $p$ is a pole of order greater than $3$, then $P(z)$ converges in $\SL(3,\mathbb{R})$ when $z\rightarrow\infty$ in a sector $\overline{V}=\{z\in\mathbb{C}\mid \theta_1\leq\arg(z)\leq\theta_2\}\subset U$ as long as $\overline{V}$ does not contain any $v$ with $v^3=\pm\ima$ (called an ``unstable direction'' in \cite{dumas-wolf}); whereas if $\overline{V}$ contains exactly one such $v$ then the limits within the sub-sectors on the two sides of $v$ are related by an element in a specific $1$-dimensional unipotent subgroup $\mathcal{N}_v\subset\SL(3,\mathbb{R})$. In the latter case, we further show that $P(z)$ factorizes as
$$
P(z)=\widehat{P}(z)R(z),
$$ 
where $\widehat{P}(z)$ converges in $\SL(3,\mathbb{R})$ as $z\rightarrow+\infty$, whereas $R(z)$ is in $\mathcal{N}_v$ and has at most polynomial growth with respect to $|z|$ (see Proposition \ref{prop_reduction1}). This implies that $R(z)\delta_0(z)$ has the same asymptotic behavior as $\delta_0(z)$, allowing us to keep track of $\delta(z)$ when $z\to\infty$ along unstable directions and prove Theorem \ref{thm_intro3} (\ref{item_thmintro31}).

%This factorization result is proved by decomposing the Pfaffian system that $P$ satisfies and applying standard ODE techniques (in the actually proof we use the language of flat connections, see Section \ref{prop_osculation1}). 
Assuming $g=e^{u(z)}|\dz|^2$ on $U$, a key ingredient in the proof of the above results is the exponential decay property of $u$ provided by \cite{dumas-wolf}:
$$
0\leq u(z)\leq C|z|^{-\frac{1}{2}}e^{-\sqrt{6}|z|} \text{ when $|z|$ is big enough,}
$$
where the coefficient $-\sqrt{6}$ is crucial. However, while the factor $|z|^{-\frac{1}{2}}$ is important in \cite{dumas-wolf}, it is nonessential for our factorization result, hence we will only use a weaker estimate, with $|z|^{-\frac{1}{2}}$ replaced by $|z|^{\frac{1}{2}}$. Unlike the above estimate, which is very specific to poles of order $\geq4$, the weaker estimate holds in more general settings (see Theorem \ref{thm_diskfine}) and is also used in our treatment of third order poles.

The analysis of osculation map is trickier at third order poles: Identifying a neighborhood of the pole as a cylinder as before, we only have a weaker estimate $0\leq u(z)\leq Ce^{-\sqrt{6}\sin(\theta)|z|}$ for $z$ in the vicinity of a negative ray tending to the pole, where $\theta$ is the angle in Figure \ref{figure_cylinder}. If $\sin(\theta)>\frac{\sqrt{3}}{4}$, we show a similar factorization $P(z)=\widehat{P}(z)R(z)$ for $z$ in a small sector center at the ray (Proposition \ref{prop_osculation2}), where $R(z)$ belongs to certain unipotent subgroup of $\SL(3,\mathbb{R})$ and has controlled exponential growth. This enables us to prove Part (\ref{item_thmintro32}) of Theorem \ref{thm_intro3}. 

%The following intermediate result (see Proposition \ref{prop_reduction2}) might be interesting in its own right: 
%\begin{proposition}\label{prop_introv}
%Suppose $\theta_1\in(\arcsin(\frac{\sqrt{3}}{4}),\frac{\pi}{2}]$ and $v\in\mathbb{C}$, $v^3=-1$. Let $g$ be a solution to Wang's equation on the sector
%$$
%V=\{te^{\theta\ima}v\mid t>0,\,|\theta|<\theta_1\}
%$$
%with cubic differential $\dz^3$ and let $\iota:V\rightarrow\mathbb{R}^3$ be an affine spherical immersion with Blaschke metric $g$ and normalized Pick differential $\dz^3$. Let $\delta:=\mathbb{P}\circ\iota$ be the projectivized immersion. Then for any ray in $V$ of the form $\gamma(t)=z_0+vt$, the limit $L(\gamma):=\lim_{t\rightarrow+\infty}\delta(\gamma(t))$ exists and runs over an open line segment $I\subset\mathbb{RP}^2$ as $\gamma$ runs over all such rays. Moreover, the distance between two rays $\gamma_1$ and $\gamma_2$ equals $\frac{1}{\sqrt{6}}d_I(L(\gamma_1),L(\gamma_2))$, where $d_I$ is the metric on $I$ given by cross-ratio.
%\end{proposition}
%It is of course worth seeking to weaken or remove the assumption $\theta_1>\arcsin(\frac{\sqrt{3}}{4})$, which we leave to future work.

\subsection{Moduli spaces and Fock-Goncharov framings}\label{subsec_intro_bijection}
We conclude this introductory chapter by pointing out some consequences of Theorem \ref{thm_intro2} about moduli spaces of convex $\mathbb{RP}^2$-structures, which we will not pursued further in this paper.

The bijection mentioned in Section \ref{subsec_intro2} between the space $\mathscr{P}(S)$ of convex $\mathbb{RP}^2$-structures and the space $\mathscr{C}(S)$ of pairs $(\ac{J},\ve{\phi})$ induces a bijection between the moduli spaces $\mathcal{C}(S)=\mathscr{C}(S)/\Diffeo^0(S)$ and $\mathcal{P}(S)=\mathscr{P}(S)/\Diffeo^0(S)$, where $\Diffeo^0(S)$ is the identity component of the diffeomorphism group of $S$.  In particular, when $S$ is closed with genus $g\geq 2$, $\mathcal{P}(S)$ is the \emph{Hitchin component} in the $\SL(3,\mathbb{R})$-character variety of $\pi_1(S)$, and $\mathcal{C}(S)$ is a holomorphic vector bundle of rank $5(g-1)$ over the Teichm\"uller space $\mathcal{T}_g$. The bijection $\mathcal{C}(S)\cong\mathcal{P}(S)$ in this case, due to Labourie \cite{labourie_cubic} and Loftin \cite{loftin_amer} independently, is the origin of the whole subject here. 

On the other hand, when $S=S_{g,b}$ is a surface of genus $g\geq 0$ with $b>0$ punctures, although $\mathcal{C}_{g,b}:=\mathcal{C}(S)$ and $\mathcal{P}_{g,b}:=\mathcal{P}(S)$ are infinite-dimensional spaces, Theorem \ref{thm_intro2} yields identifications between various finite dimensional submanifolds in them. Namely, given a $b$-tuple of integers $m=(m_1,\cdots, m_b)$, we consider the subset $\mathcal{C}_{g,b}^{\leqslant m}\subset\mathcal{C}_{g,b}$ given by all $(\ac{J},\ve{\phi})$ with a pole of order at most $m_i$ at the $i^\text{th}$ puncture for every $i$. When $|m|:=\sum m_i>-4(g-1)$, by Riemann-Roch Formula, $\mathcal{C}_{g,b}^{\leqslant m}$ is a complex vector bundle of rank $5(g-1)+|m|$ over the Teichm\"uller space $\mathcal{T}_{g,b}$. Further assume $m_i\geq 4$ if $b\geq 2$ and $m=m_1\geq 6$ if $b=1$, and let $\mathcal{C}_{g,b}^m$ be the open subset of $\mathcal{C}_{g,b}^{\leqslant m}$ formed by all $(\ac{J},\ve{\phi})$ with a pole of order exactly $m_i$ at the $i^\text{th}$ puncture. Then Theorem \ref{thm_intro2} suggests considering accordingly the subset $\mathcal{P}_{g,b}^m$ of $\mathcal{P}_{g,b}$ given by all convex $\mathbb{RP}^2$-structures $X$ on $S$ such that $X$ extends to a compact surface with boundary which contains $S_{g,b}$ as its interior and the extended convex $\mathbb{RP}^2$-structure has piecewise geodesic boundary with $m_i-3$ vertices on the $i^\text{th}$ boundary component. The theorem implies:
\begin{corollary}\label{coro_intromoduli}
Under the above assumption on $m$, the affine sphere construction induces a bijection between $\mathcal{C}_{g,b}^m$ and $\mathcal{P}_{g,b}^m$.
\end{corollary}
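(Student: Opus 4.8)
The plan is to deduce Corollary~\ref{coro_intromoduli} directly from Theorem~\ref{thm_intro2} together with the bijection $\mathscr{C}(S)\cong\mathscr{P}(S)$ already recorded (Section~\ref{subsec_yau}), by tracking how the two sides behave under the respective equivalence relations. First I would recall that, by Theorem~\ref{thm_intro1} and the affine sphere construction, there is a canonical bijection $\Phi\colon\mathscr{C}(S)\to\mathscr{P}(S)$ which is manifestly equivariant for the natural action of $\Diffeo(S)$ on both sides (a diffeomorphism pulls back a pair $(\ac{J},\ve{\phi})$ and, correspondingly, an $\mathbb{RP}^2$-structure, and the affine sphere construction commutes with this). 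Restricting to $\Diffeo^0(S)$ and passing to quotients yields the bijection $\mathcal{C}(S)\cong\mathcal{P}(S)$ mentioned in the text. So the content of the corollary is entirely about showing that $\Phi$ carries $\mathscr{C}_{g,b}^d$ (the $\Diffeo^0$-saturated preimage of $\mathcal{C}_{g,b}^d$) \emph{onto} the $\Diffeo^0$-saturated preimage $\mathscr{P}_{g,b}^d$ of $\mathcal{P}_{g,b}^d$, and then that the identification descends to the quotients.

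Next I would verify the set-level equality $\Phi(\mathscr{C}_{g,b}^d)=\mathscr{P}_{g,b}^d$. Fix $(\ac{J},\ve{\phi})\in\mathscr{C}_{g,b}^d$: this means $(S,\ac{J})$ is biholomorphic to a finite-type punctured Riemann surface of genus $g$ with $b$ punctures, and $\ve{\phi}$ extends meromorphically across each puncture $p_i$ with a pole of order exactly $d_i$, where $d_i\geq 4$ (or $d=d_1\geq 6$ if $b=1$, which in particular forces $d_1\geq 4$). Apply Theorem~\ref{thm_intro2}\,(\ref{item_introthm2}) at each puncture $p_i$: the corresponding convex $\mathbb{RP}^2$-structure $X=\Phi(\ac{J},\ve{\phi})$ extends, across a boundary circle attached at $p_i$, to a convex $\mathbb{RP}^2$-structure with piecewise geodesic boundary having exactly $d_i-3$ vertices on that boundary component. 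Doing this simultaneously over all $i$ gives a compact surface with boundary $\overline{S}$ containing $S$ as interior, equipped with an extended convex $\mathbb{RP}^2$-structure whose $i^{\text{th}}$ boundary component has $d_i-3$ vertices, which is precisely the defining condition for $X\in\mathscr{P}_{g,b}^d$. Conversely, if $X\in\mathscr{P}_{g,b}^d$, let $(\ac{J},\ve{\phi})=\Phi^{-1}(X)$; the ``if'' direction of Theorem~\ref{thm_intro2}\,(\ref{item_introthm2}), applied at each boundary component (using that $d_i-3\geq 1$, so the boundary is genuinely piecewise geodesic and not merely geodesic, which rules out the third-order-pole case of part~(\ref{item_introthm1}) and the finite-volume case of part~(\ref{item_introthm0})), shows that $p_i$ is a pole of $\ve{\phi}$ of order $d_i$; hence $(\ac{J},\ve{\phi})\in\mathscr{C}_{g,b}^d$. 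This gives $\Phi(\mathscr{C}_{g,b}^d)=\mathscr{P}_{g,b}^d$. One should also check that $\mathscr{C}_{g,b}^d$ is $\Diffeo^0(S)$-invariant (clear, since the order of a pole and the genus/puncture data are diffeomorphism invariants) and likewise $\mathscr{P}_{g,b}^d$; then $\Phi$ restricts to a $\Diffeo^0(S)$-equivariant bijection between these saturated subsets, and quotienting gives the desired bijection $\mathcal{C}_{g,b}^d\cong\mathcal{P}_{g,b}^d$.

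The main obstacle, such as it is, is not in the logic above but in being careful about two points of bookkeeping. First, one must confirm that the equivalences in Theorem~\ref{thm_intro2} are sharp enough to \emph{characterize} membership in $\mathscr{P}_{g,b}^d$ purely in terms of $\mathbb{RP}^2$-data: concretely, that a convex $\mathbb{RP}^2$-structure on $S_{g,b}$ admitting an extension with piecewise geodesic boundary and exactly $d_i-3$ vertices on the $i^{\text{th}}$ component forces, via the trichotomy (\ref{item_introthm0})–(\ref{item_introthm1}), the pole at $p_i$ to have order exactly $d_i$ — this uses that the three cases are mutually exclusive (finite-volume vs.\ geodesic vs.\ strictly-piecewise-geodesic boundary behavior near the end) and jointly exhaustive for ends that are poles, together with Theorem~\ref{thm_introli} to guarantee that every end of the relevant $\Phi^{-1}(X)$ is indeed a pole (finite-type punctured surface, pole of order $\geq k=3$ at each puncture, so the complete solution is the one produced by the construction and the ends are poles). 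Second, in the $b=1$ case the hypothesis is $d_1\geq 6$ rather than merely $d_1\geq 4$; I would note that this stronger assumption is imposed to ensure $\mathcal{C}_{g,1}^d$ is the expected vector bundle (the Riemann--Roch count $5(g-1)+|d|$ and the condition $|d|>-4(g-1)$ behave as stated), and it is harmless for the bijection since $6\geq 4$ so part~(\ref{item_introthm2}) still applies verbatim. With these checks in place, the corollary follows formally.
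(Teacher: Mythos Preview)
Your proposal is correct and follows exactly the approach implicit in the paper, which presents the corollary as an immediate consequence of Theorem~\ref{thm_intro2} without writing out a proof. One minor remark: your appeal to Theorem~\ref{thm_introli} in the bookkeeping paragraph is unnecessary and a bit muddled---the ``if'' direction of Theorem~\ref{thm_intro2}\,(\ref{item_introthm2}) already \emph{concludes} that the end $p_i$ is a pole of order $d_i$ from the $\mathbb{RP}^2$-boundary data, so there is no need to verify separately that $p_i$ is a pole before invoking it.
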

When $g=0$ and $b=1$, this is the Dumas-Wolf identification \cite{dumas-wolf} between the moduli spaces of polynomial cubic differentials of degree $m-6$ on $\mathbb{C}$ and convex polygons in $\mathbb{RP}^2$ with $m-3$ sides. Note however that while the bijection is shown to be a homeomorphism in \cite{dumas-wolf}, we do not treat the continuity issue here.

Finally, Theorem \ref{thm_intro3} implies a holomorphic description of \emph{framings} of convex $\mathbb{RP}^2$-structures in the sense of Fock-Goncharov \cite{fock-goncharov_1, fock-goncharov_2}, defined as follows: Given a surface $S$ which is the interior of a compact surface $\overline{S}$ with nonempty boundary and a convex $\mathbb{RP}^2$-structure $X$ on $S$ which extends to one with piecewise geodesic boundary on $\overline{S}$, with vertices $\Lambda\subset \pa \overline{S}$, a framing of $X$ is by definition a choice of a point in each connected component of $\pa \overline{S}\setminus\Lambda$ (see Definition 2.6 in \cite{fock-goncharov_2} and the remark following it). Thus, under the above assumptions on $m$, each convex $\mathbb{RP}^2$-structure $X$ in $\mathcal{P}_{g,b}^m$ can be endowed with framings. The moduli space $\widehat{\mathcal{P}}_{g,b}^m$ of all such framed $X$'s is the subject of study in \cite{fock-goncharov_1, fock-goncharov_2}. 

Now suppose $X$ corresponds to some $(\ac{J},\ve{\phi})$ in $\mathcal{C}_{g,b}^m$ under the bijection from Corollary \ref{coro_intromoduli}. By Theorem \ref{thm_intro3}, the compact surface $\overline{S}$ to which $X$ extends and the set $\Lambda\subset\pa \overline{S}$ of vertices are obtained from $(S,\ac{J},\ve{\phi})$ by performing the negative ray bordification at every puncture. Therefore, by the construction of negative ray bordification, a framing of $X$ is equivalent to a choice, for every $i$, of $m_i-3$ negative rays not parallel to each other tending to the $i^\text{th}$ puncture of $S$ (here two negative rays are considered the same if one is contained in the other). Defining such a choice as a framing of $(\ac{J},\ve{\phi})$ and letting $\widehat{\mathcal{C}}_{g,b}^m$ denote the moduli space of all pairs $(\ac{J},\ve{\phi})$ in $\mathcal{C}_{g,b}^m$ endowed with a framing, we deduce from Theorem \ref{thm_intro3}:
\begin{corollary}\label{coro_intromoduli2}
There is a natural bijection between $\widehat{\mathcal{C}}_{g,b}^m$ and $\widehat{\mathcal{P}}_{g,b}^m$ covering the bijection in Corollary \ref{coro_intromoduli}.
\end{corollary}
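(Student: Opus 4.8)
The plan is to deduce this from Corollary \ref{coro_intromoduli} by checking that, under the affine sphere correspondence $\mathcal{C}_{g,b}^d\cong\mathcal{P}_{g,b}^d$, a framing of a holomorphic cubic differential and a Fock--Goncharov framing of the associated convex $\mathbb{RP}^2$-structure carry exactly the same information. Fix $(\ac{J},\ve{\phi})\in\mathcal{C}_{g,b}^d$ and let $X\in\mathcal{P}_{g,b}^d$ be the corresponding convex $\mathbb{RP}^2$-structure. Since every puncture of $S_{g,b}$ carries a pole of order $d_i\geq 4$, Theorem \ref{thm_intro3}\,(\ref{item_thmintro31}) applies at each puncture, so performing the negative ray bordification at every puncture yields a compact surface $\overline{S}=\Sigma'$ with boundary on which $X$ extends to a convex $\mathbb{RP}^2$-structure $\overline{X}$ with piecewise geodesic boundary whose vertex set $\Lambda\subset\pa\overline{S}$ is the set of limits of non-negative geodesic rays; by Theorem \ref{thm_intro2}\,(\ref{item_introthm2}) this $\overline{S}$ together with its $d_i-3$ marked points on the $i$-th boundary circle is precisely the datum occurring in the definition of $\mathcal{P}_{g,b}^d$.

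The heart of the argument is an observation that is built into the construction of the negative ray bordification (Section \ref{subsec_bord1}): on the $i$-th boundary circle, the $d_i-3$ connected components of $\pa\overline{S}\setminus\Lambda$ are canonically in bijection with the $d_i-3$ parallelism classes of maximal negative geodesic rays on $S_{g,b}$ tending to the $i$-th puncture; a point of a given component is exactly the limit on $\pa\overline{S}$ of the corresponding ray, and two negative rays of the same parallelism class have the same limit precisely when one is contained in the other. Consequently, a Fock--Goncharov framing of $\overline{X}$---a choice of one point in each component of $\pa\overline{S}\setminus\Lambda$---is the same datum as a choice, for every $i$, of $d_i-3$ pairwise non-parallel negative rays tending to the $i$-th puncture (taken up to containment), that is, a framing of $(\ac{J},\ve{\phi})$ as defined above. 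This sets up a bijection between the set of framed pairs sitting over $\mathcal{C}_{g,b}^d$ and the set of framed convex $\mathbb{RP}^2$-structures sitting over $\mathcal{P}_{g,b}^d$, and by construction it restricts, after forgetting framings, to the affine sphere bijection of Corollary \ref{coro_intromoduli}; hence the desired covering square commutes.

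It then remains to observe that this bijection descends to the quotients by $\Diffeo^0$ and is therefore the asserted natural bijection. This is immediate, since each ingredient---the affine sphere construction, the negative ray bordification, and the identification of boundary arcs with limits of negative rays---is canonical and involves no auxiliary choices, hence is equivariant for the action of $\Diffeo^0(S_{g,b})$ (equivalently of $\Diffeo^0(\overline{S})$). The only real work lies in the middle paragraph, and it is essentially definitional: one must unwind the construction of $\Sigma'$ to see that its boundary arcs are parametrized by limits of negative rays, and confirm the count---that Theorem \ref{thm_intro3}\,(\ref{item_thmintro31}) attaches exactly $d_i-3$ vertices, hence $d_i-3$ arcs, at a pole of order $d_i$, matching the number of rays prescribed by a framing of $(\ac{J},\ve{\phi})$. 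I expect the only mild obstacle to be bookkeeping: making sure that the bijection between boundary arcs and parallelism classes is the canonical one compatible with the identification $\overline{S}=\Sigma'$, rather than an arbitrary relabeling; no further analytic input is needed beyond what Theorems \ref{thm_intro2} and \ref{thm_intro3} already provide.
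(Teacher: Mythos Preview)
Your proposal is correct and follows essentially the same approach as the paper. The paper does not give a formal proof of this corollary; it is stated as an immediate consequence of Theorem \ref{thm_intro3}, with the preceding paragraph explaining that under the negative ray bordification the connected components of $\pa\overline{S}\setminus\Lambda$ are in canonical bijection with parallelism (coterminal) classes of negative rays tending to each puncture, so that a Fock--Goncharov framing of $\overline{X}$ is literally the same datum as a framing of $(\ac{J},\ve{\phi})$---which is exactly the unwinding you carry out.
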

When $g=0$ and $b=1$, this identifies the moduli space of framed polynomial cubic differentials on $\mathbb{C}$ of degree $m-6$ with the moduli space of convex configurations of $m-3$ flags in $\mathbb{RP}^2$. A natural problem worth further exploration is to interpret the Fock-Goncharov coordinates on $\widehat{\mathcal{P}}_{g,b}^m$ given by cross-ratios and triple-ratios of flags \cite{fock-goncharov_1, fock-goncharov_2}, as well as the canonical Poisson structure defined under these coordinates, in terms of holomorphic cubic differentials.

%\begin{corollary}
%Let $S'$ be a compact surface with non-empty boundary $\pa S'=C_1\cup\cdots \cup C_m$ where each boundary component $C_i$ is endowed with $n_i\geq 1$ marked points. Let $S:=S'\setminus \pa S'$ be the interior of $S'$ and $d=(d_i)_{1\leq i\leq m}$ be the labelling of the punctures of $S$ given by  $d_i=n_i+3$. 
%Then there is natural bijection between the Fock-Goncharov moduli space $\FP(S')$ and the moduli space $\widehat{\mathcal{C}}_d^\circ(S)$ of framed cubic differentials.
%\end{corollary}
%
%In particular, when $S'$ is a disk with $n\geq 3$ marked points on the boundary, an element in $\FP(S')=:\FP_n$ is a projective equivalence classes of \emph{positive configurations of $n$ flags} in $\mathbb{RP}^2$, \ie $n$ flags whose points are the vertices of a convex polygon $P\subset\mathbb{RP}^2$ and whose lines meet $\overline{P}$  only at the vertices. The above corollary identifies $\FP_n$ with the space of $\Aut(\mathbb{C})=\{z\mapsto az+b\}$-equivalence classes of \emph{framed polynomial cubic differentials} on $\mathbb{C}$ of degree $n-3$. The latter object is, by definition, a cubic differential $\ve{\phi}=\phi(z)\dz^3$ on $\mathbb{C}$, where $\phi(z)$ is a polynomial of degree $n-3$, together with $n$ coterminal classes of positive rays on $(\mathbb{C}, \ve{\phi})$ going to $\infty$ that are not parallel to each other.

\subsection*{Organization of the paper}
We first prove Theorems \ref{thm_intro1} and \ref{thm_introli} in Chapter \ref{sec_vortex}, then review backgrounds about affine spheres in Chapter \ref{sec_pre} and prove in Chapter \ref{sec_4} a generalized version of Part (\ref{item_introthm0}) of Theorem \ref{thm_intro2}. The detailed construction of the negative ray bordification is carried out in Chapter \ref{sec_5}, whereas in Chapter \ref{sec_6} we prove the asymptotic results on osculation maps and deduce Theorem \ref{thm_intro3} and the last statement of Theorem \ref{thm_intro2}. Since Theorem \ref{thm_intro3} implies the ``only if'' direction in Parts (\ref{item_introthm2}) and (\ref{item_introthm1}) of Theorem \ref{thm_intro2}, the proof of Theorem \ref{thm_intro2} is completed by Chapter \ref{sec_7}, where we establish the ``if'' direction in (\ref{item_introthm2}) and (\ref{item_introthm1}).

\subsection*{Acknowledgements}
This work is a revision of the author's unpublished paper \cite{nie_mero}. We are grateful to Mike Wolf for a helpful conversation in 2014 leading to that paper and to Qiongling Li for inspiring discussions on various occasions. We would also like to thank John Loftin for explaining to us his paper \cite{loftin_neck} and thank Alexandre Eremenko for helps with the proofs in Appendix \ref{subsec_regularity}. We acknowledge
support from U.S. National Science Foundation grants DMS 1107452, 1107263, 1107367 ``RNMS: GEometric structures And Representation varieties'' (the GEAR Network)

%\tableofcontents

\section{The vortex equation}\label{sec_vortex}
Let $k\geq1$ be an integer, $\Sigma$ be a Riemann surface and $\ve{\phi}$ be a holomorphic $k$-differential, \ie a holomorphic section of $K^{\otimes k}$, where $K$ is the canonical bundle of $\Sigma$. We consider in this chapter conformal Riemannian metrics $g$ on $\Sigma$ satisfying 
\begin{equation}\label{eqn_vortex}
\kappa_g=-1+\|\ve{\phi}\|^2_g,
\end{equation}
where $\kappa_g$ is the curvature of $g$ and $\|\ve{\phi}\|_g$ is the pointwise norm of $\ve{\phi}$ with respect to the hermitian metric on $K^{\otimes k}$ induced by $g$.  
While Eq.(\ref{eqn_vortex}) is known as \emph{Wang's equation} when $k=3$, for general $k$ we call it the \emph{vortex equation} on $(\Sigma,\ve{\phi})$ as in \cite{dumas-wolf} and refer to $g$ as a \emph{solution} to the equation, which is alway smooth by elliptic regularity (see Remark \ref{remark_elliptic}). 

\subsection{Analytic tools}
We collect in this section some basic facts about Eq.(\ref{eqn_vortex}) and classical  PDE results needed in the sequel.

\subsubsection*{Scalar form of the equation.} Given a background conformal metric $g_0$ on $\Sigma$, assuming $g=e^ug_0$ for a function $u:\Sigma\to\mathbb{R}$, we have
$$
\kappa_{g}=e^{-u}\left(\kappa_{g_0}-\frac{1}{2}\Delta_{g_0}u\right),\quad \|\ve{\phi}\|^2_g=e^{-ku}\|\ve{\phi}\|^2_{g_0},
$$
where $\Delta_{g_0}$ is the Laplacian with respect to $g_0$. Therefore, $g$ solves the vortex equation for $(\Sigma,\ve{\phi})$ if and only if $u$ satisfies 
\begin{equation}\label{eqn_vortex2}
\frac{1}{2}\Delta_{g_0}u=e^u-e^{(1-k)u}\|\ve{\phi}\|^2_{g_0}+\kappa_{g_0}.
\end{equation}

In particular, if $z$ is a conformal local coordinate on $\Sigma$, taking  $g_0$ to be the locally defined flat metric $|\dz|^2$, we conclude that $g=e^u|\dz|^2$ is a solution to the vortex equation for $\ve{\phi}=\phi(z)\dz^k$ if and only if $u$ locally satisfies
\begin{equation}\label{eqn_vortex3}
\Delta u=2(e^u-e^{(1-k)u}|\phi|^2),
\end{equation}
where $\Delta=4\pazbz=\pa_x^2+\pa_y^2$ is the Euclidean Laplacian.
\begin{remark}\label{remark_elliptic}
We do not specify regularity conditions on the solutions $g$ or $u$, because it follows from basic elliptic theory (see \eg \cite[\S 6.3]{evans}) that any $H^1_\mathit{loc}$-weak solution to Eq.(\ref{eqn_vortex3}) is smooth.
\end{remark}

\subsubsection*{Method of sub- and super- solutions.} A sub- (resp. super-) solution to the vortex equation on $(\Sigma,\ve{\phi})$ is a continuous conformal metric $g$ on $\Sigma$ in the Sobolev class $H^1_\mathit{loc}$ such that for any conformal local coordinate $z$ on $\Sigma$, if we write $g=e^u|\dz|^2$ and $\ve{\phi}=\phi(z)\dz^k$, then $u$ is a sub- (resp. super-) solution to Eq.(\ref{eqn_vortex3}), \ie
$$
\Delta u \geq 2(e^u-e^{(1-k)u}|\phi|^2) \quad \left(\mbox{resp. }\Delta u \leq 2(e^u-e^{(1-k)u}|\phi|^2)\right)
$$
in the weak sense (see \cite[\S 9.3]{evans} or \cite[\S 8.5]{gilbarg-trudinger} for the definition). Thus, $g$ can be thought of as satisfying the inequality
$$
\kappa_g\leq -1+\|\ve{\phi}\|^2_g \quad \left(\mbox{resp. }\kappa_g\geq -1+\|\ve{\phi}\|^2_g\right)
$$
in the weak sense. The well known method of solving PDEs using sub- and super- solutions (see  \cite[Theorem 9]{wan}) implies:
\begin{theorem}\label{thm_subsuper}
Given a subsolution $g_-$ and  a supersolution $g_+$ to the vortex equation on $(\Sigma,\ve{\phi})$ such that $g_-\leq g_+$, there exists a solution $g$ satisfying $g_-\leq g\leq g_+$.
\end{theorem}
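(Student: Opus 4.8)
The plan is to reduce the global statement on $(\Sigma,\ve{\phi})$ to the standard monotone-iteration scheme on a fixed bounded domain in $\mathbb{C}$, then patch the local solutions together. Fix an arbitrary conformal coordinate disc $D\subset\Sigma$, write everything in the coordinate $z$, so that $g=e^u|\dz|^2$, $g_\pm=e^{u_\pm}|\dz|^2$, and the equation becomes $\Delta u = 2(e^u-e^{(1-k)u}|\phi|^2)=:f(z,u)$, with $u_-\leq u_+$ and $\Delta u_-\geq f(z,u_-)$, $\Delta u_+\leq f(z,u_+)$ in the weak sense. On a slightly smaller disc $D'\Subset D$, first note that $u_\pm$ are bounded (they are continuous), so on the order interval $[u_-,u_+]$ the nonlinearity $f$ is Lipschitz in $u$; choose a constant $c>0$ with $\partial_u f(z,u)+c\geq 0$ for all $z\in\overline{D'}$ and $u\in[\min u_-,\max u_+]$, i.e. $u\mapsto f(z,u)-cu$ is nonincreasing there. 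Then run the standard monotone iteration: set $v_0=u_+$ and inductively solve the linear Dirichlet problem $(\Delta - c)v_{n+1}= f(z,v_n)-cv_n$ on $D'$ with boundary value $v_{n+1}=u_+$ on $\partial D'$. By the maximum principle for $\Delta-c$ and the sub/supersolution inequalities, one gets $u_-\leq \cdots\leq v_{n+1}\leq v_n\leq\cdots\leq u_+$, so $v_n\downarrow v$ pointwise; elliptic estimates (applied to the linear equation, using the uniform $L^\infty$ bound) give $C^{1,\alpha}_{\mathrm{loc}}$ convergence, hence $v$ is a weak, and by Remark \ref{remark_elliptic} smooth, solution of $\Delta v=f(z,v)$ on $D'$ with $u_-\leq v\leq u_+$. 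This is exactly the content of \cite[Theorem 9]{wan}, which I would simply cite for this local step rather than reproving.

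The genuinely global part is the patching, and this is where one has to be slightly careful. A priori the solution produced on one coordinate disc depends on the choice of boundary data, so different discs need not agree on overlaps. The clean way around this is to produce a \emph{canonical} solution on each disc — the maximal one in the order interval. Concretely, on $D'$ define $u_{D'}$ to be the supremum (or rather, the pointwise limit of the monotone iteration started from $v_0=u_+$, which one checks yields the largest solution between $u_-$ and $u_+$: if $w$ is any other solution with $u_-\leq w\leq u_+$, then $w$ is itself a subsolution $\leq u_+$, so the iteration dominates it at every stage). Maximality is a local property that does not reference the boundary, so on an overlap $D_1'\cap D_2'$ both restrictions $u_{D_1'}$ and $u_{D_2'}$ are solutions lying in $[u_-,u_+]$; to see they coincide, use that on each component of the overlap the maximal solution relative to that smaller domain dominates both, while conversely each $u_{D_i'}$, being maximal on the larger $D_i'$, dominates the overlap-maximal one. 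Hence $u_{D_1'}=u_{D_2'}$ on overlaps, the $u_{D'}$ glue to a global function $u$ on $\Sigma$ with $u_-\leq u\leq u_+$ solving Eq.(\ref{eqn_vortex3}) in every chart, and the corresponding $g=e^u g$-metric is the desired global solution with $g_-\leq g\leq g_+$.

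The step I expect to be the main obstacle is not the local monotone iteration — that is entirely standard and quotable — but making the gluing genuinely coherent. The subtlety is that "maximal solution" must be shown to be a local notion compatible with restriction: one needs that the restriction of the maximal solution on a domain $D$ to a subdomain $D''$ is the maximal solution on $D''$ relative to the order interval $[u_-|_{D''}, u_+|_{D''}]$ — and this requires the comparison principle for the nonlinear equation $\Delta u=f(z,u)$ on bounded domains, which follows from the monotonicity of $u\mapsto f(z,u)-cu$ plus the maximum principle, but should be stated explicitly. Once that comparison lemma is in hand the gluing is forced. Alternatively, and perhaps more economically, one can bypass maximal solutions entirely by exhausting $\Sigma$ by an increasing sequence of coordinate domains (or relatively compact subsurfaces) $\Omega_1\subset\Omega_2\subset\cdots$, solving the equation on each $\Omega_n$ between $u_-$ and $u_+$ with Dirichlet data $u_+$, invoking the comparison principle to get a monotone (in $n$) sequence on each fixed compact set, and passing to the limit with interior elliptic estimates; this is the route \cite{wan} takes and it sidesteps the overlap-coherence issue at the cost of a compact-exhaustion argument. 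Either way, the analytic heart — existence on a disc — is already available, and what remains is organizational.
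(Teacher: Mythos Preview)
The paper does not prove this theorem; it simply cites \cite[Theorem 9]{wan} as a known result. Your proposal is therefore not being compared to a paper-proof so much as evaluated on its own merits.

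Your local step (monotone iteration on a disc, Lipschitz shift to make $u\mapsto f(z,u)-cu$ nonincreasing, maximum principle for $\Delta-c$) is entirely standard and correct, and your second global strategy --- exhaust $\Sigma$ by relatively compact domains $\Omega_n$, solve with Dirichlet data $u_+$ on each, use the comparison principle to get monotonicity in $n$, pass to the limit with interior estimates --- is the right one and is exactly what \cite{wan} does. If you simply commit to that route the argument is complete.

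Your first global strategy, however, has a genuine gap, and your proposed repair does not close it. You claim that the restriction of the maximal solution on $D'$ to a subdomain $D''\subset D'$ is the maximal solution on $D''$; this is false. The maximal solution on $D''$ (produced by iteration from $u_+$ with boundary data $u_+$ on $\partial D''$) has boundary values equal to $u_+$ on $\partial D''$, whereas the restriction from $D'$ has boundary values $v|_{\partial D''}$, which are typically strictly below $u_+$ in the interior of $D'$. By the very comparison principle you invoke, the restriction is therefore \emph{dominated by}, not equal to, the $D''$-maximal solution. Concretely, take $\phi=0$, $u_\pm=\pm M$ constants, and two overlapping discs of different radii: the maximal solutions with boundary data $M$ are distinct rotationally symmetric functions and do not agree on the overlap. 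So the discwise maximal solutions genuinely fail to glue, and no comparison lemma will force them to. The salvageable version of this idea is Perron's method --- define $u$ globally as the supremum over all subsolutions in $[u_-,u_+]$ --- but that is a different construction from the one you wrote. Drop the first route and keep the exhaustion argument.
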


\subsubsection*{Comparison principle.} The conformal ratio between sub- and super- solutions to the vortex equation satisfies a strong maximum principle:
\begin{proposition}\label{prop_maximum}
For any subsolution $g_-$ and supersolution $g_+$ to the vortex equation on $(\Sigma,\ve{\phi})$, the conformal ratio $g_-/g_+$ does not admit local maxima greater than $1$. If $g_-/g_+$ achieves local maximum $1$ at a point $p\in\Sigma$, then $g_+=g_-$  in the vicinity of $p$.
\end{proposition}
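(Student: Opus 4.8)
The plan is to pass to the scalar form of the equation and apply the strong maximum principle to $w:=\log(g_-/g_+)$. In a conformal chart, write $g_\pm=e^{u_\pm}|\dz|^2$ and $\ve{\phi}=\phi(z)\dz^k$; by Eq.~(\ref{eqn_vortex3}) a subsolution (resp.\ supersolution) is a continuous $H^1_\mathit{loc}$ function with $\Delta u\geq 2F(u)$ (resp.\ $\leq$) in the weak sense, where $F(t):=e^{t}-e^{(1-k)t}|\phi(z)|^2$ and the $z$-dependence is left implicit. The observation driving everything is that, \emph{because $k\geq 1$},
\[
\partial_tF(t)=e^{t}+(k-1)\,e^{(1-k)t}|\phi(z)|^2\ \geq\ e^{t}\ >\ 0,
\]
so $F$ is strictly increasing in $t$. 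The function $w$ is globally defined, continuous and locally $H^1$, and equals $u_--u_+$ in each chart; subtracting the two weak inequalities gives, chartwise,
\[
\Delta w\ \geq\ 2\bigl(F(u_-)-F(u_+)\bigr)\quad\text{weakly.}
\]

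For the first assertion I would argue by contradiction: suppose $w$ has a local maximum at a point $q$ with $w(q)>0$, and choose a small coordinate disc $B$ centred at $q$ on which $w>0$ and $w\leq w(q)$. There $u_->u_+$, so $F(u_-)-F(u_+)>0$ by monotonicity of $F$; being continuous on the compact $\overline B$, it is bounded below by some $\varepsilon>0$, whence $\Delta w\geq\varepsilon$ weakly on $B$. Then $w-\tfrac{\varepsilon}{4}|z-q|^2$ is continuous and weakly subharmonic on $B$, hence attains its maximum over $\overline B$ on $\partial B$ --- but its value at $q$ is $w(q)$ while on $\partial B$ it is strictly smaller, a contradiction. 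So no local maximum value of $g_-/g_+=e^{w}$ exceeds $1$.

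For the second assertion, assume $g_-/g_+$ attains a local maximum equal to $1$ at $p$, i.e.\ $w\leq 0$ near $p$ and $w(p)=0$. On a small disc $B$ around $p$ I would write the increment in mean-value form,
\[
F(u_-)-F(u_+)\ =\ \Bigl(\int_0^1\partial_tF(u_++s\,w)\,\dif s\Bigr)\,w\ =:\ c(z)\,w,
\]
where $c$ is continuous with $0<c\leq M<\infty$ on $\overline B$ (the lower bound from $\partial_tF\geq e^{u_++sw}>0$, the upper bound from continuity of the integrand on $[0,1]\times\overline B$). Thus $w$ is a continuous $H^1_\mathit{loc}$ weak subsolution of the uniformly elliptic equation $\Delta w-2c(z)\,w=0$, whose zeroth-order coefficient $-2c$ is $\leq 0$, and $w$ attains the nonnegative value $0$ as an interior maximum at $p$. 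The strong maximum principle for weak solutions (see e.g.\ \cite[Theorem~8.19]{gilbarg-trudinger}) then forces $w\equiv 0$ on $B$, i.e.\ $g_+=g_-$ near $p$.

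The sign bookkeeping and the mean-value computation are routine; the one point requiring care is that sub- and supersolutions are only continuous and $H^1_\mathit{loc}$, so the classical Hopf argument is unavailable and one must invoke the De Giorgi--Nash--Moser form of the strong maximum principle --- and for this the zeroth-order term of the resulting operator must have the right sign, which is exactly where $k\geq 1$ enters, through $\partial_tF>0$. This is essentially the comparison underlying the sub- and supersolution method of \cite[Theorem~9]{wan}, here isolated and sharpened.
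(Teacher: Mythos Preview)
Your proof is correct and follows essentially the same route as the paper's: pass to $w=\log(g_-/g_+)=u_--u_+$, use monotonicity of $t\mapsto e^t-e^{(1-k)t}|\phi|^2$ to compare $\Delta w$ with a linear expression in $w$, and invoke the strong maximum principle for weak subsolutions (\cite[Theorem~8.19]{gilbarg-trudinger}). The only cosmetic differences are that for the first assertion the paper uses the $c=0$ case of the strong maximum principle to conclude $w$ is constant (hence $\Delta w=0$, a contradiction) rather than your auxiliary comparison with $\tfrac{\varepsilon}{4}|z-q|^2$, and for the second assertion the paper bounds your variable $c(z)$ above by a constant before applying the maximum principle; neither changes the substance.
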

This can be deduced from the following strong maximum principle for weak subsolutions to linear equations, which is a special case of \cite[Theorem 8.19]{gilbarg-trudinger}:
\begin{lemma}\label{lemma_maximumprinciple}
Let $\Omega\subset \mathbb{C}$ be a bounded domain and $w\in C^0(\overline{\Omega})\cap H^1(\Omega)$ be such that 
$\Delta w\geq c w$ in the weak sense for a constant $c\geq 0$ and $w(p)=\sup_\Omega w$
for a point $p\in\Omega$. If $c>0$, we further assume $\sup_\Omega w\geq 0$.
Then $w$ is a constant.
\end{lemma}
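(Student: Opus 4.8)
The statement is designed to be quoted directly from the literature, so the primary approach is a one-line reduction: rewrite the weak inequality $\Delta w\ge cw$ as $Lw\ge 0$ with $L:=\Delta-c$, a uniformly elliptic divergence-form operator on the bounded domain $\Omega$ whose zeroth-order coefficient $-c$ is everywhere $\le 0$. Since $w\in C^0(\overline\Omega)\cap H^1(\Omega)\subset H^1_{\mathit{loc}}(\Omega)$ attains the value $M:=w(p)=\sup_\Omega w$ at an interior point, and since $M\ge 0$ whenever $c>0$ (our hypothesis) while no sign restriction is needed when $c\equiv 0$, the hypotheses of \cite[Theorem 8.19]{gilbarg-trudinger} hold verbatim, and that theorem gives $w\equiv M$ on $\Omega$, which is the assertion.

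For a self-contained argument I would unwind the proof of \cite[Theorem 8.19]{gilbarg-trudinger} as follows. Put $v:=M-w\ge 0$. Testing the weak inequality for $w$ against non-negative test functions and using $cw=c(M-v)$, one checks that $v$ satisfies $\Delta v-cv\le -cM$ in the weak sense, hence $Lv\le 0$ since $-cM\le 0$; thus $v$ is a non-negative weak supersolution of $L$. The weak Harnack inequality \cite[Theorem 8.18]{gilbarg-trudinger} then provides, for every ball $B_{4R}\subset\Omega$, constants $C>0$ and $q>0$ with $\big(|B_{2R}|^{-1}\int_{B_{2R}}v^q\big)^{1/q}\le C\inf_{B_R}v$. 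Centering such a ball at a point where $w$ equals $M$ forces the right-hand side to vanish, so $v\equiv 0$ on $B_{2R}$, i.e. $w\equiv M$ nearby. Hence $\{x\in\Omega:w(x)=M\}$ is open; it is closed by continuity of $w$ and nonempty (it contains $p$); since $\Omega$ is connected it equals $\Omega$, so $w$ is constant.

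The only delicate point — and the reason the lemma carries the extra hypothesis $\sup_\Omega w\ge 0$ when $c>0$ — is keeping the zeroth-order term of the correct sign after the substitution $v=M-w$: the inhomogeneity $-cM$ produced by that substitution must be $\le 0$ for $v$ to be a supersolution of the homogeneous operator $L$ to which the weak Harnack inequality applies, and this is exactly $cM\ge 0$, i.e. $M\ge 0$ given $c\ge 0$; when $c=0$ the term is absent and the hypothesis is vacuous, matching the parenthetical remark in \cite[Theorem 8.19]{gilbarg-trudinger}. Beyond this sign bookkeeping the proof is routine — a direct appeal to the divergence-form weak maximum principle together with the standard open--closed connectedness argument — so I do not expect any genuine obstacle.
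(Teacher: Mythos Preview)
Your proposal is correct and matches the paper's approach exactly: the paper does not prove this lemma at all but simply states that it ``is a special case of \cite[Theorem 8.19]{gilbarg-trudinger}'', which is precisely your one-line reduction. Your additional unwinding via the weak Harnack inequality and the open--closed argument is a correct and welcome elaboration beyond what the paper provides.
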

Note that if $c=0$ then the lemma is a basic property of subharmonic functions, whereas if $u\in C^2(\Omega)$ then it is the classical strong maximum principle of E. Hopf (see \cite[Theorem 3.5]{gilbarg-trudinger}).

\begin{proof}[Proof of Proposition \ref{prop_maximum}]
Suppose $w:=\log (g_-/g_+)$ achieves local maximum at $p\in \Sigma$. 
Let $U$ be a relatively compact neighborhood of $p$, conformally identified with a bounded domain in $\mathbb{C}$, such that $w(p)=\sup_U w$. If $w(p)>0$, we further assume $w>0$ on $U$.

Write $g_\pm=e^{u_\pm}|\dz|^2$ and $\ve{\phi}=\phi(z)\dz^k$, so that $u_\pm\in C^0(\overline{U})\cap H^1(U)$ satisfy
\begin{equation}\label{eqn_proofmaximum}
\Delta u_-\geq 2(e^{u_-}-e^{(1-k)u_-}|\phi|^2),\quad \Delta u_+\leq 2(e^{u_+}-e^{(1-k)u_+}|\phi|^2)
\end{equation}
in the weak sense. 

If $w(p)>0$,  since $w=u_--u_+>0$ on $U$ by assumption, we have
$$
e^{u_-}-e^{(1-k)u_-}|\phi|^2> e^{u_+}-e^{(1-k)u_+}|\phi|^2.
$$
Combining with (\ref{eqn_proofmaximum}), we get $\Delta w=\Delta (u_--u_+)> 0$ on $U$ in the weak sense, so $w$ is constant on $U$ by Lemma \ref{lemma_maximumprinciple}. But this implies $\Delta w=0$, a contradiction. Therefore, $w(p)>0$ can not happen, \ie $g_-/g_+$ does not admit local maxima greater than $1$.

On the other hand, if $w(p)= 0$, we have $w=u_--u_+\leq 0$ on $U$, hence
$$
0\leq(e^{u_+}-e^{(1-k)u_+}|\phi|^2)-(e^{u_-}-e^{(1-k)u_-}|\phi|^2)\leq c(u_+-u_-)
$$
for a constant $c>0$ only depending on $\sup_U|u_\pm|$ and $\sup_U |\phi|^2$. Combining with (\ref{eqn_proofmaximum}) gives $\Delta w\geq 2 cw$ on $U$ in the weak sense. Thus we can still apply Lemma \ref{lemma_maximumprinciple} and conclude that $w=0$ on $U$, or equivalently $g_+=g_-$ on $U$, as required.
\end{proof}

Proposition \ref{prop_maximum} implies a comparison principle for sub- and super- solutions:
\begin{corollary}\label{coro_compa}
Let $g_+$ and $g_-$ be a supersolution and a subsolution to the vortex equation on $(\Sigma,\ve{\phi})$, respectively, and let $C\subset \Sigma$ be a connected compact set such that $g_+\geq g_-$ on $\pa C$. Then $g_+\geq g_-$ on the whole $C$. If the equality holds at an interior point of $C$, then it holds on the whole $C$.
\end{corollary}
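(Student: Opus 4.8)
The plan is to deduce everything from the strong maximum principle of Proposition~\ref{prop_maximum}, applied to the globally defined continuous function
\[
w:=\log(g_-/g_+)\colon\Sigma\longrightarrow\mathbb{R}
\]
(in a conformal chart, $w=u_--u_+$ where $g_\pm=e^{u_\pm}|\dz|^2$, and $u_\pm$ are continuous by the definition of sub/supersolution, so $w$ is well defined and continuous on all of $\Sigma$). Since $g_\pm$ are positive, the inequality $g_+\ge g_-$ on $C$ is equivalent to $w\le 0$ on $C$, and ``equality at an interior point'' means $w(q)=0$ for some $q\in\Int C$.

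\textbf{First assertion.} As $C$ is compact and $w$ continuous, $w$ attains its maximum over $C$ at some point $p\in C$. If $p\in\pa C$, then $w(p)\le 0$ by the hypothesis $g_+\ge g_-$ on $\pa C$, whence $w\le 0$ on $C$. If $p\in\Int C$, then, $\Int C$ being open in $\Sigma$, there is a $\Sigma$-neighborhood of $p$ contained in $C$ on which $w\le w(p)$; thus $g_-/g_+=e^{w}$ has a local maximum $e^{w(p)}$ at $p$, and by Proposition~\ref{prop_maximum} a local maximum of $g_-/g_+$ cannot exceed $1$, so again $w(p)\le 0$ and hence $w\le 0$ on $C$. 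In either case $g_+\ge g_-$ on all of $C$.

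\textbf{Rigidity.} Suppose now $w(q)=0$ for some $q\in\Int C$, and set $S:=\{x\in\Int C : w(x)=0\}$, which is nonempty and closed in $\Int C$ by continuity of $w$. It is also open in $\Int C$: given $x\in S$, the first assertion gives $w\le 0=w(x)$ on a $\Sigma$-neighborhood of $x$ contained in $C$, so $g_-/g_+$ attains the local maximum $1$ at $x$, and the second clause of Proposition~\ref{prop_maximum} yields $g_+=g_-$ near $x$, i.e.\ a whole neighborhood of $x$ lies in $S$. Hence $S$ is clopen in $\Int C$, so it contains the connected component of $\Int C$ through $q$; when $\Int C$ is connected and dense in $C$ (which is the case in all applications, where $C$ is the closure of a connected open subset of $\Sigma$), this forces $w\equiv 0$ on $\Int C$ and therefore, by continuity, $w\equiv 0$ on $C=\overline{\Int C}$, i.e.\ $g_+=g_-$ on the whole $C$.

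The maximum-principle content is thus entirely packaged in Proposition~\ref{prop_maximum}, and the only step demanding a little attention is the last one: propagating the equality from a single interior point to all of $C$, which requires the interior of $C$ to be connected and dense in $C$. This holds for the compact connected sets to which the corollary is applied (closed disks, annuli, closures of half-cylinders, and the like), so no genuine difficulty arises.
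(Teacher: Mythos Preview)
Your proof is correct and follows essentially the same approach as the paper: both reduce everything to Proposition~\ref{prop_maximum} via a maximum/clopen argument on the ratio $g_-/g_+$. For the first assertion the two arguments are identical up to phrasing (you locate the maximum, the paper argues by contradiction). For the rigidity step, you are actually more careful than the paper: the paper simply writes ``We conclude that $g_+=g_-$ on $C$ by connectedness,'' whereas you correctly observe that the clopen argument lives on $\Int C$ and therefore needs $\Int C$ to be connected and dense in $C$ to conclude on all of $C$. Since every application in the paper is to closures of connected open sets (closed disks), your caveat is exactly right and there is no genuine gap.
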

\begin{proof}
If $g_+<g_-$ at some interior point of $C$, then the restriction of $g_-/g_+$ to $C$ attends its absolute maximum, which is greater than $1$, in the interior of $C$. But this is impossible by Proposition \ref{prop_maximum}. Thus we have $g_+\geq g_-$ on $C$. If $g_+=g_-$ at an interior point $p$ of $C$, then $g_-/g_+$ attends local maximum $1$ at $p$, so Proposition \ref{prop_maximum} again implies that $g_+=g_-$ in a neighborhood of $p$. We conclude that $g_+=g_-$ on $C$ by connectedness.
\end{proof}

\subsubsection*{A Theorem of Cheng and Yau.}  The following boundedness criterion for functions on a complete Riemannian manifold is a particular case of Theorem 8 in \cite{cheng-yau_3}, obtained by taking $g(t)=t^\alpha$ in that theorem.
\begin{theorem}[Cheng-Yau]\label{thm_boundedness}
Let $u$ be a $C^2$-function on a complete Riemannian manifold $M$ with Ricci curvature bounded from below such that $\Delta u\geq f(u)$, where $f$ is a lower semi-continuous function on $\mathbb{R}$ satisfying
$$
\liminf_{t\rightarrow+\infty}\,t^{-\alpha}f(t)>0
$$
for some $\alpha>1$. Then $u$ is bounded from above on $M$ and we have $f(\sup u)\leq 0$.
\end{theorem}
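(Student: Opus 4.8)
The plan is to prove the theorem in two steps: first show that $u$ is bounded above on $M$, and then upgrade this to the sharp inequality $f(\sup u)\leq 0$. The hypothesis on $f$ furnishes constants $c>0$ and $T_0\in\mathbb{R}$ (which we may take with $T_0>0$) such that $f(t)\geq c\,t^\alpha$ for all $t\geq T_0$. The role of the assumption $\alpha>1$ is precisely that it is the Keller--Osserman integrability condition $\int^{+\infty}\dif t/\sqrt{F(t)}<+\infty$, where $F(t)=\int_0^t c\,s^\alpha\,\dif s$, which is exactly what guarantees the existence of ``large'' barrier functions that blow up at the boundary of a geodesic ball while keeping a uniformly bounded value at its center as the ball exhausts $M$.

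For the first step, fix $p\in M$ and, for $R>0$, work on the geodesic ball $B_R=B(p,R)$, which is relatively compact since $M$ is complete. Using the Laplacian comparison theorem --- valid because $\Ric$ is bounded below, say $\Ric\geq -(n-1)\kappa^2$, which gives $\Delta r\leq (n-1)\kappa\coth(\kappa r)$ in the barrier sense for $r=\dist(p,\cdot)$ --- I would construct a radial function $\zeta_R=h_R(r)$ on $B_R$ with $h_R$ increasing and convex, $h_R(r)\to+\infty$ as $r\to R$, and $\Delta\zeta_R\leq c\,\zeta_R^\alpha$ in the barrier sense; reducing this to the one-dimensional model $h''+h'\,\Theta(r)= c\,h^\alpha$ with $\Theta(r)=(n-1)\kappa\coth\kappa r$, the Keller--Osserman condition forces $h_R(0)\to 0$ as $R\to+\infty$. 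Setting $\phi_R=\max(\zeta_R,T_0)$, one gets $\phi_R\geq T_0$, $\phi_R\to+\infty$ at $\pa B_R$, and $\Delta\phi_R\leq c\,\phi_R^\alpha$ in the barrier sense. Then $u-\phi_R\to-\infty$ near $\pa B_R$, so it attains an interior maximum at some $q$ (the cut locus being handled by Calabi's trick), where $\Delta(u-\phi_R)(q)\leq 0$. If $u(q)>\phi_R(q)$ then $u(q)>T_0$, hence $\Delta u(q)\geq f(u(q))\geq c\,u(q)^\alpha>c\,\phi_R(q)^\alpha\geq\Delta\phi_R(q)$, a contradiction; therefore $u\leq\phi_R$ on $B_R$. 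Letting $R\to+\infty$ gives $u(p)\leq T_0$, and since $p$ was arbitrary, $u\leq T_0$ on $M$.

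For the second step, let $M^\ast=\sup_M u<+\infty$ and suppose, for contradiction, that $f(M^\ast)>0$. By lower semicontinuity of $f$ at $M^\ast$ there are $\delta>0$ and $\eta>0$ with $f(t)>\delta$ for all $t\in(M^\ast-\eta,M^\ast]$. Since $M$ is complete with Ricci curvature bounded below and $u$ is now bounded above, the Omori--Yau maximum principle provides a sequence $x_k\in M$ with $u(x_k)\to M^\ast$ and $\limsup_k\Delta u(x_k)\leq 0$. For large $k$ one has $u(x_k)>M^\ast-\eta$, hence $\Delta u(x_k)\geq f(u(x_k))>\delta$, contradicting $\limsup_k\Delta u(x_k)\leq 0$. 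Thus $f(M^\ast)\leq 0$, which finishes the proof. (Alternatively, this second step can be made self-contained by a further barrier argument localized near the points where $u$ approaches its supremum, avoiding any appeal to Omori--Yau.)

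The main obstacle is the barrier construction in the first step: one must produce a radial supersolution with the prescribed boundary blow-up and, crucially, with central value decaying to $0$ as $R\to+\infty$, and this is exactly where $\alpha>1$ is essential. On a general complete manifold with Ricci bounded below this requires combining the Laplacian comparison estimate with an analysis of the model ODE above, together with the standard device of dominating $\zeta_R$ by such a model solution chosen to blow up slightly faster at $\pa B_R$; the cut-locus issue, dealt with by Calabi's trick, is routine but should be noted. One could also shortcut the whole argument by directly invoking Keller--Osserman-type nonexistence results for complete Riemannian manifolds.
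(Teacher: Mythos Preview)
The paper does not give its own proof of this statement: it is stated as a black box, attributed to Cheng--Yau (``a particular case of Theorem 8 in \cite{cheng-yau_3}, obtained by taking $g(t)=t^\alpha$''), and then used as a tool in the proofs of Lemmas~\ref{lemma_nonpositive2} and~\ref{lemma_completebound} and Theorem~\ref{thm_incomplete}. So there is nothing to compare against.

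That said, your proposal is a reasonable and essentially correct outline of how such a result is proved. The two-step structure---first bound $u$ from above via a Keller--Osserman barrier on exhausting geodesic balls, then use the Omori--Yau maximum principle to pin down $f(\sup u)\leq 0$---is standard and sound. The role of $\alpha>1$ as the Keller--Osserman integrability condition is identified correctly, and the use of Laplacian comparison plus Calabi's trick to handle the cut locus is the right mechanism on a general complete manifold with Ricci bounded below. The second step is clean as written.

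The only part that is genuinely sketchy is the barrier construction in Step~1: you assert the existence of a radial supersolution $\zeta_R$ with boundary blow-up and $h_R(0)\to 0$, but the reduction to the model ODE and the verification that the comparison goes the right way (you need $\Delta\zeta_R\leq c\,\zeta_R^\alpha$, and the Laplacian comparison gives an \emph{upper} bound on $\Delta r$, so one must check that $h_R'\geq 0$ for the inequality to go through) deserve a line or two more. Also, passing to $\phi_R=\max(\zeta_R,T_0)$ requires the maximum principle in the viscosity/barrier sense at the interface, which you should flag explicitly. None of these are gaps in the sense of missing ideas---they are routine details---but in a written proof they would need to appear.
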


\subsection{Solutions with nonpositive curvature}
The singular flat metric $|\ve{\phi}|^\frac{2}{k}$ underlying $\ve{\phi}$ is a solution to the vortex equation away from the zeros of $\ve{\phi}$. Clearly, for any solution $g$ to the vortex equation on $(\Sigma,\ve{\phi})$, 
we have the following equivalences:
$$
\kappa_g\leq 0\ \Longleftrightarrow\ \|\ve{\phi}\|_g^2\leq1 \ \Longleftrightarrow\  |\ve{\phi}|^\frac{2}{k}\leq g.
$$
Replacing ``$\leq$'' by ``$<$'', the three strict inequalities are also equivalent.  
The following lemma says that ``$\leq$'' implies either ``$<$'' everywhere or ``$=$'' everywhere: 
\begin{lemma}\label{lemma_nonpositive1}
Let $g$ be a solution to the vortex equation on $(\Sigma,\ve{\phi})$ with $\kappa_g\leq 0$. If $\kappa_g=0$ holds at a point, then $\ve{\phi}$ does not have zeros and the equality holds everywhere.
\end{lemma}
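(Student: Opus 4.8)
The plan is to combine the curvature–norm dictionary recalled just above the statement with the comparison principle coming from Proposition~\ref{prop_maximum}. Since $\kappa_g\le 0$, the metric $g$ is a supersolution to the vortex equation in the sense that $|\ve{\phi}|^{\frac{2}{k}}\le g$ away from the zeros of $\ve{\phi}$ (where $|\ve{\phi}|^{\frac{2}{k}}$ is an honest solution). Suppose $\kappa_g(p)=0$ at some point $p$. Then $\|\ve{\phi}\|_g^2=1$ at $p$, so in particular $\ve{\phi}(p)\ne 0$ and $p$ lies in the open set $\Sigma^\circ:=\Sigma\setminus\ve{\phi}^{-1}(0)$ on which $|\ve{\phi}|^{\frac{2}{k}}$ is a genuine (hence both sub- and super-) solution. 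On $\Sigma^\circ$ the conformal ratio $\rho:=|\ve{\phi}|^{\frac{2}{k}}/g$ satisfies $\rho\le 1$ everywhere, and the hypothesis $\kappa_g(p)=0$ gives $\rho(p)=1$. Thus $\rho$ attains the local maximum $1$ at $p\in\Sigma^\circ$, so Proposition~\ref{prop_maximum} (applied with $g_-=|\ve{\phi}|^{\frac{2}{k}}$, $g_+=g$) yields $g=|\ve{\phi}|^{\frac{2}{k}}$, equivalently $\kappa_g=0$, on a whole neighborhood of $p$ inside $\Sigma^\circ$.

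The next step is to upgrade this local statement to all of $\Sigma$. Let $Z:=\{q\in\Sigma^\circ : \kappa_g(q)=0\}=\{q\in\Sigma^\circ : \rho(q)=1\}$. By the previous paragraph $Z$ is open in $\Sigma^\circ$; it is also closed in $\Sigma^\circ$ since $\rho$ is continuous and $Z=\rho^{-1}(1)$ is the preimage of a closed set. As $\Sigma^\circ$ is connected (removing the discrete zero set of the holomorphic differential $\ve{\phi}$ from the connected surface $\Sigma$ leaves it connected — I should state this explicitly, noting $\ve{\phi}\not\equiv 0$ since otherwise Wang's equation forces $\kappa_g\equiv-1$, contradicting $\kappa_g(p)=0$), and $Z$ is nonempty, we conclude $Z=\Sigma^\circ$, i.e. $g=|\ve{\phi}|^{\frac{2}{k}}$ on $\Sigma^\circ$, so in particular $\kappa_g\equiv 0$ there.

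Finally I must rule out zeros of $\ve{\phi}$ altogether. If $\ve{\phi}$ had a zero $q_0$, then near $q_0$ the metric $g$ would be smooth with $\kappa_g\equiv 0$ (by continuity of $\kappa_g$ and the fact that $\kappa_g=0$ on the dense open set $\Sigma^\circ$), yet Wang's equation reads $0=\kappa_g=-1+\|\ve{\phi}\|_g^2$, forcing $\|\ve{\phi}\|_g^2\equiv 1$ near $q_0$; this is incompatible with $\ve{\phi}(q_0)=0$. Hence $\ve{\phi}$ is nowhere vanishing, $\Sigma^\circ=\Sigma$, and $\kappa_g\equiv 0$ on $\Sigma$ with $g=|\ve{\phi}|^{\frac{2}{k}}$ globally.

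The only genuinely delicate point is the application of Proposition~\ref{prop_maximum}: one has to be sure the comparison is carried out inside $\Sigma^\circ$, where $|\ve{\phi}|^{\frac{2}{k}}$ really is a subsolution in the required weak sense, rather than near a zero of $\ve{\phi}$ where that metric is singular. Everything else — connectedness of $\Sigma\setminus\ve{\phi}^{-1}(0)$, the clopen argument, and the final contradiction at a hypothetical zero — is routine.
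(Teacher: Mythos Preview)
Your proof is correct and follows essentially the same approach as the paper: apply Proposition~\ref{prop_maximum} with $g_-=|\ve{\phi}|^{2/k}$ and $g_+=g$ on the complement of the zeros of $\ve{\phi}$, then conclude that $\ve{\phi}$ cannot have zeros. Your version is more careful in spelling out the clopen argument on $\Sigma^\circ$ and the observation that $\ve{\phi}\not\equiv 0$, points the paper leaves implicit, but the substance is identical.
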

\begin{proof}
If the equality is achieved at a point, then the conformal ratio $|\ve{\phi}|^\frac{2}{k}/g=\|\ve{\phi}\|_g^\frac{2}{k}$ attends the absolute maximum $1$. Applying Proposition \ref{prop_maximum} to $g_+=g$ and $g_-=|\ve{\phi}|^\frac{2}{k}$ on the complement of the set of zeros of $\ve{\phi}$, we conclude that the equality holds everywhere except at the zeros. But in this case $\ve{\phi}$ can not have zeros.
\end{proof}

While previous works in the literature on the vortex equation mainly focus on complete solutions with nonpositive curvature, the following result in \cite{li_on} implies that the curvature assumption is dispensable as it follows from completeness. For applications later  on, we formulate the result for supersolutions:
\begin{lemma}\label{lemma_nonpositive2}
If $g$ is a complete $C^2$-supersolution to the vortex equation, then $\kappa_g\leq 0$.
\end{lemma}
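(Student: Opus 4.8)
The plan is to apply the Cheng--Yau boundedness criterion (Theorem \ref{thm_boundedness}) on the complete manifold $(\Sigma,g)$ to the function $\psi:=\|\ve{\phi}\|_g^2$, and thereby obtain $\psi\leq 1$ everywhere, which by the equivalences recalled just before Lemma \ref{lemma_nonpositive1} is exactly the conclusion $\kappa_g\leq 0$. Two preliminary remarks set the stage. Writing $g=e^u|\dz|^2$ and $\ve{\phi}=\phi(z)\,\dz^k$ in a conformal chart, the supersolution inequality $\Delta u\leq 2\big(e^u-e^{(1-k)u}|\phi|^2\big)$ is an inequality between continuous functions (since $u\in C^2$), hence holds pointwise; therefore $\kappa_g=-\tfrac12 e^{-u}\Delta u\geq -1+\|\ve{\phi}\|_g^2\geq -1$, so the Ricci curvature of $g$ is bounded below and Theorem \ref{thm_boundedness} is applicable. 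Also $\psi=|\phi(z)|^2 e^{-ku}$ is manifestly a $C^2$ function on $\Sigma$, nonnegative and vanishing precisely at the zeros of $\ve{\phi}$. (We may assume $\ve{\phi}\not\equiv 0$, the other case being trivial.)

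The crux is the inequality $\Delta_g\psi\geq 2k\,\psi(\psi-1)$ on all of $\Sigma$. On the complement of the zero set, $\log|\phi|^2$ is harmonic, so $\Delta\log\psi=-k\Delta u\geq -2k\,e^u(1-\psi)$; since $\Delta_g=e^{-u}\Delta$ this gives $\Delta_g\log\psi\geq 2k(\psi-1)$, and as $\Delta_g\psi=\psi\,\Delta_g\log\psi+|\nabla_g\psi|^2/\psi\geq\psi\,\Delta_g\log\psi$ on this set, the claimed inequality follows there. At a zero of $\ve{\phi}$ the function $\psi$ has a local minimum with value $0$, so $\Delta_g\psi\geq 0=2k\,\psi(\psi-1)$ automatically, and the inequality holds globally. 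Now Theorem \ref{thm_boundedness} applied with $f(t)=2kt^2-2kt$ --- for which $\liminf_{t\to+\infty}t^{-\alpha}f(t)=+\infty$ for any $\alpha\in(1,2)$ --- shows that $\psi$ is bounded above and $f(\sup_\Sigma\psi)\leq 0$, forcing $\sup_\Sigma\psi\leq 1$. Hence $\|\ve{\phi}\|_g^2\leq 1$, equivalently $|\ve{\phi}|^{2/k}\leq g$, i.e.\ $\kappa_g\leq 0$.

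The one point that requires care is that the differential inequality must be established \emph{globally}, including across the zeros of $\ve{\phi}$, where $\log\psi$ degenerates; the remedy is precisely to run the computation with the genuinely $C^2$ quantity $\psi$ rather than $\log\psi$ and to dispose of the zeros by the elementary local-minimum observation above. The remaining ingredients --- the two-dimensional conformal change formula for the Laplacian and the superlinear growth of $f$ --- are routine. An alternative, more hands-on route would be to compare $g$ with the subsolution $|\ve{\phi}|^{2/k}$ via Proposition \ref{prop_maximum} on an exhaustion of $\Sigma$, but invoking Theorem \ref{thm_boundedness} packages the completeness hypothesis most directly.
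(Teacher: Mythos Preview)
Your proof is correct and takes a genuinely different (and somewhat cleaner) route than the paper's. The paper, following Li, applies Theorem~\ref{thm_boundedness} not to $\psi=\|\ve{\phi}\|_g^2$ but to $\tau:=\log(1+\psi)$, computing
\[
\Delta_g\tau\;\geq\;2k\big(e^\tau-3+2e^{-\tau}\big),
\]
and then deducing $e^{\sup\tau}\leq 2$, i.e.\ $\psi\leq 1$. The substitution $\tau=\log(1+\psi)$ is chosen precisely so that the auxiliary function is smooth across the zeros of $\ve{\phi}$, so no separate treatment of those points is needed; the price is a more involved computation of $\pazbz\tau$ that produces an extra nonnegative gradient term to be discarded. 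Your approach instead exploits the clean splitting $\log\psi=\log|\phi|^2-ku$ (with $\log|\phi|^2$ harmonic) to obtain the simpler inequality $\Delta_g\psi\geq 2k\,\psi(\psi-1)$, and disposes of the zeros by the elementary observation that $\psi$ has a local minimum (value $0$) there, so $\Delta_g\psi\geq 0=2k\psi(\psi-1)$ at those points. Both approaches rest on the same Cheng--Yau criterion; yours trades a smoother auxiliary function for a shorter differential inequality plus a one-line case check at the zero set.
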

\begin{proof}
Put $\tau:=\log(1+\|\ve{\phi}\|_g^2)$. Computing in a conformal local coordinate $z$ with $\ve{\phi}=\phi(z)\dz^k$ and $g=e^{u}|\dz|^2$, we get
\begin{align*}
\pazbz \tau=\pazbz\log\left(1+\frac{|\phi|^2}{e^{ku}}\right)&=-\frac{k|\phi|^2}{|\phi|^2+e^{ku}}\pazbz u+\frac{e^{ku}}{(|\phi|^2+e^{ku})^2}\big|\paz\phi-k\phi\,\paz u\big|^2\\
&\geq  -\frac{k|\phi|^2}{|\phi|^2+e^{ku}}\pazbz u\geq \frac{k|\phi|^2}{|\phi|^2+e^{ku}}\cdot\frac{e^u}{2}\left(1-\frac{|\phi|^2}{e^{ku}}\right)\\
&=\frac{e^u}{2}\cdot\frac{k(e^\tau-1)}{e^\tau}(2-e^\tau)=\frac{e^u}{2}\cdot k(e^\tau-3+2e^{-\tau}).
\end{align*}
So the Laplacian of $\tau$ with respect to $g$ is $\Delta_g\tau=\frac{4}{e^u}\pazbz\tau\geq 2k(e^\tau-3+2e^{-\tau})$.
Applying Theorem \ref{thm_boundedness} with $f(\tau)=2k(e^\tau-3+2e^{-\tau})$, we get $f(\sup\tau)\leq 0$, or equivalently, $1\leq e^{\sup\tau}\leq 2$, hence $\|\ve{\phi}\|^2_g\leq 1$. 
\end{proof}

\subsection{Unique existence of complete solutions}
In this section, we prove unique existence of complete solutions to the vortex equation. The uniqueness part is implied by the following lemma, which will be used again in Section \ref{subsec_upper}:
\begin{lemma}\label{lemma_completebound}
Let $g_+$ and $g_-$ be a supersolution and a subsolution of class $C^2$ to the vortex equation on $(\Sigma,\ve{\phi})$, respectively. If $g_+$ is complete, then $g_+\geq g_-$.
\end{lemma}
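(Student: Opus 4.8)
The plan is to argue by contradiction, assuming that $g_-/g_+ > 1$ somewhere, and to exploit completeness of $g_+$ to push the contradiction through by "moving the maximum to infinity". Set $w := \log(g_-/g_+)$, which is a continuous function on $\Sigma$ lying in $H^1_{\mathit{loc}}$; the hypothesis to be refuted is $\sup_\Sigma w > 0$. The point is that Proposition \ref{prop_maximum} already forbids $w$ from having an \emph{interior} local maximum that is positive, so if the supremum were attained we would be done immediately; the whole issue is that on a non-compact $\Sigma$ the supremum need not be attained, and completeness of $g_+$ is precisely what is needed to compensate.

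First I would reduce to a clean statement: since $w$ has no positive local maximum, for every $c \in (0, \sup_\Sigma w)$ the superlevel set $\{w \geq c\}$ is a nonempty closed set with no compact connected component (any compact component would contain an interior maximum of $w$, necessarily positive, contradicting Proposition \ref{prop_maximum}); hence each component is unbounded in $\Sigma$, and in particular, since $g_+$ is complete, it has infinite $g_+$-diameter. The next step is the analytic heart: I would show that on the region where $w > 0$, the function $w$ is a $g_+$-subsolution of a nice elliptic inequality — from the computation already used in the proof of Proposition \ref{prop_maximum}, where $w = u_- - u_+ > 0$ gives $\Delta(u_- - u_+) \geq 0$ in the Euclidean local coordinate, i.e. $w$ is $g_+$-subharmonic (or satisfies $\Delta_{g_+} w \geq 0$) on $\{w>0\}$. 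Combined with $0 < w \le \sup w < \infty$, this says $w$ is a bounded non-constant subharmonic function on an unbounded piece of the complete surface $(\Sigma, g_+)$ which moreover has a lower Ricci (here Gauss-curvature) bound — and here I would like to invoke a Cheng–Yau / Yau-type statement in the spirit of Theorem \ref{thm_boundedness}. The cleanest route: apply Theorem \ref{thm_boundedness} not to $w$ directly but to a suitable convex increasing function $F(w)$ chosen so that $\Delta_{g_+} F(w) \geq f(F(w))$ with $f$ satisfying the growth hypothesis $\liminf t^{-\alpha} f(t) > 0$, forcing $f(\sup F(w)) \le 0$; arranging $f < 0$ on the relevant range then contradicts $\sup w > 0$.

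Executing that last step requires a lower bound on the Gauss curvature $\kappa_{g_+}$, which is not given a priori for a mere supersolution, so the genuinely delicate point — and the main obstacle I expect — is getting into a position where Theorem \ref{thm_boundedness} applies. I would handle this exactly as in the proof of Lemma \ref{lemma_nonpositive2}: rather than working with $w$ on all of $\Sigma$, work on the open set $\Sigma_0 = \{w > 0\} = \{|\ve{\phi}|^{2/k} > g_+\}$ where, by Lemma \ref{lemma_nonpositive2}'s argument applied to the complete supersolution $g_+$, we already control things, and note $\kappa_{g_+} \le 0$ there — but $\Sigma_0$ with $g_+$ need not be complete, so instead I would compare with $g_-$: on $\Sigma_0$ we have $g_- > g_+$, and if $g_-$ is also complete... it need not be. The correct fix is to keep $g_+$ as the reference metric and use that $g_+ \geq$ (a genuine complete solution $g$ from Theorem \ref{thm_subsuper}/Lemma \ref{lemma_nonpositive2}) is not automatic either; so the cleanest honest argument is: by Lemma \ref{lemma_nonpositive2}, $\kappa_{g_+} \le 0$ everywhere, hence $(\Sigma,g_+)$ is complete with curvature bounded above by $0$ — but Theorem \ref{thm_boundedness} needs a \emph{lower} bound. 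Thus the real work is to show $\kappa_{g_+}$ is also bounded below on the locus of interest, or to bypass the lower bound entirely by using a maximum-principle-at-infinity (Omori–Yau) argument: pick points $p_n$ with $w(p_n) \to \sup w$, run the Omori–Yau almost-maximum-principle for the complete manifold $(\Sigma, g_+)$ — valid once $\kappa_{g_+}$ is bounded below, which is the crux — to obtain points $q_n$ with $w(q_n) \to \sup w$, $|\nabla_{g_+} w|(q_n) \to 0$, $\Delta_{g_+} w(q_n) \le 1/n$, and then evaluate the elliptic inequality satisfied by $w$ at $q_n$ to force $\sup w \le 0$. I expect the proof the author actually gives to be shorter, leveraging Corollary \ref{coro_compa} together with a clever exhaustion argument rather than Omori–Yau; in any case the single hard point is converting "$g_+$ complete" into enough control at infinity to rule out an escaping positive supremum of $g_-/g_+$.
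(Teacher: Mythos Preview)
Your overall instinct---use completeness of $g_+$ together with the Cheng--Yau result (Theorem \ref{thm_boundedness})---is exactly right, but you have misidentified where the difficulty lies and as a result your plan has two genuine gaps.

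First, the lower curvature bound you call ``the crux'' and ``the main obstacle'' is in fact immediate: the supersolution condition reads $\kappa_{g_+} \geq -1 + \|\ve{\phi}\|^2_{g_+} \geq -1$, so $(\Sigma,g_+)$ is complete with Ricci curvature bounded below by $-1$, and Theorem \ref{thm_boundedness} applies without further work. There is no need for Omori--Yau, superlevel-set analysis, or restriction to $\{w>0\}$.

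Second, the differential inequality you extract for $w=\log(g_-/g_+)$ is too weak. Knowing only $\Delta_{g_+} w \geq 0$ on $\{w>0\}$ is not enough to feed into Theorem \ref{thm_boundedness}: your proposed fix via a convex $F$ gives $\Delta_{g_+}F(w)\geq F''(w)|\nabla_{g_+} w|^2$, which is not a pointwise lower bound of the form $f(F(w))$ with superlinear growth. The paper's proof instead combines the two inequalities directly. Writing $g_-=e^w g_+$, the subsolution condition (in the form of Eq.~(\ref{eqn_vortex2})) gives
\[
\tfrac{1}{2}\Delta_{g_+} w \geq e^w - e^{(1-k)w}\|\ve{\phi}\|^2_{g_+} + \kappa_{g_+},
\]
and inserting the supersolution inequality $\kappa_{g_+}\geq -1+\|\ve{\phi}\|^2_{g_+}$ yields
\[
\tfrac{1}{2}\Delta_{g_+} w \geq (e^w-1) + (1-e^{(1-k)w})\|\ve{\phi}\|^2_{g_+}.
\]
Now Lemma \ref{lemma_nonpositive2} gives $\|\ve{\phi}\|^2_{g_+}\leq 1$, so the right-hand side is bounded below by the piecewise function $f(w)$ equal to $e^w-1$ for $w\geq 0$ and $e^w-e^{(1-k)w}$ for $w\leq 0$. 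This $f$ satisfies the growth hypothesis of Theorem \ref{thm_boundedness}, which then gives $f(\sup w)\leq 0$, i.e.\ $\sup w\leq 0$, directly. No contradiction argument is needed.
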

\begin{proof}
Suppose $g_-=e^wg_+$. By the scalar form (\ref{eqn_vortex2}) of the vortex equation, $g_-$ being a subsolution means
$$
\frac{1}{2}\Delta_{g_+}w\geq e^w-e^{(1-k)w}\|\ve{\phi}\|^2_{g_+}+\kappa_{g_+}.
$$
Since $g_+$ is a supersolution, we have $\kappa_{g_+}\geq -1+\|\ve{\phi}\|^2_{g_+}$. Therefore,
$$
\frac{1}{2}\Delta_{g_+}w\geq e^w-1+\left(1-e^{(1-k)w}\right)\|\ve{\phi}\|_{g_+}^2\,.
$$
By Lemma \ref{lemma_nonpositive2} we have $\kappa_{g_+}\leq0$, which implies $\|\ve{\phi}\|_{g_+}^2\leq 1$. Thus
$$
\frac{1}{2}\Delta_{g_+}w\geq f(w):=
\begin{cases}
e^w-1,&\mbox{ if }w\geq 0,\\
e^w-e^{(1-k)w}&\mbox{ if } w\leq 0.
\end{cases}
$$
By Theorem \ref{thm_boundedness}, $w$ is bounded from above and $f(\sup w)\leq 0$, hence $w\leq 0$, or equivalently, $g_-\leq g_+$.
\end{proof}
\begin{remark}\label{remark_yauas}
When $\ve{\phi}=0$, Lemma \ref{lemma_completebound} is a consequence of Yau's generalization \cite{yau_ahlfors} of the Ahlfors-Schwarz lemma, stating that a conformal map from a surface endowed with a complete Riemannian metric of curvature $\geq -1$ to a surface with a Riemannian metric of curvature $\leq -1$ is metric dominating. 
\end{remark}

\begin{theorem}\label{thm_ue}
Given a Riemann surface $\Sigma$ and a nontrivial holomorphic $k$-differential $\ve{\phi}$ on $\Sigma$, there exists a unique complete solution $g$ to the couple vortex equation on $(\Sigma,\ve{\phi})$. This $g$ has negative curvature unless $(\Sigma,\ve{\phi})$ is a quotient of $(\mathbb{C},\dz^k)$, in which case $g$ is the flat metric $|\dz|^2$.
\end{theorem}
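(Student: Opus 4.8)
The plan is to take uniqueness for granted from the comparison estimates already established, and to obtain existence by the classical exhaustion-and-monotone-limit scheme, the one genuinely non-formal ingredient being the production of a single complete $C^2$-subsolution on $(\Sigma,\ve{\phi})$; the curvature dichotomy will then follow from Lemmas~\ref{lemma_nonpositive2} and~\ref{lemma_nonpositive1}. For uniqueness, if $g_1,g_2$ are complete solutions then each is at once a complete $C^2$-supersolution and a $C^2$-subsolution (smoothness by Remark~\ref{remark_elliptic}), so Lemma~\ref{lemma_completebound} gives $g_1\geq g_2$ and, by symmetry, $g_1=g_2$. I would dispose of compact $\Sigma$ first: a compact Riemann surface carrying a nontrivial $\ve{\phi}$ has genus $\geq 1$; in genus $1$ one has $\ve{\phi}=c\,\dz^k$ and the flat metric $|c|^{\frac{2}{k}}|\dz|^2$ is a solution, complete because compact; in genus $\geq 2$ the hyperbolic metric $h$ is a subsolution, $\lambda h$ is a supersolution for $\lambda$ large (since $\kappa_{\lambda h}=-1/\lambda$ while $\|\ve{\phi}\|^2_{\lambda h}=\lambda^{-k}\|\ve{\phi}\|^2_{h}$ is uniformly small), and Theorem~\ref{thm_subsuper} produces a solution between them.

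Now assume $\Sigma$ is non-compact and fix a proper exhaustion $\Omega_1\Subset\Omega_2\Subset\cdots$ by relatively compact open sets, each the interior of a compact bordered surface and hence a hyperbolic Riemann surface with complete hyperbolic metric $h_n$. As in the compact case $h_n$ is a subsolution on $\Omega_n$ and $\lambda_n h_n$ is a supersolution for $\lambda_n$ large enough (here $\|\ve{\phi}\|^2_{h_n}$ is bounded on $\Omega_n$ because $h_n$ blows up along $\pa\Omega_n$), so Theorem~\ref{thm_subsuper} yields a solution $g_n$ on $\Omega_n$ with $h_n\leq g_n\leq\lambda_n h_n$; in particular $g_n$ is a complete metric on $\Omega_n$. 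For $m>n$ the restriction $g_m|_{\Omega_n}$ is a $C^2$-subsolution and $g_n$ a complete $C^2$-supersolution on $\Omega_n$, so Lemma~\ref{lemma_completebound} gives $g_n\geq g_m$ there: the sequence $(g_n)$ is eventually non-increasing on every compact subset of $\Sigma$. Granting for the moment a complete $C^2$-subsolution $g_-$ on all of $(\Sigma,\ve{\phi})$, Lemma~\ref{lemma_completebound} also gives $g_n\geq g_-$ on every $\Omega_n$, so the decreasing limit $g:=\lim_n g_n$ is squeezed between $g_-$ and $g_{n_0}$ on each $\Omega_{n_0}$; hence $g$ is a genuine conformal metric, and it is complete because it dominates the complete metric $g_-$. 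Since the local conformal factors of the $g_n$ are decreasing and locally uniformly bounded, interior elliptic estimates for the scalar form~(\ref{eqn_vortex3}) upgrade the convergence to $C^2_{\mathit{loc}}$, so $g$ solves the vortex equation; by uniqueness it is the complete solution.

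The step I expect to be the main obstacle is the construction of $g_-$. When $\Sigma$ carries a complete hyperbolic metric, that metric is a complete $C^2$-subsolution (curvature $-1\leq-1+\|\ve{\phi}\|^2$). When $(\Sigma,\ve{\phi})$ is a quotient of $(\mathbb{C},\dz^k)$, the Euclidean metric $|\dz|^2$ descends to a complete metric with $\|\ve{\phi}\|\equiv1$, which is itself a solution. In the remaining cases $\Sigma$ is biholomorphic to $\mathbb{C}$ or $\mathbb{C}^*$ and $\ve{\phi}$ is not the corresponding standard differential; there the singular flat metric $|\ve{\phi}|^{\frac{2}{k}}$ is a weak subsolution (a solution off the zeros of $\ve{\phi}$, contributing only a non-negative distributional Laplacian at each zero) but it is degenerate at the zeros and may be incomplete toward a puncture. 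I would build $g_-$ by replacing $|\ve{\phi}|^{\frac{2}{k}}$ near each zero of $\ve{\phi}$ by a genuine smooth metric of curvature $-1$, and by gluing in a model piece of curvature $-1$ near any puncture at which $|\ve{\phi}|^{\frac{2}{k}}$ is incomplete; on the compact gluing annuli $\|\ve{\phi}\|^2$ stays bounded below by a positive constant, leaving strict slack in $\kappa\leq-1+\|\ve{\phi}\|^2$, which is what makes the interpolation compatible with the subsolution property. Carrying out this gluing and keeping track of which punctures of $\mathbb{C}$ or $\mathbb{C}^*$ require a cap is the one technical point of the argument; it is precisely here that the curvature-free completeness input of \cite{li_on}, encapsulated in Lemma~\ref{lemma_nonpositive2}, is essential.

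Finally, the curvature dichotomy is read off as follows. Since $g$ is complete, Lemma~\ref{lemma_nonpositive2} gives $\kappa_g\leq0$. If $\kappa_g$ vanishes somewhere, Lemma~\ref{lemma_nonpositive1} forces $\kappa_g\equiv0$ and $\ve{\phi}$ to be zero-free, whence $g=|\ve{\phi}|^{\frac{2}{k}}$ is a complete flat metric with no cone points; its universal cover is then $(\mathbb{C},|\dz|^2)$, on which $\ve{\phi}$ lifts to a unit-norm holomorphic $k$-differential, that is, to $c\,\dz^k$ with $|c|=1$, which is $\dz^k$ after an affine change of coordinate. Thus $(\Sigma,\ve{\phi})$ is a quotient of $(\mathbb{C},\dz^k)$ and $g=|\dz|^2$ by uniqueness; in every other case $\kappa_g<0$ throughout $\Sigma$.
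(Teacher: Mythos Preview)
Your argument is correct and shares with the paper the treatment of uniqueness (Lemma~\ref{lemma_completebound}) and of the curvature dichotomy (Lemmas~\ref{lemma_nonpositive2} and~\ref{lemma_nonpositive1}), but the existence proof is organized differently. The paper first passes to the universal cover, reducing to $\Sigma=\mathbb{D}$ or $\Sigma=\mathbb{C}$. On $\mathbb{D}$ it runs exactly your exhaustion-and-monotone-limit scheme (with disks $D_{r_n}$), using the Poincar\'e metric $g_{\mathbb{D}}$ as the lower barrier; its monotonicity step goes through Corollary~\ref{coro_compa} and the boundary blow-up of $g_{D_r}$, whereas your direct appeal to Lemma~\ref{lemma_completebound} on each $\Omega_n$ is cleaner. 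On $\mathbb{C}$ the paper avoids constructing a global complete $C^2$-subsolution altogether: it takes the complete solution $g_U$ on a hyperbolic $U=\mathbb{C}\setminus\overline{D}$ (already available from the $\mathbb{D}$ case) and glues it with $|\ve{\phi}|^{2/k}$ via pointwise $\max/\min$ (Lemma~\ref{lemma_asolution}), producing sub- and super-solutions that are merely $C^0\cap H^1_{\mathrm{loc}}$ and feeding them straight into Theorem~\ref{thm_subsuper}.

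Your route has the virtue of treating all non-compact $\Sigma$ uniformly once a complete $C^2$-subsolution $g_-$ is in hand; the cost is that in the parabolic case you must actually manufacture such a $g_-$. Your gluing sketch does work (smooth $|\ve{\phi}|^{2/k}$ near its zeros by hyperbolic patches, add cusps at incomplete punctures, then scale the result by a small constant $c$: on the compact transition annuli $\|\ve{\phi}\|^2$ is bounded below, so for $k\geq2$ the term $c^{-k}\|\ve{\phi}\|^2$ dominates $c^{-1}\kappa$ and the subsolution inequality holds), but this is precisely the fiddly step the paper's $\max/\min$ device circumvents.
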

Note that for the trivial $k$-differential $\ve{\phi}=0$, solutions to the vortex equation are exactly conformal hyperbolic metrics, so there is a unique complete solution if and only if $\Sigma$ is a hyperbolic Riemann surface.
\begin{proof}
The uniqueness follows immediately from Lemma \ref{lemma_completebound}.

Given a complete solution $g$, by Lemma \ref{lemma_nonpositive1} and Lemma  \ref{lemma_nonpositive2}, either $\kappa_g<0$ on $\Sigma$ or $\kappa_g\equiv0$. In the latter case, $\ve{\phi}$ does not have zeros and $g=|\ve{\phi}|^\frac{2}{k}$ is a complete flat metric, so the universal cover of $(\Sigma,g)$ is isometric to $(\mathbb{C},|\dz|^2)$. It follows that the universal cover of $(\Sigma, \ve{\phi})$ is isomorphic to $(\mathbb{C},\dz^k)$, whence the second statement of the theorem.

It remains to be proved that complete solutions exist. Since the Riemann sphere $\mathbb{CP}^1$ does not admit nontrivial holomorphic $k$-differentials, we may suppose $\Sigma=U/\Gamma$, where $U$ is either $\mathbb{C}$ or the unit disk $\mathbb{D}$ and $\Gamma$ is a discrete group of conformal automorphisms. If we have a complete solution $\tilde{g}$ to the vortex equation on $(U,\widetilde{\ve{\phi}})$ (where $\widetilde{\ve{\phi}}$ denotes the lift of $\ve{\phi}$), then the uniqueness statement proved above implies that $\tilde{g}$ is invariant by $\Gamma$, hence descends to a complete solution on $(\Sigma,\ve{\phi})$. Therefore,
it is sufficient to treat the cases $\Sigma=\mathbb{D}$ and $\Sigma=\mathbb{C}$ respectively.

\vspace{5pt}

\textbf{Case 1: $\Sigma=\mathbb{D}$.}   
Following \cite{wan}, we first consider a holomorphic cubic differential $\ve{\phi}$ on $\mathbb{D}$ whose pointwise norm with respect to the Poincar\'e metric $g_\mathbb{D}$ is bounded. This condition implies that if we take $M>1$ large enough and set $g_+:=Mg_\mathbb{D}$, we have
$$
\kappa_{g_+}=-\frac{1}{M^2}\geq -1+\frac{\|\ve{\phi}\|^2_{g_\mathbb{D}}}{M^k}=-1+\|\ve{\phi}\|^2_{g_+},
$$
\ie $g_+$ is a supersolution. Since $g_-=g_\mathbb{D}$ is a subsolution majorized by $g_+$, Theorem \ref{thm_subsuper} yields a solutions $g$ satisfying 
\begin{equation}\label{eqn_gdmgd}
g_\mathbb{D}=g_-\leq g\leq g_+=Mg_\mathbb{D},
\end{equation}
which is complete because $g_\mathbb{D}$ is. This establishes the required existence statement in the case of bounded norm.

For a general holomorphic $k$-differential $\ve{\phi}=\phi(z)\dz^k$, let $D_r\subset\mathbb{D}$ be the disk of radius $r<1$ centered at $0$ and $g_{D_r}$ be the complete conformal  hyperbolic metric on $D_r$. The pointwise norm of $\ve{\phi}|_{D_r}$ with respect to $g_{D_r}$ is bounded, so the above existence result in the bounded case yields a complete solution $g_r$ to the vortex equation on $(D_r, \ve{\phi}|_{D_r})$. Applying this to an increasing sequence of disks with radii $0<r_1<r_2<\cdots$, the sequence $(g_{r_n})$ of metrics obtained has following properties on each $D_{r_m}$ whenever $n>m$: 
\begin{itemize}
\item Bounded from below by $g_\mathbb{D}$. This is because $g_r\geq g_{D_r}$ by (\ref{eqn_gdmgd}), while
$$
g_{D_r}=\frac{4|\dz|^2}{r^2(1-r^{-2}|z|^2)^2}> \frac{4|\dz|^2}{(1-|z|^2)^2}=g_\mathbb{D}.
$$
\item Pointwise decreasing. To prove this, in view of the the comparison principle (Corollary \ref{coro_compa}), it is sufficient to show that if $r<r'$, then $g_{r}\geq g_{r'}$ on $\pa D_{r-\varepsilon}$ for $\varepsilon>0$ small enough. But this is true because $g_{r}\geq g_{D_{r}}$ and $g_{r'}\leq M\,g_{D_{r'}}$ by (\ref{eqn_gdmgd}), while it follows from the above expression of $g_{D_r}$ that
$$
\lim_{\varepsilon\rightarrow 0^+}\frac{g_{D_r}}{g_{D_{r'}}}\Big|_{\pa D_{r-\varepsilon}}
%=\lim_{\varepsilon\rightarrow 0^+}\frac{r'^2\big[1-r'{}^{-2}(r-\varepsilon)^2\big]^2}{r^2\big[1-r^{-2}(r-\varepsilon)^2\big]^2}
=+\infty.
$$
\end{itemize}

As a result, there is a (a priori not necessarily continuous) Riemannian metric $g=e^u|\dz|^2\geq g_\mathbb{D}$ on $\mathbb{D}$ such that $(g_{r_n})_{n>m}$ pointwise converges to $g$ on $D_{r_m}$ for every $m$. Write $g_{r_n}=e^{u_n}|\dz|^2$, so that $u_n$ pointwise converges to $u$ and satisfies 
$$
\Delta u_n=2\left(e^{u_n}-e^{(1-k)u_n}|\phi|^2\right)
$$
on $D_{r_n}$ (see Eq.(\ref{eqn_vortex3})). We claim that $(u_n)_{n>m}$ has a uniform $C^{2,\alpha}$-bound on $D_{r_m}$ for some $0<\alpha<1$.

To prove that claim, we fix $p>2$ and use the $L^p$-estimate for second order elliptic equations (\cite[Theorem 9.11]{gilbarg-trudinger}) to get a uniform $W^{2,p}$-bound for $(u_n)_{n>m}$ on $D_r$ for $r_m<r<r_{m+1}$. The Sobolev embedding (\cite[Corollary 7.11]{gilbarg-trudinger}) then yields a uniform $C^1$-bound on $D_r$. Finally the Schauder estimate (\cite[Theorem 6.2]{gilbarg-trudinger})  provides the required uniform $C^{2,\alpha}$-bound on $D_{r_m}$.

It follows from the claim that any subsequence of $(u_n)_{n>m}$ has a further subsequence converging to $u$ in $C^2(D_{r_m})$. Therefore, $(u_n)_{n>m}$ itself converges to $u$ in $C^2(D_{r_m})$, and thus $u$ satisfies Eq.(\ref{eqn_vortex3}) on $D_{r_m}$. Since $m$ is arbitrary, $u$ satisfies Eq.(\ref{eqn_vortex3}) on the whole $\mathbb{D}$, or equivalently, $g=e^u|\dz|^2$ is a solution to the vortex equation on $(\mathbb{D},\ve{\phi})$. Since $g\geq g_\mathbb{D}$ by construction, $g$ is complete. This finishes the construction of complete solution to the vortex equation when $\Sigma$ is hyperbolic.

\vspace{7pt}

\textbf{Case 2: $\Sigma=\mathbb{C}$.}  We construct sub- and super- solutions following \cite{au-wan}. A more general version of the construction, also used in the next section, can be stated as follows:
\begin{lemma}\label{lemma_asolution}
Let $\Sigma$ be a Riemann surface and $\ve{\phi}$ be a holomorphic $k$-differential on $\Sigma$. Let $U, F\subset\Sigma$ be open subsets such that $U\cup F=\Sigma$. Assume that the boundary $\pa F$ of $F$ in $\Sigma$ is compact and the closure $\overline{F}$ does not contain zeros of $\ve{\phi}$. Let $g_U$ be a complete solution to the vortex equation on $(U,\ve{\phi}|_U)$.
Then there exists a solution $g$ to the vortex equation on $(\Sigma,\ve{\phi})$ such that
$$
\max(C^{-1}g_U,|\ve{\phi}|^\frac{2}{k}) \leq g\leq g_U \mbox{ on } \Sigma\setminus F,
$$
$$
|\ve{\phi}|^\frac{2}{k}\leq g\leq C |\ve{\phi}|^\frac{2}{k} \mbox{ on } \Sigma\setminus U
$$
for some constant $C>1$.
\end{lemma}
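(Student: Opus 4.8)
The plan is to build the solution $g$ by the sub/super-solution method (Theorem \ref{thm_subsuper}), which requires producing a subsolution $g_-$ and a supersolution $g_+$ on all of $\Sigma$ with $g_-\le g_+$, both squeezed between the prescribed bounds. The candidate for $g_+$ is an interpolation between $g_U$ (on a region exhausting $U$) and a constant multiple $C|\ve{\phi}|^{2/k}$ (near $\Sigma\setminus U$), and the candidate for $g_-$ is $\max(C^{-1}g_U,|\ve{\phi}|^{2/k})$, suitably interpreted on all of $\Sigma$ (note $|\ve{\phi}|^{2/k}$ is an honest metric on a neighborhood of $\overline F$ since $\overline F$ avoids the zeros of $\ve{\phi}$, so the maximum makes sense there).

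First I would fix a slightly smaller open set $F'$ with $\overline F\subset F'$, $\overline{F'}\subset\Sigma$ avoiding the zeros of $\ve{\phi}$, and $\pa F'$ compact, so that $|\ve{\phi}|^{2/k}$ is defined on a neighborhood of $\overline{F'}$; this is where the compactness of $\pa F$ is used. On the compact collar $\overline{F'}\setminus F$ both $g_U$ and $|\ve{\phi}|^{2/k}$ are defined and positive, so I can choose a constant $C>1$ large enough that $C^{-1}g_U\le |\ve{\phi}|^{2/k}\le C\,|\ve{\phi}|^{2/k}$ fails to cause inconsistency — concretely, pick $C$ so that on the collar we have $C^{-1}g_U\le |\ve{\phi}|^{2/k}$ and $g_U\le C|\ve{\phi}|^{2/k}$. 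Then set $g_-:=\max(C^{-1}g_U,|\ve{\phi}|^{2/k})$ on $\overline{F'}$ (extended by $\max(C^{-1}g_U,|\ve{\phi}|^{2/k})$ wherever $|\ve{\phi}|^{2/k}$ makes sense, and by $C^{-1}g_U$ elsewhere on $\Sigma\setminus F$ — but $\Sigma\setminus F\subset\overline{F'}^c\cup$collar, so really $g_-$ is defined on all of $\Sigma$ as a max of subsolutions on overlaps), and $g_+:=\min(g_U,C|\ve{\phi}|^{2/k})$ where both are defined, $=g_U$ on $\Sigma\setminus F'$ and $=C|\ve{\phi}|^{2/k}$ near $\Sigma\setminus U$. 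The point is that $C^{-1}g_U$ is a subsolution (scaling down a solution decreases curvature and increases $\|\ve{\phi}\|^2$, so $\kappa\le -1+\|\ve{\phi}\|^2$ — verify via the scalar form), $|\ve{\phi}|^{2/k}$ is a (sub)solution away from zeros (it has $\kappa=0=-1+\|\ve{\phi}\|^2$ there), hence their max is a subsolution; dually $g_U$ and $C|\ve{\phi}|^{2/k}$ are supersolutions ($C|\ve{\phi}|^{2/k}$ has $\kappa=0\ge -1+C^{-k}\cdot 1$... — one needs $C^{-k}\le 1$, true — so it is a supersolution away from zeros, and near $\Sigma\setminus U$ there are no zeros), hence their min is a supersolution. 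The standard fact that a max of subsolutions is a subsolution and a min of supersolutions is a supersolution (in the $H^1_{loc}$ weak sense) is what makes the gluing legitimate without any partition-of-unity smoothing.

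Then I would check $g_-\le g_+$ globally: on $\Sigma\setminus F'$ this reads $\max(C^{-1}g_U,|\ve{\phi}|^{2/k})\le \min(g_U,C|\ve{\phi}|^{2/k})$, which holds once $C>1$ and $|\ve{\phi}|^{2/k}\le g_U$ there — and $|\ve{\phi}|^{2/k}\le g_U$ follows from Lemma \ref{lemma_nonpositive1}/Lemma \ref{lemma_nonpositive2} applied to the complete solution $g_U$ (completeness forces $\kappa_{g_U}\le 0$, equivalently $|\ve{\phi}|^{2/k}\le g_U$); on $F'$ one uses the collar estimates and the choice of $C$. Applying Theorem \ref{thm_subsuper} gives a solution $g$ with $g_-\le g\le g_+$, and unwinding the definitions of $g_\pm$ on $\Sigma\setminus F$ and on $\Sigma\setminus U$ yields exactly the two displayed inequalities. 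The main obstacle I anticipate is the bookkeeping in the overlap region $F'\setminus F$: one must choose $F'$ and $C$ so that the max defining $g_-$ and the min defining $g_+$ are genuinely ordered and genuinely of the claimed form on $\Sigma\setminus F$ versus $\Sigma\setminus U$, and so that $g_-$ extends to a global subsolution (i.e. near $\pa F'$ the two local subsolutions being maxed agree in the weak-subsolution sense with a global one). Everything else — the curvature inequalities for $C^{\pm1}$-rescalings and for $|\ve{\phi}|^{2/k}$, and the max/min stability of sub/supersolutions — is routine given the scalar form \eqref{eqn_vortex3} and Proposition \ref{prop_maximum}.
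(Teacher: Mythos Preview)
Your approach is essentially the paper's: build $g_-$ as a max of the subsolutions $C^{-1}g_U$ and $|\ve{\phi}|^{2/k}$, build $g_+$ as a min of the supersolutions $g_U$ and $C|\ve{\phi}|^{2/k}$, verify $g_-\le g_+$ using $|\ve{\phi}|^{2/k}\le g_U$ (from completeness $\Rightarrow$ nonpositive curvature), and invoke Theorem~\ref{thm_subsuper}. Two small points where your write-up is looser than the paper's: first, your region bookkeeping for $g_-$ is garbled --- you cannot set $g_-=\max(C^{-1}g_U,|\ve{\phi}|^{2/k})$ on $\overline{F'}$, since $g_U$ is undefined on $\Sigma\setminus U\subset F\subset F'$; the paper instead puts $g_-=|\ve{\phi}|^{2/k}$ on $F$ and $g_-=\max(C^{-1}g_U,|\ve{\phi}|^{2/k})$ on $\Sigma\setminus F$, with $C$ chosen so these agree on a neighborhood of $\pa F$. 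Second, for $g_+$ to patch continuously across $\pa U$ you need $\min(g_U,C|\ve{\phi}|^{2/k})=C|\ve{\phi}|^{2/k}$ near $\pa U$, which the paper secures by observing that completeness of $g_U$ forces $g_U/|\ve{\phi}|^{2/k}\to+\infty$ as one approaches $\pa U$; you should make this explicit.
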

\begin{proof}[Proof of Lemma \ref{lemma_asolution}]
We will construct sub- and super- solutions to the vortex equation on $(\Sigma,\ve{\phi})$ making use of the following basic facts:
\begin{itemize}
\item If $g'$ is a nonpositively curved solution and $C$ is a positive constant, then $C g'$ is a super- (resp. sub-) solution if $C\geq 1$ (resp. $C\leq 1)$;
\item The pointwise minimum (resp. maximum) of two super- (resp. sub-) solutions is a super- (resp. sub-) solution.
\end{itemize}
These follow from the definition of sub- and super- solutions, taking account of the fact that if $f_1,f_2\in C^0(\Omega)\cap H^1_\mathit{loc}(\Omega)$ (where $\Omega\subset\mathbb{C}$ is an open set), then the pointwise maximum $\max(f_1,f_2)=\frac{1}{2}\left(f_1+f_2+|f_1-f_2|\right)$ also belongs to $C^0(\Omega)\cap H^1_\mathit{loc}(\Omega)$ (see \cite{gilbarg-trudinger} Lemma 7.6), similarly for the pointwise minimum.

Since $\pa F$ is compact and disjoint from the zeros of $\ve{\phi}$, the ratio $g_U/|\ve{\phi}|^\frac{2}{k}$ is bounded from above by a constant $C>1$ on a neighborhood $V$ of $\pa F$. It follows that 
$\max(C^{-1}g_U,|\ve{\phi}|^\frac{2}{k})=|\ve{\phi}|^\frac{2}{k}$ on $V$. Therefore, by the above basic facts, 
$$
g_-:=
\begin{cases}
\max(C^{-1}g_U, |\ve{\phi}|^\frac{2}{k})&\mbox{  on  }\Sigma\setminus F\\
|\ve{\phi}|^\frac{2}{k}&\mbox{  on  }F
\end{cases}
$$
is continuous and is a subsolution on each of the open sets $F$, $V$ and $\Sigma\setminus\overline{F}$, hence a subsolution on $\Sigma$. 

On the other hand, since $g_U$ is a complete metric on $U$, the ratio $g_U/|\ve{\phi}|^\frac{2}{k}$ goes to $+\infty$ as one approaches a point in $\pa U$. As a result, $\min(g_U, C|\ve{\phi}|^\frac{2}{k})=C|\ve{\phi}|^\frac{2}{k}$ in a neighborhood $W$ of $\pa U$, while $\min(g_U, C|\ve{\phi}|^\frac{2}{k})=g_U$ on the neighborhood  $V$ of $\pa F$ by the earlier construction. Therefore
$$
g_+:=
\begin{cases}
g_U &\mbox{ on } \Sigma\setminus F\\ 
\min(g_U, C|\ve{\phi}|^\frac{2}{k}) &\mbox{ on } F\cap U\\
C|\ve{\phi}|^\frac{2}{k} &\mbox{ on }\Sigma\setminus U
\end{cases}
$$
is continuous and is a supersolution on each of the open sets $\Sigma\setminus\overline{F}$, $V$, $F\cap U$, $W$ and $\Sigma\setminus\overline{U}$, hence a supersolution on $\Sigma$. Moreover, since $g_U\geq |\ve{\phi}|^\frac{2}{k}$ by  Lemma \ref{lemma_nonpositive2}, one readily checks that $g_+\geq g_-$. Therefore, Theorem \ref{thm_subsuper} applies and yields the required solution.

\end{proof}

When $\Sigma=\mathbb{C}$, we let $F\subset\mathbb{C}$ be an open disk whose closure does not contain zeros of $\ve{\phi}$ and let $U\subset\mathbb{C}$ be the complement of a closed disk contained in $F$. Since $U$ is a hyperbolic Riemann surface, the above existence result in the hyperbolic case yields the solution $g_U$ needed in the hypothesis of Lemma \ref{lemma_asolution}. The lemma then provides a solution $g$ to the vortex equation on $(\mathbb{C},\ve{\phi})$, which is complete because it is minorized by a constant multiple of $g_U$ outside $F$ while $g_U$ is complete. This establishes the existence of complete solution in the case $\Sigma=\mathbb{C}$ and finishes the proof of the theorem.
\end{proof}

%\begin{remark}
%Instead of using Lemma \ref{lemma_asolution}, one can prove the existence of solutions for $(\mathbb{C},\ve{\phi})$ alternatively by a straightforward adaptation of the method in the hyperbolic case. Namely, consider the solution $g_{r_n}=e^{u_n}|\dz|^2$ on the disk $(D_{r_n},\ve{\phi}|_{D_{r_n}})$ with radius $r_1,r_2,\cdots$ tending to infinity. Although $g_{r_n}$ is no long bounded from below by $g_\mathbb{D}$, we have $g_{r_n}\geq |\ve{\phi}|^\frac{2}{k}$, or equivalently, $u_n\geq \frac{2}{k}\log|\phi|$, by Lemma \ref{lemma_nonpositive2}. Thus $u_n$ pointwise converges to some $u$ which possibly takes value $-\infty$ at the zeros of $\ve{\phi}$. Nevertheless, since $\frac{2}{k}\log|\phi|$ is locally $L^p$ for any $p$, we can still apply the same argument as in the hyperbolic case to prove that $u_n$ locally $C^2$-converges to $u$, thus $g=e^u|\dz|^2$ is a solution.
%\end{remark}
%

\subsection{(Non-)existence of incomplete solutions}\label{subsec_nonexistence}
The following theorem is a more detailed statement of Theorem \ref{thm_introli}. It shows that for certain $(\Sigma,\ve{\phi})$, the completeness assumption in Theorem \ref{thm_ue} can be dropped, resulting in unconditional uniqueness. The case $\Sigma=\mathbb{C}$ is proved by Q. Li \cite{li_on}. 

\begin{theorem}\label{thm_incomplete}
Let $\Sigma$ be a Riemann surface and $\ve{\phi}$ be a holomorphic $k$-differential on $\Sigma$ with finitely many zeros.  Then the following statements are equivalent:
\begin{enumerate}[(a)]
\item\label{item_incomplete1}
The vortex equation on $(\Sigma,\ve{\phi})$ does not admit incomplete solutions.
\item\label{item_incomplete2}
The singular flat metic $|\ve{\phi}|^\frac{2}{k}$  is complete.
\item\label{item_incomplete3}
$\Sigma=\overline{\Sigma}\setminus P$ for a closed Riemann surface $\overline{\Sigma}$ and a finite (possibly empty) subset $P\subset\overline{\Sigma}$, such that every $p\in P$ is a pole of $\ve{\phi}$ of order at least $k$.
\end{enumerate}
\end{theorem}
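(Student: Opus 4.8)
The plan is to prove the cycle of implications $(\ref{item_incomplete3})\Rightarrow(\ref{item_incomplete2})\Rightarrow(\ref{item_incomplete1})\Rightarrow(\ref{item_incomplete3})$.

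For $(\ref{item_incomplete3})\Rightarrow(\ref{item_incomplete2})$, this is a purely local computation on the flat metric $|\ve{\phi}|^{2/k}$. Writing $\ve{\phi}=\phi(z)\dz^k$ near a puncture $p$ with $z(p)=0$, the hypothesis that $\ve{\phi}$ has a pole of order $\ell\geq k$ means $\phi(z)\sim c z^{-\ell}$, so $|\ve{\phi}|^{2/k}\sim |c|^{2/k}|z|^{-2\ell/k}|\dz|^2$, and since $2\ell/k\geq 2$ the integral $\int_0 r^{-\ell/k}\,\dif r$ diverges: every radial path into the puncture has infinite length. Away from the finitely many punctures and zeros the metric is flat and nonsingular, and near a zero the metric is an honest cone point (finite diameter), so completeness of $|\ve{\phi}|^{2/k}$ reduces exactly to divergence at each puncture, which is what we just checked. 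Conversely this same calculation shows that if some puncture has order $<k$ or if $\Sigma$ is not of finite type, then $|\ve{\phi}|^{2/k}$ fails to be complete, which will feed the contrapositive of $(\ref{item_incomplete1})\Rightarrow(\ref{item_incomplete3})$ below.

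For $(\ref{item_incomplete2})\Rightarrow(\ref{item_incomplete1})$: assume $|\ve{\phi}|^{2/k}$ is complete and let $g$ be any solution to the vortex equation. Since $|\ve{\phi}|^{2/k}$ is a (complete, $C^2$ away from zeros) subsolution — in fact a solution where $\ve{\phi}\neq0$ — we want to sandwich $g$ from below by a complete metric. Here I would apply the comparison machinery of Section 1: by Lemma \ref{lemma_nonpositive1}/Proposition \ref{prop_maximum} the conformal ratio $|\ve{\phi}|^{2/k}/g$ has no local maximum $>1$ except possibly an issue at the zeros of $\ve{\phi}$, but at a zero $|\ve{\phi}|^{2/k}$ vanishes so the ratio is $0$ there and the maximum principle applies on all of $\Sigma$; hence $g\geq |\ve{\phi}|^{2/k}$ everywhere (equivalently $\kappa_g\leq 0$). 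Since $|\ve{\phi}|^{2/k}$ is complete, so is $g$. Thus no incomplete solution exists.

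For $(\ref{item_incomplete1})\Rightarrow(\ref{item_incomplete3})$, I argue the contrapositive: if $(\ref{item_incomplete3})$ fails, construct an incomplete solution. By the above computation, failure of $(\ref{item_incomplete3})$ is equivalent to failure of $(\ref{item_incomplete2})$, i.e.\ $|\ve{\phi}|^{2/k}$ is incomplete; concretely either $\Sigma$ is not $\overline{\Sigma}\setminus P$ of finite type, or some puncture $p$ carries a pole of order $<k$ (including the case where $\ve{\phi}$ is regular, or has a zero, at $p$). The idea is to build a solution $g$ that is \emph{bounded above} by a constant multiple of $|\ve{\phi}|^{2/k}$ near such a bad end, which then makes $g$ incomplete there. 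I would invoke Lemma \ref{lemma_asolution}: take $F$ to be a small punctured-disk-type neighborhood of the bad end (or of a point compactifying it) whose closure avoids zeros of $\ve{\phi}$ — shrinking so that $\ve{\phi}\neq 0$ on $\overline F$ — and $U$ the complement of a slightly smaller closed set, so $U$ is hyperbolic and carries a complete solution $g_U$ by Theorem \ref{thm_ue}. The lemma then produces a global solution $g$ with $|\ve{\phi}|^{2/k}\leq g\leq C|\ve{\phi}|^{2/k}$ on $\Sigma\setminus U\supset$ (a neighborhood of the bad end). Since $|\ve{\phi}|^{2/k}$ has finite length paths running out that end, so does $g$, and $g$ is incomplete.

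\emph{Main obstacle.} The delicate point is the last implication: one must make sure the "bad end" is genuinely approached by a finite-length path of $g$, and this requires knowing $g$ is bounded \emph{above} by $C|\ve{\phi}|^{2/k}$ right up to the end — which is exactly the second estimate in Lemma \ref{lemma_asolution}, so the real work is checking its hypotheses apply (compactness of $\pa F$, absence of zeros in $\overline F$). When the bad end is a pole of order $<k$ that is \emph{not} a finite-type puncture, or when $\Sigma$ has infinite topology, one should be careful that "attaching a point" may not be literally possible; instead one picks $F$ to be any neighborhood of the end small enough to avoid zeros, uses that $\pa F$ can be taken to be a single compact circle, and checks the flat metric still has infinite-diameter... wait, one needs \emph{finite}-length paths, which holds precisely because in all the $(\ref{item_incomplete3})$-failing cases $|\ve{\phi}|^{2/k}$ is incomplete near that end. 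Ensuring this dichotomy is exhaustive — i.e.\ that "$(\ref{item_incomplete3})$ fails" really does force $|\ve{\phi}|^{2/k}$ incomplete — is the part I would write out most carefully, case-splitting on the conformal type of $\Sigma$ (finite type vs.\ infinite type) and on the order of $\ve{\phi}$ at each puncture.
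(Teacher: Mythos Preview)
Your argument for $(\ref{item_incomplete2})\Rightarrow(\ref{item_incomplete1})$ has a genuine gap. Proposition \ref{prop_maximum} only rules out \emph{interior local} maxima of $|\ve{\phi}|^{2/k}/g$ exceeding $1$; on a noncompact $\Sigma$ nothing prevents this ratio from tending to $+\infty$ along a sequence escaping to an end, so you cannot conclude $g\geq|\ve{\phi}|^{2/k}$ from the local maximum principle alone. (Equivalently: Lemma \ref{lemma_completebound} would give what you want, but it requires the \emph{supersolution} $g_+$ to be complete, and here $g_+=g$ is the arbitrary solution whose completeness you are trying to prove.) The paper fixes this by smoothing $|\ve{\phi}|^{2/k}$ near the finitely many zeros to obtain a genuine complete $C^2$ metric $g_0$ with curvature bounded below, writing $g=e^u g_0$, and applying the Cheng--Yau generalized maximum principle (Theorem \ref{thm_boundedness}) to $-u$ to bound it from above; this forces $g\geq e^{-M}g_0$, hence $g$ is complete. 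The Omori--Yau/Cheng--Yau step is the missing idea.

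There is a second, smaller gap in your $(\ref{item_incomplete1})\Rightarrow(\ref{item_incomplete3})$. You claim that ``the same calculation'' shows $\neg(\ref{item_incomplete3})\Rightarrow\neg(\ref{item_incomplete2})$, but the local computation only handles the case where $\Sigma$ is already known to be of finite type with a low-order pole; when $\Sigma$ has infinite type you need Huber's theorem (complete metric with finite total curvature $\Rightarrow$ finite conformal type), which you do not invoke. The paper sidesteps this by proving $(\ref{item_incomplete1})\Rightarrow(\ref{item_incomplete2})$ directly rather than via $(\ref{item_incomplete3})$: assuming $|\ve{\phi}|^{2/k}$ incomplete, it applies Lemma \ref{lemma_asolution} with $F$ equal to the complement of a relatively compact neighbourhood of the zero set (not a neighbourhood of a single bad end), yielding a solution bounded by $C|\ve{\phi}|^{2/k}$ outside a compact set and hence incomplete. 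This avoids singling out an end and makes the infinite-type case automatic. The equivalence $(\ref{item_incomplete2})\Leftrightarrow(\ref{item_incomplete3})$ is then proved separately, with Huber's theorem (via \cite{hulin-troyanov}) supplying exactly the finite-type conclusion you were missing.
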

\begin{proof}
(\ref{item_incomplete1})$\Rightarrow$(\ref{item_incomplete2}): Assuming $|\ve{\phi}|^\frac{2}{k}$ is incomplete, we shall show that the vortex equation on $(\Sigma,\ve{\phi})$ has an incomplete solution. Take a relatively compact neighborhood $V$ of the set of zeros of $\ve{\phi}$ and put $F:=\Sigma\setminus\overline{V}$. Let $U$ be a relatively compact neighborhood of $\overline{V}$. Then $U$ and $F$ satisfy the assumptions in Lemma \ref{lemma_asolution}, while Theorem \ref{thm_ue} provides the complete solution $g_U$ needed in the lemma. The solution $g$ given by the lemma is incomplete, as required,  because $g$ is majorized by $C|\ve{\phi}|^\frac{2}{k}$ outside the compact set $\overline{U}$ while $|\ve{\phi}|^\frac{2}{k}$ is incomplete.  

(\ref{item_incomplete2})$\Rightarrow$(\ref{item_incomplete1}):
Assume $|\ve{\phi}|^\frac{2}{k}$  is complete and let $g_0$ be a smooth conformal metric on $\Sigma$ such that $g_0=|\ve{\phi}|^\frac{2}{k}$ outside a relatively compact neighborhood $V$ of the set of zeros of $\ve{\phi}$. Then $g_0$ is complete and has curvature bounded from below. Let $g=e^ug_0$ be any solution to the vortex equation on $(\Sigma,\ve{\phi})$. Then $u$ satisfies Eq.(\ref{eqn_vortex2}), or equalently,
$$
\frac{1}{2}\Delta_{g_0}(-u)=e^{(k-1)(-u)}\|\ve{\phi}\|^2_{g_0}-e^{-(-u)}-\kappa_{g_0}.
$$
Outside $U$, the right hand side is $e^{(k-1)(-u)}-e^{-(-u)}$. Therefore, if we let $t_0$ and $f_0$ be the maximum of $-u$ and the minimum of $\frac{1}{2}\Delta_{g_0}(-u)$ on $\overline{U}$, respectively, and put
$$
f(t):=
\begin{cases}
f_0 &\mbox{ if } t\leq t_0 \\[5pt]
e^{(k-1)t}-e^{-t}& \mbox{ if }  t> t_0
\end{cases}
$$
then we have $\frac{1}{2}\Delta_{g_0}(-u)\geq f(-u)$.
By Theorem \ref{thm_boundedness}, $-u$ is bounded from above by a positive constant $M$. Thus $g\geq e^{-M}g_0$ is complete as $g_0$ is.

(\ref{item_incomplete2})$\Rightarrow$(\ref{item_incomplete3}): The complete conformal metric $g_0$ constructed above has finite total curvature, so $\Sigma$ is a punctured Riemann surface of finite type by \cite[Theorem 1.1]{hulin-troyanov}. By Lemma \ref{lemma_huber}, each puncture is pole of order at least $k$.
 
(\ref{item_incomplete3})$\Rightarrow$(\ref{item_incomplete2}): Conformally identifying a closed neighborhood of each  $p\in P$ with the punctured disk $\{z\in\mathbb{C}\mid |z|\leq 1\}$ such that $p$ corresponds to $z=0$, it is elementary to check that Condition (\ref{item_incomplete3}) implies $|\ve{\phi}|^\frac{2}{k}$ is complete on the punctured disk, hence also complete on the whole $\Sigma$.
\end{proof}

\subsection{Upper bound at the center of a disk}\label{subsec_upper}
%In this section, we give upper bounds for nonpositively curved solutions to the vortex equation at a point $q\in\Sigma$ in terms of the distance $r$ from $q$ to the zeros of $\ve{\phi}$.

Every point of $\Sigma$ that is not a zero of $\ve{\phi}$ has a neighborhood which can be conformally identified with a disk
$$
D_r:=\{z\in\mathbb{C}\mid |z|<r\}
$$
in such a way that $\ve{\phi}$ restricts to $\dz^k$. A solution $g$ to the vortex equation on $(\Sigma,\ve{\phi})$ can be written in the disk as $g=e^u|\dz|^2$, with $u:D_r\to\mathbb{R}$ satisfying
\begin{equation}\label{eqn_simplevortex}
\Delta u=2(e^u-e^{(1-k)u}).
\end{equation}
The curvature $\kappa_g=-\frac{1}{2e^u}\Delta u$ is nonpositive in the disk if and only if $u$ is nonnegative. In this section, we establish upper bounds for the value of any such $u$ at the center of the disk in terms of the radius $r$.
\begin{proposition}[\textbf{Coarse bound}]
\label{prop_coarse}
Given $k\geq1$, there is a constant $C>0$ such that for any nonnegative $u$ satisfying Eq.(\ref{eqn_simplevortex}) on the disk $D_r$ with radius $r\geq1$, we have
$$
u(0)\leq \frac{C}{r^2}.
$$
\end{proposition}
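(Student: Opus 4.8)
The plan is to compare $u$ with an explicit radial supersolution on $D_r$ that blows up on the boundary, using the comparison principle (Corollary \ref{coro_compa}). Since $u\geq 0$, Eq.(\ref{eqn_simplevortex}) gives $\Delta u = 2(e^u - e^{(1-k)u}) \geq 0$ on $D_r$ (as $u\geq 0$ forces $e^u \geq 1 \geq e^{(1-k)u}$), so $u$ is subharmonic; but this alone only gives the mean-value inequality, not the desired quadratic decay. To get the sharp $r^{-2}$ rate I would instead work with the conformal metric $g = e^u|\dz|^2$, which is a non-positively curved solution of the vortex equation, and construct a complete comparison supersolution on $D_r$ of the form $g_+ = e^{u_+}|\dz|^2$ where $u_+$ is radial and $g_+$ is a complete metric on $D_r$. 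The natural candidate is a constant multiple $u_+ = \log M + v_r$ of a rescaled Poincaré-type metric on $D_r$: recall that on the disk of radius $r$ the hyperbolic metric $g_{D_r}$ is a solution of $\kappa = -1$, hence (being complete and non-positively curved) a supersolution of the vortex equation for any $\ve{\phi}$, and one checks directly $g_{D_r}(0) = 4/r^2$.

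More precisely, first I would observe that $g_{D_r}$ is itself a supersolution to the vortex equation on $(D_r,\dz^3)$: indeed $\kappa_{g_{D_r}} = -1 \geq -1 + \|\ve{\phi}\|^2_{g_{D_r}}$ would require $\|\ve{\phi}\|^2_{g_{D_r}}\leq 0$, which is false, so a plain rescaling does not immediately work and one must be slightly more careful. The fix is to take $g_+ = M\,g_{D_r}$ with $M$ large: then $\kappa_{g_+} = -1/M^2$ and $\|\ve{\phi}\|^2_{g_+} = \|\dz^3\|^2_{M g_{D_r}} = M^{-k}\|\dz^3\|^2_{g_{D_r}}$. On $D_r$ with $r\geq 1$, the quantity $\|\dz^3\|^2_{g_{D_r}} = \big(\tfrac{r^2(1-r^{-2}|z|^2)^2}{4}\big)^{k}$ is bounded near the boundary but blows up at... no — it is bounded on all of $D_r$ by $(r^2/4)^k$, which grows with $r$. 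So for fixed $r$ one can choose $M = M(r)$ large enough that $-1/M^2 \geq -1 + M^{-k}(r^2/4)^k$, but this would give an $M$ depending badly on $r$. The honest route, and the one I expect to use, is to take the complete solution $g_U$ on $U = D_r$ guaranteed by Theorem \ref{thm_ue} applied to $(D_r, \dz^3)$; by uniqueness and rotational symmetry $g_U = e^{w(|z|)}|\dz|^2$ is radial, and by Lemma \ref{lemma_completebound} (with $g_+ = g_U$ complete and $g_- = g$) we get $g \leq g_U$ on $D_r$, hence $u(0) \leq w(0)$. It then remains to bound $w(0)$, i.e. to prove the proposition for the \emph{radial complete} solution, which reduces the PDE to an ODE in $t = |z|$: $w'' + t^{-1}w' = 2(e^w - e^{(1-k)w})$ with $w \geq 0$, $w$ complete at $t = r$.

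For the ODE bound I would use a barrier / shooting argument: since $w$ is complete and non-negative, $w\to +\infty$ as $t\to r^-$, and the ODE forces $w$ to be convex-ish (more precisely $(tw')' = 2t(e^w - e^{(1-k)w}) \geq 0$, so $tw'$ is non-decreasing, hence $w' \geq 0$ once it is ever non-negative, and in fact $w$ is increasing). Comparing with the exact hyperbolic solution: the rescaled Poincaré metric $g_{D_\rho}$ for a slightly smaller radius $\rho < r$ gives a \emph{complete} metric on $D_\rho$ with $\kappa \equiv -1$, which on $D_\rho$ satisfies $\kappa_{g_{D_\rho}} = -1 \leq -1 + \|\ve{\phi}\|^2_{g_{D_\rho}}$ away from... — again the sign issue. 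The clean statement I actually want: $g_{D_\rho}$ is a \emph{subsolution} on $D_\rho$ (since $\kappa = -1 \leq -1 + \|\ve{\phi}\|^2$), complete, and $g = e^u|\dz|^2$ with $\kappa_g \leq 0$ restricted to $D_\rho$ is a supersolution there? No: $g$ solves the equation, so it is both. Rather, I apply Lemma \ref{lemma_completebound} on $D_r$ directly with $g_+$ = the complete solution $g_U$ and conclude $g \leq g_U$; then separately bound $w(0)$ where $g_U = e^w|\dz|^2$ by comparing $g_U$ \emph{from above} with $M g_{D_r}$ for a universal $M$, using that $\|\ve{\phi}\|^2_{M g_{D_r}} = M^{-k}\|\ve{\phi}\|^2_{g_{D_r}} \to 0$ uniformly as... no it doesn't.

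The genuine main obstacle, which I would tackle head-on, is exactly this: producing a supersolution on $D_r$ whose value at the origin is $O(r^{-2})$ \emph{with a universal constant}, uniformly in $r\geq 1$. I believe the right construction is $g_+ = c\,|\dz|^2/(r^2 - |z|^2)^2$ times a suitable power correction — i.e. a perturbed Poincaré metric — chosen so that the term $e^{(1-k)u_+}$ (which is small where $u_+$ is large, i.e. near the boundary, but is order $(r^2/c)^{k-1}$ at the center) can be absorbed. Concretely, since near the center $u_+ \approx \log(c/r^4)$ is very negative for large $r$, the term $e^{(1-k)u_+} = (c/(r^2-|z|^2)^2)^{1-k}$ is huge at the center when $k \geq 2$, which would \emph{help} make $g_+$ a supersolution ($\Delta u_+ \leq 2(e^{u_+} - e^{(1-k)u_+})$ needs the right side large). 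So the supersolution inequality is easiest at the center and hardest near the boundary, where $u_+\to+\infty$ and the Poincaré metric is asymptotically exact — there $g_{D_r}$ itself works. Patching: take $g_+ := \min(M g_{D_r},\, C'|\dz|^2)$... but $C'|\dz|^2$ is flat, not a supersolution unless $C'|\dz|^2 \geq |\ve{\phi}|^{2/k} = |\dz|^2$, i.e. $C'\geq 1$, and then $\kappa = 0 = -1 + \|\ve{\phi}\|^2$ forces $\|\ve{\phi}\|^2_{C'|\dz|^2} = C'^{-k} \geq 1$, so $C' \leq 1$ — meaning only $C'=1$, i.e. $|\dz|^2$ itself, which is a solution. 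Hence $g_+ := \min(g_U, \text{(flat)})$ does not improve. I therefore expect the cleanest correct proof to be: (i) reduce to the radial complete solution $w$ via Lemma \ref{lemma_completebound}; (ii) for this specific $w$, derive the ODE and integrate it, using that $w$ must reach $+\infty$ at $t=r$ and is increasing, to show $w(0)$ cannot be large — quantitatively, if $w(0)$ were $\geq \varepsilon$ then $w' $ would already be bounded below, and the convexity $(tw')'\geq 2t(e^w-1)$ would force $w$ to blow up before radius $r$ unless $w(0) = O(r^{-2})$; a Gronwall/comparison estimate against the solution of $w'' + t^{-1}w' = 2(e^w - 1)$ on $[0,r]$ (which has an explicit logarithmic blow-up profile $\sim -2\log(r-t)$) pins down the constant. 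Making step (ii) rigorous and extracting the \emph{universal} constant $C$ is where the real work lies; everything else is bookkeeping with the results already available in the excerpt.
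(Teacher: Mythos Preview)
You were on the right track with $g_+=M\,g_{D_r}$ and then abandoned it for the wrong reason. The curvature of $Mg_{D_r}$ is $-1/M$, not $-1/M^2$; the supersolution condition on $(D_r,\dz^k)$ is therefore
\[
-\tfrac{1}{M}\ \geq\ -1+\tfrac{1}{M^{k}}\sup_{D_r}\|\dz^k\|^2_{g_{D_r}}=-1+\tfrac{(r/2)^{2k}}{M^{k}},
\]
i.e.\ $M^{k}-M^{k-1}\geq (r/2)^{2k}$. The smallest such $M$ is the unique $M\geq 1$ with $M^{k}-M^{k-1}=(r/2)^{2k}$, and elementary asymptotics give $M=(r/2)^{2}+\tfrac{1}{k}+O(r^{-2})$ as $r\to\infty$. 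This is exactly the paper's choice (written there via the inverse function $\xi_k$). The point you missed is that although $M$ grows like $r^{2}$, the value of $g_{D_r}$ at the origin is $4/r^{2}$, so $Mg_{D_r}(0)=4M/r^{2}=1+O(r^{-2})$; hence by Lemma~\ref{lemma_completebound},
\[
u(0)\ \leq\ \log\!\big(Mg_{D_r}(0)\big)=\log\!\frac{M}{(r/2)^{2}}\ \sim\ \frac{4}{kr^{2}}.
\]
So ``$M$ depending on $r$'' is not a defect: the dependence is precisely calibrated to make the supersolution touch the flat metric at the center to order $O(r^{-2})$.

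Your fallback plan (reduce to the radial complete solution via Lemma~\ref{lemma_completebound}, then analyze the ODE $w''+t^{-1}w'=2(e^{w}-e^{(1-k)w})$ with blow-up at $t=r$) is a legitimate alternative in principle, but the sketch you give is not a proof: ``a Gronwall/comparison estimate against $w''+t^{-1}w'=2(e^{w}-1)$ pins down the constant'' hides all the work, and the blow-up comparison you allude to does not by itself produce the $r^{-2}$ rate without an argument at least as delicate as the supersolution construction above. In fact that ODE route would most naturally be completed by exactly the same barrier $Mg_{D_r}$, so you would end up back where you started.
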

\begin{proof}
Given $t\geq 0$, we let $\xi_k(t)$ denote the unique $\xi\geq 1$ such that $\xi^k-\xi^{k-1}=t$. In other words, $t\mapsto\xi_k(t)$ is the inverse of the strictly increasing function
$\xi\mapsto \xi^k-\xi^{k-1}$ defined on $\xi\in[1,+\infty)$. The definition implies $\xi_k(t)\geq t^\frac{1}{k}$ and 
\begin{equation}\label{eqn_xik}
\xi_k(t)-t^\frac{1}{k}=\xi_k(t)\left[1-\left(1-\frac{1}{\xi_k(t)}\right)^\frac{1}{k}\right]=\frac{1}{k}+O\big(\xi_k(t)^{-1}\big) \mbox{ as }  t\rightarrow+\infty.
\end{equation}

On the other hand, the complete conformal hyperbolic metric on $D_r$ is
$$
g_{D_r}:=\frac{4r^2|\dz|^2}{(r^2-|z|^2)^2}.
$$
Denoting $M:=\xi_k((r/2)^{2k})$, we claim that $g'=Mg_{D_r}$ is a supersolution to the vortex equation on $(D_r, \dz^k)$. Indeed, since the pointwise norm $\|\dz^k\|^2_{g_{D_r}}$ attends its maximum $(r/2)^{2k}$ at $z=0$, we have $\sup_{D_r}\|\dz^k\|^2_{g'}=(r/2)^{2k}/M^k$, so 
$$
\kappa_{g'}=-\frac{1}{M}=-1+\frac{(r/2)^{2k}}{M^k}= -1+\sup_{D_r}\|\dz^k\|^2_{g'}\geq -1+\|\dz^k\|^2_{g'},
$$
whence the claim. 

Eq.(\ref{eqn_simplevortex}) is the vortex equation for $g=e^{u(z)}|\dz|^2$ on $(D_r,\dz^k)$, so the above claim and Lemma \ref{lemma_completebound} implies $g\leq g'$. At $z=0$, this gives 
$$
u(0)\leq \log \frac{\xi_k((r/2)^{2k})}{(r/2)^2}\sim\frac{4}{kr^2} \mbox{ as }r\rightarrow+\infty
$$
 (``$f_1(x)\sim f_2(x)$ as $x\rightarrow x_0$'' means $\lim_{x\rightarrow x_0}\frac{f_1(x)}{f_2(x)}\rightarrow 1$), hence the required inequality, where the last asymptotic equivalence is obtained through
$$
\log\frac{\xi_k(t)}{t^\frac{1}{k}}=\frac{1}{k}\log\left(1+\frac{\xi_k(t)^{k-1}}{t}\right)\sim\frac{\xi_k(t)^{k-1}}{kt}\sim \frac{1}{k}t^{-\frac{1}{k}} \mbox{ as }t\rightarrow+\infty,
$$
using the relation $\xi_k(t)^k-\xi_k(t)^{k-1}=t$ and the asymptotic equivalence
$\xi_k(t)\sim t^\frac{1}{k}$ implied by Eq.(\ref{eqn_xik}).
\end{proof}

Proposition \ref{prop_coarse} can be improved to the following theorem, where the upper bound decays exponentially with respect to $r$:
\begin{theorem}[\textbf{Fine bound}]
\label{thm_diskfine}
Given $k\geq1$, there are constants $C, r_0>0$ such that for any nonnegative solution $u$ to Eq.(\ref{eqn_simplevortex}) on the disk $D_r$ with radius $r\geq r_0$, we have
$$
u(0)\leq Cr^\frac{1}{2}e^{-\sqrt{2k}\,r}.
$$
\end{theorem}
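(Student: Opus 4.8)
The plan is to bootstrap from the polynomial bound $u(0)\le C/r^2$ of Proposition~\ref{prop_coarse} to the exponential bound by comparing $u$ with a carefully chosen radial supersolution on an annulus. The key observation is that once $u$ is known to be uniformly small on a circle $|z|=\rho$ with $\rho$ comparable to $r$, Eq.~(\ref{eqn_simplevortex}) is well approximated near $u=0$ by its linearization: writing $e^{u}-e^{(1-k)u}=ku+O(u^2)$, one has $\Delta u\ge 2ku - C'u^2\ge (2k-\varepsilon)u$ on the region where $u$ is small, so $u$ is a subsolution of the linear Helmholtz-type equation $\Delta w = (2k-\varepsilon)w$. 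Radial solutions of this equation are modified Bessel functions $I_0\big(\sqrt{2k-\varepsilon}\,|z|\big)$ and $K_0\big(\sqrt{2k-\varepsilon}\,|z|\big)$, and the decaying one, $K_0(s)\sim \sqrt{\pi/(2s)}\,e^{-s}$ as $s\to+\infty$, produces precisely the claimed factor $r^{1/2}e^{-\sqrt{2k}\,r}$ (after letting $\varepsilon\to 0$, which forces the slightly lossy $r^{1/2}$ rather than $r^{-1/2}$ — this matches the statement being a \emph{weaker} version of \cite[Theorem 5.7]{dumas-wolf}).

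First I would apply Proposition~\ref{prop_coarse} on the sub-disk $D_{r/2}(z_0)$ centered at any point $z_0$ with $|z_0|\le r/2$ to get $u(z_0)\le 4C/r^2$; thus $u\le 4C/r^2$ on the whole disk $\overline{D_{r/2}}$, and in particular on the circle $|z|=r/2$. Choosing $r_0$ large enough, $u$ is then smaller than any prescribed threshold on $\{|z|\le r/2\}$, so the linearization estimate $\Delta u\ge (2k-\varepsilon)u$ is valid there (here I use $u\ge 0$, which is equivalent to $\kappa_g\le 0$, to control the sign of the error term). Next I would build the comparison function: on the annulus $A=\{ \delta \le |z|\le r/2\}$ for a fixed small $\delta>0$, set
$$
\overline{w}(z) = \frac{4C}{r^2}\cdot \frac{K_0\big(\sqrt{2k-\varepsilon}\,|z|\big)}{K_0\big(\sqrt{2k-\varepsilon}\,\delta\big)},
$$
which satisfies $\Delta \overline w = (2k-\varepsilon)\overline w$ exactly, dominates $u$ on $|z|=\delta$ (since $K_0$ is decreasing, $\overline w\ge 4C/r^2\ge u$ there for $r$ large), and dominates $u$ on $|z|=r/2$ provided $K_0(\sqrt{2k-\varepsilon}\,r/2)\le K_0(\sqrt{2k-\varepsilon}\,\delta)$, which holds for $r$ large. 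The weak maximum principle (Lemma~\ref{lemma_maximumprinciple}, applied to $u-\overline w$, whose Laplacian is $\ge (2k-\varepsilon)(u-\overline w)$ where $u-\overline w\ge 0$) then gives $u\le \overline w$ on $A$; evaluating at the origin — or rather, since $0\notin A$, evaluating at $|z|=\delta$ and then re-running Proposition~\ref{prop_coarse} or the mean value inequality on $D_\delta$ — transfers the bound to $u(0)$. Plugging in the asymptotics of $K_0$ yields $u(0)\le C'' r^{1/2}e^{-\sqrt{2k-\varepsilon}\,r}$, and a final adjustment (absorbing $\varepsilon$ into the polynomial prefactor, or repeating with $\varepsilon=\varepsilon(r)\to 0$ slowly) gives the stated $e^{-\sqrt{2k}\,r}$.

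I expect the main obstacle to be the passage from $e^{-\sqrt{2k-\varepsilon}\,r}$ to the sharp exponent $e^{-\sqrt{2k}\,r}$: a fixed $\varepsilon>0$ in the linearization leaves an exponential gap, so I must either let $\varepsilon$ depend on $r$ (then the error term $C'u^2\le C'(4C/r^2)\,u$ must be dominated by $\varepsilon u$, i.e. $\varepsilon\gtrsim r^{-2}$, and $\sqrt{2k}-\sqrt{2k-\varepsilon}=O(\varepsilon)=O(r^{-2})$ contributes only a bounded factor $e^{O(1/r)}$ to the exponential — harmless), or iterate the argument, each step improving the region on which $u$ is small. Care is also needed because the comparison region is an annulus rather than a disk, so the boundary inequality must be checked on both components and the bound at the center recovered by one more application of the coarse estimate; and one must verify that the modified Bessel function $K_0$, which is only defined for positive argument, causes no trouble since we stay away from the origin. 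None of these steps is hard individually, but matching the precise constant $\sqrt{2k}$ in the exponent is where the argument must be done with some care.
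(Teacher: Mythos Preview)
Your approach has the right skeleton (bootstrap from the coarse polynomial bound via comparison with a radial Bessel-type function), but the comparison function is set up backwards, and this is a genuine gap rather than a technicality.

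You use the \emph{decaying} modified Bessel function $K_0$ on an annulus $\{\delta\le|z|\le r/2\}$, normalized so that $\overline w = 4C/r^2$ on the \emph{inner} circle $|z|=\delta$. Since $K_0$ is decreasing, $\overline w$ is then exponentially small on the \emph{outer} circle $|z|=r/2$: the inequality $K_0(\sqrt{2k-\varepsilon}\,r/2)\le K_0(\sqrt{2k-\varepsilon}\,\delta)$ that you invoke makes $\overline w$ \emph{smaller} at the outer boundary, not larger. But all you know about $u$ there is $u\le 4C/r^2$, so the boundary inequality $\overline w\ge u$ fails on $|z|=r/2$ and the maximum principle does not apply. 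Even if it did, evaluating your $\overline w$ at $|z|=\delta$ just returns the coarse bound $4C/r^2$, so nothing is gained toward the center.

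The paper does the opposite: it uses the \emph{growing} Bessel function $I_0$ on the full disk $D_R$ (with $R=r-r_1$ for a fixed $r_1$), normalized so that the comparison function equals a fixed a-priori upper bound for $u$ on the \emph{outer} boundary $|z|=R$. Because $I_0$ grows from $I_0(0)=1$ to $I_0(\sqrt{2k}R)\sim (2\pi\sqrt{2k}R)^{-1/2}e^{\sqrt{2k}R}$, the value at the center is smaller by exactly the factor $1/I_0(\sqrt{2k}R)\sim C\sqrt{R}\,e^{-\sqrt{2k}R}$; this is where the $r^{1/2}e^{-\sqrt{2k}r}$ comes from. (Note that it is $1/I_0$ which contributes $r^{+1/2}$; your $K_0$ asymptotic carries $r^{-1/2}$, so even the heuristic matching of the prefactor is off.) The paper also avoids your $\varepsilon$-linearization entirely by constructing a supersolution of the \emph{nonlinear} equation directly: for $k\ge 3$ it sets $v=h-\tfrac{k-2}{2}h^2$ with $h(z)=I_0(\sqrt{2k}|z|)/[(k-2)I_0(\sqrt{2k}R)]$ and checks this is a supersolution via the elementary inequality $e^u-e^{(1-k)u}\ge ku-\tfrac{k(k-2)}{2}u^2$; for $k=1,2$ one has $e^u-e^{(1-k)u}\ge ku$ and $h$ itself already serves. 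Your $\varepsilon=\varepsilon(r)\sim r^{-2}$ idea for sharpening the exponent is sound and could be made to work, but only once combined with the correct $I_0$-based comparison on a disk of radius $r-r_1$; as written, the $K_0$-on-an-annulus comparison does not close.
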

See Remark \ref{remark_stronger} below for a further improvement around poles of order $>3$.
%This is a weaker version of \cite[Theorem 5.7]{dumas-wolf}. Compare Remark \ref{remark_stronger} below.
\begin{proof}
We first give a proof for all $k\geq 3$ and then point out the modifications needed for the simpler cases $k=1,2$. The idea is to construct supersolutions based on the following observation (\cite[Lemma 5.9]{dumas-wolf}): If a $C^2$-function $h$ satisfies
\begin{equation}\label{eqn_h2k}
\Delta h=2kh
\end{equation}
and has the bounds $0\leq h\leq \frac{2}{k-2}$, then $v:=h-\frac{k-2}{2}h^2\geq0$ is a supersolution to Eq.(\ref{eqn_simplevortex}). This is an immediate consequence of the following facts, which one can check by computations:
\begin{itemize}
\item 
For any $u\geq0$ we have $e^u-e^{(1-k)u}\geq ku-\frac{k(k-2)}{2}u^2$.
Therefore, every nonnegative supersolution to the equation
\begin{equation}\label{eqn_u2k}
\Delta u=2ku-k(k-2)u^2
\end{equation}
is a supersolution to Eq.(\ref{eqn_simplevortex}). 
\item 
If $h$ satisfies Eq.(\ref{eqn_h2k}), then $v:=h-\frac{k-2}{2}h^2$ satisfies
$$
\Delta v-2kv+k(k-2)v^2=-k(k-2)^2\tau^3+\frac{1}{4}k(k-2)^3h^4-(k-2)|\nabla h|^2.
$$ 
If furthermore $0\leq h\leq \frac{4}{k-2}$, then the right hand side above is nonpositive, hence $v$ is a supersolution to Eq.(\ref{eqn_u2k}).
\end{itemize}

We consider \emph{rotationally symmetric} solution to Eq.(\ref{eqn_h2k}). They are given by the modified Bessel function of the second kind $I_0(x):=\frac{1}{\pi}\int_0^\pi e^{x\cos\theta}\dif\theta$, which can be characterized as the even function on $\mathbb{R}$ satisfying
\begin{equation}\label{eqn_i0}
I_0''(x)+\frac{1}{x}I_0'(x)=I_0(x), \quad I_0(0)=1.
\end{equation}
Note that $I_0$ is increasing on $[0,+\infty)$. Given $R>0$, we set
$$
h_R(z):=\frac{I_0(\sqrt{2k}|z|)}{(k-2)I_0(\sqrt{2k}R)},\quad v_R:=h_R-\frac{k-2}{2}h_R^2.
$$ 
Then $h_R$ satisfies Eq.(\ref{eqn_h2k}) because of (\ref{eqn_i0}), and has the bounds $0\leq h_R\leq\frac{1}{k-2}$ on $D_R$. Therefore, $v_R$ is a supersolution to Eq.(\ref{eqn_simplevortex}) on $D_R$, with boundary value $v_R|_{\pa D_R}\equiv\frac{1}{2(k-2)}$.

By Proposition \ref{prop_coarse} and the translation invariance of Eq.(\ref{eqn_simplevortex}), we can let $r_1>0$ be sufficiently large such that $u(z_0)\leq \frac{1}{2(k-2)}$ for any $C^2$-function $u$ satisfies Eq.(\ref{eqn_simplevortex}) on $D_{r_1}(z_0):=\{z\in\mathbb{C}\mid |z-z_0|<r_1\}$. As a result, if $u$ satisfies Eq.(\ref{eqn_simplevortex}) on $D_{r}$ with $r>r_1$ then $u\leq\frac{1}{2(k-2)}$ holds on $D_{r-r_1}\subset D_r$. We can then apply Comparison Principle (Corollary \ref{coro_compa}) to the solution $e^u|\dz|^2$ and the above supersolution $e^{v_R}|\dz|^2$ (with $R=r-r_1$) of the vortex equation on $(D_{r-r_1},\dz^k)$ and get
\begin{equation}\label{eqn_prooffine}
u(0)\leq v_R(0)=\frac{1}{(k-2)I_0\big(\sqrt{2k}(r-r_1)\big)}\left(1-\frac{1}{2I_0\big(\sqrt{2k}(r-r_1)\big)}\right).
\end{equation}
It is well known (by the stationary phase method) that
$I_0(x)=\frac{1}{\sqrt{2\pi x}}e^{x} (1+O(x^{-1}))$ as $x\rightarrow+\infty$. So for any $x_1>0$ there is a constant $C_1>0$ such that $I_0(x)\geq C_1e^x/\sqrt{x}$ whenever $x\geq x_1$. As a result, there are constants $C_2,C_3$ such that 
$$
\frac{1}{I_0(\sqrt{2k}(r-r_1))}\leq C_2(r-r_1)^\frac{1}{2}e^{-\sqrt{2k}(r-r_1)}\leq C_3r^\frac{1}{2}e^{-\sqrt{2k}\,r}
$$
when $r-r_1\geq 1$. Combining with (\ref{eqn_prooffine}) and noting that $I_0\geq1$, we obtain the required inequality in the case $k\geq 3$.

The cases $k=1,2$ are simpler because we have $e^u-e^{(1-k)u}\geq ku$ for $u\geq0$, hence any solution $h$ to Eq.(\ref{eqn_h2k}) is already a supersolution to Eq.(\ref{eqn_simplevortex}). Thus, we can proceed as above with $v_R(z):=I_0(\sqrt{2k}R)^{-1}I_0(\sqrt{2k}|z|)$.
\end{proof}

%\begin{corollary}
%Given $k\geq1$, there are constants $C,r_0>0$ such that for any Riemann surface $\Sigma$, holomorphic $k$-differential $\ve{\phi}$ on $\Sigma$ and solution $g$ with nonpositive curvature to the vortex equation on $(\Sigma,\ve{\phi})$, we have
%$$
%\log\big(\|\ve{\phi}\|_g^{-\frac{2}{k}}(q)\big)\leq Cr(q)^\frac{1}{2}e^{-\sqrt{2k}\,r(q)}\ \text{ for all } q\in\Sigma \text{ with } r(q)\geq r_0,
%$$
%where $r(q)$ denotes the distance from $q$ to the set of zeros of $\ve{\phi}$ with respect to the singular flat metric $|\ve{\phi}|^\frac{2}{3}$.
%\end{corollary}
%\begin{proof}
%Given $q\in\Sigma$ which is not a zero of 
%\end{proof}

\section{Affine spheres, cubic differentials and $\mathbb{RP}^2$-structures}
\label{sec_pre}
In this chapter, we briefly review backgrounds on affine spheres and $\mathbb{RP}^2$-structures. See \cite{nomizu, loftin_survey} for thorough surveys.
\subsection{Affine spheres}\label{subsec_affine}
We view $\mathbb{R}^3$ as a vector space endowed with a volume form, say, the determinant $\det:\bigwedge^3\mathbb{R}^3\rightarrow\mathbb{R}$, and study properties of surfaces in $\mathbb{R}^3$ invariant under the group $\SL(3,\mathbb{R})$ of linear transformations preserving $\det$.

An embedded surface $M\subset\mathbb{R}^3\setminus\{0\}$ is said to be \emph{centro-affine} if for each $p\in M$ the position vector $\overrightarrow{0p}$ is transversal to the tangent plane $\T_pM$. Such an $M$ carries the following intrinsic data in an $\SL(3,\mathbb{R})$-equivariant way:
\begin{itemize} 
\item
a volume form $\omega$, called the \emph{induced volume form};
\item
a symmetric $2$-tensor $g$, called the \emph{centro-affine second fundamental form};
\item
a torsion-free affine connection $\nabla$, called the \emph{induced affine connection}.
\end{itemize}
They are determined by the following equalities, at every $p\in M$:
$$
\omega(X,Y)=\det(X,Y,\overrightarrow{0p}),\quad \D_XY=\nabla_XY+g(X,Y)\overrightarrow{0p},
$$ 
where $X$ and $Y$ are any vector fields tangent to $M$. We always equip $M$ with the orientation given by the induced volume form $\omega$.  

A centro-affine surface $M\subset\mathbb{R}^3\setminus\{0\}$ is locally convex with the origin $0$ lying on the concave side if and only if the centro-affine second fundamental form $g$ of $M$ is a Riemann metric. $M$ is called an \emph{hyperbolic affine sphere centered at $0$}, or simply an \emph{affine sphere}, if furthermore the volume form $\dif\vol_g$ of $g$ coincides with the induced volume form $\omega$. The affine second fundamental form $g$ is known as the \emph{Blaschke metric} in this case. 
\begin{remark}
The condition $\dif\vol_g=\omega$ signifies that the position vector $\overrightarrow{0p}$ for each $p\in M$ coincides with the \emph{affine normal} of $M$ at $p$.
%, a canonical transversal vector at every point of a locally convex surface in $\mathbb{R}^3$.
\end{remark}

A centro-affine immersion of an oriented surface $S$ into $\mathbb{R}^3$ induces intrinsic data $(\nabla,g,\omega)$ on $S$. In the same way as how immersions of a surface into the Euclidean $3$-space $\mathbb{E}^3$ are locally determined up to isometries of $\mathbb{E}^3$ by a pair of $2$-tensors (the first and second fundamental form) satisfying a compatibility condition (Gauss and Codazzi equations), centro-affine immersions are locally determined up to $\SL(3,\mathbb{R})$ by these intrinsic data, with a compatibility condition.

For affine spherical immersions, the intrinsic data with compatibility condition are equivalent to the data $(g,\ve{\phi})$ consisting of the Blaschke metric $g$ and a holomorphic cubic differential $\ve{\phi}$ (with respect to the complex structure given by $g$ and the orientation) satisfying the vortex equation $\kappa_g=-1+\|\ve{\phi}\|^2_g$, also known as \emph{Wang's equation} in this setting. In fact, $\nabla$ is given by $g$ and $\ve{\phi}$ through
\begin{equation}\label{eqn_pickdef}
g(\nabla_XY-\widehat{\nabla}_XY,Z)=\tfrac{1}{\sqrt{2}}\re \ve{\phi}(X,Y,Z),
\end{equation}
where $\widehat{\nabla}$ denotes the Levi-Civita connection of $g$. We call $\ve{\phi}$ the \emph{normalized Pick differential} of the affine sphere.
\begin{remark}\label{remark_normalized}
More commonly considered in the literature is the \emph{Pick differential} $\ve{\psi}=\frac{1}{\sqrt{2}}\ve{\phi}$, for which the relation (\ref{eqn_pickdef}) does not have the $\frac{1}{\sqrt{2}}$ fact, but Wang's equation becomes $\kappa_g=-1+2\|\ve{\psi}\|^2_g$. The advantage of the equation we use is that the flat metric $g=|\ve{\phi}|^\frac{2}{3}=|\phi(z)|^\frac{2}{3}|\dz|^2$ provides a solution away from the zeros of $\ve{\phi}$.
\end{remark}

Given an oriented surface $S$, we let $\underline{\mathbb{R}}=S\times \mathbb{R}$ denote the trivial line bundle over $S$ endowed with a global section, denoted by $\underline{1}$, and consider the rank $3$ vector bundle $E:=\T S\oplus\underline{\mathbb{R}}$.
For a pair $(g,\ve{\phi})$ on $S$ as above, affine spherical immersions with Blaschke metric $g$ and normalized Pick differential $\ve{\phi}$ can be reconstructed by integrating a specific flat connection $\D=\D_{g,\ve{\phi}}$ on $E$, defined as follows. In a conformal local coordinate $z$, we write $g=e^u|\dz|^2$, $\ve{\phi}=\phi(z)\dz^3$ and define $\D$ by specifying its matrix expression under the local frame $(\paz,\pabz,\underline{1})$ of the complexified vector bundle $E\otimes\mathbb{C}=\T_\mathbb{C}S\oplus\underline{\mathbb{C}}$ 
%(viewing $\D$ as a connection on $E_\mathbb{C}$ by $\mathbb{C}$-linear extension) 
as
\begin{equation}\label{eqn_Dlocal}
\D=\dif+
\begin{pmatrix}
\pa u&\frac{1}{\sqrt{2}}e^{-u}\bar\phi\,\dbz&\dz\\[6pt]
\frac{1}{\sqrt{2}}e^{-u}\phi\,\dz&\bpa u&\dbz\\[6pt]
\frac{1}{2}e^{u}\dbz&\frac{1}{2}e^{u}\dz&0
\end{pmatrix}
\end{equation}
(here $\D$ is viewed as a connection on $E_\mathbb{C}$ by $\mathbb{C}$-linear extension). It can be checked that $\D$ is coordinate-independent, flat and preserves the volume form $\dif\vol_g\wedge\underline{1}^*$ on $E$ (where $\underline{1}^*$ is the section of $E^*$ dual to $\underline{1}$). We call $\D$ the \emph{affine sphere connection} associated to $(g,\ve{\phi})$. 

%\begin{remark}
%
%\end{remark}

The reconstruction of affine spherical immersion from $(g,\ve{\phi})$ (see \eg \cite[\S 4.2]{dumas-wolf}, \cite[\S 3.4]{loftin_neck}) can be formulated as follows:
%For $S$ with nontrivial fundamental group, this yields instead an \emph{equivariant} centro-affine immersion of the universal cover $\widetilde{S}$ into $\mathbb{R}^3$, \ie a centro-affine immersion $\iota:\widetilde{S}\rightarrow\mathbb{R}^3$ which is equivariant with respect to some representation $\rho:\pi_1(S)\rightarrow\SL(3,\mathbb{R})$, called the \emph{holonomy} of $\iota$, in the sense that $\iota(\gamma.x)=\rho(\gamma)\iota(x)$ for every $x\in \widetilde{S}$. This integration process can be formulated as follows:
\begin{proposition}[\textbf{Wang's developing map}]
\label{prop_recon}
Let $U\subset\mathbb{R}^2$ be a simply connected domain. Then the assignment of the Blaschke metric $g$ and normalized Pick differential $\ve{\phi}$ to each affine spherical immersion $\iota:U\rightarrow\mathbb{R}^3$ gives a one-to-one correspondence between $\SL(3,\mathbb{R})$-equivalence classes of such immersions and pairs $(g,\ve{\phi})$ on $U$ satisfying Wang's equation. Given such a pair $(g,\ve{\phi})$, let $\para(z_2,z_1):\T_{z_1} U\oplus\mathbb{R}\rightarrow\T_{z_2} U\oplus\mathbb{R}$ denote the parallel transport of the affine sphere connection associated to $(g,\ve{\phi})$ along any path from $z_1$ to $z_2$ (see Appendix \ref{subsec_connection}), then a corresponding $\iota$ is given by
\begin{equation}\label{eqn_Ddevelop}
\iota:U\rightarrow\T_{z_0}U\oplus\mathbb{R}\cong\mathbb{R}^3,\ \ \iota(x)=\para(z_0,z)\underline{1}_z,
\end{equation}
where $z_0$ is any point in $U$ and the identification $\T_{z_0}U\oplus\mathbb{R}\cong\mathbb{R}^3$ is such that $\dif\vol_g\wedge\underline{1}^*$ corresponds to the volume form $\det$ on $\mathbb{R}^3$.
\end{proposition}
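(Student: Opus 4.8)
The plan is to reduce both directions to a single bundle-theoretic statement. Given an affine spherical immersion $\iota\colon U\to\mathbb{R}^3$, form the isomorphism of vector bundles
\[
\Phi_\iota\colon E=\T U\oplus\underline{\mathbb{R}}\ \xrightarrow{\ \sim\ }\ U\times\mathbb{R}^3,\qquad Y\oplus t\longmapsto \dif\iota(Y)+t\,\iota .
\]
The structure equations of an affine sphere of affine curvature $-1$ centered at $0$, namely
\[
\D_X\big(\dif\iota(Y)\big)=\dif\iota(\nabla_XY)+g(X,Y)\,\iota,\qquad \D_X\iota=\dif\iota(X),
\]
where $\D$ is the flat connection of $U\times\mathbb{R}^3$, say precisely that $\Phi_\iota$ pulls $\D$ back to the connection $\D_{g,\ve{\phi}}$ on $E$. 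I would verify this in a conformal coordinate $z$ with $g=e^u|\dz|^2$ and $\ve{\phi}=\phi(z)\dz^3$: substituting the relation \eqref{eqn_pickdef} between $\nabla$, $g$, $\ve{\phi}$ together with the Christoffel symbols of the Levi-Civita connection $\widehat\nabla$ in the frame $(\paz,\pabz)$, the two structure equations become, column by column, exactly the matrix \eqref{eqn_Dlocal}. Since $\Phi_\iota$ sends the section $\underline 1$ to $\iota$ and, by the normalization $\omega=\dif\vol_g$, sends $\dif\vol_g\wedge\underline 1^*$ to $\det$, this computation is the heart of the proof.

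Granting it, injectivity of the assignment on $\SL(3,\mathbb{R})$-classes is immediate: if $\iota,\iota'$ are affine spherical immersions on $U$ inducing the same $(g,\ve{\phi})$, then $\Phi_{\iota'}\circ\Phi_\iota^{-1}$ is a connection-preserving automorphism of the trivial flat bundle $U\times\mathbb{R}^3$, hence a constant, and since it preserves $\det$ it lies in $\SL(3,\mathbb{R})$; evaluating it on $\underline 1$ gives $\iota'=A\iota$ with $A\in\SL(3,\mathbb{R})$. Conversely $A\iota$ induces the same data as $\iota$ because the induced data $(\nabla,g,\omega)$ are $\SL(3,\mathbb{R})$-invariant, so the map is well defined on $\SL(3,\mathbb{R})$-classes; and its image consists of pairs satisfying Wang's equation because, under the equivalence recalled in the text between the data $(\nabla,g,\omega=\dif\vol_g)$ with its compatibility condition and the data $(g,\ve{\phi})$, Wang's equation is exactly the condition for $\iota$ to be a genuine affine sphere.

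For surjectivity and the explicit formula \eqref{eqn_Ddevelop}, start from a pair $(g,\ve{\phi})$ satisfying Wang's equation. The associated connection $\D_{g,\ve{\phi}}$ of \eqref{eqn_Dlocal} is flat and preserves $\dif\vol_g\wedge\underline 1^*$ (both already established in the text), and since $U$ is simply connected its parallel transport is path-independent, so $\Psi\colon v\mapsto\big(\pi(v),\,\para(z_0,\pi(v))\,v\big)$ is a trivialization $E\xrightarrow{\sim}U\times E_{z_0}$ carrying $\D_{g,\ve{\phi}}$ to the trivial connection. Fixing a linear identification $E_{z_0}=\T_{z_0}U\oplus\mathbb{R}\cong\mathbb{R}^3$ that takes $\dif\vol_g\wedge\underline 1^*$ to $\det$ and setting $\iota(z):=\Psi(\underline 1_z)=\para(z_0,z)\,\underline 1_z$, one runs the computation of the first paragraph backwards: since $\Psi$ turns covariant differentiation of sections into ordinary differentiation of components, differentiating $\iota$ and reading off \eqref{eqn_Dlocal} yields $\dif\iota=\Psi|_{\T U}$ — so $\iota$ is an immersion with image transverse to the position vector — the identity $\D_X\iota=\dif\iota(X)$, and the identification of the connection, metric and cubic differential induced by $\iota$ with $\nabla$, $g$ and $\ve{\phi}$; the volume normalization upgrades $\iota$ from a centro-affine immersion to an affine sphere.

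The main obstacle is entirely the bookkeeping in the core computation of the first paragraph: one must reconcile the $\tfrac1{\sqrt2}$ of \eqref{eqn_pickdef}, the identification of $\dif\vol_g$ with $\det$, the split of the cubic differential into the holomorphic and anti-holomorphic entries $\tfrac1{\sqrt2}e^{-u}\phi\,\dz$ and $\tfrac1{\sqrt2}e^{-u}\bar\phi\,\dbz$ of \eqref{eqn_Dlocal}, and the passage from the real connection $\nabla$ to its $\mathbb{C}$-linear extension in the frame $(\paz,\pabz,\underline 1)$, and then check that the only compatibility condition surviving after all of this is Wang's equation — i.e.\ that no component of the structure equations is left over. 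Once the flatness of $\D_{g,\ve{\phi}}$ is invoked to supply the integration, everything else is formal.
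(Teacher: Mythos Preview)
The paper does not actually prove this proposition: it states it as background with references to \cite[\S 4.2]{dumas-wolf} and \cite[\S 3.4]{loftin_neck}, and moves on. Your argument is correct and is precisely the standard one found in those references---build the moving frame $\Phi_\iota=(\dif\iota,\iota)$, observe that the centro-affine structure equations $\D_X(\dif\iota(Y))=\dif\iota(\nabla_XY)+g(X,Y)\iota$ and $\D_X\iota=\dif\iota(X)$ are exactly the statement that $\Phi_\iota$ pulls the trivial flat connection back to $\D_{g,\ve\phi}$, then integrate $\D_{g,\ve\phi}$ by parallel transport on the simply connected $U$ to reconstruct $\iota$---so there is nothing to compare.

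One small clarification on your last paragraph: when you say ``the only compatibility condition surviving after all of this is Wang's equation,'' you are implicitly using that the Codazzi equation for the Pick tensor is equivalent to holomorphicity of $\ve\phi$, which is already built into the hypothesis that $\ve\phi$ is a holomorphic cubic differential. It would be worth saying this explicitly, since otherwise a reader might wonder where the Codazzi part of the Gauss--Codazzi system went. The rest of the bookkeeping you flag (the $\tfrac{1}{\sqrt2}$ normalization, the passage to the complex frame, the volume-form identification) is indeed routine once one commits to doing it.
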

If the surface $S$ is not simply connected, applying Proposition \ref{prop_recon} to the universal cover $\widetilde{S}$ gives a one-to-one correspondence between pairs $(g,\ve{\phi})$ on $S$ satisfying Wang's equation and affine spherical immersions $\iota:\widetilde{S}\rightarrow\mathbb{R}^3$ that are \emph{equivariant} with respect to a representation $\pi_1(S)\rightarrow\SL(3,\mathbb{R})$.

\subsection{$\mathbb{RP}^2$-structures and convexity}\label{subsec_convexity}
Given a pair $(g,\ve{\phi})$ satisfying Wang's equation on a surface $S$,
the corresponding $\iota$ from the last paragraph, composed with the projection $\mathbb{P}:\mathbb{R}^3\setminus\{0\}\rightarrow\mathbb{RP}^2$, is an equivariant local diffeomorphism from $\widetilde{S}$ to $\mathbb{RP}^2$, hence defines an $\mathbb{RP}^2$-structure on $S$. Denote this $\mathbb{RP}^2$-structure by $\X{g,\ve{\phi}}$.

A. M. Li \cite{li_calabi1, li_calabi2} clarified an earlier proof of Cheng-Yau \cite{cheng-yau_1, cheng-yau_4} to show that under the settings of Proposition \ref{prop_recon}, $\iota$ is a proper embedding if and only if $g$ is complete, in which case $\iota(U)$ is inscribed in a convex cone $C\subset\mathbb{R}^3$ (like a hyperboloid inscribed in a quadratic cone) and $\mathbb{P}$ gives a diffeomorphism from $\iota(U)$ to the properly convex domain $\Omega=\mathbb{P}(C\setminus\{0\})$. 
As a consequence, the above $\mathbb{RP}^2$-structure $\X{g,\ve{\phi}}$ is convex if $g$ is complete. 

But Theorem \ref{thm_ue} yields a pair $(g,\ve{\phi})$ with complete $g$ once $\ve{\phi}$ is given. Therefore, letting $\mathscr{C}(S)$ denote the space of pairs $(\ac{J},\ve{\phi})$, where $\ac{J}$ is a $C^\infty$-complex structure on $S$ compatible with the orientation and $\ve{\phi}$ a holomorphic cubic differential on $(S,\ve{J})$ such that either $(S,\ac{J})$ is hyperbolic or $\ve{\phi}$ is nontrivial, we have a natural map from $\mathscr{C}(S)$ to the space $\mathscr{P}(S)$ of $C^\infty$-convex $\mathbb{RP}^2$-structures on $S$:
\begin{equation}\label{equation_mapcs}
\mathscr{C}(S)\rightarrow\mathscr{P}(S),\quad (\ac{J},\ve{\phi})\mapsto \X{\gjphi, \ve{\phi}},
\end{equation}
where $\gjphi$ denotes the complete solution to Wang's equation on $(S,\ac{J},\ve{\phi})$ given by Theorem \ref{thm_ue}. We proceed to explain that this map is bijective.

\subsection{Unique existence of complete affine spheres}\label{subsec_yau}
Given a bounded convex domain $\Omega\subset\mathbb{R}^2\subset\mathbb{RP}^2$  and a negative valued convex function $u$ on $\Omega$, the surface
$$
M:=\left\{-\frac{1}{u(x,y)}
(x,y, 1)\,\Big|\,
(x,y)\in \Omega\right\}\subset\mathbb{R}^3
$$ 
is convex, centro-affine and projects to $\Omega$ in an orientation preserving manner (where $M$ carries a natural orientation as a centro-affine surface). It turns out that $M$ is an affine sphere if and only if $u$ satisfies the Mong-Amp\`ere equation
\begin{equation}\label{eqn_mongeampere}
\det(\Hess(u))=u^{-4}.
\end{equation}
Moreover, $u$ has vanishing boundary values if and only if $M$ is properly embedded. The follows theorem shows that such $u$ uniquely exists, hence implies there is a unique affine sphere projecting to $\Omega$:
\begin{theorem}[\cite{cheng-yau_1}]
\label{thm_chengyauaffinesphere}
For every bounded convex domain $\Omega\subset\mathbb{R}^2$, there exists a unique convex function $u\in C^0(\overline{\Omega})\cap C^\infty(\Omega)$ satisfying Eq.(\ref{eqn_mongeampere}) in $\Omega$ with $u|_{\pa\Omega}=0$. As a consequence, for every properly convex domain $\Omega\subset\mathbb{RP}^2$ endowed with an orientation, there exists a unique complete affine sphere $M_\Omega\subset\mathbb{R}^3$ projectivizing to $\Omega$ in an orientation preserving manner.
\end{theorem}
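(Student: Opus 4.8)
The plan is to establish the Monge--Amp\`ere statement first and then translate it into the language of affine spheres. \emph{Uniqueness} is the easy half. If $u_1,u_2\in C^0(\overline\Omega)\cap C^\infty(\Omega)$ both solve (\ref{eqn_mongeampere}) with zero boundary values, then each is strictly convex and negative in $\Omega$ (since $\det(\Hess u_i)=u_i^{-4}>0$ forces the Hessian to be positive definite, and a convex function with zero boundary values that is not identically zero is negative inside). If $u_1\not\equiv u_2$, we may assume $\sup_\Omega(u_1-u_2)>0$; as $u_1-u_2\to 0$ at $\pa\Omega$ this supremum is attained at an interior point $x_0$, where $\Hess u_1(x_0)\leq\Hess u_2(x_0)$ as positive definite matrices, hence $u_1(x_0)^{-4}=\det\Hess u_1(x_0)\leq\det\Hess u_2(x_0)=u_2(x_0)^{-4}$; since $u_i(x_0)<0$ this gives $u_1(x_0)\leq u_2(x_0)$, contradicting the choice of $x_0$. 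So $u_1=u_2$.

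For \emph{existence}, I would first solve the Dirichlet problem for (\ref{eqn_mongeampere}) with zero boundary data on smooth bounded uniformly convex domains. This is a Monge--Amp\`ere equation whose right-hand side $u^{-4}$ blows up along the boundary, and it is solved by the continuity method together with the interior a priori estimates of Cheng--Yau \cite{cheng-yau_1} (resting on the $C^2$ estimates of Pogorelov/Calabi and the $C^{2,\alpha}$ estimate of Evans--Krylov), the a priori $C^0$ bounds coming from comparison with the explicit solutions over inscribed and circumscribed balls, where the affine sphere is a hyperboloid. For a general bounded convex $\Omega$, take an exhaustion $\Omega_1\subset\subset\Omega_2\subset\subset\cdots$ by such domains with union $\Omega$. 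The comparison argument used for uniqueness shows the solutions $u_j$ are monotone decreasing on each $\Omega_m$ and bounded below there by the solution over a fixed circumscribed ball; the interior estimates then give local $C^\infty$ bounds, so $u_j\to u\in C^\infty(\Omega)$ solving (\ref{eqn_mongeampere}). It remains to prove that $u$ extends continuously to $\overline\Omega$ with $u|_{\pa\Omega}=0$: since $u<0$, one only needs an upper bound for $|u|$ near each boundary point, furnished by local barriers modelled on explicit solutions over half-planes and wedges (themselves obtained as limits of the interior construction); their quantitative form shows $|u|\asymp\dist(\cdot,\pa\Omega)^{2/3}$.

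To pass from the PDE to \emph{affine spheres}: for a bounded convex $\Omega\subset\mathbb{R}^2\subset\mathbb{RP}^2$, the surface $M=\{-u^{-1}(x,y,1)\mid (x,y)\in\Omega\}$ is a centro-affine affine sphere exactly because $u$ solves (\ref{eqn_mongeampere}), and it is properly embedded because $u|_{\pa\Omega}=0$; it is therefore inscribed in the convex cone over $\Omega$, and by A.~M.~Li's criterion \cite{li_calabi1, li_calabi2} its Blaschke metric is complete, while $\mathbb{P}$ carries $M$ diffeomorphically onto $\Omega$, preserving orientation. Uniqueness of $M_\Omega$ follows from uniqueness of $u$. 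For a general properly convex $\Omega\subset\mathbb{RP}^2$ with a chosen orientation, pass to an affine chart in which $\Omega$ is bounded and convex and apply the above; since the affine sphere equation is $\SL(3,\mathbb{R})$-invariant the resulting $M_\Omega$ is independent of the chart (equivalently, one works directly with the convex cone over $\Omega$ and its unique inscribed complete affine sphere).

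The \emph{main obstacle} is the existence half on smooth uniformly convex domains together with the boundary barrier analysis: one must control the Monge--Amp\`ere equation near the boundary, where $u^{-4}$ is singular, which is precisely where the Cheng--Yau a priori estimates and the delicate boundary asymptotics $|u|\asymp\dist(\cdot,\pa\Omega)^{2/3}$ are needed; by contrast, uniqueness, the exhaustion limit, and the passage to affine spheres are comparatively routine.
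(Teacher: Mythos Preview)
The paper does not prove this theorem: it is stated as a result of Cheng and Yau and cited from \cite{cheng-yau_1} with no argument given in the paper itself. So there is no ``paper's own proof'' to compare against; the theorem functions here purely as background.

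Your sketch is a reasonable outline of the strategy one finds in the literature: the comparison/maximum-principle argument for uniqueness is correct, the exhaustion-by-smooth-domains approach to existence together with interior a priori estimates is the standard route, and the translation to affine spheres via the radial graph is exactly how the correspondence is set up in the paper (see the paragraph preceding the theorem). One small caution: the boundary asymptotic you quote, $|u|\asymp\dist(\cdot,\pa\Omega)^{2/3}$, deserves checking --- the precise exponent depends on the dimension and the local geometry of $\pa\Omega$ (and for non-smooth $\pa\Omega$ the behavior is more delicate than a single power law), so if you intend to make this rigorous you should verify the exponent against the barrier constructions in \cite{cheng-yau_1} rather than asserting it. But since the paper treats the theorem as a black box, any correct sketch is already more than what the paper provides.
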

As a result, every oriented properly convex domain $\Omega\subset\mathbb{RP}^2$ carries a complete metric $g_\Omega$ and a holomorphic cubic differential $\ve{\phi}_\Omega$ satisfying Wang's equation, given by pushing forward the Blaschke metric and normalized Pick differential on $M_\Omega$ through the projection $\mathbb{P}:M_\Omega\overset\sim\rightarrow\Omega$. The uniqueness part of the above theorem implies that $g_\Omega$ and $\ve{\phi}_\Omega$ are $\SL(3,\mathbb{R})$-equivariant in the sense that $a^*g_\Omega=g_{a(\Omega)}$ for all $a\in\SL(3,\mathbb{R})$, and similarly for $\ve{\phi}_\Omega$.

Let $S$ be an oriented surface. Given a convex $\mathbb{RP}^2$-structure $X$ with developing map $\delta:\widetilde{S}\overset\sim\rightarrow\Omega\subset\mathbb{RP}^2$, the pullbacks $\delta^*g_\Omega$ and $\delta^*\ve{\phi}_\Omega$, which are just the Blaschke metric and normalized Pick differential of the unique affine spherical immersion $\iota$ satisfying $\delta=\mathbb{P}\circ\iota$, are invariant under deck transformations, hence can be viewed as defined on $S$. We simply call them the Blaschke metric and normalized Pick differential of the convex $\mathbb{RP}^2$-structure $X$. Clearly, the map $\mathscr{P}(S)\rightarrow\mathscr{C}(S)$ assigning to each $X=[\delta]\in\mathscr{P}(S)$ the pair $(\delta^*\ac{J}_\Omega,\delta^*\ve{\phi}_\Omega)\in\mathscr{C}(S)$ (where $\ac{J}_\Omega$ is the complex structure underlying $g_\Omega$) is inverse to the above map (\ref{equation_mapcs}). Therefore, we obtain:
 
\begin{corollary}\label{coro_mapcs}
The map (\ref{equation_mapcs}) is bijective.
\end{corollary}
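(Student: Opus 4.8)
The plan is to show that the map $\mathscr{P}(S)\to\mathscr{C}(S)$ constructed just above---sending a convex $\mathbb{RP}^2$-structure $X=[\delta]$ with $\delta:\widetilde S\overset{\sim}{\rightarrow}\Omega$ to the pair $(\delta^*\ac{J}_\Omega,\delta^*\ve{\phi}_\Omega)$, where $\ac{J}_\Omega$ is the complex structure underlying the Blaschke metric $g_\Omega$ and $\ve{\phi}_\Omega$ is the normalized Pick differential---is a two-sided inverse of the map (\ref{equation_mapcs}). Everything becomes formal once the two uniqueness theorems now in hand are invoked at the right places: uniqueness of the complete solution to Wang's equation (Theorem \ref{thm_ue}) and uniqueness of the complete affine sphere over a properly convex domain (Theorem \ref{thm_chengyauaffinesphere}), together with the reconstruction correspondence of Proposition \ref{prop_recon}.

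First I would record that $\mathscr{P}(S)\to\mathscr{C}(S)$ is well defined: the pair $(\delta^*\ac{J}_\Omega,\delta^*\ve{\phi}_\Omega)$ descends to $S$ because $g_\Omega$ and $\ve{\phi}_\Omega$ are $\SL(3,\mathbb{R})$-equivariant while the holonomy of $X$ lands in $\SL(3,\mathbb{R})$, and it lies in $\mathscr{C}(S)$ rather than in the excluded locus because $\delta^*g_\Omega$ is a complete conformal metric solving Wang's equation, so that $\delta^*\ve{\phi}_\Omega=0$ would force $(S,\delta^*\ac{J}_\Omega)$ to carry a complete hyperbolic metric and hence to be hyperbolic. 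Then, for one composite, starting from $X=[\delta]$ and setting $(\ac{J},\ve{\phi}):=(\delta^*\ac{J}_\Omega,\delta^*\ve{\phi}_\Omega)$, the metric $\delta^*g_\Omega$ is complete and solves Wang's equation for $\ve{\phi}$, so Theorem \ref{thm_ue} gives $\gjphi=\delta^*g_\Omega$; since $(\delta^*g_\Omega,\delta^*\ve{\phi}_\Omega)$ is by construction the Blaschke metric and normalized Pick differential of the affine spherical immersion $\iota$ with $\mathbb{P}\circ\iota=\delta$, Proposition \ref{prop_recon} identifies the immersion reconstructed from $(\gjphi,\ve{\phi})$ with $\iota$ up to $\SL(3,\mathbb{R})$, whence $\X{\gjphi,\ve{\phi}}=[\mathbb{P}\circ\iota]=X$. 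For the other composite, starting from $(\ac{J},\ve{\phi})\in\mathscr{C}(S)$, let $\iota:\widetilde S\rightarrow\mathbb{R}^3$ be an equivariant affine spherical immersion with Blaschke metric $\gjphi$ and normalized Pick differential $\ve{\phi}$ and put $\delta=\mathbb{P}\circ\iota:\widetilde S\overset{\sim}{\rightarrow}\Omega$; completeness of $\gjphi$ makes $\iota$ a proper embedding onto a complete affine sphere projecting to $\Omega$, which by Theorem \ref{thm_chengyauaffinesphere} must be $M_\Omega$, so $\delta^*g_\Omega=\gjphi$, $\delta^*\ve{\phi}_\Omega=\ve{\phi}$, and $\delta^*\ac{J}_\Omega=\ac{J}$ since $\ac{J}$ is the complex structure underlying $\gjphi$.

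I do not expect a genuine obstacle, as the corollary is essentially bookkeeping on top of the two uniqueness theorems: the one point needing care is the passage to the universal cover $\widetilde S$, where Proposition \ref{prop_recon} determines the affine spherical immersion only up to $\SL(3,\mathbb{R})$ on simply connected pieces, so one must check it can be taken $\pi_1(S)$-equivariantly and that its projectivization is precisely the developing map whose pullbacks were used---but this was already set up in the discussion preceding the corollary, so the verification stays routine.
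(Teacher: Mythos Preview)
Your proposal is correct and follows exactly the same approach as the paper: the paper constructs the map $\mathscr{P}(S)\rightarrow\mathscr{C}(S)$, $X=[\delta]\mapsto(\delta^*\ac{J}_\Omega,\delta^*\ve{\phi}_\Omega)$ and asserts ``by construction'' that it is inverse to (\ref{equation_mapcs}), and your proposal simply unpacks that assertion by invoking Theorem~\ref{thm_ue} and Theorem~\ref{thm_chengyauaffinesphere} at the appropriate places. Your write-up is in fact more careful than the paper's, in that you explicitly check the image lands in $\mathscr{C}(S)$ rather than the excluded parabolic locus.
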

%\begin{proof}
%By definition, $X\in\mathscr{P}(S)$ being the image of $(\ac{J},\ve{\phi})\in\mathscr{C}(S)$ under the map (\ref{equation_mapcs}) means that a developing map $\delta:\widetilde{S}\rightarrow\mathbb{RP}^2$ of $X$ can be lifted through the projectivization $\mathbb{R}^3\setminus\{0\}\rightarrow\mathbb{RP}^2$ to an equivariant complete affine spherical immersion $\iota:\widetilde{S}\rightarrow\mathbb{R}^3\setminus\{0\}$ with normalized Pick differential $\ve{\phi}$. Therefore, the map (\ref{equation_mapcs}) is injective because such a lift is unique by Theorem \ref{thm_chengyauaffinesphere}. To prove surjectivity, we first use the existence part of Theorem \ref{thm_chengyauaffinesphere} to show that $\delta$ can be lifted to a complete affine spherical immersion $\iota$, then use the uniqueness part to conclude that $\iota$ is equivariant.
%\end{proof}

\subsection{\c{T}i\c{t}eica affine sphere}\label{subsec_titeica}
As an example of Proposition \ref{prop_recon}, we give in this section an explicit affine spherical embedding $\iota_0:\mathbb{C}\rightarrow\mathbb{R}^3$ with Blaschke metric $|\dz|^2$ and normalized Pick differential $\dz^3$. Since $|\dz|^2$ is complete, by the discussions in Section \ref{subsec_convexity}, $\iota_0$ is a proper embedding and the projectivized embedding $\delta_0:=\mathbb{P}\circ\iota_0$ is a diffeomorphism from $\mathbb{C}$ to a properly convex domain in $\mathbb{RP}^2$, which turn out to be a triangle, as shown in Figure \ref{figure_dev01}.

The affine sphere connection $\D_0$ associated to $(|\dz|^2,\dz^3)$, defined on the vector bundle $E:=\T\mathbb{C}\oplus\underline{\mathbb{R}}$ and extended to $E\otimes\mathbb{C}=T_\mathbb{C}\mathbb{C}\oplus\underline{\mathbb{C}}$, is expressed under the local frame $(\paz,\pabz,\underline{1})$ of $E\otimes\mathbb{C}$ as
$$
\D_0=\dif+
\begin{pmatrix}
0&\frac{1}{\sqrt{2}}\dbz&\dz\\[5pt]
\frac{1}{\sqrt{2}}\dz&0&\dbz\\[5pt]
\frac{1}{2}\dbz&\frac{1}{2}\dz&0
\end{pmatrix}.
$$
It turns out that $\D_0$ is diagonalized under the new frame $(e_1,e_2,e_3)$ defined by
$$
e_1:=\sqrt{2}\pa_x+\underline{1}\,,\ e_2:=\sqrt{2}\left(-\frac{1}{2}\pa_x+\frac{\sqrt{3}}{2}\pa_y\right)+\underline{1}\,,\ e_3:=\sqrt{2}\left(-\frac{1}{2}\pa_x-\frac{\sqrt{3}}{2}\pa_y\right)+\underline{1}.
$$
Indeed, this frame is related to the original one by $(e_1,e_2,e_3)=(\paz,\pabz,\underline{1})B$, where the change-of-basis matrix $B$ and its inverse are the constant matrices
$$
B=
\begin{pmatrix}
\sqrt{2}&&\\
&\sqrt{2}&\\
&&1
\end{pmatrix}
\begin{pmatrix}
1&\omega&\omega^2\\
1&\omega^2&\omega\\
1&1&1
\end{pmatrix}
,\
B^{-1}=
\frac{1}{3}
\begin{pmatrix}
1&1&1\\
\omega^2&\omega&1\\
\omega&\omega^2&1\\
\end{pmatrix}
\begin{pmatrix}
\frac{1}{\sqrt{2}}&&\\
&\frac{1}{\sqrt{2}}&\\
&&1
\end{pmatrix}
$$
with $\omega:=e^{\frac{2\pi\ima}{3}}$. A computation then gives the expression of $\D_0$ under $(e_1,e_2,e_2)$ as
$$
\D_0=\dif+B^{-1}
\begin{pmatrix}
0&\frac{1}{\sqrt{2}}\dbz&\dz\\[5pt]
\frac{1}{\sqrt{2}}\dz&0&\dbz\\[5pt]
\frac{1}{2}\dbz&\frac{1}{2}\dz&0
\end{pmatrix}
%\begin{pmatrix}
%0&2^{-\frac{1}{3}}\dbz&\dz\\[5pt]
%2^{-\frac{1}{3}}\dz&0&\dbz\\[5pt]
%2^{-\frac{2}{3}}\dbz&2^{-\frac{2}{3}}\dz&0
%\end{pmatrix}
B=\dif+
\sqrt{2}
\re
\begin{pmatrix}
\dz&&\\
&\omega^2\dz&\\
&&\omega\dz
\end{pmatrix}.
$$

In view of Appendix \ref{subsec_connection}, one checks that the parallel transport $\para_0(0,z)$ of $\D_0$ along any path from $z$ to $0$ is expressed under the frame $(e_1,e_2,e_3)$ as the matrix
$$
\para_0(0,z)=
\begin{pmatrix}
e^{\sqrt{2} \re(z)}\hspace{-7pt}&&\\
&e^{\sqrt{2} \re(\omega^2z)}\hspace{-7pt}&\\
&&e^{\sqrt{2} \re(\omega z)}
\end{pmatrix}.
%\quad
%\para_0(z_2,z_1)=\para(0,z_1-z_2).
$$

Let $\iota_0: \mathbb{C}\rightarrow E_0=\T_0\mathbb{C}\oplus\mathbb{R}\cong\mathbb{R}^3$ be the affine spherical embedding given by Proposition \ref{prop_recon} with $z_0=0$. Since $\underline{1}=e_1+e_2+e_3$, we have the expression
$$
\iota_0(z)=\para_0(0,z)
\begin{pmatrix}
1\\
1\\
1
\end{pmatrix}
=
\begin{pmatrix}
e^{\sqrt{2} \re(z)}\\[5pt]
e^{\sqrt{2} \re(\omega^2z)}\\[5pt]
e^{\sqrt{2} \re(\omega z)}
\end{pmatrix}.
$$
The image $\iota_0(\mathbb{C})=\{(x_1,x_2,x_3)\in\mathbb{R}^3\mid x_1x_2x_3=1\}$ is known as the \emph{\c{T}i\c{t}eica affine sphere}. Its projectivization $\delta_0(\mathbb{C})$ is the triangle 
$\Delta:=\{[x_1:x_2:x_3]\in\mathbb{RP}^2\mid x_i>0\}$. 

\begin{remark}
The identification $E_0\oplus\mathbb{R}\cong\mathbb{R}^3$ used in the above coordinate expression of $\iota_0$ is induced by the frame $(e_1,e_2,e_3)$, which is \emph{not} unimodular with respect to the volume form $\dif\vol_g\wedge\underline{1}^*=\dif x\wedge\dif y\wedge\underline{1}^*$ on $E$ given by the Blaschke metric $g=|\dz|^2$ (see Section \ref{subsec_affine}). Therefore, $\iota_0$ gives a hyperbolic affine sphere not with respect to the usual volume form on $\mathbb{R}^3$ given by the determinant, but with respect to a scaled one. Nevertheless, this issue is insignificant for our purpose as we are mainly concerned with the projectivized embedding $\delta_0$.
\end{remark}
%Under the map $\iota_0$ or $\delta_0$, every automorphism of $(\mathbb{C},\dz^3)$ corresponds to an element in $\SL(3,\mathbb{R})$, which is an automorphism of the \c{T}i\c{t}eica sphere when viewed as a linear transformation and an automorphism of the triangle $\Delta$ when viewed as a projective transformation. One readily checks that the automorphisms of the form $z\mapsto z+\tau$ ($h\in\mathbb{C}^*$) and $z\mapsto \omega z$, which generate the group $\Aut(\mathbb{C},\dz^3)$, correspond respectively to
%$$
%\begin{pmatrix}
%e^{\sqrt{2} \re(h)}\hspace{-7pt}&&\\
%&e^{\sqrt{2} \re(\omega^2h)}\hspace{-7pt}&\\
%&&e^{\sqrt{2} \re(\omega h)}
%\end{pmatrix},\quad
%\begin{pmatrix}
%&&1\\
%1&&\\
%&1&
%\end{pmatrix}\in \SL(3,\mathbb{R}).
%$$
%%. The second projective transformation is the $\frac{2\pi}{3}$-rotation of the affine chart from Figure \ref{figure_dev01}.

\section{Finite volume ends and low-order poles}\label{sec_4}
In this chapter, we show in Theorem \ref{thm_finitevolumeend} that the poles of a holomorphic $k$-differential of order less than $k$ are exactly the finite volume ends for the complete solution to the vortex equation. When $k=3$, such ends have a projective-geometric description (Theorem \ref{thm_marquis}) and the above result implies Part (\ref{item_introthm0}) of Theorem \ref{thm_intro2}. 

\subsection{Hausdorff continuity}\label{subsec_benoisthulin}
Recall from Section \ref{subsec_yau} that the properly embedded affine sphere $M_\Omega$ projecting to a bounded convex domain $\Omega\subset\mathbb{R}^2$ is given by the convex function $u_\Omega\in C^0(\overline{\Omega})\cap C^\infty(\Omega)$ satisfying the equation $\det(\Hess(u_\Omega))=u^{-4}_\Omega$ with vanishing boundary values. The following theorem of Benoist and Hulin \cite[Corollary 3.3]{benoist-hulin} (see also \cite[Theorem 4.4]{dumas-wolf}) shows that $u_\Omega$ and its derivatives depend continuously on $\Omega$: 
\begin{theorem}[\cite{benoist-hulin}]
\label{thm_bh}
Let $\Omega\subset\mathbb{R}^2$ be a bounded convex domain,  $K\subset \Omega$ be a compact subset and let $\varepsilon>0$. Then for any  integer $k\geq0$ and any convex domain $\Omega'\subset\mathbb{R}^2$ sufficiently close to $\Omega$ in terms of Hausdorff distance, we have $K\subset \Omega'$ and 
$$
\|u_\Omega-u_{\Omega'}\|_{K,k}<\varepsilon.
$$
Here $\|\cdot\|_{K,k}$ denote the $C^k$-norm on $K$.
\end{theorem}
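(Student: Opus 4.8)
The plan is to deduce the statement from a compactness argument for the Monge--Amp\`ere equation $\det(\Hess u)=u^{-4}$, combining uniform interior a priori estimates with the uniqueness in Theorem \ref{thm_chengyauaffinesphere}. Reformulated sequentially, it suffices to show that if $\Omega_n$ is a sequence of bounded convex domains converging to $\Omega$ in Hausdorff distance, then $K\subset\Omega_n$ for $n$ large and $u_{\Omega_n}\to u_\Omega$ in $C^k(K)$ for every $k\ge 0$ and every compact $K\subset\Omega$.

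First I would record two elementary properties of the family $\{u_\Omega\}$. Scaling: a direct computation shows $u_{\lambda\Omega}(x)=\lambda^{2/3}\,u_\Omega(\lambda^{-1}x)$ for $\lambda>0$, where the dilation is taken about a fixed interior point of $\Omega$, henceforth the origin; in particular $u_{\lambda\Omega}\to u_\Omega$ in $C^k_{loc}$ as $\lambda\to 1$. Monotonicity: if $\Omega_1\subset\Omega_2$ then $u_{\Omega_2}\le u_{\Omega_1}$ on $\Omega_1$; this follows from the comparison principle for the Monge--Amp\`ere operator on convex functions, since $u\mapsto -(-u)^{-2}$ is decreasing (so the equation is proper) and $u_{\Omega_2}|_{\partial\Omega_1}\le 0=u_{\Omega_1}|_{\partial\Omega_1}$. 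Now Hausdorff convergence forces $(1-\varepsilon_n)\Omega\subseteq\Omega_n\subseteq(1+\varepsilon_n)\Omega$ with $\varepsilon_n\to 0$, so monotonicity gives $u_{(1+\varepsilon_n)\Omega}\le u_{\Omega_n}\le u_{(1-\varepsilon_n)\Omega}$ on $(1-\varepsilon_n)\Omega\supseteq K$, and letting $n\to\infty$ the two scaling limits squeeze $u_{\Omega_n}\to u_\Omega$ uniformly on $K$. In particular, on a fixed compact neighborhood of $K$ the functions $u_{\Omega_n}$ are, for $n$ large, pinched between two negative constants.

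The next step is to upgrade $C^0$ to $C^k$ convergence by interior regularity. Since $u_{\Omega_n}$ lies between two negative constants on a neighborhood of $K$, the right-hand sides $f_n:=u_{\Omega_n}^{-4}$ are there bounded above and below by positive constants; the interior regularity theory for Monge--Amp\`ere equations (Pogorelov, Caffarelli) then yields uniform interior strict convexity of $u_{\Omega_n}$ and uniform $C^{2,\alpha}(K)$ bounds, with constants depending only on these pinching constants and on $\mathrm{dist}(K,\partial\Omega)$. Once $\Hess u_{\Omega_n}$ is two-sidedly controlled on $K$, the equation linearizes to a uniformly elliptic one and a Schauder bootstrap --- in which $f_n$ inherits whatever H\"older regularity has already been proven for $u_{\Omega_n}$ --- produces uniform $C^k(K)$ bounds for every $k$. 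By the Arzel\`a--Ascoli theorem the sequence is precompact in $C^k_{loc}(\Omega)$; any subsequential $C^k_{loc}$ limit is a negative convex solution of $\det(\Hess u)=u^{-4}$ on $\Omega$, which coincides with $u_\Omega$ by the pointwise convergence just established (equivalently, by the uniqueness in Theorem \ref{thm_chengyauaffinesphere}). Hence the full sequence converges to $u_\Omega$ in $C^k_{loc}(\Omega)$, as claimed.

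The main obstacle is the interior a priori estimate of the third paragraph: one needs that solutions of $\det(\Hess u)=f$ with $f$ pinched between positive constants are strictly convex and enjoy interior $C^{2,\alpha}$ estimates with constants stable under Hausdorff perturbation of the domain --- this is precisely where the deep Monge--Amp\`ere regularity theory is invoked, and it is also the step that makes essential use of the $C^0$ control from the first part, which keeps the relevant sections of $u_{\Omega_n}$ a definite distance away from $\partial\Omega$.
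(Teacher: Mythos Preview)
The paper does not give its own proof of this statement: it is quoted from \cite[Corollary~3.3]{benoist-hulin} (see also \cite[Theorem~4.4]{dumas-wolf}) and used as a black box. So there is nothing to compare your argument against within the paper itself.

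That said, your outline is essentially correct and is the standard route to such a result: $C^0$ control via scaling plus monotonicity under inclusion, then interior Monge--Amp\`ere regularity and a bootstrap. Two small points are worth tightening. First, the parenthetical ``$u\mapsto -(-u)^{-2}$ is decreasing'' is garbled; what you actually need for the comparison principle is that $u\mapsto u^{-4}$ is \emph{increasing} on $(-\infty,0)$, so that the operator $\det(\Hess u)-u^{-4}$ is non-increasing in $u$ (i.e.\ proper). Second, the sandwich $(1-\varepsilon_n)\Omega\subset\Omega_n\subset(1+\varepsilon_n)\Omega$ genuinely uses convexity of both $\Omega$ and $\Omega_n$ (equivalently, uniform convergence of support functions), so say so. In the regularity step, the Pogorelov estimate is most cleanly applied on sublevel sets $\{u_{\Omega_n}<c\}$ rather than on an arbitrary compact neighbourhood of $K$; your $C^0$ convergence ensures that for a suitable $c<0$ these sections contain $K$, are uniformly bounded, and stay uniformly away from $\partial\Omega_n$, which is exactly the input the Pogorelov/Caffarelli theory needs. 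With these clarifications your argument goes through.
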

The Riemannian metric $g_\Omega$ and holomorphic cubic differential $\ve{\phi}_\Omega$ on $\Omega$, given by the Blaschke metric and normalized Pick differential on $M_\Omega$ (see Section \ref{subsec_yau}), have coordinate expressions with coefficients given by partial derivatives of $u_\Omega$ of order up to $3$. So the curvature $\kappa_{g_\Omega}=-1+\|\ve{\phi}_\Omega\|^2_{g_\Omega}:\Omega\rightarrow\mathbb{R}$ has the same continuity property with respect to $\Omega$ as $u_\Omega$. We denote $\kappa_{g_\Omega}$ simply by $\kappa_\Omega$ for clearness. Note that $\kappa_{\Omega}\leq0$ by Theorem \ref{thm_ue}. 

Unlike $u_\Omega$, whose definition relies on the affine chart, $\kappa_{\Omega}$ only depends on the properly convex domain $\Omega$ and is $\SL(3,\mathbb{R})$-equivariant in the sense that $a^*\kappa_{\Omega}=\kappa_{{a(\Omega)}}$ for all $a\in\SL(3,\mathbb{R})$.  Therefore, letting $\mathfrak{C}$ denote the space of all properly convex domains equipped with the Hausdorff topology and $\mathfrak{C}_*$ denote the topological subspace of $\mathfrak{C}\times \mathbb{RP}^2$ consisting of those $(\Omega, x)\in \mathfrak{C}\times \mathbb{RP}^2$ such that $x\in\Omega$, we can state the continuity properly of $\kappa_{\Omega}$ as:
\begin{corollary}\label{coro_khausdorff}
The function on $\mathfrak{C}_*$ given by $(\Omega, x)\mapsto \kappa_\Omega(x)=-1+\|\ve{\phi}_\Omega\|^2_{g_\Omega}(x)$
is continuous.
% and invariant under .
\end{corollary}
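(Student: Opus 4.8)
The plan is to bootstrap from the fact, recorded just above, that \emph{within a fixed affine chart} $\kappa_\Omega$ depends continuously on $\Omega$ in $C^k_{\mathrm{loc}}$ for every $k$ (this follows from Theorem~\ref{thm_bh} together with the coordinate formulas expressing $g_\Omega$ and $\ve{\phi}_\Omega$ through the partial derivatives of $u_\Omega$ of order $\le 3$). Relative to that, the corollary contributes two new wrinkles: the domains parametrized by $\mathfrak{C}_*$ need not lie in a common affine chart, and the evaluation point $x$ varies along with $\Omega$. Since $\mathfrak{C}$ carries the metrizable Hausdorff topology, so does $\mathfrak{C}_*\subset\mathfrak{C}\times\mathbb{RP}^2$, and it is enough to prove sequential continuity: given $(\Omega_n,x_n)\to(\Omega,x)$ in $\mathfrak{C}_*$, show $\kappa_{\Omega_n}(x_n)\to\kappa_\Omega(x)$.

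First I would pass to a single chart. Pick an affine chart $\mathbb{R}^2\subset\mathbb{RP}^2$ whose complementary projective line is disjoint from the compact set $\overline{\Omega}$, so $\overline{\Omega}\subset\mathbb{R}^2$; then a neighborhood of $\overline{\Omega}$ still lies in $\mathbb{R}^2$, and Hausdorff convergence forces $\overline{\Omega_n}\subset\mathbb{R}^2$ once $n$ is large. On the relevant relatively compact region of $\mathbb{R}^2$ the metric induced from $\mathbb{RP}^2$ is bi-Lipschitz to the Euclidean one, so the convergence $\Omega_n\to\Omega$ holds in Euclidean Hausdorff distance — exactly the hypothesis under which Theorem~\ref{thm_bh}, and hence the $C^k_{\mathrm{loc}}$-continuity of $\kappa$, apply.

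Finally I would localize at $x$: fix a closed ball $K=\overline{B}(x,\rho)\subset\Omega$. For $n$ large we have $x_n\in K$ (since $x_n\to x$), $K\subset\Omega_n$, and $\kappa_{\Omega_n}\to\kappa_\Omega$ uniformly on $K$; therefore
$$
|\kappa_{\Omega_n}(x_n)-\kappa_\Omega(x)|\;\le\;\sup_K|\kappa_{\Omega_n}-\kappa_\Omega|\;+\;|\kappa_\Omega(x_n)-\kappa_\Omega(x)|\;\longrightarrow\;0,
$$
the first term by uniform convergence and the second by continuity of $\kappa_\Omega$ on $\Omega$. I expect the only real obstacle to be the chart reduction of the second paragraph — making sure the Hausdorff convergence in $\mathbb{RP}^2$ can be upgraded to convergence inside one fixed Euclidean chart, where the chart-dependent potential $u_\Omega$ and Theorem~\ref{thm_bh} become usable; everything after that is formal.
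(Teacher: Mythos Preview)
Your argument is correct and follows the same route the paper takes: the paragraph preceding the corollary already records that $\kappa_\Omega$ inherits $C^k_{\mathrm{loc}}$-continuity in $\Omega$ from Theorem~\ref{thm_bh} via the coordinate formulas, and then observes that $\kappa_\Omega$ is chart-independent (indeed $\SL(3,\mathbb{R})$-equivariant), so the statement makes sense over all of $\mathfrak{C}_*$. The paper leaves the corollary itself without a displayed proof; what you have written is a careful unpacking of exactly that reasoning, including the two points the paper glosses over --- the reduction to a single affine chart for a Hausdorff-convergent sequence, and the handling of the moving basepoint $x_n$.
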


\subsection{Hilbert metric}\label{subsec_hilbert}
Besides the Riemannian metric $g_\Omega$, any properly convex set $\Omega\subset\mathbb{RP}^2$ also carries a natural Finsler metric $g^\mathsf{H}_\Omega$, the \emph{Hilbert metric}. Given an affine chart $\mathbb{R}^2$ containing $\overline{\Omega}$ and a Euclidean norm $|\cdot|$ on $\mathbb{R}^2$, $g^\mathsf{H}_\Omega$ is defined by
$$
g^\mathsf{H}_\Omega(v):=\left(\frac{1}{|x-a|}+\frac{1}{|x-b|}\right)|v|\ \text{ for all } x\in\Omega,\,v\in\T_{x}\Omega\cong\mathbb{R}^2,
$$
where $a,b\in\pa\Omega$ are the intersections of $\pa\Omega$ with the line passing through $x$ along the direction $v$. The distance from any other $y\in\Omega$ on the line to $x$ is given by 
$$
d_\Omega^\mathsf{H}(x,y)=|\log[a,x,y,b]|,
$$
where $[a,x,y,b]$ is the cross-ratio, expressed as $\frac{(a-y)(b-x)}{(a-x)(b-y)}$ if we identify the line with $\mathbb{R}$. It follows from projective invariance of the cross-ratio that $g_\Omega^\mathsf{H}$ is independent of the choice of $\mathbb{R}^2$ and $|\cdot|$ and is $\SL(3,\mathbb{R})$-equivariant in the sense that $a^*g_\Omega^\mathsf{H}=g_{a(\Omega)}^\mathsf{H}$ for all $a\in\SL(3,\mathbb{R})$. 

Associated to $g_\Omega$ is a natural \emph{Buseman volume form} $\nu^\mathsf{H}_\Omega$ on $\Omega$, whose density with respect to the Lebesgue measure underlying $|\cdot|$ is $\mathsf{Area}(B_x(1))^{-1}$, where $B_x(1)$ denotes the unit disk $\{v\mid g^\mathsf{H}_\Omega(v)\leq 1\}\subset \T_x\Omega\cong\mathbb{R}^2$ and $\mathsf{Area}(B_x(1))$ is its area with respect to the Lebesgue measure.

Clearly, $g^\mathsf{H}_\Omega$ and $\nu^\mathsf{H}_\Omega$ at a point $x\in\Omega$ depends continuously on $\Omega$, hence the density $\nu^\mathsf{H}_\Omega/\nu_\Omega$ of $\nu^\mathsf{H}_\Omega$ with respect to the volume form $\nu_\Omega$ of the Blaschke metric $g_\Omega$ has the same continuity property as $\kappa_{\Omega}$. Applying the Benz\'ecri Compactness Theorem (see \cite[Theorem 2.7]{benoist-hulin}), which states that the quotient of the above topological space $\mathfrak{C}_*$ by the $\SL(3,\mathbb{R})$-action is compact, we get:
\begin{corollary}\label{coro_bh}
The function on $\mathfrak{C}_*$ given by $(\Omega,x)\mapsto \frac{\nu_\Omega^\mathsf{H}}{\nu_\Omega}(x)$
is continuous and is bounded from above and below by positive constants.
\end{corollary}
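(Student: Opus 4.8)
The plan is to combine the continuity already recorded in the paragraph preceding the corollary with the compactness of the Benz\'ecri quotient; the one new observation required is that the density $\nu_\Omega^\mathsf{H}/\nu_\Omega$ is $\SL(3,\mathbb{R})$-invariant.

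For continuity, one works in a fixed affine chart $\mathbb{R}^2\supset\overline\Omega$ with Lebesgue measure $\mathcal L$. The density $\nu_\Omega^\mathsf{H}/\mathcal L$ at $x$ equals $\mathsf{Area}(B_x(1))^{-1}$, which depends continuously on $(\Omega,x)$ because the endpoints $a,b\in\pa\Omega$ cut out by a line through $x$ move continuously under small Hausdorff perturbations of $\Omega$, so that the Hilbert unit ball $B_x(1)$ and its (finite, positive) area vary continuously as well. The density $\nu_\Omega/\mathcal L$, on the other hand, equals $\sqrt{\det g_\Omega}$, a strictly positive smooth function of the coefficients of the Blaschke metric $g_\Omega$, which are themselves smooth functions of the partial derivatives of $u_\Omega$ up to order $3$; hence it is continuous in $(\Omega,x)$ by Theorem \ref{thm_bh}. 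Therefore $\nu_\Omega^\mathsf{H}/\nu_\Omega=(\nu_\Omega^\mathsf{H}/\mathcal L)(\nu_\Omega/\mathcal L)^{-1}$ is continuous and everywhere positive on $\mathfrak{C}_*$.

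For boundedness, note that $g_\Omega$ and $g_\Omega^\mathsf{H}$ are both $\SL(3,\mathbb{R})$-equivariant, so their volume forms satisfy $a^*\nu_{a(\Omega)}=\nu_\Omega$ and $a^*\nu_{a(\Omega)}^\mathsf{H}=\nu_\Omega^\mathsf{H}$ for every $a\in\SL(3,\mathbb{R})$; consequently the value $(\nu_\Omega^\mathsf{H}/\nu_\Omega)(x)$ is unchanged under $(\Omega,x)\mapsto(a\Omega,ax)$ and descends to a continuous function $\bar f$ on the quotient $\mathfrak{C}_*/\SL(3,\mathbb{R})$. By the Benz\'ecri Compactness Theorem this quotient is compact, so $\bar f$ attains a maximum $M$ and a minimum $m$, and $m>0$ since $\bar f>0$ everywhere. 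Pulling back yields $0<m\le(\nu_\Omega^\mathsf{H}/\nu_\Omega)(x)\le M$ for all $(\Omega,x)\in\mathfrak{C}_*$, which is the assertion.

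No genuine obstacle is expected: all of the quantitative content is absorbed into the Benz\'ecri theorem, and the equivariance of the two volume forms is immediate from their definitions. The only steps that call for a line of justification are the strict positivity of $\nu_\Omega$ (clear, since $g_\Omega$ is Riemannian) and of $\nu_\Omega^\mathsf{H}$ (clear, since $B_x(1)$ is a bounded convex body with nonempty interior), together with the continuous dependence of $B_x(1)$ on $(\Omega,x)$ under the Hausdorff topology; the conceptual point is simply that passing to the $\SL(3,\mathbb{R})$-invariant ratio lets compactness of the Benz\'ecri quotient do all the work.
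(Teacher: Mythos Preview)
Your proposal is correct and follows essentially the same approach as the paper: the continuity is obtained from Theorem~\ref{thm_bh} (for $\nu_\Omega$) together with the elementary Hausdorff-continuity of the Hilbert unit ball (for $\nu_\Omega^\mathsf{H}$), and the two-sided bound comes from descending the $\SL(3,\mathbb{R})$-invariant ratio to the compact Benz\'ecri quotient. The paper states this argument in the paragraph immediately preceding the corollary rather than as a separate proof, but the content is identical.
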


\subsection{Definitions of ends and poles}\label{subsec_puncture}
The surfaces that we consider include those of infinite type. In this section, we define precisely what we mean by an \emph{end} of a  surface, a \emph{puncture} of a Riemann surface, \etc, although in most of the paper except the proof of Theorem \ref{thm_marquis} below, we only need to consider the ``trivial'' case of these definitions, \ie ends homeomorphic to an annulus.

Given a topological surface $S$ (without boundary), an \emph{end} of $S$ is defined intrinsically as an equivalence class of nested sequences of open subsets $U_1\supset U_2\supset\cdots$ in $S$ such that
\begin{itemize}
\item
the boundary $\pa U_i$ of $U_i$ in $S$ is compact for every $i$;
\item
any compact subset of $S$ is disjoint from $U_i$ when $i$ is sufficiently large;
\end{itemize}
while two sequences $(U_i)$ and $(V_i)$ are equivalent if each $U_i$ contains some $V_j$ and vice versa. A \emph{neighborhood} of an end $p=[(U_i)]$ is by definition a subset of $S$ containing some $U_i$. A sequence of points or an oriented path on $S$ is said to \emph{tend to $p$} if it is eventually contained in every neighborhood of $p$, or equivalently, in every $U_i$. 

%Let $\Ends(S)$ denote the set of all ends of $S$. Given an open set $U\subset S$, let $\widehat{U}\subset \Ends(S)$ denote the set of ends of which $U$ is a neighborhood. Then $\Ends(S)$ carries a natural totally disconnected, separable and compact topology generated by all subsets of the form $\widehat{U}$ with $U\subset S$ an open set with compact boundary. See \cite{richards} for details. 
The space $\Ends(S)$ of all ends of $S$ carries a natural totally disconnected, separable and compact topology (see \cite{richards}). An end $p\in \Ends(S)$ is said to be \emph{planar} if it has a neighborhood homeomorphic to a domain in $\mathbb{R}^2$. An end $p$ is both planar and isolated in $\Ends(S)$ if and only if a neighborhood is homeomorphic to an annulus. This is equivalent to the existence of a surface $S'$ with boundary such that
\begin{itemize}
\item the boundary $\pa S'$ is homeomorphic to a circle;
\item the interior $S'\setminus\pa S$ is  identified homeomorphically with $S$;
\item every sequence of points on $S$ tending to $p$ has a subsequence converging in $S'$ to a point on $\pa S'$. 
\end{itemize}
In intuitive terms, $S'$ is obtained from $S$ by attaching a boundary circle to $p$. We call such an $S'$ a \emph{bordification} of $S$ at $p$. Note that by the Hausdorff property of $\Ends(S)$, the last condition above can be replace by ``there exists a sequence on $S$ tending to $p$ which converges in $S'$ to a point on $\pa S'$''.

% Although essentially we are only concerned with the simplest case of the definition, namely, isolated planar ends. Nevertheless, the ends in the hypotheses of Theorems \ref{thm_marquis} and \ref{thm_finitevolumeend} below are a priori of general type.

If $\Sigma$ is a Riemann surface, an end $p\in \Ends(\Sigma)$ is said to be a \emph{puncture} if it has a neighborhood conformally equivalent to the punctured disk $\{z\in\mathbb{C}\mid 0<|z|\leq 1\}$.  We view $z$ as a conformal local coordinate of $\Sigma$ ``centered at $p$''. 

A holomorphic $k$-differential $\ve{\phi}$ on $\Sigma$ is said to have \emph{non-essential singularity of degree $d\in\mathbb{Z}$} at a puncture $p$ if the expression of $\ve{\phi}$ in a conformal local coordinate $z$ centered at $p$ is $z^df(z)$, where $f$ is a holomorphic function defined around $z=0$ with $f(0)\neq0$. Thus, a pole of order $m\in\mathbb{Z}$ is by definition a non-essential singularity of degree $-m$.
\begin{remark}
We allow poles to have nonpositive order, although this case is more commonly referred to as \emph{removable singularities}. In particular, they are included when we talk about ``poles of order less than $k$'' in Theorem \ref{thm_intro2} (\ref{item_introthm0}) and Section \ref{subsec_finitvolume}.
 \end{remark}
 
\subsection{Finite volume ends of convex $\mathbb{RP}^2$-surfaces}\label{subsec_cusps}
A surface $S$ endowed with a convex $\mathbb{RP}^2$-structure is referred to as a \emph{convex $\mathbb{RP}^2$-surface}. Such an $S$ is identified via a developing map with the quotient of a properly convex domain $\Omega\subset\mathbb{RP}^2$ by a torsion-free subgroup $\Gamma\subset\SL(3,\mathbb{R})$ acting properly discontinuous on $\Omega$. The Hilbert metric $g^\mathsf{H}_\Omega$, Buseman volume form $\nu^\mathsf{H}_\Omega$ and Blaschke metric $g_\Omega$ on $\Omega$ are all invariant under $\Gamma$, hence descent to objects on $S$ bearing the same names (compare Section \ref{subsec_yau}).

By Corollary \ref{coro_bh}, the proportion between the Buseman volume form and the volume form of the Blaschke metric on every convex $\mathbb{RP}^2$-surface $S$ is bounded from above and below by universal positive constants.
A \emph{finite volume end} of $S$ is by definition an end which has a neighborhood with finite volume under either of these volume forms.

The works of L. Marquis \cite{marquis} give a characterization of finite volume ends:
\begin{theorem}[\cite{marquis}]
\label{thm_marquis}
Let $\Omega\subset\mathbb{RP}^2$ be a properly convex domain and $\Gamma$ be a torsion-free subgroup of $\SL(3,\mathbb{R})$ acting properly discontinuously on $\Omega$. Then an end $p$ of the convex $\mathbb{RP}^2$-surface $S=\Omega/\Gamma$ is a finite volume end if and only if there is a parabolic projective transformation $a\in\Gamma$ with fixed point $x\in\pa\Omega$ and a closed ellipse $E\subset\mathbb{RP}^2$ preserved by $a$, such that $E\setminus\{x\}$ is contained in $\Omega$ and its quotient by $a$ is embedded in $S$ as a neighborhood of $p$.
\end{theorem}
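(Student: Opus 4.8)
The plan is to treat the two implications separately. The reverse (``if'') direction will be a short computation with the Hilbert metric, while the substantive content of the forward (``only if'') direction will be extracted from Marquis's analysis \cite{marquis} of convex projective surfaces of finite volume.

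For the ``if'' direction I would set $\mathcal{N}:=(E\setminus\{x\})/\langle a\rangle$, which by hypothesis is embedded in $S$ as a neighborhood of $p$ and is topologically a half-open annulus whose open end points toward $x$. Splitting $\mathcal{N}$ along a compact level circle, the inner compact piece has finite volume simply because $\nu^{\mathsf H}_\Omega$ descends to a continuous Finsler volume form on $S$, for which any compact set has finite volume. For the outer cuspidal piece I would observe that its preimage in $E\setminus\{x\}$ has closure meeting $\pa E$ only at $x$, so it is contained in a horoball $\mathcal{B}\subset\Int(E)$ based at $x$; since $\Int(E)\subset\Omega$, monotonicity of the Hilbert metric under inclusion gives $\nu^{\mathsf H}_\Omega\leq\nu^{\mathsf H}_{\Int(E)}$ on $\Int(E)$, and $(\Int(E),g^{\mathsf H}_{\Int(E)})$ is a constant multiple of the Klein model of $\mathbb{H}^2$, on which $a$ acts parabolically with fixed point $x$. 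Hence the $\nu^{\mathsf H}_\Omega$-volume of the cuspidal piece is bounded by the finite hyperbolic area of a parabolic-quotiented horoball. This gives $\mathcal{N}$ finite Busemann volume, hence, by Corollary \ref{coro_bh}, finite Blaschke volume, so $p$ is a finite volume end.

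For the ``only if'' direction I would start from a finite-volume neighborhood $U$ of $p$ and first reduce it to a standard cusp. The key input here is the thick--thin decomposition and Margulis-type lemma for Hilbert geometries (Cooper--Long--Tillmann, Crampon--Marquis; collected for surfaces in \cite{marquis}): only a bounded amount of topology and volume sits in the thick part, so a deep enough neighborhood of a finite-volume end is a union of thin collars, and finiteness of volume forces $U$ to be a half-open annulus with $\pi_1(U)=\langle a\rangle$ for a single $a\in\Gamma$ acting freely on $\Omega$. Next I would rule out positive Hilbert translation length for $a$: a hyperbolic or quasi-hyperbolic $a$ has an invariant geodesic (resp. a geodesic on which it translates) whose convex side produces a half-funnel in $U$ of infinite volume. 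Hence $a$ is parabolic; being an element of $\SL(3,\mathbb{R})$ that preserves a properly convex set and acts properly discontinuously, it is conjugate to the regular unipotent and fixes a unique point $x\in\pa\Omega$. Finally, invoking Marquis's normal form for parabolic cusps \cite{marquis}---after conjugation $\pa\Omega$ near $x$ lies between two $a$-invariant conics tangent at $x$---I would take $E$ to be the closed elliptical disk bounded by an $a$-invariant conic contained in $\Omega$, which gives $E\setminus\{x\}\subset\Omega$, and, after one more shrinking of $U$, makes $(E\setminus\{x\})/\langle a\rangle$ an embedded neighborhood of $p$.

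The main obstacle is entirely on the ``only if'' side: reducing an arbitrary, a priori topologically complicated, finite-volume end to a cyclic annular cusp, and then pinning down the local geometry of $\pa\Omega$ at the parabolic fixed point precisely enough to fit an $a$-invariant ellipse inside $\Omega$. These are exactly the two facts supplied by Marquis's work, which is why the theorem is ``a straightforward consequence'' of \cite{marquis} once those are cited; everything else is elementary. I would also emphasize the role of Corollary \ref{coro_bh}: since the Busemann and Blaschke volume forms on a convex $\mathbb{RP}^2$-surface are uniformly comparable, the notion of finite volume end does not depend on which of the two is used, so I am free to argue with whichever is more convenient at each step.
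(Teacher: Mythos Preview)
Your ``if'' direction is essentially the same as the paper's: both use the inclusion $\Int(E)\subset\Omega$, monotonicity of the Hilbert metric, and the fact that $(\Int(E),g^{\mathsf H}_{\Int(E)})$ is the Klein model so that a parabolic-quotiented horoball has finite hyperbolic area.

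For the ``only if'' direction your route and the paper's diverge at the step where one shows the end is topologically an annulus. You invoke a Margulis lemma / thick--thin decomposition (Cooper--Long--Tillmann, Crampon--Marquis). The paper instead argues by contradiction: if the end were not an isolated planar end, every neighborhood would be a surface of infinite type; cutting along a suitable simple loop and straightening it to a geodesic via \cite[Props.~5.9, 5.12]{marquis} produces a convex $\mathbb{RP}^2$-subsurface of infinite type, which by the proof of \cite[Thm.~5.18]{marquis} contains infinitely many disjoint ideal triangles or geodesic pairs of pants, each of Busemann volume bounded below by a universal constant \cite[Props.~1.13, 5.17]{marquis}, contradicting finite volume. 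Once the end is annular, both you and the paper finish the same way: classify the boundary holonomy as hyperbolic/quasi-hyperbolic/parabolic, rule out the first two by the infinite-volume funnel computation \cite[Props.~5.21, 5.27]{marquis}, and use \cite[Prop.~4.16]{marquis} to produce the $a$-invariant ellipse.

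One caution about your route: the Margulis-type results you cite (Crampon--Marquis, Cooper--Long--Tillmann) are proved under strict convexity and $C^1$-boundary hypotheses on $\Omega$, which are not assumed here. Marquis's surface paper \cite{marquis} does cover the general properly convex case, but not via a thick--thin decomposition; it proceeds through the specific volume-lower-bound propositions the paper cites. So your packaging of the argument as ``thick--thin'' is morally right but would need extra work to justify in the stated generality, whereas the paper's triangle/pants counting argument applies directly.
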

%\begin{figure}[h]
%\centering\includegraphics[width=1.8in]{bhsimple}
%\caption{Finite volume end of a convex $\mathbb{RP}^2$-surface}
%\label{figure_prooffomega2}
%\end{figure}
In other words, an end is a finite volume one if and only if it is projectively equivalent to a cusp of a hyperbolic surface. In particular, such an end is topologically trivial, \ie an isolated planar end.
\begin{proof}[Proof]
The ``if'' part: The interior $E^\circ$ of $E$ equipped with its Hilbert metric $g^\mathsf{H}_{E^\circ}$ is isometric to the hyperbolic plane, with $a$ corresponding to a parabolic isometry. So any $a$-invariant ellipse strictly contained in $E$ gives a neighborhood of $p$ with finite volume under $g^\mathsf{H}_{E^\circ}$, which has finite volume under the Hilbert metric of $S$ as well because $g^\mathsf{H}_{E^\circ}$ dominates $g^\mathsf{H}_\Omega$ by virtue of the fact that $E^\circ\subset \Omega$.

To prove the ``only if'' part, we first show that $p$ is an isolated planar end. 
Suppose by contradiction that it is not the case, so that every neighborhood of $p$ is a surface of infinite type.  Let $U$ be a neighborhood with finite volume. We can pick a non-elementary simple loop $\gamma$ on $S$ (where a simple loop $\gamma$ is said to be \emph{elementary} if $S\setminus\gamma$ has a connected component homeomorphic to a punctured disk) such that a connected component  $V$ of $S\setminus \gamma$ is a neighborhood of $p$ contained in $U$.  By \cite[Proposition 5.12]{marquis}, the holonomy along $\gamma$ is a hyperbolic projective transformation with principal segment contained in $\Omega$. Thus, by \cite[Proposition 5.9]{marquis}, we can deform $\gamma$ by free homotopy to a geodesic loop $\gamma'$, so that $V$ gets deformed into a neighborhood $V'$ of $p$ bounded by $\gamma'$, which still has finite volume. Since $V'$ is itself a convex $\mathbb{RP}^2$-surface of infinite type, the proof of \cite[Th\'eor\`eme 5.18]{marquis} shows that $V'$ contains either infinitely many disjoint embedded ideal geodesic triangles or infinitely many disjoint pairs of pants with non-elementary geodesic boundary loops. But it is shown in \cite[Propositions 1.13 and 5.17]{marquis} that each such triangle or pair of pants has Buseman volume bounded from below by a universal constant. Therefore, $V'$ has infinite volume, a contradiction.

Now that $p$ is an isolated planar end, we can pick a neighborhood $U$ of $p$ homeomorphic to a punctured disk with boundary $\pa U$ a simple loop. The holonomy $a\in\SL(3,\mathbb{R})$ of $S$ along $\pa U$ is either hyperbolic, quasi-hyperbolic or parabolic and a case-by-case check (see \cite[Propositions 5.21, 5.27]{marquis}) shows that $U$ can have finite volume only when $a$ is parabolic and $p$ corresponds to the fixed point of $a$.  It follows (see \cite[Proposition 4.16]{marquis}) that $p$ admits the required description.
\end{proof}

\subsection{Finite volume ends and low-order poles}\label{subsec_finitvolume}
In view of the definition of finite volume ends from the previous section, Part (\ref{item_introthm0}) of Theorem \ref{thm_intro2} is a consequence of the following more general result:
\begin{theorem}
\label{thm_finitevolumeend}
Let $\Sigma$ be a Riemann surface, $\ve{\phi}$ be a nontrivial holomorphic $k$-differential on $\Sigma$ and $p$ be an end of $\Sigma$. Then $p$ has a neighborhood with finite volume under the complete solution $g$ to the vortex equation on $(\Sigma,\ve{\phi})$ (see Theorem \ref{thm_ue}) if and only if $p$ is a puncture of $\Sigma$ and $\ve{\phi}$ has a pole of order less than $k$ at $p$.
\end{theorem}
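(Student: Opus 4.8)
The plan is to analyze the complete solution $g$ to the vortex equation near the end $p$, distinguishing whether $p$ is a puncture and, if so, what the order of $\ve{\phi}$ at $p$ is. For the ``only if'' direction I would first reduce to the case that $p$ is an isolated planar end. This requires an argument like the one in the proof of Theorem \ref{thm_marquis} using Hilbert-geometry volume lower bounds for ideal triangles and pairs of pants, applied via the conformal bi-Lipschitz comparison between the Blaschke metric $g$ and the Hilbert metric coming from Corollary \ref{coro_bh}; together with the fact (Theorem \ref{thm_ue}) that $g$ has nonpositive curvature, so its area dominates hyperbolic area on large-curvature regions. Once $p$ is known to be an isolated planar end, I can pick a neighborhood conformally identified with a punctured disk $\{0<|z|\le 1\}$ or a semi-infinite half-cylinder, and work analytically.

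Next, on such a punctured-disk neighborhood write $g=e^u|\dz|^2$ and $\ve{\phi}=\phi(z)\,\dz^k=z^d f(z)\,\dz^k$ (if $\ve{\phi}$ has a pole of order $-d$, so $d\in\mathbb Z$) or $\phi$ with an essential singularity or nontrivial monodromy. The singular flat metric $|\ve{\phi}|^{2/k}=|z|^{2d/k}|f(z)|^{2/k}|\dz|^2$ has finite area near $z=0$ precisely when $2d/k>-2$, i.e.\ $d>-k$, i.e.\ the pole has order $<k$; and it is complete near $z=0$ iff $d\le -k$, i.e.\ pole of order $\ge k$. Since Lemma \ref{lemma_nonpositive2} gives $g\ge|\ve{\phi}|^{2/k}$, completeness of $|\ve{\phi}|^{2/k}$ near $p$ forces $g$ to be complete near $p$, hence $p$ is \emph{not} a finite volume end (an end with a complete-metric neighborhood has infinite diameter and, using the curvature bound and Corollary \ref{coro_bh} to transfer to the Hilbert metric, infinite volume). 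This handles poles of order $\ge k$ and the essential-singularity case (where $|\ve{\phi}|^{2/k}$ is also complete: the flat metric blows up too fast). It also handles the case $p$ is not a puncture, since then $p$ is a ``wild'' end and one shows $g$ is complete there as well, e.g.\ by the same domination once one checks the relevant $|\ve{\phi}|^{2/k}$ is complete, or by a Cheng--Yau/Schwarz-lemma argument bounding $g$ below by a complete hyperbolic-type metric.

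For the remaining case---$p$ a puncture with $\ve{\phi}$ having a pole of order $\le 2k-1$... rather, of order $<k$, so $d>-k$---I would show $p$ is a finite volume end by producing a good upper bound for $g$ near $z=0$. The flat metric $g_-=|\ve{\phi}|^{2/k}$ is an incomplete subsolution there, and I need a complete supersolution $g_+$ on the punctured disk comparable to a cusp metric, so that Lemma \ref{lemma_completebound} (or Corollary \ref{coro_compa}) pins $g$ between them, whence $g$ has finite area near $p$; then Corollary \ref{coro_bh} transfers finiteness of Blaschke area to finiteness of Buseman volume. A convenient supersolution is a constant multiple $Mg_{\mathrm{cusp}}$ of the complete hyperbolic cusp metric $g_{\mathrm{cusp}}=\frac{|\dz|^2}{|z|^2(\log|z|)^2}$: since $\|\ve{\phi}\|^2_{g_{\mathrm{cusp}}}=|z|^{2d}|f|^2\cdot |z|^{2k}(\log|z|)^{2k}\to 0$ as $z\to 0$ when $d>-k$, for $M$ large $g_+=Mg_{\mathrm{cusp}}$ satisfies $\kappa_{g_+}=-M^{-1}\ge -1+\|\ve{\phi}\|^2_{g_+}$ near $z=0$, i.e.\ it is a supersolution, and it is complete. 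Comparing on an annulus $\{\epsilon\le|z|\le 1\}$ and letting $\epsilon\to0$ via Corollary \ref{coro_compa} gives $g\le M g_{\mathrm{cusp}}$ near the puncture, so the neighborhood has finite $g$-area, hence finite volume.

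The main obstacle I anticipate is the first reduction step---ruling out finite volume for non-planar or non-isolated ends, and more generally establishing completeness of $g$ near \emph{every} end that is not a low-order-pole puncture---since the end may a priori be of complicated topological type and the singular flat metric $|\ve{\phi}|^{2/k}$ only yields a clean dichotomy at punctures. Handling this likely needs the Hilbert-geometric volume estimates of Marquis together with the curvature comparison, rather than a purely PDE argument; alternatively one can invoke Theorem \ref{thm_incomplete}-type reasoning locally to show that on any neighborhood of such an end the vortex equation forces a complete solution. Packaging this cleanly, uniformly over all end types, is the delicate part.
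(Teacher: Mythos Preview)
Your ``if'' direction is essentially correct and in fact slightly cleaner than the paper's argument. Once you observe that $\|\ve{\phi}\|^2_{g_1}$ is bounded on the \emph{whole} open punctured disk $\mathbb{D}^*$ (it vanishes both as $|z|\to 0$, since the pole has order $<k$, and as $|z|\to 1$, since $g_1$ blows up there), the metric $Mg_1$ is a complete $C^2$ supersolution on $\mathbb{D}^*$ for $M$ large, and Lemma~\ref{lemma_completebound} applied on $\mathbb{D}^*$ gives $g\le Mg_1$ directly. The paper instead proves the sharper Lemma~\ref{lemma_vortexcusp} (that $\limsup g/g_1\le 1$) via a two-step Ahlfors--Schwarz argument, but for finite volume your bound suffices. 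Your phrasing ``comparing on an annulus and letting $\epsilon\to 0$ via Corollary~\ref{coro_compa}'' would not work as stated (you have no control on the inner boundary), but the Lemma~\ref{lemma_completebound} route you also mention does.

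The ``only if'' direction, however, has a genuine gap, which you have correctly located. Your reduction to an isolated planar end via Marquis-type Hilbert-geometry estimates and Corollary~\ref{coro_bh} is specific to $k=3$: for general $k$ there is no $\mathbb{RP}^2$-structure, no Hilbert metric, and Corollary~\ref{coro_bh} is unavailable. The paper's resolution is much simpler and purely analytic: assuming $\Sigma$ is hyperbolic (the parabolic case being trivial), let $g_0$ be the complete conformal hyperbolic metric on $\Sigma$. Since $g_0$ is a subsolution and $g$ is the complete solution, Lemma~\ref{lemma_completebound} (or Yau's Ahlfors--Schwarz lemma) gives $g\ge g_0$ globally. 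Hence a finite-$g$-volume end is a finite-$g_0$-volume end, and one invokes the classical fact that finite-volume ends of complete hyperbolic surfaces are cusps, i.e.\ punctures. No Hilbert geometry is needed.

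There is also a smaller error in your treatment of poles of order $\ge k$: completeness of a metric near an end does \emph{not} imply infinite volume (cusps are complete with finite volume), and Corollary~\ref{coro_bh} concerns ratios of volume forms, not diameter versus volume. The correct argument is immediate area comparison: since $g\ge |\ve{\phi}|^{2/k}$ by Lemma~\ref{lemma_nonpositive2}, finite $g$-area near the puncture forces finite $|\ve{\phi}|^{2/k}$-area there, and then Lemma~\ref{lemma_area} in the appendix shows the singularity is non-essential with pole order $<k$. This also disposes of the essential-singularity case, which your completeness heuristic does not handle (an essential singularity need not make $|\ve{\phi}|^{2/k}$ complete).
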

Note that if $\Sigma$ is hyperbolic and $\ve{\phi}$ is trivial, so that the hyperbolic metric given by Uniformization solves the vortex equation, then the statement of the theorem boils down to the well known fact that finite volume ends of complete hyperbolic surfaces are cusps. 
%This follows from the classical fact that the convex core of a complete hyperbolic surface has finite volume if and only if the surface is of finite type, 
%or alternatively from Theorem \ref{thm_marquis}.

%The key ingredient in the proof of Theorem \ref{thm_finitevolumeend}:

The mains tools in the proof are the following local estimates at low-order poles:
\begin{lemma}\label{lemma_vortexcusp}
Let $\ve{\phi}$ be a holomorphic $k$-differential on $\mathbb{D}^*:=\{z\in\mathbb{C}\mid 0<|z|<1\}$ with a pole of order less than $k$ at $0$. Let $g_1$ be the complete conformal hyperbolic metric on $\mathbb{D}^*$ and $g$ be a solution to the vortex equation on $(\mathbb{D}^*,\ve{\phi})$ which is complete at $0$ (\ie rectifiable paths tending to $0$ have infinite length under $g$). Then we have
$$
\lim_{z\rightarrow0}\kappa_g(z)=-1,\quad \limsup_{z\rightarrow0}\frac{g}{g_1}(z)\leq 1.
$$
\end{lemma}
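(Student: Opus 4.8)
The plan is to prove the two conclusions more or less independently: the estimate on $g/g_1$ by comparison with model hyperbolic metrics on small punctured disks, and the convergence $\kappa_g\to-1$ by exploiting that the flat metric $|\ve{\phi}|^\frac{2}{k}$ is \emph{incomplete} at $0$ (since the pole has order $<k$) while $g$ is complete there. Throughout I would work on a punctured disk $\mathbb{D}^*_{r_0}=\{0<|z|<r_0\}$ small enough that $\ve{\phi}=\phi(z)\dz^k$ has no zeros on it and $|\phi(z)|\le C_0|z|^{-m}$ with $m:=\max(d,0)<k$; then, as $2m/k<2$, paths tending to $0$ have finite $|\ve{\phi}|^\frac{2}{k}$-length.

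For the bound $\limsup_{z\to0}g/g_1\le1$, fix $\epsilon>0$ and let $h_r=|\dz|^2\big/\big(|z|^2\log^2(r/|z|)\big)$ be the complete hyperbolic metric of $\mathbb{D}^*_r$, for $r\in(0,r_0)$. A direct computation gives $\|\ve{\phi}\|^2_{h_r}=|\phi(z)|^2|z|^{2k}\log^{2k}(r/|z|)\le C_0^2A\,r^{2(k-m)}$ on $\mathbb{D}^*_r$, where $A:=\sup_{0<t<1}t^{2(k-m)}\log^{2k}(1/t)<\infty$, so $\sup_{\mathbb{D}^*_r}\|\ve{\phi}\|^2_{h_r}\to0$ as $r\to0$. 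Choose $r$ so small that this supremum is at most $(1+\epsilon)^k-(1+\epsilon)^{k-1}$; then $(1+\epsilon)h_r$ is a complete supersolution of the vortex equation on $(\mathbb{D}^*_r,\ve{\phi})$, the supersolution inequality rearranging exactly to this bound on $\|\ve{\phi}\|^2_{h_r}$. Since $g$ is a subsolution, Lemma \ref{lemma_completebound} yields $g\le(1+\epsilon)h_r$ on $\mathbb{D}^*_r$; as $h_r/g_1=\log^2(1/|z|)\big/\log^2(r/|z|)\to1$ when $z\to0$, this gives $\limsup_{z\to0}g/g_1\le1+\epsilon$, and letting $\epsilon\downarrow0$ finishes this part.

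The convergence $\kappa_g\to-1$ amounts, by the vortex equation, to $\beta:=\|\ve{\phi}\|^2_g\to0$ at $0$. The easy half, $\liminf_{z\to0}\beta=0$, follows because otherwise $\beta\ge e^{-N}$ on a punctured neighborhood of $0$, hence $g=\beta^{-1/k}|\ve{\phi}|^\frac{2}{k}\le e^{N/k}|\ve{\phi}|^\frac{2}{k}$ there, forcing $g$ to be incomplete at $0$ — a contradiction. Upgrading this to $\lim_{z\to0}\beta=0$ is the substance of the proof. I would first record the a priori bound $\kappa_g\in[-1,0]$ near $0$ (equivalently $\beta\le1$), so that $(\mathbb{D}^*_{r_0},g)$ has curvature in $[-1,0]$ and $g$-balls of unit radius near the puncture have uniformly controlled geometry; this is automatic when $g$ extends to a globally complete solution (Lemma \ref{lemma_nonpositive2}) and otherwise is part of the preliminary analysis via Proposition \ref{prop_maximum}. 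With $\beta\le1$, the function $v:=-\log\beta\ge0$ is $g$-subharmonic: from the vortex equation and the harmonicity of $\log|\phi|^2$ off the zeros one obtains the pointwise identity $\Delta_g\log\beta=2k(\beta-1)$, hence $0\le\Delta_g v=2k(1-\beta)\le2k$, while $\limsup_{z\to0}v=+\infty$ by the easy half.

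The step I expect to be the main obstacle is then a Harnack-plus-maximum-principle argument on a collar of $p$. Suppose $\liminf_{z\to0}v<\infty$. Along suitable circles $\Gamma_j=\{|z|=s_j\}$ with $s_j\to0$ there are points $q_j\in\Gamma_j$ with $v(q_j)$ bounded; since $g\le(1+o(1))g_1$ near $0$ the $g$-length of $\Gamma_j$ tends to $0$, so $\Gamma_j\subset B_g(q_j,\tfrac12)$, and since $g$ is complete at $0$, for large $j$ the ball $B_g(q_j,1)$ is relatively compact in $\mathbb{D}^*_{r_0}$ with uniformly bounded geometry. The elliptic Harnack inequality for the nonnegative $v$ with $|\Delta_g v|\le2k$ then bounds $\sup_{\Gamma_j}v$ by a constant independent of $j$. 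As $v$ is $g$-subharmonic, the maximum principle on the compact region bounded by $\{|z|=r_0\}$ and $\Gamma_j$ bounds $v$ there by $\max\!\big(\sup_{|z|=r_0}v,\ \sup_{\Gamma_j}v\big)$, again a constant independent of $j$; letting $j\to\infty$ bounds $v$ on all of $\mathbb{D}^*_{r_0}$, contradicting $\limsup_{z\to0}v=+\infty$. Hence $v\to+\infty$, i.e. $\beta\to0$ and $\kappa_g\to-1$. The delicate points hidden here — producing the right collar exhaustion, the uniformity of the Harnack constant, and the a priori curvature bound — are where completeness of $g$ at $0$ is used essentially, and constitute the part needing real care.
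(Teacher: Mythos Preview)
Your argument for $\limsup_{z\to 0} g/g_1\leq 1$ is correct and clean: the key computation $\sup_{\mathbb{D}^*_r}\|\ve{\phi}\|^2_{h_r}\leq C_0^2 A\,r^{2(k-m)}\to 0$ makes $(1+\epsilon)h_r$ a complete supersolution on $\mathbb{D}^*_r$, and Lemma~\ref{lemma_completebound} applies directly. This is essentially the paper's second step, done independently of the curvature limit (the paper instead first proves $\kappa_g\to -1$ and then runs the same comparison with $\lambda^{-1}g$ against $g_r$ via the ordinary Ahlfors--Schwarz lemma). Your ordering is arguably nicer here.

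Your approach to $\kappa_g\to -1$, however, has a genuine gap and is far more elaborate than needed. The entire Harnack/maximum-principle scheme rests on the a~priori bound $\beta\leq 1$ near $0$: you need it for $v=-\log\beta\geq 0$, for $\Delta_g v\geq 0$, and for the uniform curvature control on the balls $B_g(q_j,1)$. But you have not established $\beta\leq 1$. The hypothesis is only completeness \emph{at $0$}, so Lemma~\ref{lemma_nonpositive2} does not apply, and Proposition~\ref{prop_maximum} only excludes interior local maxima of $\beta$ above $1$; it does not prevent $\limsup_{z\to 0}\beta>1$ or even $=+\infty$. Establishing $\beta\leq 1$ is equivalent to a lower bound $g\geq|\ve{\phi}|^{2/k}$ near $0$, and once you are proving lower bounds on $g$ you have essentially reached the paper's argument, after which the Harnack machinery becomes superfluous.

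The paper's route is short and bypasses all of this: interpolate $g$ with $g_1$ to obtain a conformal metric $g'$ on $\mathbb{D}^*$ that equals $g$ near $0$ and $g_1$ near $|z|=1$; then $g'$ is complete and has curvature bounded below (since $\kappa_g\geq -1$ from the vortex equation, $\kappa_{g_1}=-1$, and the transition annulus is compact). Choose $M$ with $\kappa_{Mg'}\geq -1$ and apply Yau's Ahlfors--Schwarz lemma to get $Mg'\geq g_1$, hence $g\geq M^{-1}g_1$ on $\{0<|z|<1/3\}$. This immediately gives
\[
\|\ve{\phi}\|^2_g\leq M^k\|\ve{\phi}\|^2_{g_1}=M^k|f(z)|^2|z|^2(\log|z|)^{2k}\longrightarrow 0,
\]
so $\kappa_g\to -1$. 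Note that the crucial step is a \emph{lower} bound on $g$, precisely what your upper bound $g\leq(1+\epsilon)h_r$ cannot supply; your incompleteness argument for $\liminf\beta=0$ is correct but cannot be upgraded to $\lim\beta=0$ without such a lower bound.
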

\begin{proof}
Let $g'$ be a conformal metric on $\mathbb{D}^*$ such that $g'=g$ on $\{0<|z|<1/3\}$ and $g'=g_1$ on $\{2/3<|z|<1\}$. Noting that $\kappa_g\geq-1$,  we can pick a sufficiently large constant $M$ such that the curvature of $Mg'$ is bounded from below by $-1$. Since $g'$ is complete, Yau's Ahlfors-Schwarz lemma \cite{yau_ahlfors} (see Remark \ref{remark_yauas}) implies that $Mg'\geq g_1$. Therefore, on $\{0<|z|<1/3\}$ we have
$$
-1\leq \kappa_g=-1+\|\ve{\phi}\|^2_g\leq -1+M^k\|\ve{\phi}\|^2_{g_1}.
$$
But it follows from the expressions 
$$
g_1=\frac{|\dz|^2}{|z|^2\left(\log |z|\right)^2},\quad \ve{\phi}=\frac{f(z)}{z^{k-1}}\dz^k
$$
(where $f$ is holomorphic on $\{|z|<1\}$) that $\|\ve{\phi}\|^2_{g_1}=|f(z)|^2|z|^2(\log|z|)^{2k}$ tends to $0$ as $z\rightarrow0$, whence $\lim_{z\rightarrow0}\kappa_g(z)=-1$.

The second assertion is deduced from the first one as follows. Given $r>0$, we consider the complete conformal hyperbolic metric
$$
g_r=\frac{|\dz|
^2}{|z|^2\left(\log |z|-\log r\right)^2}
$$
on the punctured disk $\{0<|z|<r\}$. Since $\lim_{z\rightarrow0}\kappa_g(z)=-1$, for any $\lambda>1$ there is $0<r<1$ such that  $-1\leq\kappa_g(z)\leq-\lambda^{-1}$ for $0<|z|<r$, so that we can apply the Ahlfors-Schwarz lemma on $\{0<|z|<r\}$ to the metrics $g_r$ and $\lambda^{-1} g$, and conclude that $g_r\geq \lambda^{-1} g$. As a result,
$$
\frac{g}{g_1}(z)=\frac{g}{g_r}(z)\frac{g_r}{g_1}(z)\leq \lambda\left(\frac{\log|z|}{\log|z|-\log r}\right)^2
$$
whenever $0<|z|<r$. This implies the required $\limsup$ because $\lambda>1$ is arbitrary and the fraction in the parentheses tends to $1$ as $z$ goes to $0$.
\end{proof}

\begin{proof}[Proof of Theorem \ref{thm_finitevolumeend}]
Suppose $p$ has a neighborhood with finite volume under $g$. We first show that $p$ is a puncture. To this end, we may suppose that $\Sigma$ is a hyperbolic Riemann surface (otherwise $\Sigma$ is parabolic and $p$ must be a puncture) and let $g_0$ be the hyperbolic metric on $\Sigma$ given by Uniformization. We have $g\geq g_0$ by Yau's Ahlfors-Schwarz lemma or alternative by Lemma \ref{lemma_completebound}. So $p$ is a finite-volume end with respect to $g_0$, hence a puncture (see the paragraph following Theorem \ref{thm_finitevolumeend}).

By Theorem \ref{thm_ue}, the curvature $\kappa_g=-1+\|\ve{\phi}\|^2_g$ is nonpositive, so the conformal ratio $|\ve{\phi}|^\frac{2}{k}/g=\|\ve{\phi}\|^\frac{2}{k}_g$ between singular flat metric $|\ve{\phi}|^\frac{2}{k}$ and $g$ is at most $1$. Thus, $p$ also has a neighborhood with finite volume under the volume form of $|\ve{\phi}|^\frac{2}{k}$. By Lemma \ref{lemma_area} in the appendix, $\ve{\phi}$ has a pole or order at most $k-1$ at $p$. This proves the ``only if'' part.

The ``if'' part follows from Lemma \ref{lemma_vortexcusp}: Identifying a neighborhood of $p$ with the punctured disk $\mathbb{D}^*$, the lemma implies that $g\leq Cg_1$ on $\{0<|z|<\varepsilon\}$ for some constants $C>1$ and $0<\varepsilon<1$. But $\{0<|z|<\varepsilon\}$ has finite volume with respect to $g_1$, hence with respect to $g$ as well.
\end{proof}

The $k=3$ case of Theorem \ref{thm_finitevolumeend} is a slightly more general version of the results of Benoist and Hulin in \cite{benoist-hulin}. Indeed, the ``only if'' statement for $k=3$ is essentially contained in \cite{benoist-hulin}, with a proof different from the one above.

\section{Bordification of cubic differentials around poles}\label{sec_5}
In this chapter, after reviewing some facts about the local geometry of holomorphic $k$-differentials around poles, we show that for a cubic differential, any pole of order $n+3$ ($n\geq1$) has a neighborhood formed by $n$ copies of a half-plane. We then define the negative ray bordification $\Sigma'$ in Theorem \ref{thm_intro3}.
\subsection{Normal form and invariants of $k$-differentials at poles}\label{subsec_normalform}
Around a pole, we can always write a $k$-differential in normal form after a change of coordinate:
\begin{theorem}[\textbf{Normal form}]\label{thm_norm}
Given a holomorphic $k$-differential $\ve{\phi}$ on a punctured neighborhood of $0$ in $\mathbb{C}$ with non-essential singularity of degree $d\in\mathbb{Z}$ at $0$ (see Section \ref{subsec_puncture}), there exists a conformal local coordinate $w$ centered at $0$ such that
\begin{equation}\label{eqn_normalform}
\ve{\phi}
=\begin{cases}
w^d\dif w^k\quad  &\mbox{ if } d\notin\{-k,-2k,\cdots\},\\[4pt]
Rw^{-k}\dif w^k\quad(R\in\mathbb{C}^*) &\mbox{ if }d=-k,\\[4pt]
\left(w^{-l}+A\right)^kw^{-k}\dif w^k\quad (A\in\mathbb{C})  &\mbox{ if }d=-(l+1)k\mbox{ with }l\in\mathbb{Z}_+.
\end{cases}
\end{equation}
Moreover, $R\in\mathbb{C}^*$ and $A^k\in\mathbb{C}$ are invariants of $\ve{\phi}$, independent of the choice of $w$.
\end{theorem}
Note that $A$ itself is not coordinate-independent, because if $\lambda^{kl}=1$ then the last normal form is written in the coordinate $\tilde{w}=\lambda w$ as $(\tilde{w}^{-l}+\lambda^lA)^k\tilde{w}^{-k}\dif\tilde{w}^k$. The invariant $R$, defined at poles of order $k$, is called the \emph{residue}, and the theorem implies that $d$, $R$ and $A^k$ give full local information of $\ve{\phi}$ at a pole.

A proof of Theorem \ref{thm_norm} in the $k=2$ case is given in \cite[\S 6]{strebel}. In this section, we interpret the invariants $R$ and $A^k$ from a geometric point of view, which will be used in the next section. The interpretation, combined with Appendix \ref{sec_flatend}, also yield in this section a proof of the theorem for general $k$.

The geometric point of view, well-known for $k=1,2$ (see \cite{zorich}), is to consider a Riemann surface $\Sigma$ endowed with a holomorphic $k$-differential $\ve{\phi}$ as a \emph{$\frac{1}{k}$-translation surface} with singularities. Namely, the complement of the zeros of $\ve{\phi}$ in $\Sigma$ carries an atlas with charts taking values in $\mathbb{C}$ and transition maps in the group
$$
\Aut(\mathbb{C},\dz^k)=\big\{z\mapsto az+b\mid a^k=1,\ b\in\mathbb{C}\big\}
$$ 
of automorphisms of $\mathbb{C}$ preserving the $k$-differential $\dz^k$. We further define:
\begin{definition}[\textbf{$\frac{1}{k}$-translation ends}]\label{def_translation_ends}
A \emph{$\frac{1}{k}$-translation end} is a $\frac{1}{k}$-translation surfaces homeomorphic to the half-open annulus $\mathbb{S}^1\times[0,1)$. Two $\frac{1}{k}$-translation ends are said to be \emph{equivalent} if, after removing a compact subset from each of them if necessary, they are isomorphic.
A $\frac{1}{k}$-translation end is said to be \emph{regular} with \emph{degree $d$} if it is conformally a punctured disk on which the $k$-differential does not have zeros and has a non-essential singularity of degree $d$ at the puncture. 
\end{definition}
\begin{remark} 
This definition of regularity is nonstandard and comes from our treatment of ends of flat surfaces in Appendix \ref{sec_flatend}. It should not be confused with the convention in some literature of calling poles of order $\leq k$ regular.
\end{remark} 

These definitions transform the analytic problem of finding a normal form for holomorphic $k$-differentials into the geometric problem of classifying regular $\frac{1}{k}$-translation ends $F$ up to equivalence. For the latter problem, we consider invariants of $F$ coming from its \emph{monodromy}, \ie the element in $\Aut(\mathbb{C},\dz^k)$, well-defined up to conjugation, given by the orienting generator of $\pi_1(F)$ and the \emph{holonomy representation} $\pi_1(F)\rightarrow\Aut(\mathbb{C},\dz^k)$ from a \emph{developing pair} (in the sense of \cite{goldman_gx}). But every element $\sigma:z\mapsto az+b$ in $\Aut(\mathbb{C},\dz^k)$ is determined up to conjugation by following invariants:
\begin{itemize}
	\item the \emph{rotation} $\rho(\sigma):=a\in\langle e^\frac{2\pi\ima}{k}\rangle\cong\mathbb{Z}/k\mathbb{Z}$;
	\item the \emph{translation} $\tau(\sigma)$, defined by $\tau(\sigma):=0$ if $a\neq 1$ and $\tau(\sigma):=b$ if $a=1$ (note that if $a\neq 1$ then $z\mapsto az+b$ is conjugate to $z\mapsto az$). It is well-defined only up to rotation by $\langle e^\frac{2\pi\ima}{k}\rangle$, hence should be viewed as taking values in $\mathbb{C}/\langle e^\frac{2\pi\ima}{k}\rangle$. Alternatively, $\tau(\sigma)^k\in\mathbb{C}$ is a well-defined conjugation invariant.
\end{itemize}
Abusing the notation, we let $\rho(F)$ and $\tau(F)$ denote the rotation and translation, respectively, of the monodromy of a $\frac{1}{k}$-translation end $F$. They are invariant under the equivalence relation, and are related to the invariants from the normal form by:
\begin{lemma}\label{lemma_invariants}
For a $\frac{1}{k}$-translation end $F=(\{0<|w|\leq\varepsilon\}, \ve{\phi})$ such that $\ve{\phi}$ has the normal form (\ref{eqn_normalform}), we have
$$
\rho(F)=e^{\frac{2d\pi\ima}{k}}
,\quad \left(\frac{\tau(F)}{2\pi\ima}\right)^k=
\begin{cases}
0 &\text{if } d\notin\{-k,-2k,\cdots\},\\
R &\text{if }d=-3,\\
A^k&\text{if }d=-(l+1)k,\ l\geq1.
\end{cases}
$$
\end{lemma}
\begin{proof}
For $\ve{\phi}$ in each of the three cases, an explicit developing map $\widetilde{F}\to(\mathbb{C},\dz^3)$ and  the corresponding monodromy are given in the following table, from which $\sigma(F)$ and $\tau(F)$ can be read off immediately. 

\vspace{5pt}

\begin{tabular}{|c|c|c|c|}
	\hline
	expression of $\ve{\phi}$& developing map & monodromy\\
	\hline
	\rule[-4pt]{0pt}{16pt} $w^d\dif w^k$&$w\mapsto z(w)=\frac{k}{d+k}w^{\frac{d+k}{k}}$ & $z\mapsto e^{\frac{2(d+k)\pi\ima}{k}}z$\\
	\hline
		\rule[-4pt]{0pt}{16pt} $Rw^{-k}\dif w^k$& $
		w\mapsto z(w)=\sqrt[k]{R}\,\log w.
		$& $z\mapsto z+2\pi\ima\sqrt[k]{R}$\\
	\hline
		\rule[-4pt]{0pt}{16pt} $(w^{-l}+A)^kw^{-k}\dif w^k$& $
		w\mapsto z(w)=-\frac{1}{l}w^{-l}+A\log w
		$& $z\mapsto z+2\pi\ima A$\\
	\hline
\end{tabular}

\vspace{5pt}

(In the language of complex analysis, each $w\mapsto z(w)$ in the second column is a multi-valued function on $\{0<|w|\leq\varepsilon\}$, and the corresponding monodromy describes how we move from a single-valued branch to another when the variable $w$ goes counterclockwise around $0$.)
\end{proof}
\begin{proof}[Proof of Theorem \ref{thm_norm}]
Lemma \ref{lemma_normalform} in Appendix \ref{sec_flatend} gives a coordinate $\tilde{w}$ such that the flat metric underlying $\ve{\phi}$ coincides with the flat metric underlying some $k$-differential $\widetilde{\ve{\phi}}$ whose expression in $\tilde{w}$ has the form (\ref{eqn_normalform}). It follows that $\ve{\phi}=\lambda\widetilde{\ve{\phi}}$ for some $\lambda\in\mathbb{C}$ with $|\lambda|=1$. If $d=-k$, this already means that $\ve{\phi}$ has the normal form in $\tilde{w}$; otherwise, we can check that $\ve{\phi}$ has the normal form in the coordinate $w=\lambda^\frac{1}{d+k}\tilde{w}$.
We have thus established the existence of $w$. The coordinate-independence of $R$ and $A^k$ follows from Lemma \ref{lemma_invariants}, because $\tau(F)^k$ is coordinate-independent.
\end{proof}
Using Lemma \ref{lemma_invariants}, we can reformulate Theorem \ref{thm_norm} as:
\begin{corollary}
\label{coro_model}
The degree $d=d(F)$ and monodromy translation $\tau=\tau(F)$ of any regular $\frac{1}{k}$-translation end $F$ satisfy:
\begin{itemize}
\item
if $d\notin\{-k,-2k,\cdots\}$ then $\tau=0$;
\item
if $d=-k$ then $\tau\neq0$.
\end{itemize}
Conversely, any $d\in\mathbb{Z}$ and $\tau\in\mathbb{C}/\langle e^\frac{2\pi\ima}{k}\rangle$ satisfying these constraints are the degree and monodromy translation of a regular $\frac{1}{k}$-translation end, which is unique up to equivalence. 
In particular, any two regular $\frac{1}{k}$-translation ends of the same degree $d\notin\{-k,-2k,\cdots\}$ are equivalent to each other.
\end{corollary}
\begin{remark}
The existence of $F$ with prescribed $d$ and $\tau$ in the corollary follows from the analytic expressions in (\ref{eqn_normalform}) through Lemma \ref{lemma_invariants}. However, we can also construct $F$ geometrically as in Theorem \ref{thm_classification} of the appendix, where geometric models for \emph{flat ends} are built. Those models can be endowed with $\frac{1}{k}$-translation structures (as long as the parameter $\theta$ is a multiple of $\frac{2\pi}{k}$) to give the required $F$ here. In particular, when $d=-k$, we can take a cylinder as $F$, see Section \ref{subsec_bord2} below for detailed discussions in the $k=3$ case. When $d<-k$, another model will be built in the next section.
\end{remark}

Finally, note that although the analytic invariants $R$ and $A^k$ are recovered from the monodromy by Lemma \ref{lemma_invariants}, the degree $d$ is recovered only modulo $k$. Nevertheless, as the next lemma shows, $d$ still has a geometric interpretation through the \emph{total boundary curvature} $\theta(F)$, which is an invariant of $F$ only involving the underlying flat metric. We refer to Appendix \ref{subsec_classification} for the definition. 
\begin{lemma}\label{lemma_degree}
For any regular $\frac{1}{k}$-translation end $F$ of degree $d\in\mathbb{Z}$, we have
$$
\theta(F)=2\pi\left(\frac{d}{k}+1\right).
$$
\end{lemma}
\begin{proof}
We can assume $F=(\{0<|w|\leq\varepsilon\},\ve{\phi})$ with $\ve{\phi}$ given by (\ref{eqn_normalform}). The definition of $\theta(F)$ only involves the flat metric $|\ve{\phi}|^\frac{2}{k}$ underlying $\ve{\phi}$, which has the expression
$$
|\ve{\phi}|^\frac{2}{k}
=\begin{cases}
|w|^\frac{2d}{k}|\dif w|^2\quad  &\mbox{ if } d\notin\{-k,-2k,\cdots\},\\[4pt]
|R|^\frac{2}{k}|w|^{-2}|\dif w|^2 &\mbox{ if }d=-k,\\[4pt]
\left|w^{-l}+A\right|^2|w|^{-2}|\dif w|^2=\left|\tilde{w}^{-l}+|A|\right|^2|\tilde{w}|^{-2}|\dif\tilde{w}|^2&\mbox{ if }d=-(l+1)k,
\end{cases}
$$
where in the last case we change the coordinate $w$ to $\tilde{w}:=e^{\arg(A)/l}w$. Using Theorem \ref{thm_classification} in the appendix, we get the value of $\theta(F)$ from the expression.
\end{proof}

\subsection{Half-planes around a pole of order $\geq4$}\label{subsec_halfplanemodel}
From now on we only consider holomorphic \emph{cubic} differentials, \ie the case $k=3$. For applications in the next section and Chapter \ref{sec_6}, we shall show that a pole of order $n+3$ ($n\geq1$) has a neighborhood form by $n$ copies of the right half-plane
$$
\overline{\H}:=\{z\in\mathbb{C}\mid\re(z)\geq0\},
$$ 
endowed with the cubic differential $\dz^3$, patched together through $\frac{2\pi}{3}$-rotations as in Figure \ref{figure_halfplanes} below. Using Definition \ref{def_translation_ends}, we formulate the result as:
\begin{theorem}\label{thm_halfplanes}
Any regular $\frac{1}{3}$-translation end of degree $-n-3$ ($n\geq 1$) is equivalent to one from Figure \ref{figure_halfplanes}.
%Given a Riemann surface $\Sigma$ equipped with a holomorphic cubic differential $\ve{\phi}$, every puncture of $\Sigma$ at which $\ve{\phi}$ has a pole of order $m=n+3\geq 4$ admits a closed neighborhood which isomorphic to the
% $\frac{1}{3}$-translation surface obtained by assembling $n$ copies of $\CH$, denoted by $\overline{H}_1,\cdots, \overline{H}_n$, as in Figure \ref{figure_halfplanes}.
\end{theorem}
\begin{figure}[h]
\includegraphics[width=4in]{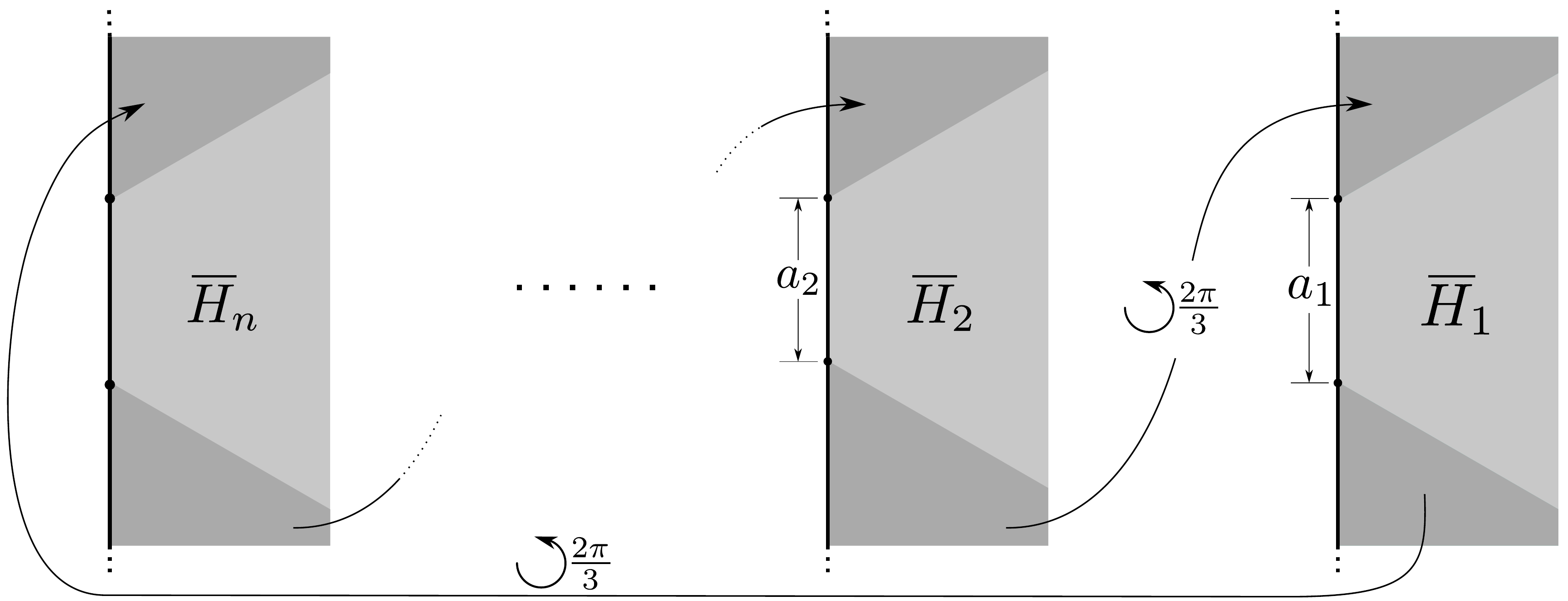}
\caption{$\frac{1}{3}$-translation end constructed by assembling $n$ copies of $\overline{\H}$: the shaded region at the bottom of each $\overline{H}_{i+1}$ is glued with the one at the top of $\overline{H}_{i}$  (the indices counted mod $n$) through a map of the form $z\mapsto e^{2\pi\ima/3}z+z_0$. The distance $a_i>0$ between the two shaded regions in each $\overline{H}_i$ is a customizable parameter.}
\label{figure_halfplanes}
\end{figure}
Note that the overbars in our notations just indicate that we are considering \emph{closed} half-planes. In the sequel we often remove the bars to denote the open ones.

This theorem generalizes the result in \cite[Appendix A]{dumas-wolf}. The statement and the proof below also generalize easily to $\frac{1}{k}$-translation ends for any $k\geq2$.
\begin{proof}
Let $F(a_1,\cdots, a_n)$ denote the $\frac{1}{3}$-translation end in Figure \ref{figure_halfplanes}.
We claim that this is a regular $\frac{1}{3}$-translation end with degree $-n-3$. To prove the claim, first note that a $\frac{1}{k}$-translation end is regular if and only if its underlying \emph{flat end} is regular (see Appendix \ref{subsec_regularity}); but the flat metric on $F(a_1,\cdots, a_n)$ is clearly complete, hence regular by Lemma \ref{lemma_huber}. This shows that $F(a_1,\cdots,a_n)$ is regular. Its total boundary curvature is $-2n\pi/3$ (because the boundary is piecewise geodesic with $n$ vertices, each contributing $-2\pi/3$; see Appendix \ref{subsec_classification}), hence Lemma \ref{lemma_degree} gives the required degree count, proving the claim.

To prove the theorem, we need to show that any regular $\frac{1}{3}$-translation end $F$ of degree $-n-3$ is equivalent to some $F(a_1,\cdots, a_n)$. In view of the above claim and Corollary \ref{coro_model}, this is trivial if
$n$ is not divisible by $3$, while in the case $n=3l$ ($l\in\mathbb{Z}_+$) we only need to find some parameters $a_1,\cdots, a_n$ such that $F(a_1,\cdots, a_n)$ has the correct monodromy translation $\tau(F(a_1,\cdots, a_n))=\tau:=\tau(F)$. 

Figure \ref{figure_modelend} below gives a way to find such parameters. First, consider the region $L(a)\subset\mathbb{C}$ showed in the first picture, whose boundary consists of a vertical segment of length $a$ and two rays with slope $\pm\sqrt{3}$, so that $F(a_1,\cdots,a_n)$ is obtained alternative by gluing $L(a_1),\cdots, L(a_n)$ successively along their boundary rays. Now, we start the construction with $F(a,\cdots, a)$ (with $n=3l$ identical parameters), which is the $l$-fold cyclic cover of the complement of a equilateral triangle in $(\mathbb{C},\dz^3)$, and has trivial monodromy. If $\tau=0$, this is already what we need. Otherwise, since $\tau$ is well-defined up to rotation by $\langle e^\frac{2\pi\ima}{3}\rangle$, we can assume $\im(\tau)<0$ and consider the strip $B\subset(\mathbb{C},\dz^3)$ bounded by the segment from $0$ to $\tau$ and the rays $\mathbb{R}_{\geq0}$ and $\tau+\mathbb{R}_{\geq0}$. Then we graft $F(a,\cdots, a)$ by inserting $B$ into one of the $n$ copies of $L(a)$, see the second picture. This yields a $\frac{1}{3}$-translation end $F'$ with the prescribed monodromy translation $\tau$. Finally, we remove a trapezoid from $F'$ as in the last picture. The resulting surface is formed by some $L(a_1),\cdots,L(a_n)$, with $a_i=a$ for all but two $i$'s. This is the required $F(a_1,\cdots, a_n)$.

\begin{figure}[h]
\centering\includegraphics[width=5in]{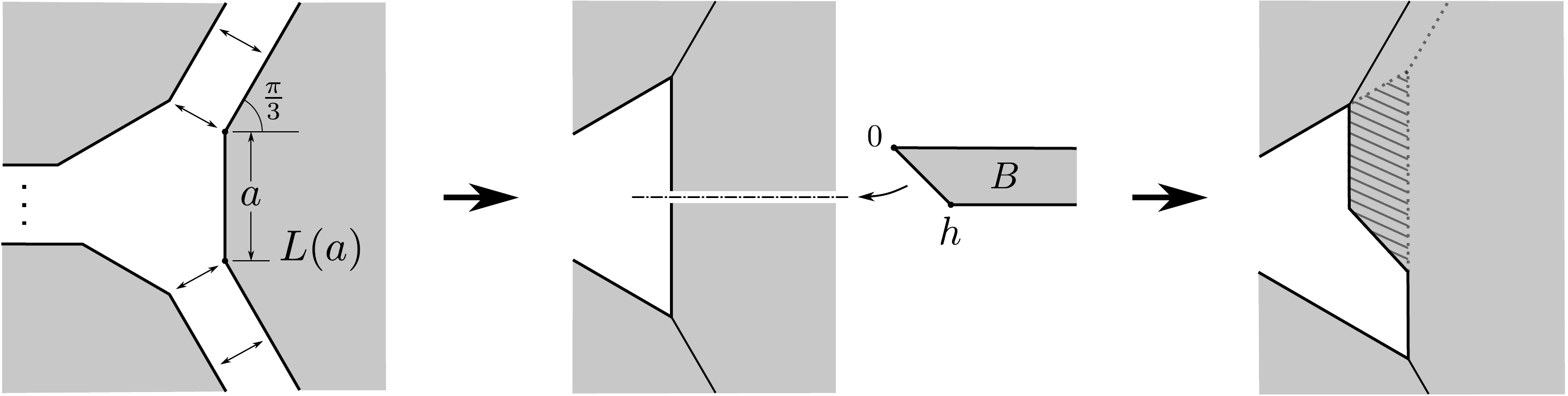}
\caption{Construction of $F(a_1,\cdots,a_n)$ with prescribed monodromy translation $\tau$ from $F(a,\cdots, a)$.}
\label{figure_modelend}
\end{figure}
\end{proof}

Using Theorem \ref{thm_diskfine}, we can estimate nonpositively curved solutions to the vortex equation in terms of the coordinate on each half-plane: 
\begin{corollary}\label{coro_halfplaneestimate}
Given a $\frac{1}{3}$-translation end $F=\overline{H}_1\cup\cdots\cup\overline{H}_n$ as in Figure \ref{figure_halfplanes}, there are constants $C,r_0>0$ with the following property: For any Riemann surface $\Sigma$ with a cubic differential $\ve{\phi}$ such that $(\Sigma,\ve{\phi})$ contains $F$, and any  solution $g$ with nonpositive curvature to the vortex equation on $(\Sigma,\ve{\phi})$,
writing $g=e^u|\dz|^2$ on $\overline{H}_i\cong\CH$, we have
$$
0\leq u(z)\leq C|z|^{\frac{1}{2}}e^{-\sqrt{6}|z|} \ \text{ for all } z\in \overline{H}_i,\,|z|\geq r_0.
$$
\end{corollary}
\begin{proof}
Around $\overline{H}_i$, $F$ locally looks like Figure \ref{figure_estimate}, 
	\begin{figure}[h]
	\centering\includegraphics[width=1.4in]{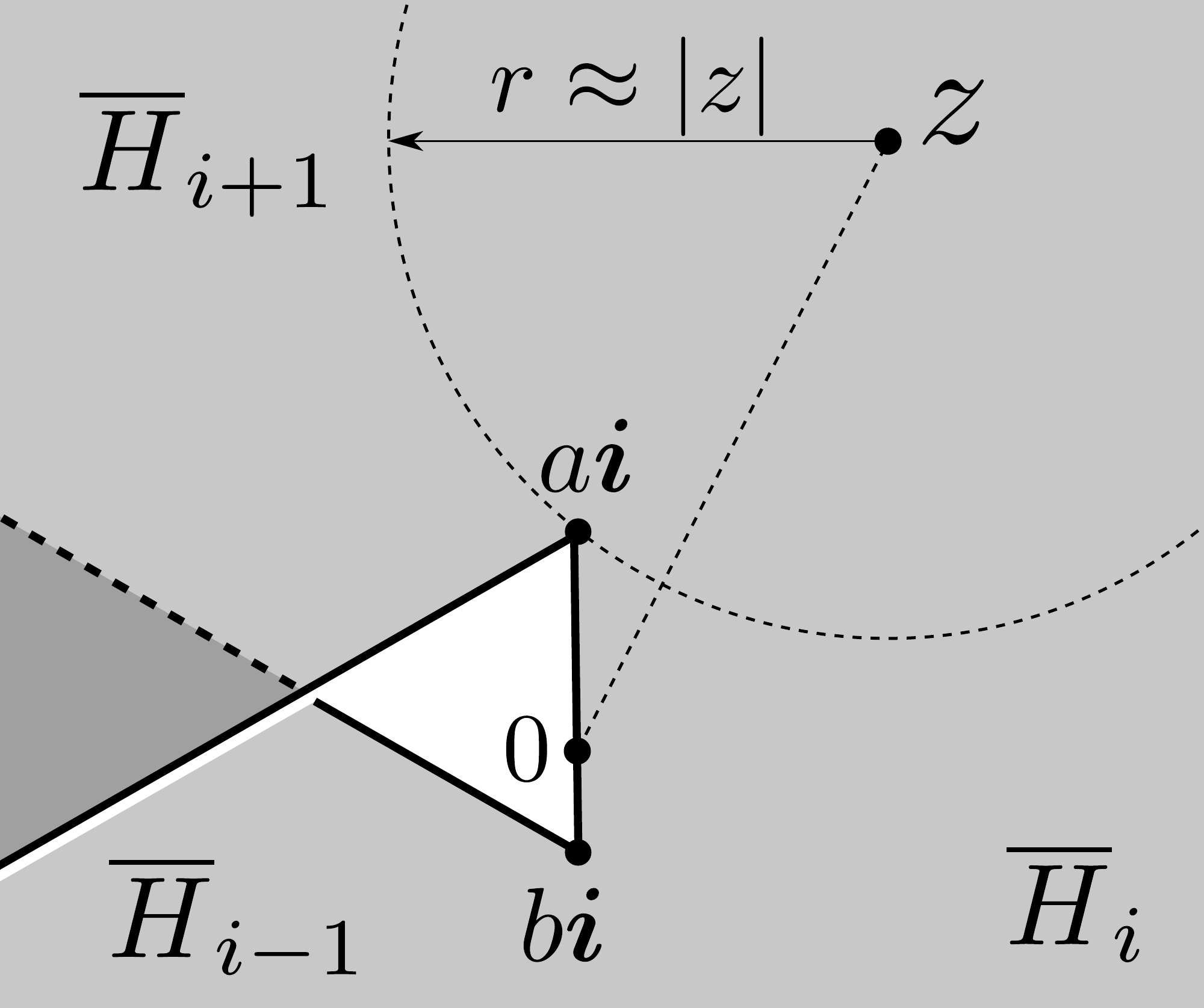}
	\caption{The maximal disk centered at $z\in\overline{H}_i\cong\CH$ contained in $\bigcup_{j=1}^n\overline{H}_j$. 
	}
	\label{figure_estimate}
\end{figure}
from which it is clear that there is a constant $c>0$,
only depending on the points $a\ima, b\ima\in\pa H_i\cong\ima\mathbb{R}$ where $\pa H_{i-1}$ and $\pa H_{i+1}$ intersect $\pa H_i$, such that for every $z\in\overline{H}_i$, the disk of radius $|z|-c$ centered at $z$ is contained in $F$. Applying Theorem \ref{thm_diskfine} to $u$ on this disk, we obtain
$$
0\leq u(z)\leq C'(|z|-c)^{\frac{1}{2}}e^{-\sqrt{6}(|z|-c)}\ \text{ for all } z\in\overline{H}_i \text{ with }|z|-c\geq r_0',
$$
where the constants $C',r_0'>0$ are universal. The required inequality follows.
\end{proof}
\begin{remark}\label{remark_stronger}
A stronger estimate is given in \cite[Theorem 5.7]{dumas-wolf} with the upper bound $C|z|^{-\frac{1}{2}}e^{-\sqrt{6}|z|}$. However, the exponent of the $|z|$-factor is nonessential for our method in Chapter \ref{sec_6}.
\end{remark}

\subsection{Bordification at a pole of order $\geq4$}\label{subsec_bord1}
In this section, we give a formal definition of the negative ray bordification $\Sigma$, used in Theorem \ref{thm_intro3}, for high-order poles. First, we introduce the following terminologies for a Riemann surface $\Sigma$ equipped with a nontrivial holomorphic cubic differential $\ve{\phi}$:

\pt By a \emph{geodesic ray} on $(\Sigma,\ve{\phi})$, we mean a unit-speed geodesic ray with respect to the flat metric $|\ve{\phi}|^\frac{2}{3}$ avoiding the zeros of $\ve{\phi}$, or equivalently, a curve $\gamma:[0,+\infty)\rightarrow \Sigma$ such that $\ve{\phi}(\dot{\gamma})$ (the cubic differential evaluated at the velocity vector $\dot{\gamma}$) is a unimodular complex constant. If furthermore $\ve{\phi}(\dot{\gamma})\equiv-1$, we call $\gamma$ a \emph{negative ray}.

\pt A \emph{sector} (resp.  \emph{half-band}) in $(\Sigma,\ve{\phi})$ is an open set whose closure is isometric to a convex region in the Euclidean plane bounded by two rays issuing from a point (resp. two parallel rays and the line segment joining their endpoints).

%\pt A (open or closed) sector or half-band $V$ in $(\Sigma,\ve{\phi})$ is said to \emph{tend to $p$} if any sequence $(z_n)$ in $V$ with $\lim_{n\rightarrow\infty}|z_n|=+\infty$ (upon identifying $V$ with a sector or half-band in $\mathbb{C}$) tends to $p$ in $\Sigma$. In particular, both boundary rays of $V$ and all geodesic rays contained in $V$ are elements of $\G_p$.

\pt Geodesic rays $\alpha$ and $\beta$ are said to be \emph{coterminal} if they coincide up to a shift of parameter, \ie if $\alpha(t)=\beta(t+t_0)$ for some $t_0\in\mathbb{R}$ and all $t\geq\max\{-t_0,0\}$. If $\alpha$ and $\beta$ are either coterminal to each other, or coterminal to the two boundary rays of a half-band respectively, then they are said to be \emph{parallel}. 

The rough idea of defining the bordification $\Sigma'$ at a high-order pole $p$ is to make every geodesic ray tending to $p$ converge to a limit in $\pa\Sigma'$, in such a way that the limit distinguishes negative rays but not non-negative rays. A formal definition is:

\begin{definition}[\textbf{Bordification at  pole of order $\geq4$}]\label{def_bord1}
Given a Riemann surface $\Sigma$, a puncture $p$ of $\Sigma$ and a holomorphic cubic differential $\ve{\phi}$ with a pole of order $n+3$ at $p$ ($n\geq1$), we let $\G_p$ denote the set of all geodesic rays on $(\Sigma,\ve{\phi})$ tending to $p$, and ``$\sim$'' denote the equivalence relation defined on $\G_p$ by:
\begin{itemize}
	\item[-] if $\alpha$ is negative, $\alpha\sim\beta$ means $\alpha$ and $\beta$ are coterminal;
	\item[-] otherwise, $\alpha\sim\beta$ means $\alpha$ and $\beta$ are parallel, respectively, to the two boundary rays of a sector not containing any negative ray.
\end{itemize}

The \emph{negative ray bordification} $\Sigma'$ of $(\Sigma,\ve{\phi})$ at $p$ is the set $\Sigma\cup(\G_p/\sim)$ endowed with the topology generated by the open subsets of $\Sigma$ together with all subsets of $\Sigma' $ of the form $V\cup\pa_\infty V$, where 
\begin{itemize}
\item[-]
$V$ is a half-band or a sector such that both boundary rays of $V$ are negative rays tending to $p$;
\item[-]
$\pa_\infty V$ is the subset of $\G_p/\sim$ given by all geodesic rays contained in $V$. 
\end{itemize}
\end{definition}
%It turns out that each point of the boundary $\pa\Sigma'=\G_p/\sim\,\approx\mathbb{S}^1$, with only $n$ exceptions, corresponds to a coterminal class of negative rays tending to $p$, hence the name. More precisely, to justify the definition, we shall show:

To justify the definition, we shall show:
\begin{proposition}\label{prop_bordification}
$\Sigma' $ is a bordification of $\Sigma$ at $p$ (in the sense of Section \ref{subsec_puncture}), and has the following properties:
\begin{enumerate}
\item\label{item_bord1}
Every $\gamma\in\G_p$, viewed as a path on $\Sigma$, converges in $\Sigma'$ to the point $[\gamma]\in\G_p/\sim=\pa\Sigma'$ represented by $\gamma$ itself.
\item\label{item_bord2}
The set $\Lambda:=\{[\gamma]\mid \gamma\in\G_p\mbox{ is not negative}\}$ has $n$ points. 
\item\label{item_bord3}
Given negative rays $\alpha,\beta\in\G_p$, their limits $[\alpha],[\beta]\in\pa\Sigma'\setminus\Lambda$ are in the same connected component of $\pa\Sigma'\setminus\Lambda$ if and only if $\alpha$ and $\beta$ are parallel.
%\item
%The \c{T}i\c{t}eica $\mathbb{RP}^2$-structure on $\Sigma\setminus\Z(\ve{\phi})$ extends to a $C^0$-$\mathbb{RP}^2$-structure on $\Sigma'\setminus\Z(\ve{\phi})$ with piecewise geodesic boundary such that $\Lambda$ is the set of vertices. 
\end{enumerate}
\end{proposition}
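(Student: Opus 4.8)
\emph{Sketch of the argument.}
The plan is to pull everything back to the half-plane model of Proposition~\ref{prop_halfplanes} and then study straight Euclidean rays there. Using that proposition, fix a punctured closed neighborhood $\overline U$ of $p$ together with a $\tfrac{1}{3}$-translation isomorphism of $\overline U$ with a model end $M=M(a_1,\dots,a_n)$, the cyclic union of half-planes $\overline H_1,\dots,\overline H_n$ patched by $\tfrac{2\pi}{3}$-rotations. Every ray in $\G_p$ has a tail inside $\overline U$, and the relation $\sim$, the notions of coterminal and parallel, and the sets $V\cup\pa_\infty V$ generating the topology of $\bor$ all depend only on such tails; hence $\G_p/\!\sim$ and the topology of $\bor$ near $\pa\bor=\G_p/\!\sim$ are computed inside $M$. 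Since open subsets of $\Sigma$ are open in $\bor$ by construction, what remains is to show that $\G_p/\!\sim$ is a circle, that every sequence escaping to the end of $M$ subconverges in $\bor$ to a point of $\G_p/\!\sim$, and then properties~(\ref{item_bord1})--(\ref{item_bord3}).

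The technical heart is the description of $\G_p$ inside $M$, and the main obstacle is the first claim: a geodesic ray tending to the end of $M$ crosses only finitely many of the gluing loci, so that after deleting a compact initial arc it is a Euclidean half-line contained in a single half-plane $\overline H_i\cong\CH$ and pointing away from $\pa\overline H_i$ (otherwise it would leave $\overline H_i$); this is where the specific geometry of the model is used. Granting it, every $\gamma\in\G_p$ acquires an eventual direction in the closed half-circle of directions ``into $\overline H_i$'', and because consecutive half-planes are patched after a $\tfrac{2\pi}{3}$-rotation, these $n$ half-circles glue into one circle $\mathcal D$ of circumference $\tfrac{2n\pi}{3}$ on which the eventual direction is well defined. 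On $\mathcal D$ there are exactly $n$ \emph{negative} directions: in each half-plane's coordinate the condition $\ve{\phi}(\dot\gamma)=-1$ reads $e^{3\ima\vartheta}=-1$ and gives the two admissible values $\vartheta=\pm\tfrac{\pi}{3}$, but each of these is identified with an admissible value of an adjacent half-plane, so the count is $2n/2=n$; moreover all $n$ are realized, as is every direction of $\mathcal D$. A negative ray has one of these $n$ directions, is eventually straight in some $\overline H_i$, and up to coterminality and extension through the overlaps the negative rays through a fixed negative direction sweep a transversal line and form a single maximal parallelism family; this gives $n$ parallelism families of negative rays, one per negative direction. The $n$ negative directions cut $\mathcal D$ into $n$ open arcs, each shorter than $\pi$; a non-negative ray has direction in one such arc, and two non-negative rays whose directions lie in the \emph{same} arc bound a convex sector whose interior directions stay in that arc, hence meet no negative direction and contain no negative ray, so the two rays are $\sim$-equivalent; conversely, if the two directions lie in \emph{different} arcs, any convex sector with boundary rays parallel to them sweeps across a negative direction and therefore contains a negative ray, so the rays are not $\sim$-equivalent. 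Hence the non-negative rays form exactly $n$ $\sim$-classes, and these $n$ points separate the $n$ parallelism families of negative rays.

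It remains to assemble these facts. For the topology, a basic neighborhood $V\cup\pa_\infty V$ of a non-negative class $[\gamma_0]$ is produced from a small convex sector $V$ straddling the corresponding direction with two negative boundary rays; its trace on $\G_p/\!\sim$ consists of a half-open segment of each of the two adjacent parallelism families together with the single point $[\gamma_0]$, so $[\gamma_0]$ has arbitrarily small neighborhoods homeomorphic to a neighborhood of an interior point of an interval, and similarly a class lying inside a parallelism family has a basis of open intervals; thus $\G_p/\!\sim$ is a Hausdorff, second countable $1$-manifold, and cyclically concatenating the $n$ parallelism families with the $n$ separating points exhibits it as a circle. A sequence $x_k$ escaping to the end of $M$ has, after passage to a subsequence, all its terms in one $\overline H_i$ with $|x_k|\to\infty$ and $x_k/|x_k|\to v$ for some $v$ with $\re v\ge0$; then $x_k$ eventually lies in every $V\cup\pa_\infty V$ which is a neighborhood of the class of the direction-$v$ ray based at a suitable point, so $x_k$ converges in $\bor$ to that boundary point, and $\bor$ is a bordification of $\Sigma$ at $p$. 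Property~(\ref{item_bord1}) then follows because $[\gamma]\in\pa_\infty V$ forces $\gamma$ to be eventually inside $V$, whence $\gamma(t)\to[\gamma]$; property~(\ref{item_bord2}) is the count of the $n$ non-negative $\sim$-classes obtained above; and property~(\ref{item_bord3}) restates that the connected components of $\pa\Sigma'\setminus\Lambda$ are exactly the images of the $n$ parallelism families of negative rays, which is immediate from the circle description.
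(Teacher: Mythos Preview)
Your sketch is essentially correct and follows the same first step as the paper (the half-plane model from Proposition~\ref{prop_halfplanes}), but diverges in how you extract the topology of $\Sigma'$. The paper does not introduce your direction circle $\mathcal D$; instead it builds, for each half-plane $H_j$, the set $H_j':=H_j\cup\pa_\infty H_j\subset\Sigma'$, checks directly from the defining basis that $H_j'$ is open and naturally identified with the bordified model half-plane $\H'\subset\mathbb{C}'$, and then \emph{invokes the projectivized \c{T}i\c{t}eica map} $\overline{\delta}_0:\mathbb{C}'\to\overline{\Delta}$ (Lemma~\ref{lemma_deltaprime}) to transport the manifold-with-boundary structure of the closed triangle $\overline{\Delta}$ onto each $H_j'$. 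The circle $\pa\Sigma'$ is then obtained by gluing $n$ ``V-shapes'' $I_+\cup\{[1{:}0{:}0]\}\cup I_-\subset\pa\Delta$ cyclically. Your argument replaces this transplantation by a direct combinatorial analysis of ray directions, which is more elementary (it is independent of any affine-sphere input) and makes the counts in (\ref{item_bord2}) and (\ref{item_bord3}) very transparent.

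What the paper's route buys, and what your sketch leaves implicit, is the verification that $\Sigma'$ is a \emph{surface} with boundary, not merely that $\G_p/\!\sim$ is a circle. You argue carefully that $\pa\Sigma'$ is a $1$-manifold, but you do not exhibit $2$-dimensional charts near $\pa\Sigma'$; the paper gets these for free from $\overline{\delta}_0$. In your framework this amounts to checking that each basic set $V\cup\pa_\infty V$ (for $V$ a half-band or a sector with negative boundary rays) is homeomorphic to a relatively open subset of a closed half-plane---straightforward, but it should be said. A second small point: your claim that two non-negative rays with directions in the same open arc of $\mathcal D$ ``bound a convex sector'' needs the remark that one first replaces each by a parallel ray based at a common apex inside one $H_i$; the equivalence $\sim$ is phrased via ``parallel to the two boundary rays of a sector'', so this is exactly what is needed, and your use of it is correct once stated.
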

The proof is given in the next section. By the last property, every connected component of $\pa\Sigma'\setminus\Lambda$ carries a natural metric isometric to the real line, given by 
specifying the distance between $[\alpha]$ and $[\beta]$ to be the width of a half-band whose boundary rays are coterminal to $\alpha$ and $\beta$ (this might not be the plain distance between $\alpha$ and $\beta$, as they can start from the same point but still be parallel with positive distance). 

The negative ray bordification $\mathbb{C}'$ of $(\mathbb{C},\dz^3)$ at $\infty$, which is actually a compactification, is essentially so defined that the projectivized \c{T}i\c{t}eica affine spherical embedding $\delta_0:\mathbb{C}\overset\sim\rightarrow \Delta\subset\mathbb{RP}^2$ (see Section \ref{subsec_titeica}) extends to a homeomorphism from $\mathbb{C}'$ to the closed triangle $\overline{\Delta}$. In the rest of this section we give details of this example.
\begin{example}\label{example_cprime}
The cubic differential $\dz^3$ on $\mathbb{C}$ has a pole of order $6$ at  $\infty$. The set $\G=\G_\infty$ of geodesic rays tending to $\infty$ consists of all rays of the form
$$
\gamma_{z,v}:[0,+\infty)\rightarrow\mathbb{C},\ \  \gamma_{z,v}(t)=z+vt,
$$ 
where $v\in\mathbb{S}^1:=\{v\in\mathbb{C}\mid |v|=1\}$. The definition of negative/nonnegative rays and the equivalence relation ``$\sim$'' amounts to:
\begin{itemize}
\item
$\gamma_{z,v}$ is negative if $v^3=-1$, \ie if $v=e^{\pm\pi\ima/3}$ or $-1$; 
\item
two nonnegative rays $\gamma_{z,v}$ and $\gamma_{z',v'}$ are $\sim$-equivalent if $v$ and $v'$ are in the same connected component of $\mathbb{S}^1\setminus\{v\in\mathbb{C}\mid v^3=-1\}$. 
\end{itemize}
The first picture of Figure \ref{figure_continuity} shows how to understand $\pa_\infty\mathbb{C}:=\G/\sim$. Here, given $v\in\mathbb{S}^1$, $\pai{v}\mathbb{C}$ denotes the set of points in $\pa_\infty\mathbb{C}$ represented by all $\gamma_{z,v}$. Then $\pa_\infty^{(v)}\mathbb{C}$ is a single point exactly when $v^3\neq-1$, and we have  
$$
\Lambda:=\{[\gamma]\in\G/\sim\mid \mbox{ $\gamma\in\G$ is nonnegative} \}=\{\pai{1}\mathbb{C},\,\pai{\omega}\mathbb{C},\,\pai{\omega^2}\mathbb{C}\},
$$
where $\omega:=e^{2\pi\ima/3}$, whereas each $\pa_\infty^{(v)}\mathbb{C}$ with $v^3=-1$ can  be identified with the real line, with points in one-to-one correspondence with coterminal classes of negative rays in the direction $v$. The identification can be written explicitly as
\begin{equation}\label{eqn_parametrization}
\mathbb{R}\overset\sim\rightarrow \pai{v}\mathbb{C},\quad s\mapsto [\gamma_{\ima sv ,v}(t)].
\end{equation}
 
On the other hand, we have the following coordinate expression of the projectivized \c{T}i\c{t}eica affine sphere embedding (see Section \ref{subsec_titeica}):
\begin{align*}
\delta_0:\mathbb{C}&\overset\sim\longrightarrow\Delta=\{[x_1:x_2:x_3]\mid x_i>0\}\subset\mathbb{RP}^2\\ 
\delta_0(z)&=[e^{\sqrt{2}\re(z)}:e^{\sqrt{2}\re(\omega^2 z)}:e^{\sqrt{2}\re(\omega z)}].
\end{align*}

Let us find the limit of $\delta_0$ along each $\gamma_{z,v}$. 
Note that $\delta_0$ maps every negative ray $\gamma_{z,v}$ with $v=-1$ (resp. $v=e^{-\pi\ima/3}$, $v=e^{\pi\ima/3}$) to a straight ray in $\Delta$ issuing from $[1:0:0]$ (resp. $[0:1:0]$, $[0:0:1]$), as Figure \ref{figure_dev01} shows. Therefore, the limit along all such rays form an edge of $\Delta$. More explicitly, for any $s\in\mathbb{R}$, one checks that
\begin{equation}\label{eqn_limit2}
\lim_{t\rightarrow+\infty}\delta_0(\gamma_{\ima sv ,v}(t))=
%[e^{\sqrt{2}\re\big((t+s\ima)(\frac{1}{2}+\frac{\sqrt{3}}{2}\ima)\big)}]
\begin{cases}
[1:e^{\sqrt{6}s}:0] &\mbox{ if }v=e^{\pi\ima/3}\\
[0:1:e^{\sqrt{6}s}] &\mbox{ if }v=-1\\
[e^{\sqrt{6}s}:0:1] &\mbox{ if }v=e^{5\pi\ima/3}
\end{cases}
\end{equation}
When $v^3\neq-1$, one checks that the limit of $\delta_0(\gamma_{z,v}(t))$ is a vertex of $\Delta$:
$$
\lim_{t\rightarrow+\infty}\delta_0(\gamma_{z ,v}(t))=
%[e^{\sqrt{2}\re\big((t+s\ima)(\frac{1}{2}+\frac{\sqrt{3}}{2}\ima)\big)}]
\begin{cases}
[1:0:0] &\mbox{ if }\arg(v)\in (-\tfrac{\pi}{3},\tfrac{\pi}{3}),\\
[0:1:0] &\mbox{ if }\arg(v)\in (\tfrac{\pi}{3},\pi),\\
[0:0:1] &\mbox{ if }\arg(v)\in (\pi,\tfrac{5\pi}{3}).
\end{cases}
$$
Therefore, we can define a natural bijective extension of $\delta_0$ to $\mathbb{C}'=\mathbb{C}\cup\pa_\infty\mathbb{C}$ by
$$
\overline{\delta}_0:\mathbb{C}'\rightarrow\overline{\Delta},\quad \overline{\delta}_0|_{\mathbb{C}}:=\delta_0,\ \  \overline{\delta}_0([\gamma]):= \lim_{t\rightarrow+\infty}\delta_0(\gamma(t))\text{ for all }\gamma\in\G.
$$
See Figure \ref{figure_continuity}. Equipping $\mathbb{C}'$ with the topology from Definition \ref{def_bord1}, we shall show:
\begin{lemma}\label{lemma_deltaprime}
The map $\overline{\delta}_0$ is a homeomorphism. 
\end{lemma}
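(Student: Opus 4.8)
The plan is to establish that $\overline{\delta}_0$ is a continuous bijection from the compact space $\mathbb{C}'$ to the Hausdorff space $\overline{\Delta}$, whence a homeomorphism. First I would verify that $\mathbb{C}'$ is compact: it is covered by $\overline{\Delta}$-like neighborhoods, but more directly one checks from the construction that $\mathbb{C}'$ is second countable and that every sequence has a convergent subsequence — a sequence either stays in a compact part of $\mathbb{C}$, or escapes to $\infty$, in which case one extracts a subsequence whose directions $v_n\in\mathbb{S}^1$ converge, and then tracks whether the limiting direction is a negative cube root of $-1$ (subsequence converges to a point of some $\pai{v}\mathbb{C}$, using the parametrization (\ref{eqn_parametrization}) and the fact that escaping along direction $v$ pins down the transverse coordinate $s$) or not (subsequence converges to one of the three vertex points of $\Lambda$). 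Since $\overline{\Delta}$ is compact Hausdorff, it then suffices to prove $\overline{\delta}_0$ is a continuous bijection.

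Bijectivity is essentially bookkeeping already done in Example \ref{example_cprime}: $\delta_0:\mathbb{C}\to\Delta$ is a diffeomorphism, the three vertices of $\overline{\Delta}$ are hit exactly by the three points of $\Lambda$ (by the vertex-limit computation), and each open edge of $\overline{\Delta}$ is swept out bijectively by the corresponding line $\pai{v}\mathbb{C}$, $v^3=-1$, via the explicit formulas (\ref{eqn_limit2}) — the map $s\mapsto [1:e^{\sqrt{6}s}:0]$ and its cyclic variants are injective with image the open edge, and different $v$'s give disjoint edges. No boundary point of $\overline{\Delta}$ is missed and none is hit twice, so $\overline{\delta}_0$ is a bijection.

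The substantive part is continuity, and it is continuity at boundary points $[\gamma]\in\pa_\infty\mathbb{C}$ (continuity on $\mathbb{C}$ is just continuity of $\delta_0$). Here I would take a basic neighborhood of $[\gamma]$ in $\mathbb{C}'$ — of the form $V\cup\pa_\infty V$ for $V$ a sector or half-band bounded by negative rays — and show its image under $\overline{\delta}_0$ is open in $\overline{\Delta}$, equivalently that $\overline{\delta}_0$ maps a neighborhood basis to a neighborhood basis. Concretely: if $[\gamma]\in\Lambda$, say $[\gamma]=\pai{1}\mathbb{C}$ mapping to the vertex $[1:0:0]$, one shows that for a sector $V$ around the direction $1$ with negative boundary rays $e^{\pm\pi\ima/3}$-directed (possibly truncated), $\overline{\delta}_0(V\cup\pa_\infty V)$ is exactly the intersection of $\overline{\Delta}$ with a small projective neighborhood of $[1:0:0]$ — this follows from the coordinate formula for $\delta_0$, since $\re(\omega z)$ and $\re(\omega^2 z)$ both $\to-\infty$ relative to $\re(z)$ as $z\to\infty$ inside such a sector, uniformly, giving control on the first two projective coordinates. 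If $[\gamma]$ is a negative ray, so $[\gamma]\in\pai{v}\mathbb{C}$ for some $v^3=-1$, one takes $V$ a thin half-band around $\gamma$; then the non-trivial claim is that $\overline{\delta}_0(V\cup\pa_\infty V)$ contains a relative neighborhood of the corresponding interior edge point — this again reduces to the explicit asymptotics (\ref{eqn_limit2}), plus the observation that as the transverse coordinate varies the limit point varies continuously and monotonically along the edge, and that points of $\mathbb{C}$ deep in $V$ map near the edge. The main obstacle is organizing these case-by-case asymptotic estimates cleanly and uniformly: one must show the convergence $\delta_0(\gamma(t))\to\overline{\delta}_0([\gamma])$ is locally uniform in $\gamma$ as $\gamma$ ranges over a neighborhood of a given ray (uniform on sectors/half-bands), rather than merely pointwise, so that openness of images of basic neighborhoods follows. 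Once that local uniformity is in hand, continuity at every boundary point — and hence the lemma — drops out.
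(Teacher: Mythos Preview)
Your overall plan would work, but the logical structure is tangled: you conflate continuity of $\overline{\delta}_0$ with openness of $\overline{\delta}_0$, and as a consequence do more work than needed while missing the paper's key simplifying observation.

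You set out to prove $\overline{\delta}_0$ is a continuous bijection from compact $\mathbb{C}'$ to Hausdorff $\overline{\Delta}$. This framing forces you to verify compactness of $\mathbb{C}'$ directly (and your sequential sketch has a minor gap: a sequence whose direction converges to a cube root $v$ of $-1$ but whose transverse coordinate diverges converges to a \emph{vertex} point in $\Lambda$, not to a point of $\pai{v}\mathbb{C}$, so an extra subcase is needed). But when you turn to ``continuity,'' what you actually describe --- showing that $\overline{\delta}_0(V\cup\pa_\infty V)$ is open in $\overline{\Delta}$ --- is \emph{openness} of $\overline{\delta}_0$, i.e.\ continuity of the inverse. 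Your later remark that locally uniform convergence of $\delta_0(\gamma(t))\to\overline{\delta}_0([\gamma])$ would yield ``openness of images of basic neighborhoods'' is also misdirected: uniform convergence gives continuity of $\overline{\delta}_0$, not openness.

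The paper runs the compact--Hausdorff argument in the other direction: $\overline{\Delta}$ is compact (obvious) and $\mathbb{C}'$ is Hausdorff (clear from the defining topology), so it suffices to prove $\overline{\delta}_0^{-1}$ continuous, i.e.\ $\overline{\delta}_0$ open. This removes any need to establish compactness of $\mathbb{C}'$ beforehand. The openness of $\overline{\delta}_0(V\cup\pa_\infty V)$ then falls out of a single geometric fact you do not isolate: $\delta_0$ sends every negative geodesic in $\mathbb{C}$ to a \emph{straight line segment} in $\Delta$ (visible in Figure~\ref{figure_dev01} and checkable directly from the formula for $\delta_0$). Since $V$ is bounded by negative rays, $\delta_0(V)$ is a region in $\Delta$ cut out by straight lines, and $\overline{\delta}_0(V\cup\pa_\infty V)$ is that region together with its open arc on $\pa\Delta$ --- manifestly open in $\overline{\Delta}$, with no case-by-case asymptotic estimates or uniformity arguments required.
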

This implies Proposition \ref{prop_bordification} in the case $(\Sigma,\ve{\phi})=(\mathbb{C},\dz^3)$.
\begin{proof}
The topology defined on $\mathbb{C}'$ is clearly Hausdorff. Since a continuous bijection from a compact space to a Hausdorff space is a homeomorphism, we only need to show that the inverse $\overline{\delta}_0^{-1}$ is continuous, or equivalently, $\overline{\delta}_0$ maps every $V\cup\pa_\infty V\subset\mathbb{C}'$ in Definition \ref{def_bord1} to an open subset of $\overline{\Delta}$. Since modifying $V$ within a bounded set of $\mathbb{C}$ does not effect the latter statement, we may suppose that $V\subset\mathbb{C}$ is either an angular region or a half-band bounded by negative geodesics, typical examples of which are shown as $V_1$ and $V_2$ in Figure \ref{figure_continuity}.
\begin{figure}[h]
\centering\includegraphics[width=4.7in]{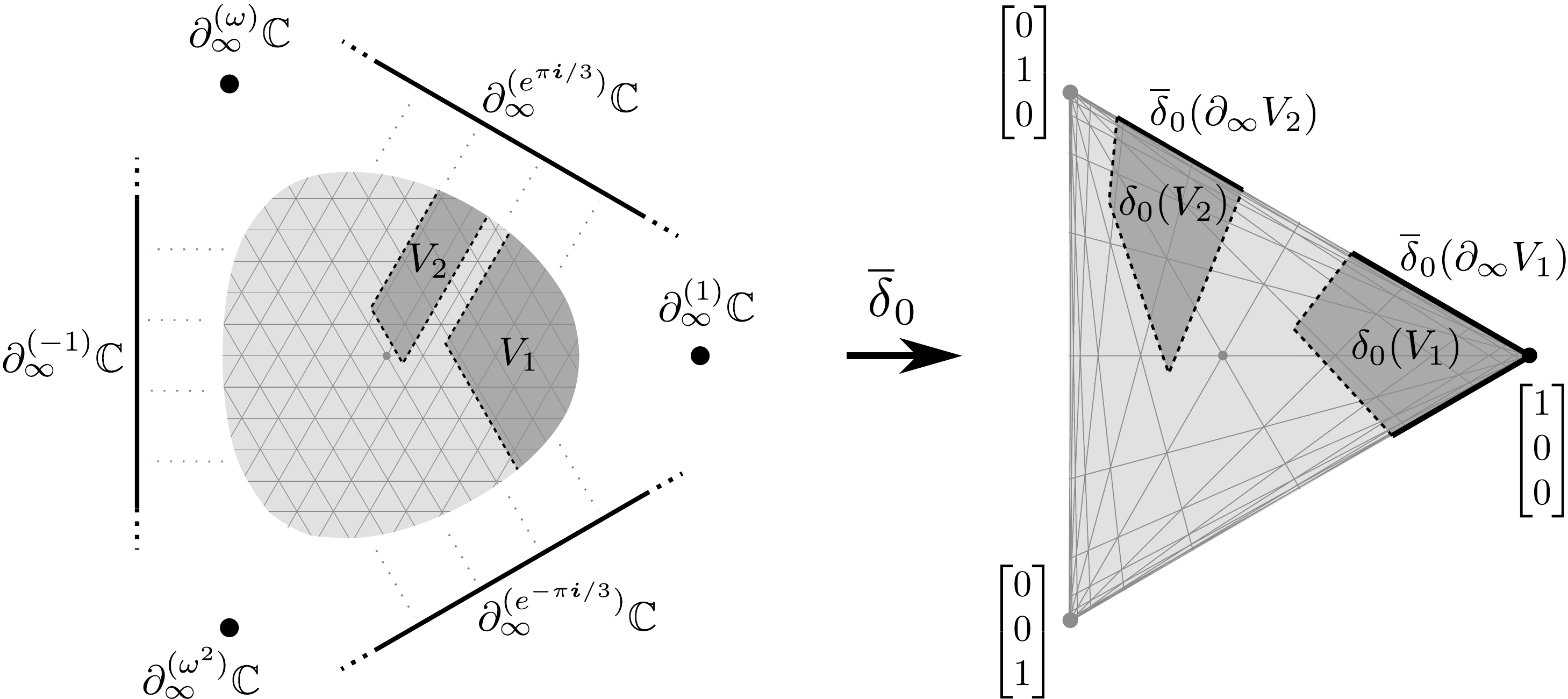}
\caption{The compactification $\mathbb{C}'$ and the proof of Lemma \ref{lemma_deltaprime}.}
\label{figure_continuity}
\end{figure}
By virtue of the fact that $\delta_0$ maps every negative geodesic to a line in $\Delta$ with the limit behavior computed above, $\overline{\delta}_0(V\cup\pa_\infty V)$ is formed by the region $\delta_0(V)\subset \Delta$ bounded by lines together with the open boundary part of the region, as shown by the second picture in Figure \ref{figure_continuity}. So $\overline{\delta}_0(V\cup\pa_\infty V)$ is open, as required.
\end{proof}
\end{example}

The extension $\overline{\delta}_0$ has the property that it preserves boundary metrics. Indeed, each $\pai{v}\mathbb{C}$ with $v^3=-1$ is a connected component of $\pa_\infty\mathbb{C}\setminus\Lambda$ and the parametrization (\ref{eqn_parametrization}) is isometric with respect to the metric defined in the paragraph following Proposition \ref{prop_bordification}.
On the other hand, every open line segment $I\subset\mathbb{RP}^2$ with endpoints $a\neq b$ carries a metric given by the logarithm of cross-ratio (see Section \ref{subsec_hilbert}). If we consider the rescaled metric
$$
\widetilde{d}_I(x,y):=\tfrac{1}{\sqrt{6}}|\log[a,x,y,b]| \ \text{ for all } x,y\in I,
$$
then, using the limit expressions (\ref{eqn_limit2}), one checks that $\overline{\delta}_0$ restricts to an isometry from $\pai{v}\mathbb{C}$ to an open edge $I$ of $\Delta$ equipped with the metric $d_I$.

\subsection{Bordified half-plane charts and proof of Proposition \ref{prop_bordification}}\label{subsec_bordified}
Let $\H'$ denote the open subset of $\mathbb{C}'$ formed by the right half-plane $\H=\{z\in\mathbb{C}\mid \re(z)>0\}$ and the boundary part 
$$
\pa_\infty\H:=\pai{e^{\pi\ima/3}}\mathbb{C}\cup \{\pai{1}\mathbb{C}\}\cup\pai{e^{-\pi\ima/3}}\mathbb{C}\subset\pa_\infty\mathbb{C}.
$$
The homeomorphism $\overline{\delta}_0:\mathbb{C}'\rightarrow\overline{\Delta}$ maps $\H'$ to an open subset of the closed triangle $\overline{\Delta}$, sending $\pai{e^{\pi\ima/3}}\mathbb{C}$ and $\pai{e^{-\pi\ima/3}}\mathbb{C}$ isometrically to the open edges 
$$
I_+:=\{[1:a:0]\mid a>0\},\quad I_-:=\{[a:0:1]\mid a>0\}
$$
of the triangle, respectively, and sending the point $\pai{1}\mathbb{C}$ to the vertex $[1:0:0]$. 
%\begin{figure}[h]
%\centering\includegraphics[width=5.1in]{continuity}
%\caption{The bordified half-plane $\H'$}
%\label{figure_hprime}
%\end{figure}
In other to prove proposition \ref{prop_bordification}, we shall show that the local model of $(\Sigma,\ve{\phi})$ around $p$ formed by $n$ copies of $\H$ (see Theorem \ref{thm_halfplanes}) extends to a local model of $\Sigma'$ around the boundary formed by $n$ copies of the bordified half-plane $\H'$. In particular, this provides $\Sigma'$ with manifold charts.
% and proves Proposition \ref{prop_bordification}. These charts will be the basic framework for analysis 

\begin{proof}[Proof of Proposition \ref{prop_bordification}]
By Theorem \ref{thm_halfplanes}, a closed neighborhood of $p$ is formed by the half-planes $\overline{H}_1,\cdots, \overline{H}_n$. We view every geodesic ray in $\overline{H}_j$, which has the form $\gamma_{z,v}:[0,+\infty)\rightarrow \CH\cong \overline{H}_j$ with $z\in\CH$, $\arg(v)\in[-\pi,\pi]$ (see Example \ref{example_cprime} for the notation), as an element of $\G_p$. Conversely, one sees from the gluing pattern of the half-planes that every $\gamma\in\G_p$ is coterminal to a ray $\gamma_{z,v}$ in some $H_j$ with $\arg(v)\in(-\frac{\pi}{3}, \frac{\pi}{3}]$.

To each $H_j$ is associated a subset $\pa_\infty H_j$ of $\G_p/\sim$ similar to the set $\pa_\infty\H\subset\pa_\infty\mathbb{C}$ defined above. Namely, let $\pa_\infty^\pm H_j$ (resp.  $\pai{1}H_j$) denote the subset (resp. the point) in $\G_p/\sim$ given by the rays $\gamma_{z,v}$ in $H_j$ with $v=e^{\pm\pi\ima/3}$ (resp. $v=1$). We then put 
$$
\pa_\infty H_j:=\pa_\infty^+ H_j\cup \pai{1} H_j\cup \pa_\infty^- H_j\,,\quad H'_j:=H_j\cup\pa_\infty H_j.
$$
By definition of the topology on $\Sigma'$, $H_j'$ is open in $\Sigma'$  because it is the union of all subsets of the form $V\cup\pa_\infty V$, where $V\subset H_j$ is a translation of the open sector in $H_j$ bounded by $\gamma_{0,e^{\pi\ima/3}}$ and $\gamma_{0,e^{-\pi\ima/3}}$. The definition of the topology also implies that the obvious identification between $H'_j$ and the bordified half-plane $\H'=\H\cup\pa_\infty\H$ is a homeomorphism.  

Thus, $H_1',\cdots, H_n'$ form an open covering of a neighborhood of $\G_p/\sim$. Since $H_j'$ is homeomorphic through $\overline{\delta}_0$ to the open subset $\overline{\delta}_0(\H')$ of $\overline{\Delta}$, which is in turn homeomorphic to a closed half-plane, $\Sigma'$ is a topological manifold with boundary.

To show that the boundary $\pa\Sigma'=\G_p/\sim=\bigcup_{j=1}^n\pa_\infty H_j$ is homeomorphic to a circle, note that on one hand, each $H_j'\cong\H'$ is assembled with $H_{j+1}'\cong\H'$ through the self-homeomorphism of $\mathbb{C}'$ induced by an automorphism of $(\mathbb{C},\dz^3)$ of the form $z\mapsto e^{2\pi\ima/3}z+z_0$, which sends the boundary part  $\pa_\infty^- H_{j+1}\cong\pai{e^{-\pi\ima/3}}\mathbb{C}$ to $\pa_\infty^+ H_j\cong\pai{e^{\pi\ima/3}}\mathbb{C}$; on the other hand, $\pa_\infty H_j$ is topologically identified through $\overline{\delta}_0$ with the ``$V$ shape'' on the boundary of $\Delta$ formed by the open edges $I_\pm$ and the vertex $[1:0:0]$, wtih $\pa_\infty^\pm H_j$ corresponding to $I_\pm$. Therefore, $\pa\Sigma'$ is topologically the $1$-manifold obtained by assembling $n$ copies of the $V$ shape in such a way that the $I_+$-part in each copy is glued with the $I_-$-part in the next copy by a rotation. Thus, $\pa\Sigma'$ is a circle.
 
Property (\ref{item_bord1}) in the statement of the proposition is clear from definition. Therefore, $\Sigma'$ is a bordification of $\Sigma$ at $p$ in the sense of Section \ref{subsec_puncture}. A geodesic ray $\gamma\in\G_p$ is nonnegative if and only if $\gamma$ is coterminal to some $\gamma_{z,v}$ in $H_j$ with $\arg(v)\in(-\frac{\pi}{3}, \frac{\pi}{3})$, which implies $[\gamma]=\pa_\infty^{(1)}H_j$. So the set $\Lambda\subset\pa\Sigma'$ given by nonnegative rays meets each $H_j'$ exactly at the point $\pa_\infty^{(1)}H_j$, whence Property (\ref{item_bord2}). Finally, $\alpha,\beta\in\G_p$ are parallel negative rays if and only if they are coterminal to rays in the same $H_j$ with direction $v=e^{\pi\ima/3}$, which means $[\alpha]$ and $[\beta]$ both belong to the component $\pa_\infty^+H_j$ of $\pa\Sigma'\setminus\Lambda$. This proves Property (\ref{item_bord3}) and completes the proof of the proposition.
\end{proof}
\subsection{Bordification at a third order pole}\label{subsec_bord2}
Now assume $\ve{\phi}$ has a third order pole at $p$. We define in this section the bordifications $\Sigma'_\gamma$ of $\Sigma$ used in Part (\ref{item_thmintro32}) of Theorem \ref{thm_intro3}. To this end, we first note
that $(\Sigma,\ve{\phi})$ locally looks like a cylinder around $p$:
\begin{proposition}\label{prop_cylinder}
	Any regular $\frac{1}{3}$-translation end of degree $-3$ is equivalent to the cylinder
	$\tau\overline{\mathbb{H}}/\tau\mathbb{Z}$
	for some $\tau\in\mathbb{C}^*$. Two such cylinders $\tau\overline{\mathbb{H}}/\tau\mathbb{Z}$ and $\tau'\overline{\mathbb{H}}/\tau'\mathbb{Z}$ are equivalent if and only if $\tau^3=\tau'^3$.
\end{proposition}
Here $\overline{\mathbb{H}}$ is the closure of the upper half-plane $\mathbb{H}\subset\mathbb{C}$, hence $\tau\overline{\mathbb{H}}$ stands for the closed half-plane $\{\tau z\mid \im(z)\geq 0\}$ in $(\mathbb{C},\dz^3)$. Quotienting by the group $\tau\mathbb{Z}$ means quotienting by the translation $z\to z+ \tau$, which is the monodromy of the cylinder. Therefore, the above proposition is an immediate consequence of Corollary \ref{coro_model}. Note that in this case we can write the cubic differential into normal form (see Theorem \ref{thm_norm}) as
$$
\tau\overline{\mathbb{H}}/\tau\mathbb{Z}\cong(\{0<|w|\leq1\},Rw^{-3}\dif w^3),
$$
where the residue $R$ equals $\big(\tau/2\pi\ima\big)^3$ by Lemma \ref{lemma_invariants}. For our purpose, a crucial remark is that the negative rays in the cylinder behave differently when the real part of $R$ is negative, zero and positive, respectively, as one can see from Figure \ref{figure_thirdorder}: When $\re(R)<0$, there exist two negative rays tending to the puncture at infinity that are not parallel to each other, but when $\re(R)\geq0$ all such rays are parallel. The case $\re(R)=0$ is characterized by the existence of a closed negative loop.
\begin{figure}[h]
	\labellist
	\pinlabel {$0$} at 134 134
	\pinlabel {$0$} at 667 134
	\pinlabel {$0$} at 1197 134
	\pinlabel {$\tau$} at 150 76
	\pinlabel {$\tau$} at 693 80
	\pinlabel {$\tau$} at 1281 104
%	\pinlabel {$\re(R)<0$} at 90 0
%	\pinlabel {$\re(R)=0$} at 640 0
%		\pinlabel {$\re(R)>0$} at 1150 0
	\endlabellist
	\centering
	\includegraphics[width=5.1in]{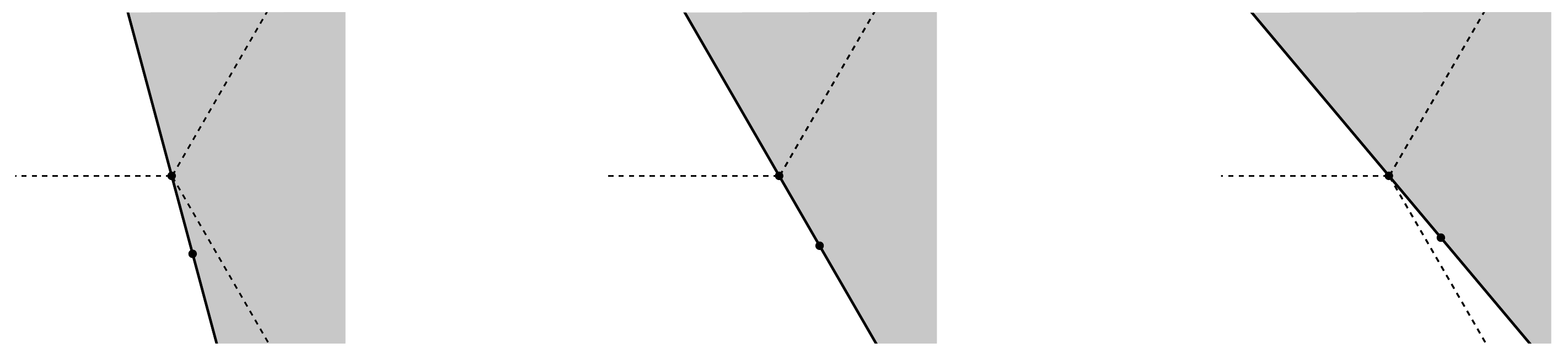}
	\caption{The negative rays in $(\mathbb{C},\dz^3)$ issuing from $0$ and examples of $\tau\overline{\mathbb{H}}$ with $\re(R)=\re((\tau/2\pi\ima)^3)$  negative, zero and positive, respectively.}
	\label{figure_thirdorder}
\end{figure}

%Instead of defining $\Sigma'$ abstractly as in Proposition \ref{prop_bordification}, it is more convenient to define it through the boundary $\pa_\infty\mathbb{C}$ of the compactification $\mathbb{C}'$ of $(\mathbb{C},\dz^3)$ (see Example \ref{example_cprime}) as follows:

In view of this remark and the terminologies from Section \ref{subsec_bord1}, we define: 
\begin{definition}[\textbf{Bordification at third order pole}]
\label{def_bord2}
Given a Riemann $\Sigma$, a puncture $p$ of $\Sigma$, a holomorphic cubic differential $\ve{\phi}$ with a third order pole at $p$, and a negative ray $\gamma$ tending to $p$, we let $\G_\gamma$ denote the set of all geodesic rays on $(\Sigma,\ve{\phi})$ parallel to $\gamma$, and let ``$\sim$'' denote the coterminal equivalence relation on $\G_\gamma$. The \emph{negative ray bordification} $\Sigma'_\gamma$ of $(\Sigma,\ve{\phi})$ at $p$, relative to $\gamma$, is the set $\Sigma\cup(\G_\gamma/\sim)$ endowed with the topology generated by the open subsets of $\Sigma$ and all subsets of $\Sigma'_\gamma$ of the form $V\cup\pa_\infty V$, where $V$ is a half-band whose boundary rays are both in $\G_\gamma$, and $\pa_\infty V\subset\G_\gamma/\sim$ is given by rays contained in $V$.
\end{definition}
Thus, unlike the case of higher order poles, where every geodesic ray tending to $p$ converges to a boundary point of the bordification, in the current case we only assign limits to a chosen family of parallel negative rays, and when $\re(R)<0$ (resp. $\re(R)\geq0$) there are two (resp. only one) choice(s).

Similarly as in the case of higher order poles, $\Sigma'_\gamma$ is locally modeled on the compactification $\mathbb{C}'$ of $(\mathbb{C},\dz^3)$. More precisely, given a cubic root  $v$ of $-1$ in an open half-plane $\tau\mathbb{H}$ as above, we can bordify the half-plane by attaching to it the boundary segment $\pa_\infty^{(v)}\mathbb{C}$ of $\mathbb{C}'$(see Section \ref{subsec_bord1}), such that the quotient $(\tau\mathbb{H}\cup \pa_\infty^{(v)}\mathbb{C})/\tau\mathbb{Z}$
of the bordified half-plane is a bordification of the cylinder $\tau\mathbb{H}/\tau\mathbb{Z}$. This bordified cylinder provides a local model for $\Sigma'_\gamma$:
\begin{proposition}\label{prop_bord2}
In the setting of Definition \ref{def_bord2}, if we identify a neighborhood of $p$ with the (open) cylinder $\tau\mathbb{H}/\tau\mathbb{Z}$ and let $v\in\tau\mathbb{H}$, $v^3=-1$ be such that $\gamma$ is given by $t\mapsto z_0+tv$ in the cylinder, then $\Sigma'_\gamma$ is the surface with boundary obtained by replacing the cylinder with the bordified cylinder $(\tau\mathbb{H}\cup\pa_\infty^{(v)}\mathbb{C})/\tau\mathbb{Z}$.
%$$
%\Sigma'_\gamma\cong\Sigma\sqcup(\pa_\infty^{(v)}/\tau\mathbb{Z}),
%$$ 
%on $U\cup(\pa_\infty^{(v)}/\tau\mathbb{Z})$ 
\end{proposition}
The proof is similar is spirit to the previous two sections and we omit the details.

\section{$\mathbb{RP}^2$-structures around higher order poles}
\label{sec_6}
In this chapter, we fix a Riemann surface $\Sigma$, a puncture $p$ of $\Sigma$, a holomorphic cubic differential $\ve{\phi}$ on $\Sigma$ with a pole of order $m\geq 3$ at $p$, and let $g$ be the complete solution to the vortex equation on $(\Sigma,\ve{\phi})$ (see Theorem \ref{thm_ue}).
Our goal is to prove Theorem \ref{thm_intro3} about extensibility of the convex $\mathbb{RP}^2$-structure $X:=X(g,\ve{\phi})$ (see Section \ref{subsec_convexity}) to the bordifications of $\Sigma$ from the last chapter, as well as the last statement in Theorem \ref{thm_intro2} about eigenvalues and principality of geodesic boundary.

\subsection{Reduction to local statements}\label{subsec_reduction}
First assume $m=n+3\geq 4$, in which case the boundary $\pa\Sigma'\approx\mathbb{S}^1$ has a subset $\Lambda$ of $n$ marked points, given by the limits of non-negative rays. By Theorem \ref{thm_halfplanes}, $p$ has a neighborhood formed by $n$ copies $H_1,\cdots, H_n$ of the right half-plane $\H$, and it is showed in Section \ref{subsec_bordified} that we can attach  to each $H_i$ a topological open interval $\pa_\infty H_i\subset\pa\Sigma'$ of the negative ray bordification, so that $H_i'=H_i\cup\pa_\infty H_i$ ($i=1,\cdots,n$) form an open covering of a neighborhood of $\pa\Sigma'$ in $\Sigma'$, with the following properties: 

\pt Each $\pa_\infty H_i$ is separated by the point $\pai{1}H_i=\Lambda\cap\pa_\infty H_i$ into sub-intervals $\pa_\infty^+H_i$ and $\pa_\infty^-H_i$, such that $\pa_\infty^+ H_i=\pa_\infty^- H_{i+1}$ is a connected component of $\pa\Sigma'\setminus\Lambda$ and carries the metric given by parallel distances of negative rays.

\pt The identification $H_i\cong \H$ extends to $H'_i=H_i\cup\pa_\infty H_i\cong\H'=\H\cup\pa_\infty\H\subset\mathbb{C}'$, identifying $\pa_\infty^\pm H_i$ isometrically with $\pai{e^{\pm\pi\ima/3}}\mathbb{C}\subset\pa_\infty\mathbb{C}$ (here $\mathbb{C'}=\mathbb{C}\cup\pa_\infty\mathbb{C}$ is the negative ray compactification of $(\mathbb{C},\dz^3)$ and $\pa_\infty\H$ is the boundary part of $\H$ in $\pa_\infty\mathbb{C}$, see Sections \ref{subsec_bord1} and \ref{subsec_bordified}).

\pt Writing $g=e^u|\dz|^2$ on $H_i\cong\H$, we have $0\leq u(z)\leq C|z|^\frac{1}{2}e^{-\sqrt{6}|z|}$ when $|z|$ is large enough (see Corollary \ref{coro_halfplaneestimate} and Remark \ref{remark_stronger}).

In view of these properties, Part (\ref{item_thmintro31}) of Theorem \ref{thm_intro3} is an immediate consequence of the following local result, proved in Section \ref{subsec_reduction1} below, applied to every $H_i$:
\begin{proposition}\label{prop_reduction1}
Let $g=e^u|\dz|^2$ be a solution to Wang's equation on $(\H,\dz^3)$ satisfying
$$
0\leq u(z)\leq C|z|^\alpha e^{-\sqrt{6}|z|} \ \text{ for all } z\in\H, |z|\geq r
$$  
for some constants $C,\alpha, r>0$ and $\iota:\H\rightarrow\mathbb{R}^3$ be an affine spherical embedding with Blaschke metric $g$ and normalized Pick differential $\dz^3$ such that $\delta:=\mathbb{P}\circ\iota:\H\rightarrow\mathbb{RP}^2$ is injective. Then $\delta$ extends to a continuous injection $\overline{\delta}:\H'\rightarrow\mathbb{RP}^2$ sending $\pai{e^{\pi\ima/3}}\mathbb{C}$ and $\pai{e^{-\pi\ima/3}}\mathbb{C}$ isometrically to open line segments in $\mathbb{RP}^2$ not collinear to each other.
\end{proposition}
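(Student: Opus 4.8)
The strategy is the one sketched in the introduction: realize the chart $\delta$ through the \emph{osculation map} relative to the \c{T}i\c{t}eica affine sphere $\iota_0$ of Section~\ref{subsec_titeica} and run an ODE analysis along rays. First I would use Proposition~\ref{prop_recon} and the \c{T}i\c{t}eica model to write $\iota=P\iota_0$ with $P\colon\H\to\SL(3,\mathbb{R})$, so that $\delta(z)=P(z)\cdot\delta_0(z)$, where $P$ solves a linear Pfaffian system $\dif P=P\,\Theta$; in the frame of Section~\ref{subsec_titeica} diagonalizing $\D_0$ one has $\para_0(0,z)=\mathrm{diag}\big(e^{\sqrt2\re z},e^{\sqrt2\re(\omega^2 z)},e^{\sqrt2\re(\omega z)}\big)$, and $\Theta$ is the conjugate by $\para_0(0,z)$ of the (constant-frame) difference of the two affine-sphere connection forms, whose entries are linear combinations of $u$, $\pa u$ and $\bpa u$. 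Feeding in the hypothesis $0\le u\le C|z|^{\alpha}e^{-\sqrt6|z|}$ together with interior Schauder estimates for Eq.~\eqref{eqn_simplevortex} (which bound $\pa u,\bpa u$ by the same kind of quantity), one gets $|\Theta_{ij}(z)|\le C'|z|^{\alpha'}e^{-\sqrt6|z|+\sqrt2\re((\mu_i-\mu_j)z)}$ with $\{\mu_1,\mu_2,\mu_3\}=\{1,\omega^2,\omega\}$. Since $\sqrt2|\mu_i-\mu_j|=\sqrt6$ for $i\ne j$, every $\Theta_{ij}$ is at worst polynomially bounded and decays exponentially along every ray except the one on which $(\mu_i-\mu_j)z>0$; inside $\H$ the only such ``critical'' rays are the two unstable rays $e^{\pm\pi\ima/6}\mathbb{R}_{\ge0}$, affecting only the entries $\Theta_{13}$ (on $e^{\pi\ima/6}\mathbb{R}_{\ge0}$) and $\Theta_{12}$ (on $e^{-\pi\ima/6}\mathbb{R}_{\ge0}$). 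The exponent $-\sqrt6$ in the hypothesis is exactly what keeps $\Theta$ from growing exponentially; this is why the estimate of Corollary~\ref{coro_halfplaneestimate} (resp.\ Theorem~\ref{thm_diskfine}) is the key input.

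Next I would treat the directions away from the unstable rays. Along any ray tending to infinity in $\overline\H$ whose direction is not $e^{\pm\pi\ima/6}$, all entries of $\Theta$ are integrable, so Gr\"onwall gives convergence of $P$ in $\SL(3,\mathbb{R})$; on any closed sub-sector of $\overline\H$ containing no unstable ray the bound is uniform, so $P$ converges uniformly there, and integrating $\dif P=P\Theta$ over circular arcs $\{|z|=t\}$ (length $O(t)$ against an integrand $O(e^{-ct})$) shows that the limit $P_\infty$ is \emph{constant} on each of the three open sub-sectors cut out by the unstable rays, and equals that constant along the negative rays $e^{\pm\pi\ima/3}\mathbb{R}_{\ge0}$ too. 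Write $P_0^{\mathrm{mid}}$ for the limit on $\{|\arg z|<\pi/6\}$ and $P_0^{\pm}$ for the limit on $\pm(\pi/6,\pi/3)$ (hence also along $e^{\pm\pi\ima/3}\mathbb{R}_{\ge0}$). Using $\SL(3,\mathbb{R})$-equivariance of $\delta_0$ and the computation of Example~\ref{example_cprime}, $\delta=P\delta_0$ then has a limit along every negative ray in direction $e^{\pm\pi\ima/3}$, and along every non-negative ray the limit is $[P_0^{\mathrm{mid}}e_1]$ — modulo checking that the three \emph{a priori} values $[P_0^{\mathrm{mid}}e_1]$, $[P_0^{\pm}e_1]$ agree across the unstable rays, which is the content of the next step.

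The heart of the argument, and the main obstacle, is the passage across an unstable ray, say $e^{\pi\ima/6}\mathbb{R}_{\ge0}$. On a thin sector $V$ around it I would set $R(z):=I+\big(\int_{z_0}^{z}\Theta_{13}\big)E_{13}$ (integrating along rays) and $\widehat P:=PR^{-1}$. Since $N:=\{I+tE_{13}\}$ is abelian and $E_{13}$ nilpotent, $\widehat P$ satisfies $\dif\widehat P=\widehat P\,(R\,\Theta'\,R^{-1})$ with $\Theta'=\Theta-\Theta_{13}E_{13}$, all of whose entries decay exponentially on $V$, whereas $R$ only grows polynomially there; hence $R\Theta'R^{-1}$ is integrable on $V$ and $\widehat P$ converges uniformly on $V$ to a constant $\widehat P_0$. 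Comparing $\widehat P=PR^{-1}$ on the two sides of the ray (where $P$ and $R$ converge separately) gives $P_0^{\mathrm{mid}},P_0^{+}\in\widehat P_0 N$, so $P_0^{+}=P_0^{\mathrm{mid}}n_+$ with $n_+\in N$; the symmetric analysis at $e^{-\pi\ima/6}\mathbb{R}_{\ge0}$ with $E_{12}$ and $N':=\{I+tE_{12}\}$ gives $P_0^{-}=P_0^{\mathrm{mid}}n_-$ with $n_-\in N'$. A direct computation with the explicit $\delta_0$ shows that on the unstable ray the polynomially growing entry of $R$ multiplies the \emph{smallest} coordinate of $\delta_0$, so $R(z)\delta_0(z)$ and $\delta_0(z)$ have the same limit $[1:0:0]$; thus $\delta$ also converges to $\widehat P_0[1:0:0]=[P_0^{\mathrm{mid}}e_1]$ there. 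The delicate point, and the improvement over \cite{dumas-wolf}, is that this factorization has to be performed on a whole neighborhood $V$ of the unstable ray rather than on the ray alone, so that $\widehat P_0$ can bridge the two adjacent sub-sectors; this is exactly where the fine structure of $\Theta$ (polynomial growth, exponential decay off the ray) is used.

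Finally I would assemble the extension. Since $n_\pm$ fix $e_1$, the point $[P_0^{\mathrm{mid}}e_1]$ is the common limit of $\delta$ along all non-negative rays (and along the two boundary pieces at their vertex end), so $\delta$ extends continuously at the vertex $\pai{1}\mathbb{C}$; along negative rays, using the limit formulas \eqref{eqn_limit2} for $\delta_0$ together with $P\to P_0^{\pm}$, the assignment $[\gamma_{\ima sv,v}]\mapsto P_0^{+}[1:e^{\sqrt6 s}:0]$ (resp. $P_0^{-}[e^{\sqrt6 s}:0:1]$) extends $\delta$ to $\pai{e^{\pi\ima/3}}\mathbb{C}$ (resp. $\pai{e^{-\pi\ima/3}}\mathbb{C}$), which is the $P_0^{\pm}$-image of an open edge of the \c{T}i\c{t}eica triangle, hence an open line segment; a neighborhood check as in Lemma~\ref{lemma_deltaprime} shows these pieces glue continuously on $\H'$. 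The isometry statement follows because cross-ratios are $\SL(3,\mathbb{R})$-invariant while $\overline\delta_0$ already restricts to an isometry onto the corresponding edge for the metric $\tfrac1{\sqrt6}|\log[\,\cdot\,]|$ (end of Section~\ref{subsec_bord1}). Non-collinearity is the nilpotent bookkeeping again: $n_+\in N$ fixes the plane $\{x_3=0\}$ and $n_-\in N'$ fixes $\{x_2=0\}$, so the two segments span $P_0^{\mathrm{mid}}\{x_3=0\}$ and $P_0^{\mathrm{mid}}\{x_2=0\}$, distinct projective lines. For injectivity, $\delta$ is an injective local homeomorphism on $\H$, hence an open embedding, so no boundary point of $\H'$ can map to an interior value (otherwise a ray would have to converge in $\H$), and the explicit expressions above show the two open segments and the vertex are pairwise disjoint.
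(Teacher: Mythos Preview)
Your proposal is correct and follows essentially the same route as the paper. The paper packages the ODE analysis into a separate statement (Proposition~\ref{prop_osculation1} and Corollary~\ref{coro_comparison}), then defines the extension via $P_\pm\circ\overline{\delta}_0$ and verifies a sequential continuity property, invoking a short topological lemma (Lemma~\ref{lemma_topology}) for injectivity; your inline treatment and your direct injectivity argument amount to the same thing. Two minor points of precision worth tightening: first, the connection form $\Xi=\D-\D_0$ has entries built from $e^{\pm u}-1$ and $\pa u,\bpa u$ rather than from $u$ itself (this is harmless once combined with the decay hypothesis and Schauder estimates, as you note); second, your Pfaffian identity $\dif\widehat P=\widehat P\,(R\Theta'R^{-1})$ presupposes $\dif R=\Theta_{13}E_{13}$ as a $1$-form, which requires $\Theta_{13}$ to be exact---the paper avoids this by defining $R$ via a fixed family of paths $\Gamma_z$ and then running two separate ODE arguments, one in $t$ along the central ray and one in $\theta$ along circular arcs, rather than appealing to a global Pfaffian equation for $\widehat P$. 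Finally, for continuity at the vertex $\pai{1}\mathbb{C}$ you need the factorization and the estimate $R(z)\delta_0(z)\to[1{:}0{:}0]$ on the full closed sector $\{0\le\arg z\le\pi/3\}$ (and its mirror), not just on a thin sector or on the unstable ray alone, since sequences approaching the vertex can have $\arg z_n$ ranging over all of $(-\tfrac{\pi}{3},\tfrac{\pi}{3})$; the paper handles this by splitting into $\im z_n\ge0$ and $\im z_n\le0$.
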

Here, we endowed every open line segment in $\mathbb{RP}^2$ with the metric given by $\frac{1}{\sqrt{6}}$ times the logarithm of cross-ratio, as at the end of Section \ref{subsec_bord1}. 
%\begin{remark}
%While in our application the injectivity assumption on $\delta$ in Proposition \ref{prop_reduction1} as well as Proposition \ref{prop_reduction3} below is satisfied because $\iota$ is a part of a complete affine sphere, we can work in the more general setting of incomplete affine spheres, so that $\delta$ is only a local diffeomorphism. The proof of Proposition \ref{prop_reduction1} then still yields a continuous extension $\overline{\delta}:\H'\rightarrow\mathbb{RP}^2$ (compare Proposition \ref{prop_reduction2}), but showing local injectivity of $\overline{\delta}$ around the boundary $\pa_\infty\H$ requires some extra properties of $\delta$, such as being injective on $\H+a$ for $a>0$ big enough. Although we do believe that such a property can be proved in general, we stick with the original setting to avoid overcomplication.
%\end{remark}

We postpone the proof of this proposition and the next two propositions to the subsequent sections, and proceed with the case $m=3$. In this case, we identify a neighborhood of $p$ with the cylinder $\tau\mathbb{H}/\tau\mathbb{Z}$, where $\tau\in\mathbb{C}^*$ is related to the residue $R$ of $\ve{\phi}$ at $p$ by $R=(\tau/2\pi\ima)^3$ (see Section \ref{subsec_bord2}). Around $p$, the convex $\mathbb{RP}^2$-structure $X$ is given by an affine spherical embedding  $\iota:\tau\mathbb{H}\rightarrow\mathbb{R}^3$, 
and there is some $H\in\SL(3,\mathbb{R})$, namely the holonomy of $X$ around $p$, such that
$$
\iota(z+\tau)=H(\iota(z))\ \text{ for all }z\in\tau\mathbb{H}.
$$ 

A technical issue in this case is that we now only have a weaker upper bound for the function $u$ in the Blaschke metric $e^u|\dz|^2$, with the term $|z|$ in the above estimate replaced by the distance from $z$ to the boundary $\tau\mathbb{R}$ or $\pa\mathbb{H}$, namely,
$$
u(z)\leq C|d(z,\tau\mathbb{R})|^\frac{1}{2}e^{-\sqrt{6}\,d(z,\tau\mathbb{R})},
$$ 
Thus, when $z$ goes to $\infty$ along a ray in $\tau\mathbb{H}$, the smaller the angle from the ray to $\tau\mathbb{R}$, the weaker the decay. Our techniques only enable us to prove a similar extensibility result as Proposition \ref{prop_reduction1} when this angle is bigger than $\arcsin(\frac{\sqrt{3}}{4})\approx 0.14\pi$:
\begin{proposition}\label{prop_reduction2}
Suppose $\theta_1\in(\arcsin(\frac{\sqrt{3}}{4}),\frac{\pi}{2}]$ and $v\in\mathbb{C}$, $v^3=-1$. Let $g=e^u|\dz|^2$ ($u\geq0$) be a solution to Wang's equation on the sector 
$$
V_1:=\{te^{\theta\ima}v\mid t>0,\ |\theta|<\theta_1\}
$$
with cubic differential $\dz^3$, and let $\iota:V_1\rightarrow\mathbb{R}^3$ be an affine spherical embedding with Blaschke metric $g$ and normalized Pick differential $\dz^3$, such that $\delta:=\mathbb{P}\circ\iota:V_1\to\mathbb{RP}^2$ is injective. Then $\delta$ extends to a continuous injection $\overline{\delta}$ from $V_1\cup\pai{v}\mathbb{C}\subset\mathbb{C}'$ to $\mathbb{RP}^2$, sending $\pai{v}\mathbb{C}$ isometrically to an open line segment.
\end{proposition}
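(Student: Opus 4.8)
The plan is to reduce Proposition~\ref{prop_reduction2} to a precise asymptotic analysis of the \emph{osculation map} attached to $\iota$ relative to the \c{T}i\c{t}eica immersion $\iota_0$, exactly as in the higher-order case but now carried out on a sector $V_1$ rather than on a full half-plane. Writing $\iota(z) = P(z)\iota_0(z)$ with $P:V_1\to\SL(3,\mathbb{R})$ solving the Pfaffian system whose coefficients are built from $g=e^{u}|\dz|^2$, the difference between $\D_{g,\ve{\phi}}$ and $\D_0$ is governed by the terms of the connection matrix \eqref{eqn_Dlocal} that vanish when $u\equiv 0$; these are $O(u)$ and $O(|\nabla u|)$ together with an $e^u-1$ correction, hence controlled by the hypothesis decay of $u$. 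The key point is that, after conjugating by the diagonal gauge $\para_0(0,z)$ that diagonalizes $\D_0$ (see Section~\ref{subsec_titeica}), the perturbation term acquires exponential weights $e^{\pm\sqrt{2}\,\re((\omega^a-\omega^b)z)}$ on the off-diagonal $(a,b)$ entries. Along the direction $v$ with $v^3=-1$, one checks that exactly one pair of off-diagonal entries has a growing weight, generated by the factor $e^{\sqrt{6}\,l(z)}$-type growth; but this is precisely dominated by the assumed decay $u(z)\le C|l(z)|^{1/2}e^{-\sqrt6\, l(z)}$ provided the angle $\theta_1$ is large enough, which is where $\arcsin(\tfrac{\sqrt3}{4})$ enters. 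So the first step is to set up this gauge-transformed ODE and record the sharp exponential bounds on each entry of the conjugated perturbation over $V_1$.

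The second step is to prove, by a standard fixed-point / Gr\"onwall argument on the conjugated system, a factorization $P(z)=\widehat P(z)R(z)$ where $\widehat P(z)$ converges in $\SL(3,\mathbb{R})$ as $z\to\infty$ inside $V_1$ (uniformly in the angular variable on $[-\theta_1',\theta_1']$ for any $\theta_1'<\theta_1$), while $R(z)$ lies in the one-parameter unipotent subgroup corresponding to the single ``unstable'' off-diagonal entry and has at most controlled exponential growth. This is the analog of Proposition~\ref{prop_reduction1}'s factorization; the only new feature is that $R(z)$ may grow exponentially rather than polynomially, but — and this is the crucial observation — the \c{T}i\c{t}eica projectivized map $\delta_0$ is contracted in exactly the complementary directions, so $R(z)\,\delta_0(z)$ still has the same limiting behavior as $\delta_0(z)$ itself along each negative ray. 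Concretely, along a ray $\gamma_{z_0,v}(t)=z_0+vt$ one has $\delta_0(\gamma_{z_0,v}(t))\to$ a point running over the open edge $I_0\subset\partial\Delta$ in the $v$-direction (this is \eqref{eqn_limit2} in Example~\ref{example_cprime}), and the unipotent $R$ fixes the vertex toward which the complementary coordinates blow up, so it does not destroy convergence. Therefore $\delta(\gamma_{z_0,v}(t)) = \widehat P(\gamma_{z_0,v}(t))\,R(\gamma_{z_0,v}(t))\,\delta_0(\gamma_{z_0,v}(t))$ converges, and its limit is $L\cdot$(the $\delta_0$-limit) where $L=\lim \widehat P$; as $z_0$ varies this traces out $L(I_0)$, an open line segment $I$. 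Continuity of $\overline\delta$ on $V_1\cup\pai{v}\mathbb{C}$ then follows from the uniformity of the convergence of $\widehat P$ and $R\delta_0$ on angular sub-sectors, combined with the already-established homeomorphism properties of $\overline\delta_0$ from Lemma~\ref{lemma_deltaprime}.

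The third step is the metric statement: that $\overline\delta$ restricts to an \emph{isometry} from $\pai{v}\mathbb{C}$ (with its parallel-distance metric, equivalently the parametrization \eqref{eqn_parametrization}) onto $I$ with the rescaled cross-ratio metric $\widetilde d_I = \tfrac1{\sqrt6}|\log[\cdot,\cdot,\cdot,\cdot]|$. Since $\overline\delta = L\circ\overline\delta_0$ on $\pai{v}\mathbb{C}$ up to the action of $R$, and we already observed at the end of Section~\ref{subsec_bord1} that $\overline\delta_0$ is an isometry from $\pai{v}\mathbb{C}$ onto the open edge of $\Delta$ with metric $\widetilde d$, and projective transformations preserve cross-ratios, the isometry is automatic once one checks that $R$ acts on the segment $I_0$ in a way compatible with — in fact, trivially on, in the limit — this metric. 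I would spell this out via the explicit limit formula \eqref{eqn_limit2} and the fact that $R(z)$, being unipotent fixing the relevant vertex, acts on the $[1:a:0]$-type coordinate of $I_0$ by an affine map whose linear part tends to $1$.

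The main obstacle I anticipate is Step~2 — specifically, verifying that the angular aperture condition $\theta_1>\arcsin(\tfrac{\sqrt3}{4})$ is exactly what makes the Gr\"onwall estimate close. One must track how, for a ray at angle $\theta$ from $h\mathbb{R}$ (so $l(z)\asymp |z|\sin\theta$ along it) and for points at angular deviation up to $\theta_1$ from the $v$-direction, the product of the \emph{growing} exponential weight on the unstable entry (of order $e^{c|z|}$ for a constant $c$ depending on the geometry of the cube roots of $-1$) against the decay $e^{-\sqrt6\, l(z)}$ of $u$ remains integrable over the ray. The threshold $\sin\theta_1>\tfrac{\sqrt3}{4}$ should emerge from requiring $\sqrt6\sin\theta > c$ uniformly on the sector, after optimizing over which pair of cube roots realizes the worst weight; getting this numerical balance right, and organizing the estimate so that $\widehat P$ genuinely converges (not merely stays bounded) while $R$ is correctly isolated into the single unipotent direction, is the delicate part. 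Everything else is either a direct translation of the half-plane argument behind Proposition~\ref{prop_reduction1} or an application of the already-recorded facts about $\delta_0$ and $\overline\delta_0$.
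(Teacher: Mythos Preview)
Your overall architecture---osculate against the \c{T}i\c{t}eica immersion, factor $P(z)=\widehat P(z)R(z)$, then read off the extension as $\widehat P(\infty)\circ\overline\delta_0$---is exactly the paper's plan, and your identification of Step~2 as the crux is right. But there is a structural error that would prevent the argument from reaching the stated threshold $\theta_1>\arcsin(\tfrac{\sqrt3}{4})$.

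You claim that along the direction $v$ with $v^3=-1$, ``exactly one pair of off-diagonal entries has a growing weight,'' and accordingly you place $R(z)$ in a \emph{one-parameter} unipotent subgroup. This is false. By Property~(\ref{item_mu5}) of Section~\ref{subsec_setups}, when $v^3=\pm1$ the maximum $\mu(v)=\tfrac{3\sqrt2}{2}$ is achieved by \emph{two} elementary matrices (for $v=e^{\pi\ima/3}$ these are $E_{13}$ and $E_{23}$), so $\mathfrak n_v$ is two-dimensional and $N_v$ is a two-parameter abelian unipotent group. If you factor out only one of these directions, the other remains in $\Xi-\Xi_{\mathfrak n_v}$, and after conjugation by $\para_0(0,z)$ it carries weight $e^{\frac{3\sqrt2}{2}|z|}$ along the ray $tv$. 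To make $\widehat P$ converge you would then need the decay $e^{-\sqrt6\,c|z|}$ of $u$ (with $c$ close to $\sin\theta_1$) to beat this, forcing $\sin\theta_1>\tfrac{\sqrt3}{2}$, i.e.\ $\theta_1>\tfrac{\pi}{3}$---much stronger than the hypothesis. The paper's Proposition~\ref{prop_osculation2} instead factors out the full two-dimensional $\mathfrak n_v$-component, and the threshold $\tfrac{\sqrt3}{4}$ then emerges because in the resulting estimate for $\|\Theta\|$ the decay $e^{-\sqrt6\,c|z|}$ enters \emph{twice} (once via $\|R-I\|$ and once via the remaining $\lambda_{12}\Xi_{12}$-type term), giving the condition $2c>\sin(\tfrac{\pi}{3}+\theta_0)+\sin\theta_0\to\sin\tfrac{\pi}{3}=\tfrac{\sqrt3}{2}$ as $\theta_0\to0$.

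Two smaller corrections. First, the decay of $u$ on the narrow sub-sector $\overline V$ is not ``assumed'' but derived inside the proof from Theorem~\ref{thm_diskfine}, using that the distance from $z\in\overline V$ to $\partial V_1$ is at least $|z|\sin(\theta_1-\theta_0)$; this is where $c=\sin(\theta_1-\theta_0)$ comes from. Second, your Step~3 overcomplicates the isometry statement: since $\overline\delta|_{\pai{v}\mathbb{C}}=\widehat P(\infty)\circ\overline\delta_0|_{\pai{v}\mathbb{C}}$ (no $R$ appears on the boundary), the isometry is immediate from the cross-ratio computation at the end of Section~\ref{subsec_bord1} plus projective invariance of cross-ratios---there is nothing to check about how $R$ acts on $I_0$.
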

Returning to the cylinder neighborhood  $\tau\mathbb{H}/\tau\mathbb{Z}$ of $p$, we may suppose that the negative ray $\gamma$ in Theorem \ref{thm_intro3} (\ref{item_thmintro32}) is given by $t\mapsto z_0+tv$ in the cylinder (where $v^3=-1$). The assumption on $\gamma$ in the theorem means exactly that the above sector $V_1$, defined from this $v$ and some $\theta_1$ verifying the assumption of the proposition, is contained in $\tau\mathbb{H}$. The proposition then yields an extension $\overline{\delta}:\tau\mathbb{H}\cup\pai{v}\mathbb{C}\rightarrow\mathbb{RP}^2$ of $\delta=\mathbb{P}\circ\iota$ whose restriction to $V_1\cup\pai{v}\mathbb{C}$ is continuous. But then $\overline{\delta}$ itself is continuous because $\tau\mathbb{H}\setminus V_1$ does not have adherent points in $\pai{v}\mathbb{C}$. Since the bordification $\Sigma'_\gamma$ is obtained by replacing $\tau\mathbb{H}/\tau\mathbb{Z}$ with $(\tau\mathbb{H}\cup\pa_\infty^{(v)}\mathbb{C})/\tau\mathbb{Z}$ (Proposition \ref{prop_bord2}), $\overline{\delta}$ gives the required extension of the convex $\mathbb{RP}^2$-structure $X$ and proves Theorem \ref{thm_intro3} (\ref{item_thmintro32}).

Finally, we need the following result on $H$ for the last statement in Theorem \ref{thm_intro2}:
\begin{proposition}\label{prop_reduction3}
Let $\tau\in\mathbb{C}^*$ and let $v$ be a cubic root of $-1$ contained in the half-plane $\tau\mathbb{H}$, with distance to the boundary $\tau\mathbb{R}$ of the half-plane greater than $\frac{\sqrt{3}}{4}$. Suppose
\begin{itemize}
\item
$g=e^{u(z)}|\dz|^2$ is a solution to Wang's equation on $(\tau\mathbb{H},\dz^3)$  such that $u\geq0$ and $u(z+\tau)=u(z)$ for all $z$.
\item
$\iota:\tau\mathbb{H}\rightarrow\mathbb{R}^3$ is an affine spherical embedding with Blaschke metric $g$ and normalized Pick differential $\dz^3$, such that $\delta:=\mathbb{P}\circ\iota:\tau\mathbb{H}\rightarrow\mathbb{RP}^2$ is injective. 
\item
$\overline{\delta}:\tau\mathbb{H}\cup\pai{v}\mathbb{C}\rightarrow\mathbb{RP}^2$ is the extension of $\delta$ provided by Proposition \ref{prop_reduction2}. 
\item
$x^+$ and $x^-$ are the endpoints of the open line segment $I:=\overline{\delta}(\pai{v}\mathbb{C})$ in $\mathbb{RP}^2$, and more specifically $x^+$ (resp. $x^-$) is the limit of $\overline{\delta}(\zeta)$ as $\zeta\in\pai{v}\mathbb{C}$ tends to $\pai{e^{\pi\ima/3}v}\mathbb{C}$ (resp. $\pai{e^{-\pi\ima/3}v}\mathbb{C}$).
\end{itemize}
Then $\overline{\delta}$ is injective and there is a unique $H\in\SL(3,\mathbb{R})$ such that $\iota(z+\tau)=H(\iota(z))$ for all $z\in \tau\mathbb{H}$, which preserves $I$ and has eigenvalues $\exp\big(\sqrt{2}\re(e^{\mp\frac{\pi\ima}{3}}v^{-1}\tau)\big)$ at $x^\pm$.
\end{proposition}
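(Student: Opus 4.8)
The plan is to produce $H$ from the periodicity of the data together with the reconstruction theorem, then to transport the translation $z\mapsto z+h$ across the boundary via $\overline{\delta}$, and finally to read off the eigenvalues of $H$ from the osculation‑map factorization of Proposition \ref{prop_reduction2}. Since $u(z+h)=u(z)$, the translation $T\colon z\mapsto z+h$ of $h\mathbb{H}$ preserves the pair $(g,\dz^3)$, so $\iota\circ T$ is an affine spherical immersion with the same Blaschke metric and normalized Pick differential as $\iota$; by Proposition \ref{prop_recon} it equals $H\circ\iota$ for a unique $H\in\SL(3,\mathbb{R})$, uniqueness holding because the strictly convex surface $\iota(h\mathbb{H})$ spans $\mathbb{R}^3$. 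Projectivizing gives $\delta\circ T=H\circ\delta$ on $h\mathbb{H}$. By Section \ref{subsec_bord2} the map $T$ extends to a self‑homeomorphism of $h\mathbb{H}\cup\pai{v}\mathbb{C}$ which translates $\pai{v}\mathbb{C}$ by the distance $|\im(h\bar{v})|$, and this distance is positive because $v$ lies at distance $>\tfrac{\sqrt3}{4}$ from $h\mathbb{R}$; since $\overline{\delta}$ is continuous (Proposition \ref{prop_reduction2}), the identity $\overline{\delta}\circ T=H\circ\overline{\delta}$ propagates from $h\mathbb{H}$ to all of $h\mathbb{H}\cup\pai{v}\mathbb{C}$. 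Hence $H(I)=I$, and through the isometry $\overline{\delta}\colon\pai{v}\mathbb{C}\to I$ (with the $\tfrac1{\sqrt6}$‑rescaled cross‑ratio metric on $I$) the restriction $H|_I$ is a nontrivial translation in the Hilbert metric of $I$. Consequently $H$ fixes the two endpoints $x^\pm$ of $I$, and, writing $\lambda^\pm$ for the eigenvalues of $H$ at $x^\pm$, one has $\log\lambda^+-\log\lambda^-=\pm\sqrt6\,\im(h\bar{v})$.

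To pin down $\lambda^\pm$ themselves, write $\iota=P\,\iota_0$ with $\iota_0$ the model immersion on $(\mathbb{C},\dz^3)$ from Section \ref{subsec_titeica}. Since $\iota_0(z+h)=D\,\iota_0(z)$ with $D=\mathrm{diag}\!\big(e^{\sqrt2\re(h)},\,e^{\sqrt2\re(\omega^2 h)},\,e^{\sqrt2\re(\omega h)}\big)$ in the frame $(e_1,e_2,e_3)$, the relation $\iota\circ T=H\iota$ forces $H=P(z+h)\,D\,P(z)^{-1}$ for every $z\in h\mathbb{H}$. By Proposition \ref{prop_reduction2} (through Proposition \ref{prop_osculation2}), along the negative ray $v\mathbb{R}_{\ge0}$ the osculation map factors as $P=\widehat{P}\,R$ with $\widehat{P}(z)\to\widehat{P}_\infty\in\SL(3,\mathbb{R})$ as $|z|\to\infty$ and $R$ valued in a one‑parameter unipotent subgroup $N$; crucially $N$ is a root subgroup relative to the diagonal torus, hence normalized by $D$, and it fixes the projective line carrying $\overline{\delta}_0(\pai{v}\mathbb{C})$ pointwise, so the drift $R$ does not perturb the limits of $\delta$ along negative rays. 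Setting $S(z):=R(z+h)\,DR(z)^{-1}D^{-1}\in N$, we obtain $H=\widehat{P}(z+h)\,S(z)\,D\,\widehat{P}(z)^{-1}$; letting $z\to\infty$ along the ray, the left‑hand side is constant while $\widehat{P}(z)$ and $\widehat{P}(z+h)$ both tend to $\widehat{P}_\infty$, so $S(z)D\to\widehat{P}_\infty^{-1}H\widehat{P}_\infty$, the limit $S_\infty:=\lim S(z)$ exists in the closed subgroup $N$, and $H=\widehat{P}_\infty\,(S_\infty D)\,\widehat{P}_\infty^{-1}$. As $S_\infty\in N$ is unipotent relative to $(e_1,e_2,e_3)$, the matrix $S_\infty D$ is triangular in that frame with the same diagonal as $D$, so $H$ has eigenvalues $e^{\sqrt2\re(\zeta h)}$ for $\zeta\in\{1,\omega,\omega^2\}$.

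It remains to match these three numbers to $x^\pm$ and to prove injectivity. Because $N$ fixes $\overline{\delta}_0(\pai{v}\mathbb{C})$ pointwise, $\overline{\delta}|_{\pai{v}\mathbb{C}}=\widehat{P}_\infty\circ\overline{\delta}_0|_{\pai{v}\mathbb{C}}$, so $x^\pm=\widehat{P}_\infty(x_0^\pm)$ with $x_0^\pm:=\overline{\delta}_0(\pai{e^{\pm\pi\ima/3}v}\mathbb{C})$ the two vertices of the model triangle adjacent to the model edge $\overline{\delta}_0(\pai{v}\mathbb{C})$. Each $x_0^\pm$ is an eigenvector of $D$ and, since $x^\pm$ is fixed by $H$, also a fixed point of $S_\infty D$; as $S_\infty$ is unipotent, the eigenvalue of $S_\infty D$ at $x_0^\pm$ equals that of $D$, so the eigenvalue of $H$ at $x^\pm$ equals the eigenvalue of $D$ at $x_0^\pm$. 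Since $e^{\pm\pi\ima/3}v$ is a cube root of unity, reading off the vertex from the explicit $\overline{\delta}_0$ (Sections \ref{subsec_titeica} and \ref{subsec_bord1}) identifies this eigenvalue as $e^{\sqrt2\re((e^{\pm\pi\ima/3}v)^{-1}h)}=e^{\sqrt2\re(e^{\mp\pi\ima/3}v^{-1}h)}$, which is the asserted formula; one checks that its ratio is consistent with the first paragraph. Finally $\overline{\delta}$ is injective: it is injective on $h\mathbb{H}$ by hypothesis and on $\pai{v}\mathbb{C}$ because it is an isometric embedding there, and the two images are disjoint — if some $x\in I$ belonged to the open set $\delta(h\mathbb{H})$ then, $\delta$ being an injective local homeomorphism, a negative ray whose $\delta$‑image tends to $x$ would eventually be confined to a relatively compact neighborhood of $\delta^{-1}(x)$, contradicting that negative rays leave every compact subset of $h\mathbb{H}$.

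The main obstacle is the appeal to Proposition \ref{prop_reduction2}: producing the factorization $P=\widehat{P}R$ with $\widehat{P}$ convergent and $R$ of controlled growth in the correct unipotent subgroup, in a sector about a negative ray where $u$ decays only like $e^{-\sqrt6\sin(\theta)|z|}$ rather than $e^{-\sqrt6|z|}$ — this is the analytic heart of the chapter, and it is exactly the reason for the hypothesis on the distance of $v$ from $h\mathbb{R}$. Relative to that, the only further care needed here is the structural bookkeeping ensuring that $N$ fixes $\overline{\delta}_0(\pai{v}\mathbb{C})$ pointwise and is normalized by $D$, so that $\overline{\delta}$ really restricts to $\widehat{P}_\infty\circ\overline{\delta}_0$ on the boundary and the drift $R$ neither collapses the segment $I$ nor disturbs the eigenvalue‑to‑vertex correspondence.
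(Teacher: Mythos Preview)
Your argument is essentially the paper's: derive $H$ from translation-invariance of $(g,\dz^3)$, establish $H=P(z+h)\,D\,P(z)^{-1}$ (the paper's $D$ is called $H_0$), factor $P=\widehat{P}R$ via Proposition~\ref{prop_osculation2}, pass to the limit using that $N_v$ is normalized by the diagonal, and then match eigenvalues to endpoints through $\overline{\delta}|_{\pai{v}\mathbb{C}}=\widehat{P}(\infty)\circ\overline{\delta}_0|_{\pai{v}\mathbb{C}}$; you also supply the injectivity argument (the analogue of Lemma~\ref{lemma_topology}) that the paper leaves implicit. Two small inaccuracies to fix: for $v^3=-1$ the subgroup $N_v$ is \emph{two}-dimensional (spanned by two root subgroups, e.g.\ $E_{13},E_{23}$ when $v=e^{\pi\ima/3}$), not one-parameter---your argument never uses one-dimensionality, only that $N_v$ is abelian unipotent, normalized by $D$, and fixes the model edge pointwise, all of which hold; and $S_\infty D$ is not literally triangular for every $v$ (for $v=e^{-\pi\ima/3}$ it is only block-triangular), but the eigenvalues are still those of $D$ because $N_v$ fixes the endpoints $x_0^\pm$, which is the reason you actually need.
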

\begin{proof}[Proof of the last statement in Theorem \ref{thm_intro2}]
  Since $e^{\frac{\pi\ima}{3}}v^{-1}$ and $e^{-\frac{\pi\ima}{3}}v^{-1}$ are different cubic roots of unity, Proposition \ref{prop_reduction3} implies that the three eigenvalues of $H$ are $e^{\sqrt{2}\re(\tau)}$, $e^{\sqrt{2}\re(\omega\tau)}$ and $e^{\sqrt{2}\re(\omega^2\tau)}$ (where $\omega=e^{\frac{2\pi\ima}{3}}$), which are exactly the values claimed in Theorem \ref{thm_intro2} because of the relation $R=\left(\frac{\tau}{2\pi\ima}\right)^3$. These eigenvalues are distinct if and only if $\tau^3\neq\pm1$, or equivalently, $\re(R)\neq0$. 
%which is divided into two cases:
%
%\pt $\re(R)>0\Longleftrightarrow\arg(h)\in (-\frac{\pi}{3},0)\cup(\frac{\pi}{3}, \frac{2\pi}{3})\cup(\pi, \frac{4\pi}{3})\Longleftrightarrow$ there are two $v\in \tau\mathbb{H}$ with $v^3=-1$;
%
%\pt $\re(R)<0\Longleftrightarrow\arg(h)\in (-\frac{2\pi}{3},-\frac{\pi}{3})\cup(0,\frac{\pi}{3})\cup(\frac{2\pi}{3},\pi)\Longleftrightarrow$ there is a unique $v\in h\overline{\mathbb{H}}$ with $v^3=-1$.

%By the proposition, the two eigenvalues at the endpoints of $I$ can be written as $\exp\left[\sqrt{2}\re(e^{\pm\frac{\pi\ima}{3}}v^{-1}\tau)\right]$. 

It is elementary to check, for $v=e^{\frac{\pi\ima}{3}},-1$ and $e^{-\frac{\pi\ima}{3}}$ respectively, that if $\re(R)>0$, or equivalently, if $\arg(\tau)\in (-\frac{\pi}{3},0)\cup(\frac{\pi}{3}, \frac{2\pi}{3})\cup(\pi, \frac{4\pi}{3})$, then $\exp\big(\sqrt{2}\re(e^{\frac{\pi\ima}{3}}v^{-1}\tau)\big)$ and $\exp\big(\sqrt{2}\re(e^{-\frac{\pi\ima}{3}}v^{-1}\tau)\big)$ are the biggest and smallest eigenvalues of $H$; whereas if $\re(R)< 0$ then one of the two is the intermediate eigenvalue. This shows that $X'$ has principal geodesic boundary if and only if $\re(R)>0$, completing the proof.
\end{proof}

To explain the idea of proof of the above propositions, we first note that if $g=|\dz|^2$, so that the affine spherical embedding in question and its projectivization are the maps $\iota_0$ and $\delta_0$ studied in Sections \ref{subsec_titeica} and \ref{subsec_bord1}, the propositions can be verified with the explicit expressions. For general $g$, we study the \emph{osculation map} introduced in \cite{dumas-wolf}, which measures the deviation of $\iota$ from $\iota_0$. The precise definition of this map goes as follows. Given a simply connected domain $U\subset\mathbb{C}$ and a solution $g$ to Wang's equation on $(U,\dz^3)$, we let 
$$
\para_0(z_2,z_1),\para(z_2,z_1):\T_{z_1}U\oplus\mathbb{R}\rightarrow\T_{z_2}U\oplus\mathbb{R}
$$ 
denote the parallel transports of the affine sphere connections $\D_0$ and $\D$, associated to $(|\dz|^2,\dz^3)$ and $(g,\dz^3)$, respectively, along a path from $z_1$ to $z_2$ (see Section \ref{subsec_affine}; be cautious about the order of $z_1$ and $z_2$ in the notations, \cf Appendix \ref{subsec_connection}). By Proposition \ref{prop_recon}, given a base point $z_0\in U$, we can write
$$
\iota_0(z)=\para_0(z_0,z)\underline{1}_z,\ \iota(z)=\para(z_0,z)\underline{1}_z.
$$
We define the \emph{osculation map associated to $g$ at $z_0$} as
\begin{equation}\label{eqn_osc}
P:U\rightarrow\SL(\T_{z_0} U\oplus\mathbb{R}),\ P(z):=\para(z_0,z)\para_0(z,z_0),
\end{equation}
so that $\iota(z)=P(z)\iota_0(z)$ for all $z\in U$. Since $\delta_0=\mathbb{P}\circ\iota_0$ is already well understood, in other to prove the required statements about $\delta(z)=\mathbb{P}(\iota(z))=P(z)\delta_0(z)$, we shall analyze the asymptotic behavior of $P(z)$ as $z$ goes to infinity. This will be done in Sections \ref{subsec_part1} and \ref{subsec_part2} below, for poles of order $m\geq4$ and $m=3$ respectively, after we collect some preliminary facts in Section \ref{subsec_setups}. Propositions \ref{prop_reduction1} -- \ref{prop_reduction3} above are deduced from asymptotic results on $P$ in Sections \ref{subsec_reduction1} and \ref{subsec_reduction2}.

\subsection{Preliminaries}\label{subsec_setups}
By the definition (\ref{eqn_Dlocal}) of affine sphere connections, in the above setting, the connections $\D_0$ and $\D$, extended to the complexified vector bundle $(\T U\oplus\underline{\mathbb{R}})\otimes\mathbb{C}=\T_\mathbb{C}U\oplus\underline{\mathbb{C}}$, are expressed under the frame $(\paz,\pabz,\underline{1})$ as
$$
\D_0=\dif+
\begin{pmatrix}
0&\frac{1}{\sqrt{2}}\dbz&\dz\\[6pt]
\frac{1}{\sqrt{2}}\dz&0&\dbz\\[6pt]
\frac{1}{2}\dbz&\frac{1}{2}\dz&0
\end{pmatrix},\ \ 
\D=\dif+
\begin{pmatrix}
\pa u&\frac{1}{\sqrt{2}}e^{-u}\dbz&\dz\\[6pt]
\frac{1}{\sqrt{2}}e^{-u}\dz&\bpa u&\dbz\\[6pt]
\frac{1}{2}e^{u}\dbz&\frac{1}{2}e^{u}\dz&0
\end{pmatrix}
$$

In the sequel, we work with the global frame $(e_1,e_2,e_3)$ of $\T U\oplus\underline{\mathbb{R}}$ introduced in Section \ref{subsec_titeica}, under which we can write 
$$
\D_0=\dif+
\sqrt{2}
\re
\begin{pmatrix}
\dz&&\\
&\omega^2\dz&\\
&&\omega\dz
\end{pmatrix},\quad
\D=\D_0+\Xi,
$$
where $\Xi:=\D-\D_0\in\Omega^1(U,\End(\T U\oplus\underline{\mathbb{R}}))$ is expressed as a matrix valued $1$-form. By the above expressions of $\D$ and $\D_0$ under the old frame $(\paz,\pabz,\underline{1})$ and the fact that $(e_1,e_2,e_3)$ is related to $(\paz,\pabz,\underline{1})$ by a constant change-of-basis matrix (see Section \ref{subsec_titeica}), the coefficients of every entry of $\Xi$ are $\mathbb{R}$-linear combinations of the four functions $e^u-1$,  $e^{-u}-1$, $\pa_x u$ and $\pa_y u$.

Parallel transports of $\D$ and $\D_0$ are expressed as matrices in $\SL(3,\mathbb{R})$ under the frame $(e_1,e_2,e_3)$. In particular, the osculation map (\ref{eqn_osc}) is considered as $\SL(3,\mathbb{R})$-valued through the identification $\SL(\T_{z_0} U\oplus\mathbb{R})\cong\SL(3,\mathbb{R})$ given by the frame. 

Recall from Section \ref{subsec_titeica} that the parallel transport of $\D_0$ along a path from $z\in U$ to $0$ (assuming $0\in U$) is 
$$
\para_0(0,z)=
\begin{pmatrix}
e^{\sqrt{2} \re(z)}&&\\
&e^{\sqrt{2} \re(\omega^2z)}&\\
&&e^{\sqrt{2} \re(\omega z)}
\end{pmatrix},\ \mbox{ where }\omega=e^{\frac{2\pi\ima}{3}}.
$$
We need some information on eigenvalues of the  automorphism $\Ad_{\para_0(0,z)}$ of $\sl_3\mathbb{R}$. Given $i,j\in\{1,2,3\}$, let $\mu_{ij}$ be the function on $\mathbb{S}^1=\{v\in\mathbb{C}\mid |v|=1\}$ defined by
$$
\mu_{ij}(v):=\sqrt{2}\re\left[(\omega^{1-i}-\omega^{1-j})v\right],
$$
so that the eigenvalue of $\Ad_{\para_0(0,z)}$ on the elementary matrix $E_{ij}\in\sl_3\mathbb{R}$ is 
$$
\lambda_{ij}(z)=e^{\mu_{ij}(z/|z|)|z|}.
$$ 
Also denote $\mu(v):=\max_{i, j}\mu_{ij}(v)=\max_{i\neq j}\mu_{ij}(v)$ for every $v\in\mathbb{S}^1$.

It is useful to visualize $\mu_{ij}(v)$ as follows: Consider an equilateral triangle on the complex plane, centered at the origin, with vertices labeled \emph{clockwise} by $1$, $2$ and $3$ such that the vertex $1$ is at the point $v$. Then $\mu_{ij}(v)$ is $\sqrt{2}$ times the difference between the real coordinates of the vertices $i$ and $j$. With this geometric interpretation, it is elementary to check the following properties:
\begin{enumerate}[(a)]
\item\label{item_mu1}
$\mu_{ij}(\omega v)=\mu_{i+1,j+1}(v)$ (here $i,j\in\{1,2,3\}$ are counted mod $3$). 
\item\label{item_mu2}
The maximum of $\mu(v)$ is $\sqrt{6}$ and is achieved when $v^3=\pm\ima$.
\item\label{item_mu3}
The minimum of $\mu(v)$ is $\frac{3\sqrt{2}}{2}$ and is achieved when $v^3=\pm1$.
\item\label{item_mu4}
If $v^3\neq\pm 1$, there is a unique $(i,j)$ with $\mu_{ij}(v)=\mu(v)$. In particular, for $v=e^{\frac{\pi\ima}{6}}$ (resp. $v=e^{-\frac{\pi\ima}{6}}$), the corresponding $(i,j)$ is $(1,3)$ (resp. $(1,2)$) 
\item\label{item_mu5}
If $v^3=\pm1$, there are two $(i,j)$'s with $\mu_{ij}(v)=\mu(v)$. For $v=1$ (resp. $v=e^{\frac{\pi\ima}{3}}$), the corresponding $(i,j)$'s are $(1,2)$ and $(1,3)$ (resp. $(1,3)$ and $(2,3)$).
\end{enumerate}
%\begin{quote}
%\begin{tabular}{|c|c|c|c|c|c|c|}
%\hline
%$v$&$1$&$e^{\frac{\pi\ima}{3}}$&$e^{\frac{2\pi}{3}}$&$-1$&$e^{\frac{4\pi}{3}}$&$e^{\frac{5\pi}{3}}$\\[3pt]\hline
%$(i,j)$&$(1,2),(1,3)$&$(1,3),(2,3)$&$(2,1),(2,3)$&$(3,1)$&$(3,2)$&$(1,2)$\\
%\hline
%%$\mu_{ij}(e^{\theta\ima})$&$(1,3)$&$(2,3)$&$(2,1)$&$(3,1)$&$(3,2)$&$(1,2)$\\
%%\hline
%\end{tabular}
%\end{quote}

Let $\mathfrak{n}_v\subset\sl_3\mathbb{R}$ denote the eigenspace of $\Ad_{\para_0(0,v)}$ with largest eigenvalue, which is generated by those $E_{ij}$ with $\mu_{ij}(v)=\mu(v)$. By Property (\ref{item_mu1}), $\frak{n}_{\omega v}$ is obtained from $\mathfrak{n}_v$ by a permutation of basis, and it follows from (\ref{item_mu4}) and (\ref{item_mu5}) that $\mathfrak{n}_v$ has dimension $1$ or $2$. Moreover, when $\dim\mathfrak{n}_v=2$ (\ie when $v^3=\pm1$), the matrix product $xy$ vanishes for all $x,y\in\mathfrak{n}_v$. Therefore, $\frak{n}_v$ is an abelian Lie algebra and the corresponding closed subgroup of $\SL(3,\mathbb{R})$ can be written as
$$
\mathcal{N}_v:=\exp(\frak{n}_v)=\{\exp(x)=I+x\mid x\in\frak{n}_v\}.
$$ 
%where $I$ denotes the identity matrix.

\subsection{Asymptotics of osculation map}\label{subsec_part1}
With the above notations, the asymptotic result on osculation maps needed for Proposition \ref{prop_reduction1} is:
\begin{proposition}\label{prop_osculation1}
Let $\overline{V}$ be a closed sector in $\mathbb{C}$ based at $0$ and $U$ be the $\varepsilon$-neighborhood of $\overline{V}$ for some $\varepsilon>0$. Let $g=e^u|\dz|^2$ be a solution to Wang's equation on $(U, \dz^3)$ such that
$$
0\leq u(z)\leq C|z|^\alpha e^{-\sqrt{6}|z|}\ \text{ for all } z\in U,\,|z|\geq r
$$
for some constants $\alpha, C,r>0$. Let $P:U\rightarrow\SL(\T_0\mathbb{C}\oplus\mathbb{R})\cong\SL(3,\mathbb{R})$ be the osculation map associated to $g$ at $0$. 
\begin{enumerate}
\item\label{item_osculation1}
If $\overline{V}$ does not contain any cubic root of $\pm\ima$, then $P(z)$ converges in $\SL(3,\mathbb{R})$ as $z$ goes to infinity in $\overline{V}$.
\item\label{item_osculation2}
If there is a unique $v\in\overline{V}$ such that $v^3=\pm\ima$, then there are continuous maps $\widehat{P}: \overline{V}\rightarrow \SL(3,\mathbb{R})$ and $R: \overline{V}\rightarrow \mathcal{N}_v$ such that
\begin{itemize}
\item $P(z)=\widehat{P}(z)R(z)$ for all $z\in \overline{V}$;
\item $\widehat{P}(z)$ converges in $\SL(3,\mathbb{R})$ as $z$ goes to infinity in $\overline{V}$;
\item $\|R(z)-I\|\leq C'|z|^{\alpha+1}$ for some constant $C'>0$ and all $z\in \overline{V}$, $|z|\geq r$, where $\|\cdot\|$ is a matrix norm.
\end{itemize}
\end{enumerate}
\end{proposition}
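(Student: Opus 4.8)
The plan is to convert the statement into an asymptotic question for a single matrix-valued Pfaffian equation and then apply standard ODE reasoning; the two parts differ only in whether an integrable estimate for the coefficient one-form is available uniformly on $\overline{V}$.

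\textbf{The governing equation and its estimates.} Writing $\D=\D_0+\Xi$ as in Section~\ref{subsec_setups} and recalling $\para_0(0,z)=\diag\big(e^{\sqrt2\re z},e^{\sqrt2\re(\omega^2z)},e^{\sqrt2\re(\omega z)}\big)$ under the frame $(e_1,e_2,e_3)$, a direct computation with the defining relations $\dif\para=-A\para$, $\dif\para_0=-A_0\para_0$ of the two parallel transports gives that the osculation map $P(z)=\para(z_0,z)\para_0(z,z_0)$ satisfies
$$
\dif P=P\,\Omega,\qquad \Omega:=\Ad_{\para_0(0,z)}\Xi ,
$$
so that the $(i,j)$-entry of $\Omega$ is $\lambda_{ij}(z)\,\Xi_{ij}=e^{\mu_{ij}(z/|z|)\,|z|}\,\Xi_{ij}$. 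Since the entries of $\Xi$ are $\mathbb R$-linear combinations of $e^{u}-1$, $e^{-u}-1$, $\pa_xu$, $\pa_yu$, and since interior elliptic estimates applied on unit balls contained in the $\varepsilon$-neighbourhood $U$ promote the hypothesis on $u$ to $|u|+|\nabla u|\lesssim |z|^\alpha e^{-\sqrt6|z|}$ (this is the only place the thickening from $\overline V$ to $U$ is used), one obtains the entrywise bound $\|\Omega_{ij}(z)\|\lesssim |z|^{\alpha}e^{(\mu_{ij}(z/|z|)-\sqrt6)|z|}$ for $z\in\overline V$, $|z|\ge r$. Differentiating $\dif P=P\Omega$ gives the Maurer--Cartan identity $\dif\Omega=-\Omega\wedge\Omega$, and combining this with the additivity $\mu_{ik}(v)+\mu_{kj}(v)=\mu_{ij}(v)$ yields $\|\dif\Omega(z)\|\lesssim |z|^{2\alpha}e^{-\sqrt6|z|}$; that is, $\Omega$ is \emph{closed up to an exponentially small error}, a fact I expect to be the technical backbone of Part (2).

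\textbf{Part (1).} If $\overline V$ contains no cubic root of $\pm\ima$, then by Property~\ref{item_mu2} of $\mu$ and compactness there is $\varepsilon_0>0$ with $\mu(v)\le\sqrt6-\varepsilon_0$ on $\overline V$, hence $\|\Omega(z)\|\lesssim |z|^{\alpha}e^{-\varepsilon_0|z|}$, which is integrable along each ray and over $\overline V\cap\{|z|\ge\rho\}$ with total mass $\to 0$ as $\rho\to\infty$. Grönwall's inequality then bounds $P$ on $\overline V$; integrating $\dif P=P\Omega$ along a radial ray shows $P(tv)$ converges as $t\to+\infty$, with a rate uniform in $v$; and integrating along the circular arc of radius $t$ shows $\|P(tv_1)-P(tv_2)\|\lesssim t^{\alpha+1}e^{-\varepsilon_0 t}\to0$, so all radial limits coincide. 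This gives the single limit asserted in Part (1).

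\textbf{Part (2).} Let $v_0\in\overline V$ be the unique cubic root of $\pm\ima$; by Property~\ref{item_mu4} there is a unique pair $(i_0,j_0)$ with $\mu_{i_0j_0}(v_0)=\sqrt6$, so $\mathfrak n_{v_0}=\mathbb R E_{i_0j_0}$ is one-dimensional, $E_{i_0j_0}^2=0$, and $N_{v_0}=\{\,I+sE_{i_0j_0}\,\}$. Split $\Omega=\Omega_N+\Omega'$ with $\Omega_N:=\Omega_{i_0j_0}E_{i_0j_0}$. Because $\mu_{ij}(v)=\sqrt6$ forces $v^3=\pm\ima$ and $\overline V$ meets the cubic roots of $\pm\ima$ only at $v_0$, Property~\ref{item_mu4} yields $\mu_{ij}(v)\le\sqrt6-\varepsilon_0$ on $\overline V$ for every $(i,j)\ne(i_0,j_0)$, so $\|\Omega'(z)\|\lesssim |z|^{\alpha}e^{-\varepsilon_0|z|}$ while only $\|\Omega_N(z)\|\lesssim |z|^{\alpha}$ is available. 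The crucial step is to absorb $\Omega_N$ into a correction $R$ valued in $N_{v_0}$. Using that $\dif\Omega_{i_0j_0}$ is exponentially small, I would build a primitive of the scalar one-form $\Omega_{i_0j_0}$ up to exponentially small error: set $\rho_1(z):=\int_r^{|z|}\Omega_{i_0j_0}\big(s\tfrac{z}{|z|},\tfrac{z}{|z|}\big)\dif s$, which is continuous, satisfies $|\rho_1|\lesssim |z|^{\alpha+1}$, and whose differential equals $\Omega_{i_0j_0}$ exactly in the radial direction; then cancel the tangential discrepancy with a bounded, angularly integrated term $\rho_2$ whose radial derivative is bounded by $|\dif\Omega_{i_0j_0}|$, hence exponentially small. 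Putting $\rho:=\rho_1-\rho_2$ and $R:=\exp(\rho E_{i_0j_0})=I+\rho E_{i_0j_0}$ gives a continuous $R\colon\overline V\to N_{v_0}$ with $\|R-I\|=|\rho|\lesssim |z|^{\alpha+1}$ and $\|\dif\rho-\Omega_{i_0j_0}\|\lesssim |z|^{2\alpha+1}e^{-\sqrt6|z|}$. For $\widehat P:=PR^{-1}$ one then computes $\dif\widehat P=\widehat P\,\Theta$, where — exploiting $E_{i_0j_0}^2=0$, so that $R^{-1}=I-\rho E_{i_0j_0}$ and $\Ad_R$ is the identity on $\mathfrak n_{v_0}$ — every term of $\Theta$ is either a multiple of $E_{i_0j_0}$ produced by $\dif\rho-\Omega_{i_0j_0}$ (exponentially small, unaffected by $\Ad_R$) or a term built from $\Omega'$ (exponentially small, amplified by at most $\|R\|\,\|R^{-1}\|\lesssim |z|^{2\alpha+2}$). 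Thus $\|\Theta(z)\|\lesssim |z|^{3\alpha+2}e^{-\varepsilon_0|z|}$ is integrable over $\overline V$, and the Part~(1) argument applies verbatim to $\widehat P$: Grönwall bounds it, its radial limits exist uniformly in the direction, and the arc estimate forces them to agree, producing the single limit of $\widehat P$ together with the factorization $P=\widehat P R$.

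I expect the main obstacle to be precisely the construction of $R$ in Part~(2). Killing $\Omega_N$ ray-by-ray is easy but useless: it leaves $\widehat P$ with a genuinely direction-dependent limit, which is exactly the Stokes jump of $P$ across the unstable ray $v_0\mathbb R_{\ge0}$. What is needed is a \emph{single} $N_{v_0}$-valued function $R$, continuous on all of $\overline V$, whose twisting across that ray matches $P$'s; the estimate $\|\dif\Omega\|\lesssim |z|^{2\alpha}e^{-\sqrt6|z|}$, coming from Maurer--Cartan and the additivity of the $\mu_{ij}$, is the ingredient that makes such an $R$ — with the stated polynomial growth — exist. Everything else (the ODE derivation, the elliptic gradient bound, the Grönwall and arc estimates) is routine.
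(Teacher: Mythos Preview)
Your argument is correct; Part (1) matches the paper exactly, and Part (2) reaches the same conclusion by a genuinely different construction of $R$.

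The paper does not build an approximate scalar primitive of $\Omega_{i_0j_0}$. Instead it introduces an \emph{intermediate connection} $\D_1:=\D_0+\Xi_{\mathfrak n_v}$ and sets
\[
\widehat P(z):=\para(0,z)\,\para_1(\Gamma_z),\qquad R(z):=\para_1(\Gamma_z^{-1})\,\para_0(z,0),
\]
where $\Gamma_z$ is the broken path ``radial along $v\mathbb R_{\ge0}$ to $|z|v$, then circular arc to $z$''. Because $\mathfrak n_v$ is nilpotent with $xy=0$, the ODE for $R$ along $\Gamma_z$ integrates exactly to $R(z)=I+\int_{\Gamma_z}\lambda_{i_0j_0}\Xi_{i_0j_0}\,E_{i_0j_0}$, whence $\|R-I\|\lesssim|z|^{\alpha+1}$ immediately. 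The ODE for $\widehat P$ has coefficient $\Theta=\Ad_{R}\Ad_{\para_0(0,z)}(\Xi-\Xi_{\mathfrak n_v})$, which decays because the $(i_0,j_0)$ entry has been removed from $\Xi$ \emph{before} the conjugation by $\para_0$; no Maurer--Cartan estimate on $\dif\Omega$ and no correction term $\rho_2$ are needed.

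Your route instead makes the analytic mechanism explicit: the identity $\dif\Omega=-\Omega\wedge\Omega$ together with $\mu_{ik}+\mu_{kj}=\mu_{ij}$ shows $\Omega_{i_0j_0}$ is closed up to an exponentially small error, so a global approximate primitive $\rho$ exists, and $R=I+\rho E_{i_0j_0}$ works. This is a perfectly valid alternative and arguably clarifies \emph{why} a single continuous $R$ exists across the unstable ray. The paper's version buys some bookkeeping economy: it never differentiates $\Omega$, never constructs $\rho_2$, and the factorization $P=\widehat P R$ is exact by definition of parallel transport rather than obtained by algebraic cancellation. Your version buys a more transparent link between the Stokes phenomenon and the near-closedness of $\Omega$. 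Both lead to an integrable $\Theta$ and the same radial-plus-arc limit argument for $\widehat P$.
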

We give below the proof of Part (\ref{item_osculation2}) and then only outline that of Part (\ref{item_osculation1}), as the latter follows the same scheme and is simpler. 

To be specific, we fix the norm $\|\cdot\|$ on $3\times 3$ matrices as $\|M\|:=\max_{i,j}|M_{ij}|$. Note that $\|AB\|\leq 3\|A\|\cdot \|B\|$. Given a scalar valued $1$-form $\xi$ on $U$, let $|\xi|$ denote the pointwise norm of $\xi$ with respect to the metric $|\dz|^2$, while for a matrix valued $1$-form $\Xi$ we consider the pointwise norm $\|\Xi\|:=\max_{ij}|\Xi_{ij}|$.
%where $\T^{(1)}U\subset\T U$ is the unit tangent bundle with respect to $|\dz|^2$.

\begin{proof}[Proof of Part (\ref{item_osculation2})]
As seen in Section \ref{subsec_setups}, the affine sphere connections $\D_0$ and $\D$ associated to $(|\dz|^2,\dz^3)$ and $(g,\dz^3)$ are expressed under the frame $(e_1,e_2,e_3)$ as
$$
\D_0=\dif+\sqrt{2}\re
\begin{pmatrix}
\dz&&\\
&\omega^2\dz&\\
&&\omega\,\dz
\end{pmatrix}
,\quad
\D=\D_0+\Xi,
$$ 
where $\Xi$ is a matrix valued $1$-form whose coefficients are linear combinations of $e^u-1$, $e^{-u}-1$, $\pa_xu$ and $\pa_yu$. Since $|e^u-1|$, $|e^{-u}-1|$ and $|\Delta u|=2|e^u-e^{-2u}|$ are all majorized by a constant multiple of $u$ when $u$ is bounded, using the Schauder estimates (see \eg \cite[Corollary 1.2.7]{jost}) to control $|\pa_x u|$ and $|\pa_yu|$, we obtain
\begin{equation}\label{eqn_xiestimate}
\|\Xi\|(z)\leq C_1|z|^{\alpha}e^{-\sqrt{6}|z|}\ \text{ for all } z\in\overline{V},\, |z|\geq r
\end{equation}
for some $C_1>0$ (the control of the derivatives at each point replies on the control of $u$ on a neighborhood of that point, this is why in the hypothesis we need $u$ to be controlled on a $\varepsilon$-neighborhood of $\overline{V}$). We proceed in the following steps.

\textbf{Step 1. Define $\widehat{P}(z)$ and $R(z)$.} 
Suppose $v\in\overline{V}$, $v^3=\pm\ima$. By Properties (\ref{item_mu2}) and (\ref{item_mu4}) in Section \ref{subsec_setups}, the largest eigenvalue of $\Ad_{\para_0(0,v)}$ on $\sl_3\mathbb{R}$ is $\sqrt{6}$, which has $1$-dimensional eigenspace $\mathfrak{n}_v$ generated by an elementary matrix $E_{i_0j_0}$, $i_0\neq j_0$. Let $\Xi_{\mathfrak{n}_v}:=\Xi_{i_0j_0}E_{i_0j_0}$ be the $\frak{n}_v$-component of $\Xi$ (where $\Xi_{ij}$ denotes the $(i,j)$-entry of $\Xi$) and consider the connection
$$
\D_1:=\D_0+
\Xi_{\mathfrak{n}_v}.
$$
%Note that $\D=\D_1+\big(\Xi-\Xi_{\mathfrak{n}_v}\big)$ vanishes, where $\big(\Xi-\Xi_{\mathfrak{n}_v}\big)$ is obtained from $\Xi$ by removing the $(i_0,j_0)$-entry.
As $\D_1$ is not flat, its parallel transport $\para_1$ depends on paths instead of just two points. For every $z\in\overline{V}$, consider the path $\Gamma_z$ in $\overline{V}$ from $0$ to $z$ which first goes straight from $0$ to $|z|v$ then goes from $|z|v$ to $z$ along the circle of radius $|z|$ centered at $0$. We define the required maps $\widehat{P}$ and $R$ as 
$$
\widehat{P}(z):=\para(0,z)\para_1(\Gamma_z), \quad R(z):=\para_1(\Gamma_z^{-1})\para_0(z,0).
$$
The definition immediately implies $P(z):=\para(0,z)\para_0(z,0)=\widehat{P}(z)R(z)$. 
%We show next that $R(z)$ belongs to $\mathcal{N}_v$ and $\|R(z)-I\|$ satisfies the required estimate.

\vspace{3pt}

\textbf{Step 2. Estimate of $R(z)$.} 
In order to control $R(z)$, we first consider a general $C^1$-path $\gamma:[0,T]\rightarrow U$ with $\gamma(0)=0$ instead of $\Gamma_z$. Put $U(t):=\para_1(\gamma_{[0,t]}^{-1})\para_0(\gamma(t),0)$ for $t\in[0,T]$, where $\gamma_{[0,t]}^{-1}$ is the reverse path of  $\gamma|_{[0,t]}$. Computing the derivative of $U(t)$ in view of  Appendix \ref{subsec_connection}, we get
$$
U'(t)=\frac{\dif}{\dif t}\left(\para_1(\gamma_{[0,t]}^{-1})\para_0(\gamma(t),0)\right)
$$
\begin{align*}
&=\para_1(\gamma_{[0,t]}^{-1})A_1(\dot\gamma(t))\para_0(\gamma(t),0)-\para_1(\gamma_{[0,t]}^{-1})A_0(\dot\gamma(t))\para_0(\gamma(t),0)\\
&=\para_1(\gamma_{[0,t]}^{-1})\para_0(\gamma(t),0)\Ad_{\para_0(0,\gamma(t))}(\D_1-\D_0)(\dot\gamma(t))=U(t)\lambda_{i_0j_0}(\gamma(t))\Xi_{\mathfrak{n}_v}(\dot\gamma(t)),
\end{align*}
where $A_0$ and $A_1$ are the matrix valued $1$-forms such that $\D_0=\dif+A_0$, $\D_1=\dif+A_1$, and $\lambda_{ij}(z)=e^{\mu_{ij}(z/|z|)|z|}$ is the eigenvalue of $\Ad_{\para_0(0,z)}$ on $E_{ij}$ (see Section \ref{subsec_setups}).
Write $X(t):=\lambda_{i_0j_0}(\gamma(t))\Xi_{\mathfrak{n}_v}(\dot\gamma(t))\in\mathfrak{n}_v$, so that $U(t)$ is the solution to the ODE $U'(t)=U(t)X(t)$ with initial condition $U(0)=I$. By virtue of the fact that the matrix product $xy$ vanishes for any $x,y\in\mathfrak{n}_v$, the solution is given by
$$
U(t)=\exp\left(\int_0^tX(\tau)\dif \tau\right)=I+\int_0^tX(\tau)\dif\tau=I+\int_{\gamma|_{[0,t]}} \lambda_{i_0j_0}\Xi_{\mathfrak{n}_v}.
$$
%The last term is the integral of the matrix valued $1$-from $\lambda_{i_0j_0}\Xi_{\mathfrak{n}_v}$ along the path $\gamma|_{[0,t]}$. 
In particular, we have $U(T)=\int_\gamma\lambda_{i_0j_0}\Xi_{\mathfrak{n}_v}$. This generalizes to piecewise $C^1$-paths through approximation by $C^1$-paths. Applying to the path $\Gamma_z$, we get
$$
R(z)=I+\int_{\Gamma_z}\lambda_{i_0j_0}\Xi_{\mathfrak{n}_v}\in \mathcal{N}_v.
$$
Inequality (\ref{eqn_xiestimate}) and the fact that $\lambda_{ij}(z)=e^{\mu_{ij}(z/|z|)|z|}\leq e^{\sqrt{6}|z|}$ (see Section \ref{subsec_setups})
%there is a constant $C'$ depending on $C$, $r$ and the upper bound of $\|\Xi_{\mathfrak{n}_v}\|$ on $\{z\in\overline{V}\mid |z|\leq r\}$, such that
imply $\|\lambda_{i_0j_0}\Xi_{\mathfrak{n}_v}\|(z)\leq C_1|z|^{\alpha}$ for all $z\in\overline{V}$, $|z|\geq r$. By integration, we get 
\begin{equation}\label{eqn_restimate1}
\|R(z)-I\|\leq C'|z|^{\alpha+1}\ \text{ for all } z\in\overline{V},\,|z|\geq r
\end{equation}
for  a constant $C'>0$ depending on $C_1$, $\alpha$, $\overline{V}$ and the maximum of $\|\lambda_{i_0j_0}\Xi_{\mathfrak{n}_v}\|$ on $\{z\in\overline{V}\mid |z|\leq r\}$. Thus, $R(z)$ has the required properties.

\vspace{3pt}

\textbf{Step 3. Convergence of $\widehat{P}(tv)$.} 
Noting that $\Gamma_{tv}$ is the line segment from $0$ to $tv$, we compute the derivative of $\widehat{P}(tv)=\para(0,tv)\para_1(\Gamma_{tv})$ in $t$ similarly as in the above computation of $U'(t)$ and get
$$
\frac{\dif }{\dif t}\widehat{P}(tv)=\widehat{P}(tv)\Ad_{\para_1(\Gamma_{tv}^{-1})}(\D-\D_1)\big(\pa_t(tv)\big)=\widehat{P}(tv)\Ad_{\para_1(\Gamma_{tv}^{-1})}(\Xi-\Xi_{\mathfrak{n}_v})_{tv}(v).
$$
In order to analyze this ODE, we define an $\sl_3\mathbb{R}$-valued $1$-form $\Theta$ on $U$ by specifying $\Theta_z\in\Hom(\T_zU,\sl_3\mathbb{R})$ as
$$
\Theta_z:=\Ad_{\para_1(\Gamma_{z}^{-1})}(\Xi-\Xi_{\mathfrak{n}_v})_z=\Ad_{R(z)}\Ad_{\para_0(0,z)}(\Xi-\Xi_{\mathfrak{n}_v})_z.
$$
Then $\widehat{P}(tv)$ is the solution to the initial value problem
\begin{equation}\label{eqn_hatpt}
\frac{\dif}{\dif t}\widehat{P}(tv)=\widehat{P}(tv)\Theta_{tv}(v),\quad \widehat{P}(0)=I.
\end{equation}

We shall show that $\|\Theta\|(z)$ decays exponentially with respect to $|z|$. To this end, we first estimate $\Ad_{\para_0(0,z)}(\Xi-\Xi_{\mathfrak{n}_v})_z$. Recall from Section \ref{subsec_setups} that the eigenvalue of $\Ad_{\para_0(0,z)}$ on $E_{ij}$ is $\lambda_{ij}(z)=e^{\mu_{ij}(z/|z|)|z|}$. By assumption, for $z\in\overline{V}$ we have $(z/|z|)^3=\pm\ima$ if and only if $z/|z|=v$, so
Properties (\ref{item_mu2}) and (\ref{item_mu4}) of $\mu_{ij}$ in Section \ref{subsec_setups} imply that $\mu_{ij}(z/|z|)\leq\sqrt{6}$ for all $z\in\overline{V}$ and the equality is achieved if and only if $z/|z|=v$, $(i,j)=(i_0,j_0)$. Therefore, there is a constant $c>0$ such that
$$
\max_{z\in\overline{V},(i,j)\neq(i_0,j_0)}\mu_{ij}(z/|z|)=\sqrt{6}-c.
$$
whence $\lambda_{ij}(z)\leq e^{(\sqrt{6}-c)|z|}$ for all $z\in\overline{V}$ and $(i,j)\neq (i_0,j_0)$. Combining with (\ref{eqn_xiestimate}) and noting that $\Xi-\Xi_{\mathfrak{n}_v}$ is obtained from $\Xi$ by removing the $(i_0,j_0)$-entry, we obtain
\begin{equation}\label{eqn_adt0estimate}
\|\Ad_{\para_0(0,z)}\big(\Xi-\Xi_{\mathfrak{n}_v}\big)_z\|
\leq C_1|z|^\alpha e^{-c|z|}\ \text{ for all } z\in\overline{V},\,|z|\geq r.
\end{equation}

On the other hand, given $M=(M_{ij})\in\sl_3\mathbb{R}$ with $M_{i_0j_0}=0$ and $R=I+rE_{i_0j_0}\in \mathcal{N}_v$ ($r\in\mathbb{R}$), writing down $\Ad_{R}M$ explicitly, one sees that each of its entries is a sum of at most three terms of the form $r^kM_{ij}$ with $k=0,1$ or $2$. Therefore,
$$
\|\Ad_RM\|\leq 3\big(\max_{k=0,1,2}|r|^k\big)\|M\|=3\max\{1,\|R-I\|^2\}\|M\|.
$$
Using this inequality, we derive from (\ref{eqn_restimate1}) and (\ref{eqn_adt0estimate}) that
\begin{equation}\label{eqn_theta}
\|\Theta\|(z)\leq
3C_1\max\{1,C'{}^2|z|^{2\alpha+2}\}|z|^\alpha e^{-c|z|}\ \text{ for all } z\in\overline{V},\,|z|\geq r.
\end{equation}
In particular, $\int_0^{+\infty}\|\Theta_{t v}(v)\|\dif t<+\infty$. This allows us to apply a classical ODE result \cite[Lemma B.1(ii)]{dumas-wolf} to Eq.(\ref{eqn_hatpt}) and conclude that
$\widehat{P}(tv)$ has a limit $\widehat{P}(\infty)\in\SL(3,\mathbb{R})$ as $t\rightarrow+\infty$.

\textbf{Step 4. Compare $\widehat{P}(z)$ with $\widehat{P}(tv)$.} 
Every $z\in\overline{V}$ can be written as $z=te^{\theta\ima}v$, where $t\geq 0$ and $\theta$ belongs to an interval $[\theta_1,\theta_2]$ with $\theta_1<0<\theta_2$. We finally need to show that $\widehat{P}(te^{\theta\ima}v)$ converges to $\widehat{P}(\infty)$ uniformly in $\theta\in[\theta_1,\theta_2]$ as $t\rightarrow+\infty$. To this end, consider the function $Q:[\theta_1,\theta_2]\times[0,+\infty)\rightarrow\SL(3,\mathbb{R})$ defined by
\begin{align*}
Q(\theta,t)&:=\widehat{P}(tv)^{-1}\widehat{P}(te^{\theta\ima}v)=\para_1(\Gamma_{tv}^{-1})\para(tv,0)\para(0,te^{\theta\ima}v)\para_1(\Gamma_{te^{\theta\ima}v})\\[5pt]
&=\para_1(\Gamma_{tv}^{-1})\para(tv,te^{\theta\ima}v)\para_1(\Gamma_{te^{\theta\ima}v}).
\end{align*}

Using the identities (see Appendix \ref{subsec_connection})
$$
\frac{\pa}{\pa\theta}\para(tv,te^{\theta\ima}v)=\para(tv,te^{\theta\ima}v)A\left(\pa_\theta(te^{\theta\ima}v)\right),
$$
$$
\frac{\pa}{\pa\theta}\para_1(\Gamma_{te^{\theta\ima}v})=-A_1\left(\pa_\theta(te^{\theta\ima}v)\right)\para_1(\Gamma_{te^{\theta\ima}v}),
$$
we compute $\tfrac{\pa}{\pa\theta}Q(\theta,t)$ similarly as above and get 
$$
\frac{\pa}{\pa\theta}Q(\theta,t)=
Q(\theta,t)\Ad_{\para_1(\Gamma_{te^{\theta\ima}v}^{-1})}\big(\D-\D_1\big)\left(\pa_\theta(te^{\theta\ima}v)\right)=Q(\theta,t)\Theta(\pa_\theta(te^{\theta\ima}v)).
$$
%Put $X(\theta,t):=\Theta(\pa_\theta(te^{\theta\ima}v))$. Since $|\pa_\theta(te^{\theta\ima}v)|=t$, by the above estimate (\ref{eqn_theta}) on $\|\Theta\|$, we have
%$$
%\|X(\theta,t)\|\leq 3C_1\max\{1,C'{}^2t^{2\alpha+2}\}t^{\alpha+1} e^{-ct}\ \text{ for all } t\geq r,\ \theta\in[\theta_1,\theta_2].
%$$
View this equation as an ODE with parameter $t$ satisfied by every $Q(\cdot,t)$, which has initial value $Q(0,t)=I$. Inequality (\ref{eqn_theta}) implies that $\|\Theta(\pa_\theta(te^{\theta\ima}v))\|\rightarrow0$ uniformly in $\theta$ as $t\rightarrow+\infty$, so we can apply another ODE result \cite[Lemma B.1(i)]{dumas-wolf} and conclude that $\|Q(\theta,t)-I\|$ converges to $0$ uniformly in $\theta$, hence so does
$$
\|\widehat{P}(te^{\theta\ima}v)-\widehat{P}(tv)\|
=\|\widehat{P}(tv)(Q(\theta,t)-I)\|
\leq 3\|\widehat{P}(tv)\|\cdot\|Q(\theta,t)-I\|.
$$
It follows that $\widehat{P}(te^{\theta\ima}v)$ converges to $\widehat{P}(\infty)$ uniformly in $\theta$, completing the proof of Proposition \ref{prop_osculation1}  (\ref{item_osculation2}).
\end{proof}

Part (\ref{item_osculation1}) is proved by the same argument with simplifications. Namely, since there is no $v\in\overline{V}$ with $v^3=\pm1$, the spectral radius $e^{\mu(z/|z|)|z|}$ of $\Ad_{\para_0(0,z)}$ on the whole $\sl_3\mathbb{R}$ is less than $e^{(\sqrt{6}-c)|z|}$ for a constant $c>0$, so we have a more straightforward exponential decay property
 $$
\left\|\Ad_{\para_0(0,z)}\Xi\right\|\leq C_1|z|^\alpha e^{-c|z|}\ \text{ for all } z\in\overline{V},\,|z|\geq r.
 $$
This allows us to proceed with $P(z)$ exactly as how we treat $\widehat{P}(z)$ in the above proof and conclude that $P(te^{\theta\ima}v)$ converges to some $P(\infty)\in\SL(3,\mathbb{R})$ uniformly in $\theta$.

\vspace{5pt}
Combining the two parts of Proposition \ref{prop_osculation1} yields:
\begin{corollary}\label{coro_comparison}
Under the hypotheses of Proposition \ref{prop_osculation1}, for each connected component $W$ of $\overline{V}\setminus\{tv\mid t\geq 0,\,v^3=\pm\ima\}$, there exists $P_W(\infty)\in\SL(3,\mathbb{R})$ such that $P(z)$ converges to $P_W(\infty)$ when $z$ goes to infinity in any closed sector contained in $W\cup\{0\}$. If $W_1$ and $W_2$ are adjacent connected components separated by a ray $v\mathbb{R}_{\geq0}$, then $P_{W_1}(\infty)$ and $P_{W_2}(\infty)$ belong to the same left $\mathcal{N}_v$-coset in $\SL(3,\mathbb{R})$.
\end{corollary}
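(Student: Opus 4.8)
The plan is to deduce the corollary directly from the two parts of Proposition \ref{prop_osculation1}, applied to appropriately chosen subsectors, together with one elementary geometric observation: the six directions $v$ with $v^3=\pm\ima$ are spaced $\frac{\pi}{3}$ apart, so each component $W$ of $\overline{V}\setminus\{tv\mid t\geq0,\,v^3=\pm\ima\}$ is the part lying in $\overline{V}$ of an open angular sector of width at most $\frac{\pi}{3}$, hence convex. First I would fix such a $W$ and a small genuine (two‑dimensional) closed sector $\overline{V}_0\subset W\cup\{0\}$, and set $P_W(\infty):=\lim_{z\to\infty,\,z\in\overline{V}_0}P(z)$, the limit existing by Proposition \ref{prop_osculation1}(\ref{item_osculation1}) since $\overline{V}_0$ contains no cubic root of $\pm\ima$ (here one applies the proposition with $\overline{V}$ replaced by $\overline{V}_0$; the hypothesis on $u$ is inherited on the $\varepsilon'$‑neighborhood of $\overline{V}_0$, which lies in $U$, and the osculation map restricts correctly since $\overline{V}_0$ is simply connected and contains $0$). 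For any other closed sector $\overline{V}_1\subset W\cup\{0\}$, convexity of $W$ lets me span $\overline{V}_0$ and $\overline{V}_1$ by a single closed sector $\overline{V}^*\subset W\cup\{0\}\subset\overline{V}$ of width $<\pi$, still free of cubic roots of $\pm\ima$; Proposition \ref{prop_osculation1}(\ref{item_osculation1}) applied to $\overline{V}^*$ gives a limit of $P(z)$ over $\overline{V}^*$, and restricting to $\overline{V}_0$ and to $\overline{V}_1$ shows that limit is $P_W(\infty)$ and equals $\lim_{z\to\infty,\,z\in\overline{V}_1}P(z)$. This establishes the first assertion.

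For the coset relation, suppose $W_1$ and $W_2$ are adjacent, separated by the ray $v\mathbb{R}_{\geq0}$ with $v^3=\pm\ima$. I would pick a closed sector $\overline{W}\subset\overline{V}$ containing this ray in its interior and containing no other cubic root of $\pm\ima$, and decompose $\overline{W}=\overline{W}_1\cup v\mathbb{R}_{\geq0}\cup\overline{W}_2$ with $\overline{W}_i\subset W_i\cup\{0\}$. Proposition \ref{prop_osculation1}(\ref{item_osculation2}) applied to $\overline{W}$ yields a factorization $P(z)=\widehat{P}(z)R(z)$ with $R(z)\in N_v$ and $\widehat{P}(z)\to\widehat{P}(\infty)\in\SL(3,\mathbb{R})$ as $z\to\infty$ in $\overline{W}$. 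For $i=1,2$, as $z\to\infty$ in $\overline{W}_i$ we already know $P(z)\to P_{W_i}(\infty)$ by the first part, so $R(z)=\widehat{P}(z)^{-1}P(z)\to \widehat{P}(\infty)^{-1}P_{W_i}(\infty)=:R_i$; since $N_v$ is a closed subgroup of $\SL(3,\mathbb{R})$ and $R(z)\in N_v$, we get $R_i\in N_v$, hence $P_{W_i}(\infty)=\widehat{P}(\infty)R_i$. Therefore $P_{W_1}(\infty)$ and $P_{W_2}(\infty)$ both lie in the left coset $\widehat{P}(\infty)N_v$, which is the claim.

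The argument is in essence just a repackaging of Proposition \ref{prop_osculation1}, so I do not expect a serious obstacle. The two places needing a little care are: (i) the gluing of limits over different subsectors, which rests on the convexity of the components $W$ noted above and on the fact that the osculation map of $g$ restricted to a smaller simply connected subdomain containing $0$ is the restriction of $P$; and (ii) upgrading the merely polynomial bound $\|R(z)-I\|\leq C'|z|^{\alpha+1}$ from Proposition \ref{prop_osculation1}(\ref{item_osculation2}) to genuine convergence of $R(z)$ along $\overline{W}_i$ — this is obtained for free by writing $R=\widehat{P}^{-1}P$ and using the convergence of $P$ established in the first part, together with closedness of $N_v$ to keep the limit inside $N_v$.
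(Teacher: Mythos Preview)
Your proof is correct and follows essentially the same approach as the paper's: both parts deduce the result directly from Proposition \ref{prop_osculation1}, with part (\ref{item_osculation1}) applied to subsectors free of bad rays and part (\ref{item_osculation2}) applied to a sector straddling a single bad ray. The only cosmetic difference is that the paper phrases the coset argument via the quotient map $\Pi:\SL(3,\mathbb{R})\to\SL(3,\mathbb{R})/N_v$ rather than computing the limit $R_i=\widehat{P}(\infty)^{-1}P_{W_i}(\infty)$ directly, and it takes the limit along a single ray $v_i\mathbb{R}_{\geq0}\subset W_i$ rather than over the half-open region $\overline{W}_i$ (which suffices and sidesteps the minor issue that your $\overline{W}_i$ is not actually a closed sector).
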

\begin{proof}
The first statement follows from Proposition \ref{prop_osculation1} (\ref{item_osculation1}) applied to closed sectors contained in $W\cup\{0\}$. As for the second one, let $\overline{S}\subset W_1\cup (v\mathbb{R}_{\geq0})\cup W_2$ be a closed sector containing $v$ in its interior, so that applying Proposition \ref{prop_osculation1} (\ref{item_osculation2}) to $\overline{S}$ gives $P(z)=\widehat{P}(z)R(z)$ on $\overline{S}$ with $\widehat{P}(z)$ converging to some $\widehat{P}(\infty)\in\SL(3,\mathbb{R})$ as $z$ goes to infinity in $\overline{S}$. Let $\Pi:\SL(3,\mathbb{R})\rightarrow\SL(3,\mathbb{R})/\mathcal{N}_v$ denote the projection, assigning to each element of $\SL(3,\mathbb{R})$ its left $\mathcal{N}_v$-coset. Then $\Pi(P(z))=\Pi(\widehat{P}(z))$ because $R(z)\in \mathcal{N}_v$. Given $v_i\in W_i$, $i=1,2$, we have $\lim_{t\rightarrow+\infty}P(tv_i)=P_{W_i}(\infty)$, hence
$$
\widehat{P}(\infty)=\lim_{t\rightarrow+\infty}\Pi(\widehat{P}(tv_i))=\lim_{t\rightarrow+\infty}\Pi(P(tv_i))=\Pi(P_{W_i}(\infty))
$$
for $i=1,2$. It follows that $\Pi(P_{W_1}(\infty))=\Pi(P_{W_2}(\infty))$, as required.
\end{proof}

\subsection{Proof of Proposition \ref{prop_reduction1}}\label{subsec_reduction1}
By a shift of coordinate, we can consider $g=e^u|\dz|^2$ and $\iota$ as defined on $\H_{-1}=\{z\in\mathbb{C}\mid \re(z)>-1\}$ instead of $\H$. The control of $u$ in the assumption is still valid for $u:\H_{-1}\rightarrow\mathbb{R}$ after modifying the constants $C$ and $r$. As at the end of Section \ref{subsec_reduction}, let $\para_0$ and $\para$ be the parallel transports of the affine sphere connections on $\T\H_{-1}\oplus\underline{\mathbb{R}}$ associated to $(|\dz|^2,\dz^3)$ and $(g,\dz^3)$, respectively, so that we can write
$$
\iota_0, \iota:\H_{-1}\rightarrow\T_0\mathbb{C}\oplus\mathbb{R}\cong\mathbb{R}^3,\ \ \iota_0(z)=\para_0(0,z)\underline{1}_z,\,\iota(z)=\para(0,z)\underline{1}_z=P(z)\iota_0(z),
$$
and hence $\delta(z)=P(z)\iota_0(z)$. Here 
%the identification $\T_0\H_{-1}\oplus\mathbb{R}\cong\mathbb{R}^3$ is given by the frame $(e_1,e_2,e_3)$ of $\T\H_{-1}\oplus\underline{\mathbb{R}}$ from Section \ref{subsec_titeica} (see Section \ref{subsec_setups}) and 
$P(z):=\para(0,z)\para_0(z,0)$ is the osculation map. 

The idea of proof of the proposition is to define the values of the extension $\overline{\delta}$ on $\pa_\infty\H$ using the extension $\overline{\delta}_0$ of $\delta_0$ and the limits of $P(z)$ or $\widehat{P}(z)$ from Proposition \ref{prop_osculation1}, so that $\overline{\delta}$ has the built-in property 
\begin{equation}\label{eqn_limzn}\tag{$\star$}
\text{$\forall$ sequence $(z_n)$ in $\H_{-1}$,}\lim_{n\rightarrow\infty}z_n=z_\infty\in\pa_\infty\H\, \Longrightarrow \lim_{n\rightarrow\infty}\delta(z_n)=\overline{\delta}(z_\infty),
\end{equation}
as well as the metric preserving property.  Since $\H_{-1}\cup\pa_\infty\H$ is homeomorphic to the closed upper half-plane $\overline{\mathbb{H}}$ with $\pa_\infty\H$ corresponding to the boundary $\pa\mathbb{H}=\mathbb{R}$, we can use the following lemma to conclude that $\overline{\delta}$ is continuous and injective:
\begin{lemma}\label{lemma_topology}
Let $f:\overline{\mathbb{H}}\rightarrow\mathbb{RP}^2$ be a map such that the restrictions $f|_{\mathbb{H}}$ and $f|_\mathbb{R}$ are both continuous and injective, and that  for every sequence $(z_n)_{n=1,2,\cdots}$ in $\mathbb{H}$ converging to some $x\in\mathbb{R}$, we have $\lim_{n\rightarrow\infty}f(z_n)=f(x)$. Then $f$ is continuous and injective on $\overline{\mathbb{H}}$. 
\end{lemma}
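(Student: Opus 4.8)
The plan is to prove Lemma~\ref{lemma_topology} by a direct topological argument, treating continuity and injectivity separately. For continuity at a boundary point $x\in\mathbb{R}$, I would argue by contradiction: if $f$ fails to be continuous at $x$, there is a sequence $(w_n)\subset\overline{\mathbb{H}}$ with $w_n\to x$ but $f(w_n)\not\to f(x)$. Passing to a subsequence, I may assume $f(w_n)\to y\neq f(x)$ in $\mathbb{RP}^2$ (using compactness of $\mathbb{RP}^2$). Splitting the sequence, either infinitely many $w_n$ lie in $\mathbb{H}$, in which case the hypothesis on sequences in $\mathbb{H}$ converging to a boundary point forces $f(w_n)\to f(x)$, a contradiction; or infinitely many $w_n$ lie in $\mathbb{R}$, in which case continuity of $f|_\mathbb{R}$ gives $f(w_n)\to f(x)$, again a contradiction. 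Continuity at interior points is already assumed, so this establishes continuity of $f$ on all of $\overline{\mathbb{H}}$.

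For injectivity, suppose $f(p)=f(q)$ with $p\neq q$. The cases $p,q\in\mathbb{H}$ and $p,q\in\mathbb{R}$ are excluded by injectivity of $f|_{\mathbb{H}}$ and $f|_\mathbb{R}$ respectively, so the only case to rule out is $p\in\mathbb{H}$, $q\in\mathbb{R}$ (up to relabeling). Here I would use a degree/winding argument: pick a small round circle $C$ in $\mathbb{H}$ around $p$ (small enough that $f|_{\overline{D}}$ is injective on the closed disk $\overline{D}$ bounded by $C$, which is possible since $f|_{\mathbb{H}}$ is a continuous injection of a $2$-manifold, hence by invariance of domain a homeomorphism onto its open image). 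Then $f(C)$ is a simple closed curve in $\mathbb{RP}^2$ bounding the open disk $f(D^\circ)$, and $f(p)$ lies in $f(D^\circ)$; in particular $f(p)$ has a neighborhood entirely covered by $f(D^\circ)\subset f(\mathbb{H})$. On the other hand, since $q\in\mathbb{R}$ and $f$ is continuous on $\overline{\mathbb{H}}$, arbitrarily small half-disk neighborhoods of $q$ in $\overline{\mathbb{H}}$ map into any neighborhood of $f(q)=f(p)$; such a half-disk neighborhood contains boundary points $q'\in\mathbb{R}$ with $q'$ arbitrarily close to $q$, hence $f(q')$ arbitrarily close to $f(p)$, so $f(q')\in f(D^\circ)=f(D^\circ)$ for some $q'\neq$ any point of $D$. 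But $f(q')\in f(\mathbb{H})$ means $q'=f|_{\mathbb{H}}^{-1}(f(q'))\in D^\circ\subset\mathbb{H}$, contradicting $q'\in\mathbb{R}$.

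I expect the injectivity step to be the main obstacle, specifically the need to be careful about working in $\mathbb{RP}^2$ rather than $\mathbb{R}^2$ (so that "the disk bounded by a simple closed curve" is not immediately meaningful globally) and about the fact that $f(C)$ being a Jordan curve in $\mathbb{RP}^2$ requires $f|_C$ to be injective, which in turn requires shrinking $C$. The cleanest fix is to localize everything: replace $\mathbb{RP}^2$ by a small coordinate chart $\mathbb{R}^2\supset f(p)$ and work with an even smaller disk $D$ so that $f(\overline{D})$ lies in that chart; then the Jordan curve theorem in $\mathbb{R}^2$ together with invariance of domain (which gives that $f(D^\circ)$ is open and equals the bounded complementary component of the Jordan curve $f(\partial D)$) yields that $f(p)$ is an interior point of $f(\mathbb{H})$, and the argument concludes as above. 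An alternative, perhaps even shorter, route for injectivity: since $f$ is now known to be continuous on $\overline{\mathbb{H}}$ and $f|_{\mathbb{H}}$ is an open map (invariance of domain), while $f|_\mathbb{R}$ is injective, one checks that $f(\mathbb{H})$ and $f(\mathbb{R})$ must be disjoint --- if $f(q)\in f(\mathbb{H})$ for some $q\in\mathbb{R}$ then $f(q)$ is in the open set $f(\mathbb{H})$, so a whole $\overline{\mathbb{H}}$-neighborhood of $q$ maps into $f(\mathbb{H})$, forcing nearby boundary points into $f(\mathbb{H})$ and hence, via the injectivity of $f|_{\mathbb{H}}$ used as a partial inverse, into $\mathbb{H}$, absurd --- and this disjointness together with injectivity on each piece gives global injectivity directly.
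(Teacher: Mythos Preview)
Your continuity argument is correct and matches the paper's. The injectivity argument, however, has a genuine gap in both versions you propose.

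In your first version you choose $q'\in\mathbb{R}$ close to $q$, observe that $f(q')\in f(D^\circ)\subset f(\mathbb{H})$, and then write ``$f(q')\in f(\mathbb{H})$ means $q'=f|_{\mathbb{H}}^{-1}(f(q'))\in D^\circ\subset\mathbb{H}$''. This is a non sequitur: $f|_{\mathbb{H}}^{-1}(f(q'))$ is indeed a well-defined point $p'\in D^\circ\subset\mathbb{H}$, but there is no reason it equals $q'$; you are assuming the very injectivity you are trying to prove. All you have shown is that $f(p')=f(q')$ with $p'\in\mathbb{H}$ and $q'\in\mathbb{R}$, which is the same type of collision you started with. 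Your alternative route has the identical error (``via the injectivity of $f|_{\mathbb{H}}$ used as a partial inverse, into $\mathbb{H}$'').

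The fix is simple and is what the paper does: instead of taking $q'\in\mathbb{R}$ near $q$, take a sequence $(z_n)$ in $\mathbb{H}\setminus\overline{D}$ converging to $q$ (possible since $\overline{D}$ is compact in $\mathbb{H}$ and $q\in\mathbb{R}$). Then injectivity of $f|_{\mathbb{H}}$ gives $f(z_n)\notin f(D^\circ)$, and since $f(D^\circ)$ is open by invariance of domain, the limit $f(q)=\lim f(z_n)$ lies in the closed set $\mathbb{RP}^2\setminus f(D^\circ)$, hence $f(q)\neq f(p)$. Equivalently, in your framework: a small $\overline{\mathbb{H}}$-neighborhood of $q$ contains \emph{interior} points $z_n\in\mathbb{H}\setminus\overline{D}$ with $f(z_n)\in f(D^\circ)$, so $f(z_n)=f(p_n)$ for some $p_n\in D^\circ$; now $z_n\neq p_n$ are both in $\mathbb{H}$, contradicting injectivity of $f|_{\mathbb{H}}$. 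Either way, the key is to produce the contradiction inside $\mathbb{H}$, where injectivity is already known.
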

\begin{proof}
By the last hypothesis and the continuity of $f|_\mathbb{R}$, for any sequence $(z_n)$ in $\overline{\mathbb{H}}$ converging to $x\in\mathbb{R}$ we still have $\lim_{n\rightarrow\infty}f(z_n)=f(x)$. So $f$ is continuous on the whole $\overline{\mathbb{H}}$. Given $z\in \mathbb{H}$ and $x\in\mathbb{R}$, we pick a neighborhood $U$ of $z$ in $\mathbb{H}$ and a sequence $(z_n)$ of points in $\mathbb{H}\setminus U$ such that $f|_U$ is injective and $\lim_{n\rightarrow\infty}z_n=x$. By injectivity of $f|_\mathbb{H}$, each $f(z_n)$ lies outside of $f(U)$, which is an open subset of $\mathbb{RP}^2$ by Brouwer Invariance of Domain. As a result, $\lim_{n\rightarrow\infty}f(z_n)=f(x)$ lies outside $f(U)$, hence is different from $f(z)$. This shows that $f$ is injective. 
\end{proof}

\begin{proof}[Proof of Proposition \ref{prop_reduction1}]
%We first construct the required extension of $\delta=\mathbb{P}\circ\iota$ explicitly. 
By Property (\ref{item_mu4}) in Section \ref{subsec_setups}, the unipotent subgroups $\mathcal{N}_v$ for $v=e^{\pm\pi\ima/6}$ are
$$
\mathcal{N}_+:=\mathcal{N}_{e^{\pi\ima/6}}=
\{I+rE_{13}\mid r\in\mathbb{R}\}
,\quad 
\mathcal{N}_-:=\mathcal{N}_{e^{-\pi\ima/6}}=
\{I+rE_{12}\mid r\in\mathbb{R}\}.
$$
Applying Corollary \ref{coro_comparison} to $\overline{V}=\overline{\H}\subset\H_{-1}$, we obtain $P_+, P_-, P_0\in\SL(3,\mathbb{R})$, with $P_\pm$ in the same left $\mathcal{N}_\pm$-coset as $P_0$, such that
$$
\lim_{t\rightarrow+\infty}P(te^{\theta\ima})=
\begin{cases}
P_-&\mbox{ if }-\frac{\pi}{2}<\theta<-\frac{\pi}{6},\\
P_0&\mbox{ if }-\frac{\pi}{6}<\theta<\frac{\pi}{6},\\
P_+&\mbox{ if }\frac{\pi}{6}<\theta<-\frac{\pi}{2}.
\end{cases}
$$
Since the actions of both $\mathcal{N}_+$ and $\mathcal{N}_-$ on $\mathbb{RP}^2$ fix $[1:0:0]$, we have 
$$
P_+[1:0:0]=P_0[1:0:0]=P_-[1:0:0].
$$ 
We then define the required extension $\overline{\delta}:\H_{-1}\cup\pa_\infty\H\rightarrow\mathbb{RP}^2$ by setting its values on $\pa_\infty\H=\pai{e^{\pi\ima/3}}\mathbb{C}\cup \pai{1}\mathbb{C}\cup\pai{e^{-\pi\ima/3}}\mathbb{C}$ as
$$
\overline{\delta}(\zeta):=
\begin{cases}
P_+\overline{\delta}_0(\zeta) &\mbox{ if }\zeta\in \pai{e^{\pi\ima/3}}\mathbb{C},\\[3pt]
P_+[1:0:0]=P_-[1:0:0]&\mbox{ if }\zeta=\pai{1}\mathbb{C},\\[3pt]
P_-\overline{\delta}_0(\zeta)&\mbox{ if }\zeta\in \pai{e^{-\pi\ima/3}}\mathbb{C}.
\end{cases}
$$
Here $\overline{\delta}_0:\mathbb{C}'\to\mathbb{RP}^2$ is the extension of $\delta_0=\mathbb{P}\circ\iota_0$ (see Section \ref{subsec_bord1}), sending $\pai{1}\mathbb{C}$ to $[1:0:0]$ and sending $\pai{e^{\pi\ima/3}}\mathbb{C}$ and $\pai{e^{-\pi\ima/3}}\mathbb{C}$ isometrically to the open segments 
$$
I_+:=\{[1:a:0]\in\mathbb{RP}^2\mid a>0\},\quad I_-:=\{[a:0:1]\in\mathbb{RP}^2\mid a>0\}.
$$

Since $P_\pm$ is in the same left $\mathcal{N}_\pm$-coset as $P_0$, we can write $P_-=P_+N_+N_-$ for $N_\pm\in \mathcal{N}_\pm$, and hence rewrite the map $\overline{\delta}|_{\pa_\infty\H}$ defined above as a composition
$$
\overline{\delta}|_{\pa_\infty\H}=P_+\circ N'\circ\overline{\delta}_0|_{\pa_\infty\H},
$$
where $N'$ is the continuous map $I_+\cup [1:0:0]\cup I_-\rightarrow\mathbb{RP}^2$ restricting to the identity on $I_+\cup [1:0:0]$ and to the projective transformation $N_+N_-$ on $I_-$. The segments $N_+N_-(I_-)$ and $I_+$ share the endpoint $[1:0:0]$ and are not collinear. Therefore, we conclude that $\overline{\delta}|_{\pa_\infty\H}$ is continuous and sends $\pai{e^{\pi\ima/3}}\mathbb{C}$ and $\pai{e^{-\pi\ima/3}}\mathbb{C}$ isometrically to open line segments not collinear with each other.

We now only need to verify Property (\ref{eqn_limzn}). First assume $z_\infty\in\pai{e^{\pi\ima/3}}\mathbb{C}$, so that $z_\infty=[\gamma]$ for a negative ray $\gamma:[0,+\infty)\rightarrow\H_{-1}$ of the form $\gamma(t)=\gamma(0)+e^{\pi\ima/3}t$ (see Section \ref{subsec_bord1}). 
The convergence $z_n\rightarrow[\gamma]$ means that $|z_n|\rightarrow+\infty$ as $n\rightarrow\infty$ while the distance from $z_n$ to $\gamma([0,+\infty))$ tends to $0$. As a consequence, for any $\varepsilon>0$, $z_n$ is contained in the sector 
$\{z\in\mathbb{C}\mid |\arg(z)-\frac{\pi}{3}|\leq\varepsilon\}$
for $n$ big enough. By Proposition \ref{prop_osculation1} (\ref{item_osculation1}) applied to this sector,  $P(z_n)$ converges to the above defined $P_+$. On the other hand, we have $\delta_0(z_n)\rightarrow\overline{\delta}_0(z_\infty)$ by continuity of $\overline{\delta}_0$ (Lemma \ref{lemma_deltaprime}), hence 
$\delta(z_n)=P(z_n)\delta_0(z_n)\rightarrow P_+\overline{\delta}_0(z_\infty)=\overline{\delta}(z_\infty)$, as required. 

If $z_\infty\in\pai{e^{-\pi\ima/3}}\mathbb{C}$, Property (\ref{eqn_limzn}) is proved in the same way. Finally, if $z_\infty$ is the point $\pai{1}\mathbb{C}$, we consider the sectors
$$
V_+:=\{0<\arg(z)<\tfrac{\pi}{3}\}, \ V_-:=\{-\tfrac{\pi}{3}<\arg(z)<0\},\ V:=\{-\tfrac{\pi}{3}<\arg(z)<\tfrac{\pi}{3}\}
$$ 
Neighborhoods of $\pai{1}\mathbb{C}$ in $\mathbb{C}'$ are generated by all subsets of the form $(x+V)\cup\pa_\infty(x+V)$, $x>0$, where $\pa_\infty(x+V)\subset\pa_\infty\mathbb{C}$.
% is given by geodesic rays in the translated sector $x+V$.
% The convergence $z_n\rightarrow\pai{1}\mathbb{C}$ means that for any $x\geq0$, $z_n$ is contained in the translated sector $x+V$ when $n$ is big enough. 
 In order to show that the image of the sequence $(z_n)$ by $\delta$ converges to $\overline{\delta}(z_\infty)=P_+[1:0:0]=P_-[1:0:0]$, it is sufficient to show it for the subsequences $(z_n)\cap \{z\mid \im(z)\geq 0\}$ and $(z_n)\cap \{z\mid \im(z)\leq 0\}$ separately. Thus, we can assume either $\im(z_n)\geq 0$ for all $n$ or $\im(z_n)\leq 0$ for all $n$.

In the case $\im(z_n)\geq0$, the convergence $z_n\rightarrow z_\infty$ means that for any $x\geq0$, $z_n$ is contained in $x+\overline{V}_+$ for $n$ big enough. 
By part (\ref{item_osculation2}) of Proposition \ref{prop_osculation1}, there are continuous maps $\widehat{P}_+:\overline{V}_+\rightarrow\SL(3,\mathbb{R})$ and $R_+:\overline{V}_+\rightarrow \mathcal{N}_+$ such that 
\begin{enumerate}[(i)]
\item\label{item_proofred1}
$P(z)=\widehat{P}_+(z)R_+(z)$ for all $z\in V_+$;
\item\label{item_proofred2}
$\widehat{P}_+(z)$ converges to some $\widehat{P}_+(\infty)\in\SL(3,\mathbb{R})$ as $z\rightarrow\infty$ in $V_+$;
\item\label{item_proofred3}
$\|R_+(z)-I\|\leq C'|z|^{\alpha+1}$ for some $C'>0$ and all $z\in V_+$, $|z|\geq r$. 
\end{enumerate}
Letting $z$ go to infinity along a ray $e^{\theta\ima}\mathbb{R}_{\geq 0}$ with $\theta\in (0,\frac{\pi}{6})$ (or $\theta\in(\frac{\pi}{6},\frac{\pi}{3})$) in (\ref{item_proofred1}), we see that the limit $\widehat{P}_+(\infty)$ in (\ref{item_proofred2}) is in the same left $\mathcal{N}_+$-coset as $P_0$ (or $P_+$). Since $\mathcal{N}_+$ fixes $[1:0:0]$, we have 
$$
\widehat{P}_+[1:0:0]=P_0[1:0:0]=P_+[1:0:0]=\overline{\delta}(z_\infty).
$$

We claim that for any neighborhood $B$ of $[1:0:0]$ in $\mathbb{RP}^2$, there exists a big enough $x_0>0$ such that $x_0+\overline{V}_+$ is sent into $B$ by the map $z\mapsto R_+(z)\delta_0(z)$ from $\overline{V}_+$ to $\mathbb{RP}^2$. The proof is based on the coordinate expression
$$
R_+(z)\delta_0(z)=
\begin{pmatrix}
1&&r(z)\\
&1&\\
&&1
\end{pmatrix}
\begin{bmatrix}
e^{\sqrt{2}\re(z)}\\
e^{\sqrt{2}\re(\omega^2z)}\\
e^{\sqrt{2}\re(\omega z)}\\
\end{bmatrix}
=
\begin{bmatrix}
e^{\sqrt{2}\re[(1-\omega)z]}+r(z)\\
e^{\sqrt{2}\re[(\omega^2-\omega)z]}\\
1\\
\end{bmatrix},
$$
%$$
%=
%\begin{bmatrix}
%e^{\sqrt{2}\re[(1-\omega)z]}+r(z)\\
%e^{\sqrt{2}\re[(\omega^2-\omega)z]}\\
%1\\
%\end{bmatrix}
%=
%\begin{bmatrix}
%e^{\sqrt{6}\re(e^{\pi\ima/6}z)}+r(z)\\
%e^{\sqrt{6}\im(z)}\\
%1\\
%\end{bmatrix},
%$$
where $|r(z)|=\|R_+(z)-I\|\leq C'|z|^{\alpha+1}$ for $|z|\geq r$ by (\ref{item_proofred3}). For any $z\in\overline{V}_+$ we have
$$
\sqrt{2}\re[(1-\omega)z]=\sqrt{6}\re(e^{-\pi\ima/6}z)\geq \sqrt{6}\cdot\tfrac{\sqrt{3}}{2}|z|.
$$ 
Hence $e^{\sqrt{2}\re[(1-\omega)z]}$ glows exponentially with respect to $|z|$.
As a result, when $|z|$ is big enough, we have $|r(z)|\leq \frac{1}{2}e^{\sqrt{2}\re[(1-\omega)z]}$, so that 
$$
e^{\sqrt{2}\re[(1-\omega)z]}+r(z)\geq \tfrac{1}{2}e^{\sqrt{2}\re[(1-\omega)z]}.
$$
Since $e^{\sqrt{2}\re[(\omega^2-\omega)z]}=e^{\sqrt{6}\im(z)}\geq 1$, the claim follows from the above inequality and the following one, which holds for all $z\in x_0+\overline{V}_+$:
$$
e^{\sqrt{2}\re[(1-\omega)z]}/e^{\sqrt{2}\re[(\omega^2-\omega)z]}=e^{\sqrt{2}\re[(1-\omega^2)z]}=e^{\sqrt{6}\re(e^{\pi\ima/6}z)}\geq e^{\sqrt{6}\cdot\frac{\sqrt{3}}{2}x_0}.
$$

The claim implies that the image of the sequence $(z_n)$ by the map $z\mapsto R_+(z)\delta_0(z)$ converges to $[1:0:0]$. 
As a result, $\delta(z_n)=P(z_n)\delta_0(z_n)=\widehat{P}_+(z_n)R_+(z_n)\delta_0(z_n)$ converges to $\widehat{P}_+(\infty)[1:0:0]=\overline{\delta}(z_\infty)$, proving Property (\ref{eqn_limzn}) in the case $z_\infty=\pai{1}\mathbb{C}$ and $\im(z_n)\geq0$. The $\im(z_n)\leq 0$ case is proved in the same way. This establishes Property (\ref{eqn_limzn}) and completes the proof of Proposition \ref{prop_reduction1}. 
\end{proof}

\subsection{Asymptotics of osculation map with weaker exponential decay}\label{subsec_part2}
In order to prove Proposition \ref{prop_reduction2}, we need the following variation of Proposition \ref{prop_osculation1} under a weaker exponential decay assumption on $u$:
\begin{proposition}\label{prop_osculation2}
Pick $\theta_0\in(0,\frac{\pi}{6})$ and $v\in\mathbb{C}$ with $v^3=\pm1$, and let $U\subset\mathbb{C}$ be the $\varepsilon$-neighborhood, for some $\varepsilon>0$, of the sector
$$
\overline{V}:=\{te^{\theta\ima}v\in\mathbb{C}\mid t\geq0,\ |\theta|\leq\theta_0\}.
$$
Let $g=e^u|\dz|^2$ be a solution to Wang's equation on $(U,\dz^3)$ such that
$$
0\leq u(z)\leq Ce^{-\sqrt{6}\,c|z|}\ \text{ for all } z\in U,
$$
for constants $C>0$ and 
$c\in\left(\tfrac{1}{2}\left[\sin(\theta_0)+\sin\left(\tfrac{\pi}{3}+\theta_0\right)\right], \sin\left(\tfrac{\pi}{3}+\theta_0\right)\right)$.
Let $P:U\rightarrow\SL(\T_0\mathbb{C}\oplus\mathbb{R})\cong\SL(3,\mathbb{R})$ be the osculation map associated to $g$ at $0$.
Then there are continuous maps $\widehat{P}:\overline{V}\rightarrow\SL(3,\mathbb{R})$ and $R:\overline{V}\rightarrow \mathcal{N}_v$ such that 
\begin{itemize}
\item
$P(z)=\widehat{P}(z)R(z)$ for all $z\in\overline{V}$;
\item
$\widehat{P}(z)$ converges in $\SL(3,\mathbb{R})$ as $z$ goes to infinity in $\overline{V}$;
\item
$\|R(z)-I\|\leq C'e^{\sqrt{6}(\sin(\frac{\pi}{3}+\theta_0)-c)|z|}$ for some $C'>0$ and all $z\in \overline{V}$.
\end{itemize}
\end{proposition}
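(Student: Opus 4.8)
The plan is to run the four-step scheme from the proof of Proposition~\ref{prop_osculation1}(\ref{item_osculation2}), with two changes forced by the weaker decay rate $\sqrt6\,c$: the relevant nilpotent subalgebra $\mathfrak{n}_v$ is now the two-dimensional one attached to a cube root of $\pm1$, and the estimates on the correction factor $R$ and on the residual one-form $\Theta$ must be pushed to the precise threshold encoded in the hypothesis on $c$. The first thing I would record is the linear algebra. Since $v^3=\pm1$, Properties~(\ref{item_mu3}) and~(\ref{item_mu5}) of Section~\ref{subsec_setups} give $\mu(v)=\tfrac{3\sqrt2}{2}=\sqrt6\sin\tfrac{\pi}{3}$ and show that $\mathfrak{n}_v$ is spanned by two elementary matrices $E_{i_1j_1},E_{i_2j_2}$ which share an index; hence $xy=0$ for all $x,y\in\mathfrak{n}_v$, so $N_v=I+\mathfrak{n}_v$ is abelian unipotent, and for $R=I+X$ with $X\in\mathfrak{n}_v$ one has $\Ad_R(E_{ij})=E_{ij}+[X,E_{ij}]-XE_{ij}X$, where the quadratic term $XE_{ij}X$ vanishes \emph{unless} $(i,j)$ is the reverse of one of the two generating pairs (i.e. $\mu_{ij}(v)=-\mu(v)$); in particular $\Ad_R$ is affine-linear in $X$ on the diagonal and on the two entries with $\mu_{ij}(v)=0$. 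This ``only linear amplification on the slowly decaying directions'' is the structural input that makes the constant $\tfrac12[\sin\theta_0+\sin(\tfrac{\pi}{3}+\theta_0)]$, rather than something larger, the correct threshold for $c$.

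Next I would imitate Steps 1--2. Put $\D_1:=\D_0+\Xi_{\mathfrak{n}_v}$ with parallel transport $\para_1$, let $\Gamma_z$ be the path from $0$ running straight to $|z|v$ and then along the circle $|w|=|z|$ to $z$, and set $\widehat{P}(z):=\para(0,z)\para_1(\Gamma_z)$ and $R(z):=\para_1(\Gamma_z^{-1})\para_0(z,0)$, so that $P=\widehat{P}R$ with both factors continuous on $\overline{V}$. Because the matrices of $\mathfrak{n}_v$ multiply to zero, the ODE computation of Step 2 still yields $R(z)=I+\int_{\Gamma_z}\bigl(\lambda_{i_1j_1}\Xi_{i_1j_1}E_{i_1j_1}+\lambda_{i_2j_2}\Xi_{i_2j_2}E_{i_2j_2}\bigr)\in N_v$. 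Since $\theta_0<\tfrac{\pi}{6}$, on $\overline{V}=\{te^{\theta\ima}v:|\theta|\le\theta_0\}$ one has $\mu_{i_kj_k}(z/|z|)\le\sqrt6\sin(\tfrac{\pi}{3}+\theta_0)$, hence $\lambda_{i_kj_k}(z)\le e^{\sqrt6\sin(\pi/3+\theta_0)|z|}$; combining this with $\|\Xi\|(z)\le C_1|z|^\alpha e^{-\sqrt6 c|z|}$ (obtained exactly as inequality~(\ref{eqn_xiestimate}), using Schauder to control $|\pa u|$) and integrating along $\Gamma_z$ gives the stated bound $\|R(z)-I\|\le C'|z|^{\alpha+1}e^{\sqrt6(\sin(\pi/3+\theta_0)-c)|z|}$. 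Write $\beta:=\sqrt6(\sin(\tfrac{\pi}{3}+\theta_0)-c)$, which is positive precisely by the upper hypothesis $c<\sin(\tfrac{\pi}{3}+\theta_0)$ (the regime in which a genuine Stokes-type factorization is needed).

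The crux is Step 3, the convergence of $\widehat{P}(tv)$ along the ray, governed by the ODE $\tfrac{\dif}{\dif t}\widehat{P}(tv)=\widehat{P}(tv)\,\Theta_{tv}(v)$ with $\Theta:=\Ad_{R(\cdot)}\Ad_{\para_0(0,\cdot)}(\Xi-\Xi_{\mathfrak{n}_v})$. I would bound $\Theta$ by splitting $\Xi-\Xi_{\mathfrak{n}_v}$ into: (i) its diagonal part together with the two entries $E_{ij}$ with $\mu_{ij}(v)=0$, on which $\Ad_{\para_0(0,z)}$ scales by $\lambda_{ij}(z)\le e^{\sqrt6\sin(\theta_0)|z|}$ and $\Ad_{R(z)}$ amplifies only by $\lesssim\max\{1,\|R(z)-I\|\}$; and (ii) the two entries with $\mu_{ij}(v)=-\mu(v)$, on which $\lambda_{ij}(z)\le e^{-\sqrt6\sin(\pi/3-\theta_0)|z|}$ while $\Ad_{R(z)}$ may amplify by $\lesssim\max\{1,\|R(z)-I\|^2\}$. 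Substituting the bounds on $\|R-I\|$ and $\|\Xi\|$, part (i) contributes an exponent $\le\sqrt6\bigl(\sin\theta_0+\sin(\tfrac{\pi}{3}+\theta_0)-2c\bigr)|z|$ and part (ii) an exponent $\le\sqrt6\bigl(2\sin(\tfrac{\pi}{3}+\theta_0)-\sin(\tfrac{\pi}{3}-\theta_0)-3c\bigr)|z|$; an elementary trigonometric check shows $\tfrac13\bigl(2\sin(\tfrac{\pi}{3}+\theta_0)-\sin(\tfrac{\pi}{3}-\theta_0)\bigr)<\tfrac12\bigl(\sin\theta_0+\sin(\tfrac{\pi}{3}+\theta_0)\bigr)$, so the single hypothesis $c>\tfrac12\bigl(\sin\theta_0+\sin(\tfrac{\pi}{3}+\theta_0)\bigr)$ makes both exponents strictly negative. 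Hence $\|\Theta\|(z)\le C''|z|^Ne^{-\epsilon|z|}$ for suitable $N\ge0$ and $\epsilon>0$, so $\int_0^{+\infty}\|\Theta_{tv}(v)\|\,\dif t<+\infty$ and \cite[Lemma B.1(ii)]{dumas-wolf} gives $\widehat{P}(tv)\to\widehat{P}(\infty)\in\SL(3,\mathbb{R})$.

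Finally, Step 4 goes through verbatim as in Proposition~\ref{prop_osculation1}(\ref{item_osculation2}): with $Q(\theta,t):=\widehat{P}(tv)^{-1}\widehat{P}(te^{\theta\ima}v)$ one has $\tfrac{\pa}{\pa\theta}Q=Q\,\Theta(\pa_\theta(te^{\theta\ima}v))$ and $Q(0,t)=I$, and the estimate above makes $\|\Theta\|\to0$ uniformly in $\theta$, so \cite[Lemma B.1(i)]{dumas-wolf} forces $Q(\theta,t)\to I$ uniformly, whence $\widehat{P}(te^{\theta\ima}v)\to\widehat{P}(\infty)$ uniformly in $\theta$ and the proposition follows. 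The main obstacle is exactly the $\Theta$-estimate of Step 3: a crude bound $\|\Ad_RM\|\lesssim\max\{1,\|R-I\|^2\}\|M\|$ applied uniformly to $\Xi-\Xi_{\mathfrak{n}_v}$ would only close under the strictly stronger condition $c>\tfrac13\bigl(2\sin(\tfrac{\pi}{3}+\theta_0)+\sin\theta_0\bigr)$, so one genuinely has to exploit that $\Ad_{R(z)}$ amplifies the near-neutral directions ($\mu_{ij}(v)=0$) only linearly and that the directions suffering quadratic amplification are the very fast-decaying ones ($\mu_{ij}(v)=-\tfrac{3\sqrt2}{2}$).
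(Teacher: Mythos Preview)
Your proposal is correct and follows essentially the same four-step scheme as the paper, with the same key structural observation in Step~3: the quadratic term of $\Ad_{R(z)}$ only hits the entries $E_{ij}$ with $\mu_{ij}(v)=-\mu(v)$ (the ``reverse'' pairs), while the near-neutral and diagonal entries suffer at most linear amplification. The paper packages this as a single inequality $\|\Ad_RM\|\le 5\max\{\|R-I\|^a\|M\|,\ \|R-I\|^b|M_{3j}|\}$ (for $v=e^{\pi\ima/3}$) rather than splitting $\Xi-\Xi_{\mathfrak{n}_v}$ into types (i) and (ii) as you do, but the content and the resulting exponents are identical. One small simplification you could note: your two threshold expressions actually coincide, since $2\sin(\tfrac{\pi}{3}+\theta_0)-\sin(\tfrac{\pi}{3}-\theta_0)=\sin(\tfrac{\pi}{3}+\theta_0)+\sin\theta_0$, so the ``elementary trigonometric check'' is trivial and a single condition $2c>\sin\theta_0+\sin(\tfrac{\pi}{3}+\theta_0)$ handles both parts at once.
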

\begin{remark}
If the assumption on $u$ is strengthened to $0\leq u(x)\leq C e^{-\sqrt{6}\,c|z|}$ with $c>\sin\left(\tfrac{\pi}{3}+\theta_0\right)$,  one can show that $P(z)$ itself converges as $z\in\overline{V}$ goes to $\infty$. But we do not have such a strong control on $u$ in the case to which we apply the proposition (see the next section).
\end{remark}

\begin{proof}
Since the rotations by cubic roots of unity $U\mapsto \omega^k U$, $z\mapsto \omega^k z$ ($k=1,2$) preserve the cubic differential $\dz^3$, whereas any $v$ with $v^3=\pm1$ can be brought to $1$ or $e^{\pi\ima/3}$ by such rotations, we may suppose $v=1$ or $e^{\pi\ima/3}$ without loss of generality. We proceed with $v=e^{\pi\ima/3}$, while the proof for $v=1$ only has notational difference.

The proof is closely modeled on that of Proposition \ref{prop_osculation1}, so here we mainly emphasize the required modifications.
We start by writing $\D=\D_0+\Xi$ as in that proof, and 
%$$
%\D_0=\dif+\sqrt{2}\re
%\begin{pmatrix}
%\dz&&\\
%&\omega^2\dz&\\
%&&\omega\,\dz
%\end{pmatrix}
%,\quad
%\D=\D_0+\Xi,
%$$ 
obtain through the Schauder estimate
\begin{equation}\label{eqn_xiestimate2}
\|\Xi\|(z)\leq C_1e^{-\sqrt{6}\,c|z|}\ \text{ for all } z\in\overline{V}.
\end{equation}
\textbf{Step 1. Define $\widehat{P}(z)$ and $R(z)$.} By Property (\ref{item_mu5}) in Section \ref{subsec_setups}, $\mathfrak{n}_v$ is spanned by $E_{13}$ and $E_{23}$.  Let $\Xi_{\mathfrak{n}_v}:=\Xi_{13}E_{13}+\Xi_{23}E_{23}$ be the $\mathfrak{n}_v$-component of $\Xi$ and consider the auxiliary connection
$$
\D_1:=\D_0+\Xi_{\mathfrak{n}_v}.
%=\D_0+
%\begin{pmatrix}
%0&&\Xi_{13}\\
%&0&\Xi_{23}\\
%&&0
%\end{pmatrix},
$$
Let $\para_1$ be the parallel transport of $\D_1$ and consider the same path $\Gamma_z$ from $0$ to $z$ as in the proof of Proposition \ref{prop_osculation1}. We define the required maps as
$$
\widehat{P}(z):=\para(0,z)\para_1(\Gamma_z), \quad R(z):=\para_1(\Gamma_z^{-1})\para_0(z,0).
$$
The definition immediately implies $P(z)=\widehat{P}(z)R(z)$.
%where $\Gamma_z$ is the path in $\overline{V}$ from $0$ to $z$ which first goes straight from $0$ to $|z|v$ then goes from $|z|v$ to $z$ along the circle centered at $0$.
%in the same way as in the proof of Proposition \ref{prop_osculation1}.

\textbf{Step 2. Estimate  of $R(z)$.} 
Similarly as in the proof of Proposition \ref{prop_osculation1}, we obtain
$$
R(z)=I+r_1(z)E_{13}+r_2(z)E_{23}\in \mathcal{N}_v\,,\text{ where } r_i(z)=\int_{\Gamma_z}\lambda_{i3}\Xi_{i3}\ \  (i=1,2).
$$
Here $\lambda_{ij}(z)=e^{\mu_{ij}(z/|z|)|z|}$ is the eigenvalue of $\Ad_{\para_0(0,z)}$ on $E_{ij}$ (see Section \ref{subsec_setups}). The $\mu_{ij}$'s appearing here can be written more explicit as
$$
\mu_{13}(e^{\theta\ima}v)=\sqrt{2}\re[(1-\omega)e^{\theta\ima}v]=\sqrt{6}\cos\left(\theta+\tfrac{\pi}{6}\right)=\sqrt{6}\sin\left(\tfrac{\pi}{3}-\theta\right),
$$
$$
\mu_{23}(e^{\theta\ima}v)=\sqrt{2}\re[(\omega^2-\omega)e^{\theta\ima}v]=\sqrt{6}\cos\left(\theta-\tfrac{\pi}{6}\right)=\sqrt{6}\sin\left(\tfrac{\pi}{3}+\theta\right).
$$
Given $z=te^{\theta\ima}v\in \overline{V}$, since $\theta$ is in the interval $[-\theta_0,\theta_0]$ with $0<\theta_0<\frac{\pi}{6}$, we have
$\sin\left(\frac{\pi}{3}-\theta_0\right)\leq\sin\left(\frac{\pi}{3}\pm\theta\right)\leq\sin\left(\frac{\pi}{3}+\theta_0\right)$, hence
\begin{equation}\label{eqn_lambda13}
e^{\sqrt{6}\sin\left(\tfrac{\pi}{3}-\theta_0\right)|z|} \leq\lambda_{i3}(z)\leq e^{\sqrt{6}\sin\left(\tfrac{\pi}{3}+\theta_0\right)|z|}\ \text{ for all }z\in\overline{V}\ (i=1,2).
\end{equation}
Combing the second inequality with (\ref{eqn_xiestimate2}), we get
\begin{equation}\label{eqn_lambdaxi13}
|\lambda_{i3}\Xi_{i3}|(z)\leq C_1 e^{\sqrt{6}\left(\sin(\tfrac{\pi}{3}+\theta_0)-c\right)|z|}\ \text{ for all } z\in\overline{V}\  (i=1,2).
\end{equation}
By integration, we get the required bound for $\|R(z)-I\|=\max\{|r_1(z)|,|r_2(z)|\}$.
%\begin{equation}\label{eqn_restimate}
%\|R(z)-I\|\leq C'e^{\sqrt{6}\left(\sin(\tfrac{\pi}{3}+\theta_0)-c\right)|z|}\ \text{ for all } z\in\overline{V},
%\end{equation}
%where the constant $C'>0$ depends on $c$, $C_1$, $\overline{V}$ and the maximum of $|\lambda_{13}\Xi_{13}|$ and $|\lambda_{23}\Xi_{23}|$ on $\{z\in\overline{V}\mid |z|\leq r\}$.

\textbf{Step 3. Convergence of $\widehat{P}(tv)$.}
As in the proof of Proposition \ref{prop_osculation1}, if we consider the $\sl_3\mathbb{R}$-valued $1$-form $\Theta$ on $U$, defined at every $z\in U$ by
$$
\Theta_z:=\Ad_{\para_1(\Gamma_z^{-1})}(\Xi-\Xi_{\mathfrak{n}_v})_z=\Ad_{R(z)}\Ad_{\para_0(0,z)}(\Xi-\Xi_{\mathfrak{n}_v})_z,
$$
then $\widehat{P}(tv)$ solves the initial value problem of ODE 
\begin{equation}\label{eqn_hatpt2}
\frac{\dif }{\dif t}\widehat{P}(tv)=\widehat{P}(tv)\Theta_{tv}(v),\quad \widehat{P}(0)=I.
\end{equation}

We shall still show that $\|\Theta\|$ decays exponentially, although the proof is more delicate in the current setting. We first give an upper bound for each entry of
$$
\Ad_{\para_0(0,z)}(\Xi-\Xi_{\mathfrak{n}_v})_z
=
\begin{pmatrix}
\Xi_{11}&\lambda_{12}\Xi_{12}&0\\
\lambda_{21}\Xi_{21}&\Xi_{22}&0\\
\lambda_{31}\Xi_{31}&\lambda_{32}\Xi_{32}&\Xi_{33}
\end{pmatrix}_{\!\!\!z}.
$$
To this end, we need to control each $\lambda_{ij}$. By (\ref{eqn_lambda13}),  $\lambda_{31}(z)=\lambda_{13}(z)^{-1}$ and $\lambda_{32}(z)=\lambda_{23}(z)^{-1}$ are bounded from above by $e^{-\sqrt{6}\sin\left(\tfrac{\pi}{3}-\theta_0\right)|z|}$ for all $z\in\overline{V}$, whereas for $\lambda_{12}(z)=\lambda_{21}(z)^{-1}=e^{\mu_{12}(z/|z|)|z|}$, the expression
$$
\mu_{12}(e^{\theta\ima}v)=\sqrt{2}\re[(1-\omega^2)e^{\theta\ima}v]=-\sqrt{6}\sin(\theta)
$$
implies that $\lambda_{12}(z)$ and $\lambda_{21}(z)$ are bounded by $e^{\sqrt{6}\sin(\theta_0)}$ for all $z\in\overline{V}$. Combining these bounds  with the bound of the $|\Xi_{ij}|$'s given by (\ref{eqn_xiestimate2}), we obtain, for all $z\in\overline{V}$,
$$
|\Xi_{11}|(z),|\Xi_{22}|(z),|\Xi_{33}|(z)\leq C_1e^{-\sqrt{6}\,c|z|},
$$
$$
|\lambda_{31}\Xi_{31}|(z),|\lambda_{32}\Xi_{32}|(z)\leq C_1 e^{\sqrt{6}(-\sin(\frac{\pi}{3}-\theta_0)-c)|z|},
$$
$$
|\lambda_{12}\Xi_{12}|(z),|\lambda_{21}\Xi_{21}|(z)\leq C_1 e^{\sqrt{6}(\sin(\theta_0)-c)|z|}.
$$
 Note that the last bound is the biggest among the three, hence is an upper bound of $\|\Ad_{\para_0(0,z)}(\Xi-\Xi_{\mathfrak{n}_v})_z\|$. The fact that we have a finer bound for the $(3,1)$ and $(3,2)$ components will be important.

%The last bound $C_1 e^{\sqrt{6}(\sin(\theta_0)-c)|z|}$ is the largest among the three, hence is an upper bound of
% $\|\Ad_{\para_0(0,z)}(\Xi-\Xi_{\mathfrak{n}_v})_z\|$.

We proceed to estimate $\|\Theta\|$. Given any $R=I+r_1E_{13}+r_2E_{23}\in \mathcal{N}_v$ and $M=(M_{ij})\in\sl_3\mathbb{R}$ with $M_{13}=M_{23}=0$, we see from the expression
$$
\Ad_RM=\begin{pmatrix}
1&&r_1\\
&1&r_2\\
&&1
\end{pmatrix}
\begin{pmatrix}
M_{11}&M_{12}&0\\
M_{21}&M_{22}&0\\
M_{31}&M_{32}&M_{33}
\end{pmatrix}
\begin{pmatrix}
1&&-r_1\\
&1&-r_2\\
&&1
\end{pmatrix}
$$
$$
=\begin{pmatrix}
M_{11}+r_1M_{31}&M_{12}+r_1M_{32}&r_1M_{33}-r_1M_{11}-r^2_1M_{31}-r_2M_{12}-r_1r_2M_{32}\\
M_{21}+r_2M_{31}&M_{22}+r_2M_{32}&r_2M_{33}-r_1M_{21}-r_1r_2M_{31}-r_2M_{22}-r_2^2M_{32}\\
M_{31}&M_{32}&M_{33}-r_1M_{31}-r_2M_{32}
\end{pmatrix},
$$
that each matrix entry of $\Ad_RM$ is a sum of at most 5 terms of the form $M_{ij}$, $r_kM_{ij}$, $M_{33}$, $r_kM_{33}$ or $r_1^{l_1}r_2^{l_2}M_{3j}$, where $i,j,k\in\{1,2\}$ and $l_1+l_2\leq 2$ (remark that second order terms in $r_1$ and $r_2$ only appears with $M_{31}$ and $M_{32}$). Therefore,
$$
\|\Ad_RM\|\leq 5\max_{
\substack{
a=0,1,\,b=0,1,2\\j=1,2
}}
\big\{\|R-I\|^a\|M\|, \|R-I\|^b|M_{3j}|\big\}.
$$
Enlarging the constant $C'$ in the bound for $\|R(z)-I\|$ obtained above if necessary, we may assume that the bound is bigger than $1$ for all $z\in\overline{V}$. 
Using the last inequality, we deduce from the bounds for $\|R(z)-I\|$ and each entry of $\Ad_{\para_0(0,z)}(\Xi-\Xi_{\mathfrak{n}_v})_z$:
\begin{align*}
&\|\Theta\|(z)\leq5\cdot\max\Big\{C'e^{\sqrt{6}\left(\sin(\tfrac{\pi}{3}+\theta_0)-c\right)|z|} C_1 e^{\sqrt{6}\left(\sin(\theta_0)-c\right)|z|}\,,\\
&\hspace{110pt} \big(C'e^{\sqrt{6}\left(\sin(\tfrac{\pi}{3}+\theta_0)-c\right)|z|}\big)^2C_1|z|^\alpha e^{\sqrt{6}\left(-\sin(\frac{\pi}{3}-\theta_0)-c\right)|z|}\Big\}\\
&=5C'C_1\max\Big\{e^{\sqrt{6}\left(\sin(\frac{\pi}{3}+\theta_0)+\sin(\theta_0)-2c\right)|z|}\,, C'e^{\sqrt{6}\left(2\sin(\frac{\pi}{3}+\theta_0)-\sin(\frac{\pi}{3}-\theta_0)-3c\right)|z|}\Big\}
\end{align*}
for all $z\in\overline{V}$, $|z|\geq r'$.
Since by hypothesis we have 
$$
2c>\sin\big(\tfrac{\pi}{3}+\theta_0\big)+\sin(\theta_0)=2\sin\big(\tfrac{\pi}{3}+\theta_0\big)-\sin\big(\tfrac{\pi}{3}-\theta_0\big),
$$ 
there is $r>0$ such that when $|z|\geq r$ we have
$$
e^{\sqrt{6}\left(\sin(\frac{\pi}{3}+\theta_0)+\sin(\theta_0)-2c\right)|z|}\geq C'e^{\sqrt{6}\left(\sin(\tfrac{\pi}{3}+\theta_0)-3c\right)|z|},
$$
so that the above inequality for $\|\Theta\|$ amounts to
\begin{equation}\label{eqn_theta2}
\|\Theta\|(z)\leq C_2e^{\sqrt{6}\left(\sin(\frac{\pi}{3}+\theta_0)+\sin(\theta_0)-2c\right)|z|}\ \text{ for all } z\in\overline{V},\,|z|\geq r.
\end{equation}
%where $C_2=5C'C_1$. 

The exponent is negative by assumption. So   $\int_0^{+\infty}\|\Theta_{tv}(v)\|\dif t<+\infty$, and we can apply \cite[Lemma B.1(ii)]{dumas-wolf} again to Eq.(\ref{eqn_hatpt2}) and conclude that $\widehat{P}(tv)$ converges to some $\widehat{P}(\infty)\in\SL(3,\mathbb{R})$ as $t\rightarrow+\infty$.

\textbf{Step 4. Compare $\widehat{P}(z)$ and $\widehat{P}(tv)$.} Consider the $\SL(3,\mathbb{R})$-valued function $Q$ on $[-\theta_0,\theta_0]\times[0,+\infty)$ given by $Q(\theta,t):=\widehat{P}(tv)^{-1}\widehat{P}(te^{\theta\ima}v)$. The computation of $\frac{\pa}{\pa \theta}Q(\theta,t)$ in the proof of Proposition \ref{prop_osculation1} is still valid and gives
$$
\frac{\pa}{\pa \theta}Q(\theta,t)=Q(\theta,t)\Theta(\pa_\theta(te^{\theta\ima}v)),\quad Q(0,t)=I.
$$
By (\ref{eqn_theta2}), $\|\Theta(\pa_\theta(te^{\theta\ima}v))\|\rightarrow0$ uniformly in $\theta\in[-\theta_0,\theta_0]$ as $t\rightarrow+\infty$. Therefore, $\|Q(\theta,t)-I\|\rightarrow0$ uniformly in $\theta$ by \cite[Lemma B.1(i)]{dumas-wolf}, and this implies $\widehat{P}(te^{\theta\ima}v)\rightarrow\widehat{P}(\infty)$ uniformly in $\theta$, completing the proof.
\end{proof}

\subsection{Proof of Propositions \ref{prop_reduction2} and \ref{prop_reduction3}}\label{subsec_reduction2}

\begin{proof}[Proofs of Propositions \ref{prop_reduction2}]
As in Section \ref{subsec_reduction1}, by a shift of coordinate, we consider $g=e^u|\dz|^2$ as defined on the translated sector $V_2:=V_1-v$, which contains $0$, and consider the affine spherical embeddings
$$
\iota_0,\iota:V_2\rightarrow \T_0\mathbb{C}\oplus\mathbb{R}\cong\mathbb{R}^3,\ \iota_0(z)=\para_0(0,z)\underline{1}_z,\ \iota(z)=\para(0,z)\underline{1}_z=P(z)\iota_0(z),
$$
where $P(z):=\para(0,z)\para_0(z,0)$ is the osculation map.

Since $\sin(\theta_1)>\frac{\sqrt{3}}{4}=\frac{1}{2}\sin(\frac{\pi}{3})$ by assumption, we can pick a small enough $\theta_0\in(0,\frac{\pi}{6})$  such that
$$
c':=\sin(\theta_1-\theta_0)>\tfrac{1}{2}[\sin(\theta_0)+\sin\left(\tfrac{\pi}{3}+\theta_0\right)].
$$
The idea of proof of the required proposition is to apply Proposition \ref{prop_osculation2} to $u$ on 
$$
\overline{V}:=\{te^{\theta\ima}v\in\mathbb{C}\mid t\geq0,\ |\theta|\leq\theta_0\}.
$$ 
This sector is contained in $\overline{V}_1$ with the property that the distance from any $z\in\overline{V}$ to $\pa V_1$ is at least $c'|z|$. So we can apply Theorem \ref{thm_diskfine} to $u$ on the disk $D(z,c'|z|)\subset V_2$ and obtain
$u(z)\leq C|z|^\frac{1}{2}e^{-\sqrt{6}\,c'|z|}$ for all $z\in\overline{V}$, $|z|\geq r$. Pick a constant $c$ such that
$$
c'>c>\tfrac{1}{2}[\sin(\theta_0)+\sin\left(\tfrac{\pi}{3}+\theta_0\right)].
$$ 
Since $|z|^\frac{1}{2}$ is controlled by $e^{(c'-c)|z|}$, enlarging $C$ if necessary, we may suppose
\begin{equation}\label{eqn_reduction21}
0\leq u(z)\leq C e^{-\sqrt{6}\,c|z|}\ \text{ for all } z\in\overline{V}.
\end{equation}
The same argument applies to the translated sector $\overline{V}-v$, which contains an $\varepsilon$-neighborhood of $\overline{V}$, and yields $0\leq u(z)\leq Ce^{-\sqrt{6}c|z+v|}$ for all $z\in \overline{V}-v$. But this implies that (\ref{eqn_reduction21}) still holds on $\overline{V}-v$ after modifying $C$, hence the hypothesis on $u$ in Proposition \ref{prop_osculation2} is fulfilled.

Proposition \ref{prop_osculation2} gives a factorization $P(z)=\widehat{P}(z)R(z)$ for all $z\in\overline{V}$, such that $\widehat{P}(z)\rightarrow\widehat{P}(\infty)\in\SL(3,\mathbb{R})$ as $|z|\rightarrow+\infty$, while $R(z)\in \mathcal{N}_v$ satisfies
\begin{equation}\label{eqn_reduction2r}
\|R(z)-I\|\leq C_1e^{\sqrt{6}\left(\sin(\frac{\pi}{3}+\theta_0)-c\right)|z|}\leq C_1e^{\frac{3\sqrt{2}}{4}|z|}\ \text{ for all } z\in\overline{V},
\end{equation}
where the last inequality is because
$$
\sin(\tfrac{\pi}{3}+\theta_0)-c<\sin(\tfrac{\pi}{3}+\theta_0)-\tfrac{1}{2}[\sin(\theta_0)+\sin\left(\tfrac{\pi}{3}+\theta_0\right)]=\tfrac{1}{2}\sin\left(\tfrac{\pi}{3}-\theta_0\right)<\tfrac{\sqrt{3}}{4}.
$$

We can now define the required extension $\overline{\delta}$ of $\delta=\mathbb{P}\circ\iota$ by setting
$$
\overline{\delta}|_{\pai{v}\mathbb{C}}=\widehat{P}(\infty)\circ\overline{\delta}_0|_{\pai{v}\mathbb{C}},
$$
where $\overline{\delta}_0$ is the extension of $\delta_0=\mathbb{P}\circ\iota_0$ from Section \ref{subsec_bord1}. As seen in Section \ref{subsec_bord1}, $\overline{\delta}_0$ maps $\pai{v}\mathbb{C}$ isometrically to an open edge $I_v$ of the triangle $\Delta$, hence $\overline{\delta}$  maps $\pai{v}\mathbb{C}$ isometrically to the open line segment $\widehat{P}(\infty)I_v$. 

As in the proof of Proposition \ref{prop_reduction1}, now it only remains to be shown that for any sequence $(z_n)$ in $V_2$ tending to some $z_\infty\in\pai{v}\mathbb{C}$, the image $\delta(z_n)$ converges in $\mathbb{RP}^2$ to $\overline{\delta}(z_\infty)=\widehat{P}(\infty)\overline{\delta}_0(z_\infty)$.
Suppose $z_\infty=[\gamma]$ for a negative ray $\gamma:[0,+\infty)\rightarrow V_2$ of the form $\gamma(t)=\gamma(0)+tv$. Then the convergence ``$z_n\rightarrow[\gamma]$'' means 
``$|z_n|\rightarrow+\infty$ and the distance from $z_n$ to the ray $\gamma([0,+\infty))=\gamma(0)+v\mathbb{R}_{\geq0}$ tends to $0$'' (see Section \ref{subsec_bord1}). This implies $z_n\in\overline{V}$ for $n$ big enough, so we may suppose $z_n\in\overline{V}$ for all $n$. Since $\widehat{P}(z_n)\rightarrow\widehat{P}(\infty)$, in order to prove the required convergence $\delta(z_n)=\widehat{P}(z_n)R(z_n)\delta_0(z_n)\rightarrow\widehat{P}(\infty)\overline{\delta}_0(z_\infty)$, it is sufficient to show that $R(z_n)\delta_0(z_n)\rightarrow\overline{\delta}_0(z_\infty)$. 

To this end, let us suppose, say, $v=e^{\frac{\pi\ima}{3}}$ (the proof for $v=-1$ and $e^{-\frac{\pi\ima}{3}}$ only has notational difference). Then $\mathcal{N}_{e^{\pi\ima/3}}=\{I+r_1E_{13}+r_2E_{23}\}_{r_1,r_2\in\mathbb{R}}$ (see Section \ref{subsec_setups}, Property (\ref{item_mu5})) and we can write
$$
R(z)\delta_0(z)=
\begin{pmatrix}
1&&r_1(z)\\
&1&r_2(z)\\
&&1
\end{pmatrix}
\begin{bmatrix}
e^{\sqrt{2}\re(z)}\\
e^{\sqrt{2}\re(\omega^2z)}\\
e^{\sqrt{2}\re(\omega z)}\\
\end{bmatrix}
=
\begin{bmatrix}
1+r_1(z)e^{\sqrt{2}\re((\omega-1) z)}\\
e^{\sqrt{2}\re((\omega^2-1)z)}+r_2(z)e^{\sqrt{2}\re((\omega-1) z)}\\
e^{\sqrt{2}\re((\omega-1) z)}\\
\end{bmatrix}
%\begin{bmatrix}
%e^{\sqrt{2}\re(z)}+r_1(z)e^{\sqrt{2}\re(\omega z)}\\
%e^{\sqrt{2}\re(\omega^2z)}+r_2(z)e^{\sqrt{2}\re(\omega z)}\\
%e^{\sqrt{2}\re(\omega z)}\\
%\end{bmatrix},
$$
(terms in square brackets are homogeneous coordinates of $\mathbb{RP}^2$). Since $|z_n|\rightarrow+\infty$ and the distance from $z_n$ to the ray $\gamma(0)+e^{\frac{\pi\ima}{3}}\mathbb{R}_{\geq0}$ tends to $0$, we have approximately $z_n\approx |z_n|e^{\frac{\pi\ima}{3}}$ when $n$ is large, hence
$$
\re((\omega-1)z_n)=\re(\sqrt{3}\,e^{\frac{5\pi\ima}{6}}z_n)\approx \re(\sqrt{3}\,e^{\frac{5\pi\ima}{6}}e^{\frac{\pi\ima}{3}}|z_n|)=
-\tfrac{3}{2}|z_n|
$$ 
(``$\approx$'' means more precisely that the difference between the two sides is bounded). Combining with the bound (\ref{eqn_reduction2r}) of $\|R(z)-I\|=\max\{|r_1(z)|,\,|r_2(z)|\}$, we get
$$
\big|r_i(z_n)e^{\sqrt{2}\re((\omega-1) z_n)}\big|\leq C_2\, e^{-\frac{3\sqrt{2}}{4}|z_n|}\to0\ \ (i=1,2).
$$ 
Therefore, we see from the above expression of $R(z)\delta_0(z)$ that $R(z_n)\delta_0(z_n)$ has the same limit in $\mathbb{RP}^2$ as $\delta_0(z_n)$, which is $\overline{\delta}_0(z_\infty)$. This establishes the required convergence and completes the proof of Proposition \ref{prop_reduction2}.
\end{proof}
\begin{proof}[Proof of Proposition \ref{prop_reduction3}]
Again, we shift the coordinate and view $g$ and $\iota$ as defined on $\tau\mathbb{H}_{-1}=\{\tau z\mid \im(z)>-1\}$. Denoting the vector bundle $\T(\tau\mathbb{H}_{-1})\oplus\underline{\mathbb{R}}$ over $\tau\mathbb{H}_{-1}$ simply by $E$, we let  $E_z:=\T_z(\tau\mathbb{H}_{-1})\oplus\mathbb{R}$ be the fiber at $z\in \tau\mathbb{H}_{-1}$, and $L_z$ be the linear isomorphism $E_z\rightarrow E_{z+\tau}$ induced by the translation $z\mapsto z+\tau$.

%Let $L:E\to E$ denote the map induced by the translation $z\mapsto z+\tau$, whose restriction to the fiber at $z$ is denoted by $L_z:E_z\rightarrow E_{z+\tau}$. 

Since the metric $g$ and cubic differential $\dz^3$ are preserved by the translation, 
the affine sphere connection $\D$ associated to $(g,\dz^3)$ is invariant under the bundle map induced by the translation, hence the parallel transport $\para$ of $\D$ satisfies
\begin{equation}\label{eqn_tauz}
L_{z_1}\circ\para(z_1,z_2)=\para(z_1+\tau,z_2+\tau)\circ L_{z_2}
\end{equation}
for all $z_1,z_2\in \tau\mathbb{H}_{-1}$ (both sides are linear isomorphisms $E_{z_2}\to E_{z_1}$). The parallel transport $\para_0$ of the \c{T}i\c{t}eica affine sphere connection $\D_0$ has the same property. The linear transformation $H\in \SL(E_0)\cong\SL(3,\mathbb{R})$ is then naturally defined as
$$
H:=L_{-\tau}\circ \para(-\tau,0)=\para(0,\tau)\circ L_0.
$$
%which belongs to $\SL(E_0)\cong\SL(3,\mathbb{R})$ because both $\D$ and $ L$ preserve the volume form $\dif\vol_g\wedge\underline{1}^*$ on $E$ (see Section \ref{subsec_affine}). 

The required equivariance property $\iota(z+\tau)=H(\iota(z))$ is verified with (\ref{eqn_tauz}) and the fact that the section $\underline{1}$ of $\underline{\mathbb{R}}\subset E$ is preserved by the translation:
\begin{align*}
H(\iota(z))=H\para(0,z)\underline{1}_z=L_{-\tau}\para(-\tau,0)\para(0,z)\underline{1}_z=L_{-\tau}\para(-\tau,z)\underline{1}_z\\
=\para(0,z+\tau) L_z\underline{1}_z=\para(0,z+\tau)\underline{1}_{z+\tau}=\iota(z+\tau).
\end{align*}
Also, $H$ is the unique linear transformation of $E_0$ satisfying the equivariance property because $\iota(z)$ linearly spans $E_0$ as $z$ runs over $\tau\mathbb{H}_{-1}$.

To prove the statement on eigenvalues of $H$, we compare $H$ with the linear transformation $H_0\in\SL(E_0)$ defined in the same way as $H$ but using $\para_0$ instead. 
%Since $ L$ preserves the frame $(e_1,e_2,e_3)$ of $E$, $H_0$ has the same matrix expression as $\para_0(0,h)$ under the frame, namely 
Using the matrix expression of $\para_0$ (see Section \ref{subsec_titeica}), we get
$$
H_0=
\begin{pmatrix}
e^{\sqrt{2}\re(\tau)}\hspace{-7pt}&&\\
&e^{\sqrt{2}\re(\omega^2\tau)}\hspace{-7pt}&\\
&&e^{\sqrt{2}\re(\omega\tau)}
\end{pmatrix},
\mbox{ where } \omega=e^{2\pi\ima/3}.
$$
On the other hand, using (\ref{eqn_tauz}) (and the same equality for $\para_0$), we get the following relation between $H$ and $H_0$ involving the osculation map $P(z)=\para(0,z)\para_0(z,0)$:
%by definitions and   (\ref{eqn_tauz}), we have, for all $z\in \tau\mathbb{H}_{-1}$,
\begin{align}
P(z+\tau)H_0P(z)^{-1}&=\para(0,z+\tau)\para_0(z+\tau,0)\para_0(0,\tau)L_0\,\para_0(0,z)\para(z,0)\label{eqn_red3}\\
&=\para(0,z+\tau)\para_0(z+\tau,\tau)\para_0(\tau,z+\tau)L_{z}\para(z,0)\nonumber\\
&=\para(0,z+\tau)\para(z+\tau,\tau)L_0=\para(0,\tau)L_0=H.\nonumber
\end{align}
We shall eliminate the dependence on $z$ of the relation by letting $z$ go to infinity. To achieve this, consider the cubic root $v$ of $-1$ in the assumption, which is contained in the un-shifted half-plane $\tau\mathbb{H}$. The assumption on the distance from $v$ To $\tau\mathbb{R}$ implies that we can take a small enough $\theta_0$, in the same way as in the above proof of Propositions \ref{prop_reduction2}, such that on the sector
$\overline{V}:=\{te^{\theta\ima}v\mid t\geq0,\,|\theta|\leq\theta_0\}$, the function $u$ satisfies the assumption of Proposition \ref{prop_osculation2}. That proposition then yields a factorization $P(z)=\widehat{P}(z)R(z)$ for $z\in\overline{V}$, with $R(z)\in \mathcal{N}_v$ and $\lim_{z\in\overline{V},|z|\rightarrow\infty}\widehat{P}(z)=\widehat{P}(\infty)\in\SL(3,\mathbb{R})$. Thus, when $z$ and $z+\tau$ are both contained in $\overline{V}$, (\ref{eqn_red3}) can be written as
$\widehat{P}(z+\tau)R(z+\tau)H_0R(z)^{-1}\widehat{P}(z)^{-1}=H$.
Equivalently, we have
$$
H_0^{-1}R(z+\tau)H_0R(z)^{-1}=H_0^{-1}\widehat{P}(z+\tau)^{-1}H\widehat{P}(z).
$$ 
Here we put $H_0^{-1}$ on both sides because it makes the left-hand side contained in $\mathcal{N}_v$, while the right-hand side converges as $z\to+\infty$. We conclude that the limit
$$
R_0:=H_0^{-1}\widehat{P}(\infty)^{-1}H\widehat{P}(\infty)
$$ 
is in $\mathcal{N}_v$ as well, and $H$ can be written as
\begin{equation}\label{eqn_hh0}
H=\widehat{P}(\infty)H_0R_0\widehat{P}(\infty)^{-1}.
\end{equation}

We can now verify the required statement about the eigenvalues of $H$. First note that the extension $\overline{\delta}$ is so constructed that $\overline{\delta}|_{\pai{v}\mathbb{C}}=\widehat{P}(\infty)\circ\overline{\delta}_0|_{\pai{v}\mathbb{C}}$(see the proof of Proposition \ref{prop_reduction2}). As seen in Section \ref{subsec_bord1}, $\overline{\delta}_0$ maps $\pai{v}\mathbb{C}$ to an open edge $I_v$ of the triangle $\Delta$, sending the endpoints $\pai{e^{\pi\ima/3}v}\mathbb{C}$ and $\pai{e^{-\pi\ima/3}v}\mathbb{C}$ of $\pai{v}\mathbb{C}$ to the endpoints $x^+_0$ and $x^-_0$ of $I_v$, respectively, which are ordered counterclockwise in Figure \ref{figure_continuity}. A case-by-case check for $v=e^{\pi\ima/3}$, $-1$ and $e^{\pi\ima/3}$ shows that $H_0R_0$ has eigenvalues $\exp\big(\sqrt{2}\re(e^{\mp\frac{\pi\ima}{3}}v^{-1}\tau)\big)$ at $x^\pm_0$ for any $R_0\in \mathcal{N}_v$ (for example, if $v=e^{\pi\ima/3}$ then $x_0^-=[1:0:0]$, $x_0^+=[0:1:0]$ and $$
H_0R_0=
\begin{pmatrix}
e^{\sqrt{2}\re(h)}\hspace{-8pt}&0&*\\[4pt]
&e^{\sqrt{2}\re(\omega^2h)}\hspace{-8pt}&*\\[4pt]
&&e^{\sqrt{2}\re(\omega h)}
\end{pmatrix}
$$
has eigenvalues $e^{\sqrt{2}\re(h)}=e^{\sqrt{2}\re(e^{\frac{\pi\ima}{3}}v^{-1}\tau)}$ and $e^{\sqrt{2}\re(\omega^2h)}=e^{\sqrt{2}\re(e^{-\frac{\pi\ima}{3}}v^{-1}\tau)}$ at $x_0^-$ and $x_0^+$, respectively). Therefore, as $H$ has the form (\ref{eqn_hh0}), it has eigenvalue $\exp\big(\sqrt{2}\re(e^{\mp\frac{\pi\ima}{3}}v^{-1}\tau)\big)$ at the endpoint $x^\pm=\widehat{P}(\infty)x^\pm_0$ of $I:=\overline{\delta}(\pai{v}\mathbb{C})=\widehat{P}(\infty)I_v$. We conclude the proof of Proposition \ref{prop_reduction3} by noting that $x^\pm$ is the limit of $\overline{\delta}(\zeta)=\widehat{P}(\infty)\overline{\delta}_0(\zeta)$ as $\zeta\in\pai{v}\mathbb{C}$ tends to $\pai{e^{\pm\frac{\pi\ima}{3}}v}\mathbb{C}$.
\end{proof}

\section{Cubic differential around (piecewise) geodesic boundary}\label{sec_7} 
In this chapter, we finish the proof of Theorem \ref{thm_intro2} by establishing the ``if'' direction in Parts (\ref{item_introthm2}) and (\ref{item_introthm1}). 

\subsection{Blaschke curvature near boundary lines and corners}\label{subsec_bnb}
By a \emph{corner} of a properly convex domain $\Omega\subset\mathbb{RP}^2$, we mean a point $p\in\pa\Omega$ at which $\pa\Omega$ is not $C^1$,  \ie $\pa\Omega$ admits two non-collinear tangent rays at $p$. Let $g_\Omega$ and $\ve{\phi}_\Omega$ be the metric and cubic differential on $\Omega$ given by the Blaschke metric and normalized Pick differential on the affine sphere $M_\Omega\subset\mathbb{R}^3$ projecting to $\Omega$ (see Sections \ref{subsec_yau}). Using the Hausdorff continuity property in Section \ref{subsec_benoisthulin}, we can control the curvature $\kappa_\Omega$ of $g_\Omega$ near boundary lines and corners:
\begin{lemma}\label{lemma_boundline}
Let $\Omega\subset\mathbb{RP}^2$ be a properly convex domain. Fix $0<\varepsilon<1$.
\begin{enumerate}
\item\label{item_boundline1}
Suppose $\pa\Omega$ contains a line segment $I$. Then every point $p$ in the interior of $I$ has a neighborhood $B$ in $\mathbb{RP}^2$ such that $\kappa_{\Omega}>-\varepsilon$ in $B\cap\Omega$.
\item\label{item_boundline2}
Suppose $p\in\pa\Omega$ is a corner and fix an open triangle $T\subset\Omega$ with a vertex at $p$. Then $p$ has a neighborhood $B$ in $\mathbb{RP}^2$ such that $\kappa_{\Omega}>-\varepsilon$ in $B\cap T$. 
\end{enumerate}
\end{lemma}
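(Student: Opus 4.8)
The plan is to use the Hausdorff continuity of the Blaschke curvature (Corollary \ref{coro_khausdorff}) together with a blow-up/renormalization argument in the spirit of \cite{benoist-hulin}: to show $\kappa_\Omega$ is close to $0$ near a boundary segment, it suffices to show that any sequence of points in $\Omega$ approaching such a segment, once renormalized by projective transformations, has a subdomain-limit whose boundary contains a full affine line, so that the limiting affine sphere is the one over a half-plane (or a strip, or a cone over a segment), on which the curvature vanishes identically.

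For Part (\ref{item_boundline1}), first I would fix $p$ in the interior of the segment $I\subset\pa\Omega$ and suppose, for contradiction, a sequence $x_n\in\Omega$ with $x_n\to p$ and $\kappa_\Omega(x_n)\leq-\varepsilon$. Choose projective transformations $a_n\in\SL(3,\mathbb{R})$ normalizing the configuration: send $x_n$ to a fixed base point $x_*\in\mathbb{RP}^2$ and rescale so that the domains $\Omega_n:=a_n(\Omega)$ stay in a bounded part of an affine chart and do not degenerate — this is exactly the Benzécri-type normalization, and compactness (Benzécri, as quoted before Corollary \ref{coro_bh}) gives a subsequence with $(\Omega_n,x_*)\to(\Omega_\infty,x_*)$ in $\mathfrak{C}_*$. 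The key geometric point is that because $x_n$ approaches the \emph{interior} of the segment $I$ (not an endpoint), the rescaling factors blow up in the direction transverse to $I$ but the segment $I$, being straight, persists in the limit as an entire affine line in $\pa\Omega_\infty$; a properly convex domain whose boundary contains a line lies in a half-plane, and conversely contains that half-plane's... more precisely $\Omega_\infty$ is contained in an open half-plane $P$ bounded by that line, and one checks $\Omega_\infty = P$ by maximality, or argues directly that the affine sphere over $\Omega_\infty$ is flat. Either way $\kappa_{\Omega_\infty}(x_*)=0$, contradicting $\kappa_{\Omega_\infty}(x_*)=\lim \kappa_{\Omega_n}(x_*)=\lim\kappa_\Omega(x_n)\leq-\varepsilon$ via Corollary \ref{coro_khausdorff} and the $\SL(3,\mathbb{R})$-invariance $a_n^*\kappa_\Omega=\kappa_{\Omega_n}$. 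This yields the neighborhood $B$.

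For Part (\ref{item_boundline2}) the argument is parallel but the renormalization is anchored at the singular point $p$: take $x_n\in T$ with $x_n\to p$ and $\kappa_\Omega(x_n)\leq-\varepsilon$, normalize by $a_n\in\SL(3,\mathbb{R})$ sending $x_n$ to $x_*$ and keeping $\Omega_n=a_n(\Omega)$ nondegenerate. Because $\pa\Omega$ has two distinct tangent lines $\ell_1,\ell_2$ at $p$, and because $x_n$ stays inside the fixed triangle $T$ (so it approaches $p$ "nontangentially" to both $\ell_i$, hence the rescaled pictures of $\ell_1$ and $\ell_2$ both survive), the limit domain $\Omega_\infty$ is trapped between the two limiting lines, i.e. contained in an infinite angular sector (the cone over a segment); the affine sphere over such a sector is again flat (it is the one associated to $(\mathbb{C},\dz^3)$-type flat data — compare the \c{T}i\c{t}eica picture and the fact from Theorem \ref{thm_ue} that $\kappa\equiv0$ forces the flat model). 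Hence $\kappa_{\Omega_\infty}(x_*)=0$, again contradicting the continuity. The role of the auxiliary triangle $T$ is precisely to guarantee the $x_n$ do not sneak up on $p$ along a path tangent to one of the $\ell_i$, which would let that line escape to infinity in the blow-up and destroy the argument.

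The main obstacle I anticipate is making rigorous the claim that the renormalized domains $\Omega_n$ can be kept nondegenerate while simultaneously forcing the relevant boundary features (the full line in case (\ref{item_boundline1}), the two half-lines in case (\ref{item_boundline2})) to persist in the Hausdorff limit — this requires a careful choice of the $a_n$ (typically a one-parameter family of homotheties in a well-chosen affine chart adapted to $I$ or to $\ell_1\cap\ell_2$) and a check that the limit $\Omega_\infty$ is neither a strictly smaller region (collapsing) nor all of $\mathbb{RP}^2$ minus a line without being convex-proper. A secondary technical point is identifying the flat affine sphere: one should cite that a complete affine sphere over a half-plane or a sector is the projectivization of the \c{T}i\c{t}eica-type sphere (Section \ref{subsec_titeica}), equivalently that its Blaschke data solves Wang's equation with $\kappa\equiv0$, which by the rigidity in Theorem \ref{thm_ue} is the $(\mathbb{C},\dz^k)$-quotient model; all of this is standard but needs to be invoked cleanly rather than reproved.
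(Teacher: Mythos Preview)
Your overall strategy---projectively renormalize and invoke Hausdorff continuity of $\kappa$ (Corollary \ref{coro_khausdorff})---is exactly the paper's, but there is a genuine gap in identifying the limit domain. A half-plane in an affine chart is \emph{not} a properly convex subset of $\mathbb{RP}^2$: its closure meets every projective line, so it lies in no affine chart, and hence cannot arise as a limit in $\mathfrak{C}_*$. Thus your assertion ``$\Omega_\infty = P$ by maximality'' is simply false. By the rigidity clause of Theorem \ref{thm_ue} the only properly convex domains with $\kappa\equiv 0$ are triangles, so what you actually need is $\Omega_\infty = \Delta$ for a specific triangle $\Delta$. Knowing that a long segment (or, in part (\ref{item_boundline2}), two supporting lines) persists in $\pa\Omega_\infty$ gives at best one inclusion; abstract Benz\'ecri normalization does not control the other, and a properly convex domain strictly contained in a triangle need not have $\kappa=0$ anywhere.

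The paper closes this gap by choosing the renormalizing maps explicitly rather than appealing to Benz\'ecri. For part (\ref{item_boundline1}) it fixes the triangle $\Delta\subset\Omega$ spanned by the maximal segment $I$ and a point $x_0\in\Omega$, uses the projective automorphisms $f_x$ of $\Delta$ sending $x\mapsto x_1$, and then \emph{sandwiches}: $\Delta = f_x(\Delta)\subset f_x(\Omega)\subset f_x(L)$, where $L\supset\Omega$ is an auxiliary triangle chosen so that $f_x(L)$ lies in a prescribed Hausdorff ball $\mathcal{B}$ about $\Delta$ once $x$ is near $p$ (checked by an explicit affine computation). For part (\ref{item_boundline2}) the roles reverse: the triangle $\Delta$ is taken to \emph{contain} $\Omega$ with two sides tangent at $p$, so $f_x(\Omega)\subset\Delta$ automatically, and the fixed sub-triangle $T\ni x$ guarantees that $f_x(\Omega)\supset f_x(T)$ is large enough to force $f_x(\Omega)\in\mathcal{B}$. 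In both cases the conclusion is $f_x(\Omega)\in\mathcal{B}$, not convergence to a non-properly-convex object. Your ``main obstacle'' is precisely this two-sided sandwich, and it genuinely requires the specific triangle-preserving choice of $f_x$---a generic Benz\'ecri lift will not do.
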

Note that in the second part, for each of the two tangent rays to $\pa\Omega$ at $p$, the boundary $\pa\Omega$ can either contain an initial segment of the ray or meet the ray only at $p$. Figure \ref{figure_prooffomega2} below illustrates an example where both situations occur. However, if both rays are in the former situation, the lemma just says that $\kappa_\Omega>-\varepsilon$ holds in $B\cap\Omega$ for a neighborhood $B$ of $p$. This is actually the case needed in the next section. 
\begin{proof}
(\ref{item_boundline1}) 
%Fix a distance $d$ on $\mathbb{RP}^2$ compatible with the topology. We define the Hausdorff distance $d_{\mathsf{Hausdorff}}$ between two properly convex open sets $\Omega_1,\Omega_2\subset\mathbb{RP}^2$ as the usual Hausdorff distance between the closures $\overline{\Omega}_1$ and $\overline{\Omega}_2$ with respect to $d$.
We may suppose $I$ is a maximal segment on $\pa\Omega$, \ie the intersection of $\pa\Omega$ with a line in $\mathbb{RP}^2$. Let $\Delta\subset\Omega$ be the open triangle spanned by $I$ and a point $x_0$ in $\Omega$, see the first picture in Figure \ref{figure_prooffomega1}. Choose a point $x_1\in\Delta$. Since the affine sphere projecting to $\Delta$ has flat Blaschke metric (see Section \ref{subsec_titeica}), the curvature $\kappa_\Delta$ of the metric $g_\Delta$ on $\Delta$ vanishes. By Corollary \ref{coro_khausdorff}, there exists a metric ball $\mathcal{B}\subset\mathfrak{C}$ (under the Hausdorff distance given by a metric on $\mathbb{RP}^2$) centered at $\Delta$ such that every $\Omega'\in\mathcal{B}$ contains $x_1$ and satisfies $\kappa_{\Omega'}(x_1)>-\varepsilon$.

For any $x\in \Delta$, let $f_{x}\in\SL(3,\mathbb{R})$ denote the projective transformation which preserves $\Delta$ and brings $x$ to $x_1$. Since $\kappa_{\Omega}(x)=\kappa_{f_{x}(\Omega)}(x_1)$ (see Section \ref{subsec_benoisthulin}), a sufficient condition for $\kappa_\Omega(x)>-\varepsilon$ is
$f_{x}(\Omega)\in\mathcal{B}$.
Therefore, we only need to find, for every point $p$ in the interior of $I$, a neighborhood $B$ of $p$ in $\mathbb{RP}^2$ such that 
$f_{x}(\Omega)\in\mathcal{B}$ for all $x\in B\cap\Delta$.

To this end, we work with the affine chart $\mathbb{R}^2\subset\mathbb{RP}^2$ in which $\Delta$ is the first quadrant and $x_0=(0,0)$, $x_1=(1,1)$, so that the restriction of $f_x$ to the chart is given by $f_{x}(y)=\big(\frac{y^1}{x^1},\frac{y^2}{x^2}\big)$, where $y^1$ and $y^2$ denote the coordinates of $y\in\mathbb{R}^2$. Suppose $p$ is the point at infinity of lines in $\mathbb{R}^2$ with a fixed slope $k>0$. Choose $k_0>0$ satisfying  $k_0<k<k^{-1}_0$ and consider the sector $\Delta'=\{x\in\mathbb{R}^2\mid k_0<x^2/x^1<k_0^{-1}\}$. Then $\Delta'$ is a triangle contained in $\Delta$ with an edge $I'\subset I$ containing $p$, see Figure \ref{figure_prooffomega1}.
\begin{figure}[h]
\centering\includegraphics[width=4in]{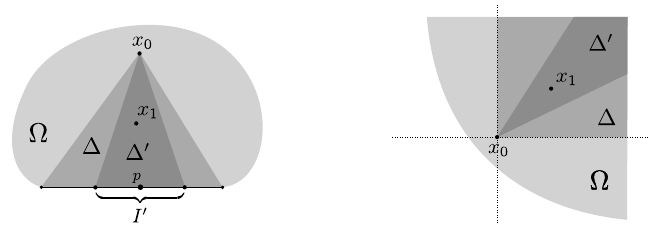}
\caption{Proof of Lemma \ref{lemma_boundline} (\ref{item_boundline1})}
\label{figure_prooffomega1}
\end{figure}

Given $\lambda\in(0,1)$ and a point $a$ in the third quadrant $-\Delta$, we let $L_{a,\lambda}$ denote the region containing $\Delta$ and bounded by the rays issuing from $a$ with slopes $-\lambda$ and $-\lambda^{-1}$. Such a region is a triangle in $\mathbb{RP}^2$, hence an element of $\mathfrak{C}$, with the following key property: there are constants $r_0>0$ and $\lambda_0>0$ such that $L_{a,\lambda}\in\mathcal{B}$ whenever $\|a\|:=((a^1)^2+(a^2)^2)^\frac{1}{2}< r_0$ and $\lambda<\lambda_0$. 

Since $I$ is a maximal segment, for any $\lambda>0$ we can take $a$ sufficiently far away from $0$ such that $\Omega\subset L_{a,\lambda}$. In particular, we fix $a_0\in-\Delta$ such that $\Omega\subset L_{a_0,k_0\lambda_0}$. Using the formula $f_{x}(L_{a,\lambda})=L_{f_{x}(a),\,\lambda x^1\!/x^2}$, which follows from the expression of $f_x$, we check that 
\begin{equation}\label{eqn_fxl}
f_x(L_{a_0,k_0\lambda_0})\in\mathcal{B}\,\mbox{ for all }x\in\Delta' \mbox{ such that } x^1,x^2> \|a_0\|/r_0.
\end{equation}
Indeed, since $k_0\leq\frac{x^2}{x^1}\leq k_0^{-1}$ for $x\in\Delta'$, the $\lambda$-parameter of $f_x(L_{a_0,k_0\lambda_0})$ is less than $\lambda_0$, while $x^1,x^2\geq \|a_0\|/r_0$ implies that the base point $f_x(a_0)$ of $f_x(L_{a_0,k_0\lambda_0})$ has norm less than $a_0$. Thus, Property (\ref{eqn_fxl}) is implied by the definition of $r_0$ and $\lambda_0$.

Since $\mathcal{B}$ is a Hausdorff metric ball centered at $\Delta$ while we have the inclusion relations $\Delta=f_x(\Delta)\subset f_x(\Omega)\subset f_x(L_{a_0,k_0\lambda_0})$, if $f_x(L_{a_0,k_0\lambda_0})$ is contained in $\mathcal{B}$, then so does  $f_x(\Omega)$. Therefore, Property (\ref{eqn_fxl}) implies that $f_x(\Omega)\in\mathcal{B}$ for all $x\in U:=\{(x^1,x^2)\in\Delta'\mid x^1,x^2>\|a_0\|/r_0\}$. So $B:=U\cup I'$ is a neighborhood of $p$ in $\overline{\Delta}$ fulfilling the requirement.

(\ref{item_boundline2}) Take an open triangle $\Delta$ containing $\Omega$ with two edges tangent to $\pa\Omega$ at $p$ as in Figure \ref{figure_prooffomega2}. Fix $x_1\in\Delta$. Defining $\mathcal{B}$ and $f_{x}$ in the same way as in the proof of Part (\ref{item_boundline1}), we only need to show $f_x(\Omega)\in\mathcal{B}$ for all $x\in T$ close enough to $p$.  
\begin{figure}[h]
\centering\includegraphics[width=4.4in]{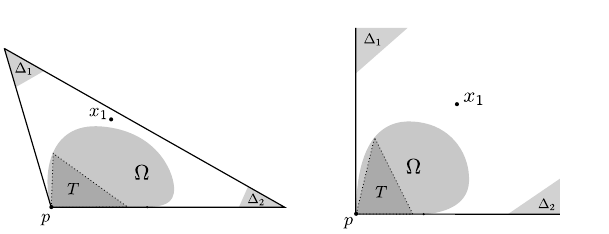}
\caption{Proof of Lemma \ref{lemma_boundline} (\ref{item_boundline2})}
\label{figure_prooffomega2}
\end{figure}

To this end, take small enough sub-triangles $\Delta_1, \Delta_2\subset\Delta$ as in the picture, such that for any convex domain $\Omega'$ in $\Delta$ with boundary containing $p$, we have $\Omega'\in\mathcal{B}$ whenever $\Omega'$ meets both $\Delta_1$ and $\Delta_2$. It is then sufficient to show that $f_x(T)\subset f_{x}(\Omega)$ meets $\Delta_1$ and $\Delta_2$, or equivalently, $T$ meets $f_{x}^{-1}(\Delta_1)$ and $f_{x}^{-1}(\Delta_2)$, when $x\in T$ is close enough to $p$. But the latter statement is elementary to check by working on the affine chart in which $\Delta$ is the first quadrant and $x_1=(1,1)$, as in the second picture in Figure \ref{figure_prooffomega2}.
\end{proof}

\subsection{(Piecewise) geodesic boundaries and poles}\label{subsec_piecewise}
Given a convex $\mathbb{RP}^2$-surface $S$, we identify $S$ as the quotient of a properly convex domain $\Omega\subset\mathbb{RP}^2$ by projective automorphisms. Recall from  Section \ref{subsec_yau} that the Blaschke metric and normalized Pick differential of $S$ are the quotients of $g_\Omega$ and $\ve{\phi}_\Omega$ on $\Omega$, respectively. The information on the curvature of $g_\Omega$ obtained above allows us to show:
\begin{theorem}[\textbf{(Piecewise) geodesic boundaries are poles}]
\label{thm_ptoc}
Let $S'$ be an oriented convex $\mathbb{RP}^2$-surface with geodesic or piecewise geodesic boundary such that the boundary $\pa S'$ is homeomorphic to a circle. Let $S$ be the interior of $S'$, viewed as an open convex $\mathbb{RP}^2$-surface, and let $g$ and $\ve{\phi}$ be the Blaschke metric and normalized Pick differential on $S$, respectively. Then the end of $S$ at $\pa S'$ is conformally a puncture (for the conformal structure underlying $g$) at which $\ve{\phi}$ has a pole of order at least $3$.
%Let $S'$ be an oriented surface with boundary such that $\pa S'$ is homeomorphic to a circle. Let $S$ be the interior of $S'$. Let $X'$ be a convex $\mathbb{RP}^2$-structure on $S'$ with geodesic or piecewise geodesic boundary. Suppose that $X'$ restricts to a $C^\infty$-convex $\mathbb{RP}^2$-structure $X\in\mathscr{P}(S)$ corresponding to $(\ac{J},\ve{\phi})\in\mathscr{C}(S)$. Then the end of the Riemann surface $(S,\ac{J})$ at $\pa S'$ is conformally a puncture at which $\ve{\phi}$ has a pole of order at least $3$.
\end{theorem}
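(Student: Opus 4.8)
The strategy is to work with a collar neighborhood of $\pa S'$ inside $S'$, lift it to a properly convex domain, and use Lemma \ref{lemma_boundline} together with the Cheng--Yau boundedness criterion (Theorem \ref{thm_boundedness}) to show that $g$ is \emph{incomplete} at the end, then invoke Theorem \ref{thm_incomplete} to conclude that the end is a puncture at which $\ve{\phi}$ has a pole of order $\geq k = 3$. First I would fix a collar $U \cong \pa S' \times [0,1)$ of $\pa S'$ in $S'$ whose interior $U \cap S$ is an annular neighborhood of the end $p$; shrinking it, I may assume $U$ is developed into $\mathbb{RP}^2$ by a single chart, so that (a lift of) $U$ is realized as a convex subset of a properly convex domain $\Omega \subset \mathbb{RP}^2$, with $\pa S'$ developing either into a line segment $I \subset \pa\Omega$ (geodesic boundary) or into a finite broken line (piecewise geodesic boundary). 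The Blaschke metric $g$ and Pick differential $\ve{\phi}$ on $S$ near $p$ are then identified with the restrictions of $g_\Omega$ and $\ve{\phi}_\Omega$.

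Second, I would use Lemma \ref{lemma_boundline} to bound the curvature near the developed boundary: fix $\varepsilon \in (0,1)$; every interior point of a boundary segment has a neighborhood $B$ with $\kappa_\Omega > -\varepsilon$ on $B \cap \Omega$ by Part (\ref{item_boundline1}), and every vertex (a non-$C^1$ point of $\pa\Omega$) has, after choosing the triangle $T$ to be the cone of the developed collar over that vertex, a neighborhood $B$ with $\kappa_\Omega > -\varepsilon$ on $B \cap T$ by Part (\ref{item_boundline2}); since $\pa S'$ is compact, finitely many such neighborhoods cover the developed image of $\pa S'$, so after shrinking the collar $U$ once more we get $\kappa_g > -\varepsilon$ throughout a (smaller) annular neighborhood $V$ of $p$. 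Equivalently, $\|\ve{\phi}\|_g^2 = 1 + \kappa_g > 1 - \varepsilon$ on $V$, i.e. the conformal ratio $|\ve{\phi}|^{2/3}/g = \|\ve{\phi}\|_g^{2/3}$ is bounded below by $(1-\varepsilon)^{1/3}$ near $p$.

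Third — the key step — I would deduce incompleteness of $g$ at $p$. Because $g$ solves Wang's equation, $\kappa_g \leq 0$ globally (Theorem \ref{thm_ue}), so $\|\ve{\phi}\|_g^2 \leq 1$ and hence $g \geq |\ve{\phi}|^{2/3}$; but near $p$ we also have $g \leq (1-\varepsilon)^{-1/3} |\ve{\phi}|^{2/3}$ from the curvature bound, so $g$ is bi-Lipschitz to the singular flat metric $|\ve{\phi}|^{2/3}$ on a neighborhood of $p$. Thus $g$ is complete at $p$ if and only if $|\ve{\phi}|^{2/3}$ is. It therefore suffices to show the flat metric $|\ve{\phi}|^{2/3}$ is incomplete at $p$. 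Here I would argue that the annular end, equipped with a flat metric of curvature identically zero away from a compact set and finite total curvature (the developed collar has bounded geometry), must be a puncture by the Hulin--Troyanov classification cited in the proof of Theorem \ref{thm_incomplete}, and then that an \emph{incomplete} flat end would be ruled out only if $\ve{\phi}$ had a pole of order $\geq k$; but in fact I want the reverse: I would instead directly estimate the $g$-area or the $g$-length of a path running out to $\pa S'$. The cleanest route: observe that the collar $U$, developed into $\Omega$, has the Hilbert metric $g^{\mathsf H}_\Omega$ assigning \emph{finite} length to transversal arcs ending on a boundary segment or vertex of $\Omega$ that is in the ``line-segment'' part of $\pa\Omega$ — no, rather the Hilbert distance to a segment in $\pa\Omega$ is \emph{infinite}. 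So I would not go through the Hilbert metric. Instead I would use that $|\ve{\phi}|^{2/3}$, being a flat metric on an annulus with a boundary circle developed to a broken line of finitely many segments, has the segments developing to straight segments of finite Euclidean length, hence the collar has finite diameter in $|\ve{\phi}|^{2/3}$ and is incomplete, forcing (by Theorem \ref{thm_incomplete}, direction (\ref{item_incomplete1})$\Leftrightarrow$(\ref{item_incomplete3}), applied locally as in Lemma \ref{lemma_vortexcusp}'s setup) $p$ to be a puncture with a pole of order $\geq 3$.

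\textbf{The main obstacle.} The delicate point is making rigorous the claim that the developed collar has finite diameter in the flat metric $|\ve{\phi}|^{2/3}$, i.e. that $g$ (equivalently $|\ve{\phi}|^{2/3}$) is genuinely incomplete at $p$. The curvature bound $\kappa_g > -\varepsilon$ only controls $g$ locally near each developed boundary point; one must patch these local estimates into a global statement about a whole neighborhood of the circle $\pa S'$ and then translate ``bounded below by a Euclidean-comparable flat metric that reaches the boundary in finite distance'' into incompleteness. I expect the honest argument to use the Cheng--Yau type estimate (Theorem \ref{thm_boundedness}) the way Lemma \ref{lemma_vortexcusp} does, or a barrier/comparison argument with an explicit incomplete model metric on the annulus whose curvature is $\geq -\varepsilon$; once incompleteness is in hand, Theorem \ref{thm_incomplete} immediately yields that $p$ is a puncture and $\ve{\phi}$ has a pole of order $\geq k = 3$ there, completing the proof. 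A secondary bookkeeping point is Part (\ref{item_boundline2}) of Lemma \ref{lemma_boundline}: at a vertex one only controls $\kappa_\Omega$ inside a cone $T$ rather than a full neighborhood, so I must choose $T$ to contain the developed collar near that vertex, which is possible because the collar is narrow (a thin band along $\pa S'$) and thus lies in any cone with vertex at the developed vertex point and opening the full interior angle of $\Omega$ there.
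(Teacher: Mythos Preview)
Your setup is right and matches the paper: develop a collar of $\pa S'$ into a properly convex $\Omega$, apply Lemma~\ref{lemma_boundline} to get $\kappa_g>-\varepsilon$ on an annular neighborhood of the end, and deduce that $g$ and $|\ve{\phi}|^{2/3}$ are bi-Lipschitz there. Your worry about Part~(\ref{item_boundline2}) is also unnecessary: at a vertex of a piecewise geodesic boundary both tangent rays lie in $\pa\Omega$, and the paper notes that in this situation the conclusion holds on $B\cap\Omega$, not just $B\cap T$.

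The genuine gap is that from this point on you have the logic reversed. You try to prove that $g$ (equivalently $|\ve{\phi}|^{2/3}$) is \emph{incomplete} at $p$ and then invoke Theorem~\ref{thm_incomplete}. But $g$ is the Blaschke metric of a convex $\mathbb{RP}^2$-surface, i.e.\ the complete solution to Wang's equation (Theorem~\ref{thm_ue}); it is always complete. Moreover Theorem~\ref{thm_incomplete} says the opposite of what you want: completeness of $|\ve{\phi}|^{2/k}$ is \emph{equivalent} to every puncture being a pole of order $\geq k$, so proving incompleteness would rule out the desired conclusion rather than imply it. Your attempt to show incompleteness by saying the developed boundary is a broken line of finite Euclidean length conflates the intrinsic flat metric $|\ve{\phi}|^{2/3}$ with the affine-chart Euclidean metric on the image in $\mathbb{RP}^2$; these are unrelated.

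The correct (and short) finish, which is what the paper does: since $g$ is complete and $g\leq (1-\varepsilon)^{-1/3}|\ve{\phi}|^{2/3}$ on the collar $U$, the flat metric $|\ve{\phi}|^{2/3}$ is complete on $\overline{U}$ as well. Writing $\overline{U}\cong\{r<|z|\leq 1\}$ conformally and $\ve{\phi}=\phi(z)\dz^3$, Lemma~\ref{lemma_huber} (applied with $\alpha=1/3$) forces $r=0$ and $\phi$ to have a pole of order at least $1/\alpha=3$ at $z=0$. No Cheng--Yau barrier or Theorem~\ref{thm_incomplete} is needed.
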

\begin{proof}
Identify $S'$ as the quotient of a properly convex set $\Omega'\subset\mathbb{RP}^2$ by projective transformations, so that $S$ is the quotient of the interior $\Omega$ of $\Omega'$. Then $g$ and $\ve{\phi}$ are given by the metric $g_\Omega$ and cubic differential $\ve{\phi}_\Omega$ on $\Omega$. 

The boundary $\pa S'$ being geodesic or piecewise geodesic means that every point of $\pa S'$ is the image, by the quotient map $\Omega'\rightarrow S'$, of either an interior point of a line segment in $\pa\Omega$ or the junction point of two line segments in $\pa\Omega$. By Lemma \ref{lemma_boundline}, every such point has a neighborhood $B$ in $\mathbb{RP}^2$ such that $\kappa_\Omega=-1+\|\ve{\phi}_\Omega\|^2_{g_\Omega}\geq -1/2$ on $B\cap \Omega$. As a consequence, $\pa S'$ has a neighborhood $U'$ in $S'$ such that 
\begin{equation}\label{eqn_proofptoc}
\kappa_g=-1+\|\ve{\phi}\|^2_{g}\geq -1/2\ \mbox{ in }\, U:=U'\setminus\pa S'.
\end{equation}
After shrinking $U'$ if necessary, we can identify the closure $\overline{U}$ of $U$ in $(S,\ac{J})$ conformally with the annulus $\{z\in\mathbb{C}\mid r<|z|\leq 1\}$ for some $r\in[0,1)$ and write $\ve{\phi}=\phi(z)\dz^3$ in $\overline{U}$, where $\phi$ is holomorphic and nowhere vanishing. By (\ref{eqn_proofptoc}),  the conformal ratio between the flat metric $|\ve{\phi}|^\frac{2}{3}=|\phi(z)|^\frac{2}{3}|\dz|^2$ and the metric $g$ is bounded from below by $2^{-\frac{1}{3}}$ in $\overline{U}$, so the completeness of $g$ implies that the restriction of $|\ve{\phi}|^\frac{2}{3}$ to $\overline{U}$ is complete. Using Lemma \ref{lemma_huber}, we conclude that $r=0$ and $\phi$ has a pole of order at least $3$ at $z=0$, as required.
\end{proof}
By construction of the one-to-one correspondence between convex $\mathbb{RP}^2$-structures and cubic differentials explained in Sections \ref{subsec_convexity} and \ref{subsec_yau}, Theorem \ref{thm_ptoc} implies that under the hypotheses of Theorem \ref{thm_intro2}, if $X$ admits an extension $X'$ as stated in Part (\ref{item_introthm2}) or (\ref{item_introthm1}), then $p$ is a pole of order at least $3$. Combining with the ``only if'' statements of (\ref{item_introthm2}) and (\ref{item_introthm1}) already proved in Section \ref{sec_6}, we get the required ``if'' statements.

\appendix

\section{Connections and parallel transports}\label{subsec_connection}
In this section, we fix some notations about connections and parallel transports used in Chapters \ref{sec_pre} and \ref{sec_6}. See \eg \cite{taubes} for a survey of the background materials.
 
Let $(E,\mu,\D)$ be an $\SL(n,\mathbb{R})$-vector bundle over a manifold $M$, namely, $E$ is a real vector bundle of rank $n$, $\D$ is a connection on $E$ and $\mu$ is a volume form on $E$ preserved by $\D$.
The \emph{parallel transport} $\para$ of $\D$ assigns to every oriented $C^1$-path $\gamma:[a,b]\rightarrow M$ a linear isomorphism between fibers
$$
\para(\gamma): E_{\gamma(a)}\rightarrow E_{\gamma(b)}
$$
respecting the volume form $\mu$. It has the properties
 $$
\para(\gamma^{-1})=\para(\gamma)^{-1},\quad \para(\beta)\para(\gamma)=\para(\beta\gamma),
$$
where $\gamma^{-1}$ is the reverse path of $\gamma$, and $\beta$ is any path starting from the point where $\gamma$ terminates. Here and below, the reader should be cautious about the order in the notations. See the next remark. 

In particular, when $M$ is simply connected and $\D$ is flat, we let $\para(y,x)$ denote the parallel transport of $\D$ along any path from the point $x$ to $y$, which does not depend on the choice of the path. In this case, the above properties become
 $$
\para(x,y)=\para(y,x)^{-1},\quad \para(z,y)\para(y,x)=\para(z,x).
$$
\begin{remark}\label{remark_order}
In our notation, the concatenated path $\beta\gamma$ first runs $\gamma$ and then $\beta$. Namely, the order is the same as in the notation for composition of maps, and might be different from the convention in algebraic topology. Accordingly, in the notation $\para(y,x)$, the starting point $x$ of the parallel transport is put in the \emph{second} slot.
\end{remark}

In terms of the matrix expression of the connection
$$
\D=\dif+A
$$ 
under a unimodular frame of $(E,\mu)$ on an open set containing $\gamma$, the parallel transport $\para(\gamma)$ is obtained by solving an ODE as follows: For every $t\in[a,b]$, let $\gamma_{[a,t]}$ denote the restriction of $\gamma$ to the interval $[a,t]$ and view the parallel transport $\para(\gamma_{[a,t]})$ from $E_{\gamma(a)}$ to $E_{\gamma(t)}$ as a matrix in $\SL(n,\mathbb{R})$ by means of the frame. Then we have
$$
\frac{\dif}{\dif t}\para(\gamma_{[a,t]}) +A(\dot{\gamma}(t))\para(\gamma_{[a,t]})=0.
$$
This equation together with the obvious initial condition $\para(\gamma_{[a,a]})=I$ determines $\para(\gamma)=\para(\gamma_{[a,b]})$.

When $M$ is simply connected and $\D$ is flat, writing $\D=\dif+A$ and viewing $\para(y,x)$ as a matrix in $\SL(n,\mathbb{R})$ under a unimodular frame as above, we get a $2$-point parallel transportation map
$$
M\times M\rightarrow \SL(3,\mathbb{R}), \quad (y,x)\mapsto \para(y,x).
$$
It is characterized by the following properties:
\begin{itemize}
\item
$\para(x,x)=I$, $\para(x,y)=\para(y,x)^{-1}$;
\item
Let $\pa_X^{(2)}\para(y,x)$ (resp. $\pa_Y^{(1)}\para(y,x)$) denote the derivative of $\para(y,x)$ with respect to the variable $x$ (resp. $y$) along the tangent vector $X\in\T_XM$ (resp. $Y\in\T_yM$). Then
$$
\pa^{(2)}_X\para(y,x)=\para(y,x)A(X),\quad\pa^{(1)}_Y\para(y,x)=-A(Y)\para(y,x).
$$
\end{itemize}

\section{Ends of flat surfaces}\label{sec_flatend}
A \emph{flat surface} is a surface endowed with a flat Riemannian metric. In this appendix, we collect some results, used in Sections \ref{subsec_nonexistence}, \ref{subsec_finitvolume}, \ref{subsec_normalform} and \ref{subsec_piecewise}, about ends of flat surfaces.  To formulate the results, we call a flat surface homeomorphic to the half-open annulus $\mathbb{S}^1\times[0,1)$ a \emph{flat end}, and study its properties ``at infinity'', \ie properties invariant under the following equivalence relation: flat ends $F_1$ and $F_2$ are said to be \emph{equivalent} if they have compact subsets $C_1$ and $C_2$ such that there is an orientation preserving isometry from $F_1\setminus C_1$ to $F_2\setminus C_2$.
%In particular, a flat end is said to be \emph{regular} is it is conformally a punctured disk and the flat metric is given by a harmonic function with non-essential singularity at the puncture, see Section \ref{subsec_regularity} for details. All regular flat ends are classified in Section \ref{subsec_classification}.
\subsection{Regular flat ends}\label{subsec_regularity}
First note that any flat end can be realized as an annulus 
$$
\Omega_r:=\{z\in\mathbb{C}\mid r<|z|\leq 1\}
$$ 
($r\in[0,1)$) equipped with a metric given by a holomorphic function:
\begin{proposition}
	Every flat end is equivalent to one of the form
	\begin{equation}\label{eqn_flatend}
	(\Omega_r,\ |f(z)|^{2\alpha}|\dz|^2),
	\end{equation}
	where $0\leq r<1$, $\alpha>0$ and $f$ is a nowhere vanishing holomorphic function defined on a larger annulus $\{r<|z|<1+\varepsilon\}$.
\end{proposition}
\begin{proof}
A conformal metric $e^{2u(z)}|\dz|^2$ on a domain in $\mathbb{C}$ has curvature $-e^{-2u}\Delta u$, hence is flat if and only if $u$ is harmonic. Therefore, any flat end, after removing a compact part around the boundary if necessary, has the form
$(\Omega_r, e^{2u}|\dz|^2)$
for some $r\in[0,1)$ and some harmonic function $u$ on a larger annulus $\{r<|z|<1+\varepsilon\}$.
But it is a classical fact (see \cite[Theorem 9.15]{axler}) that given such a $u$, there exist $\beta\in\mathbb{R}$ and a holomorphic function $h$ on $\{r<|z|<1+\varepsilon\}$ such that 
$$
u(z)=\re h(z)+\beta\log|z|.
$$
So we can write $e^{2u}|\dz|^2=|f(z)|^{2\alpha}|\dz|^2$ with $f(z)=ze^{h(z)}$, $\alpha=1$ if $\beta=0$, and $f(z)=ze^\frac{h(z)}{\beta}$ (resp. $z^{-1}e^{-\frac{h(z)}{\beta}}$), $\alpha=\beta$ (resp. $-\beta$) if $\beta>0$ (resp. $<0$).
%So we can write the metric $e^{2u}|\dz|^2$ in the required form $|f(z)|^{2\alpha}|\dz|^2$:
%$$
%e^{2u}|\dz|^2=e^{2\re h(z)+2\beta|z|}|\dz|^2=
%\begin{cases}
%|ze^{h(z)}||\dz|^2&\text{if }\beta=0,\\[3pt]
%|ze^{\frac{h(z)}{\beta}}|^{2\beta}|\dz|^2&\text{if }\beta>0,\\[3pt]
%|z^{-1}e^{-\frac{h(z)}{\beta}}|^{-2\beta}|\dz|^2&\text{if }\beta<0.
%\end{cases}
%$$  
\end{proof}

\begin{definition}[\textbf{Regular flat ends}]\label{def_regularflat}
A flat end is \emph{regular} if it is equivalent to (\ref{eqn_flatend}) such that $r=0$ and $f(z)$ has a non-essential singularity at $0$.
\end{definition}
%This is clearly a property independent of the choice of the conformal coordinate and invariant under equivalence.

A flat end is said to be \emph{complete} if it is complete as a Riemannian manifold with boundary, \ie if any path leaving every compact subset has infinite length. The works of A. Huber \cite{huber_1, huber} in more general settings imply:
\begin{lemma}
\label{lemma_huber}
The flat end (\ref{eqn_flatend}) is complete if and only if $r=0$ and $f(z)$ has a pole of order $\geq 1/\alpha$ at $z=0$. In particular, complete flat ends are regular.
\end{lemma}
\begin{proof}
The ``if'' part is elementary. The ``only if'' part follows from the following formulation of Huber's results (see \cite[\S 1]{hulin-troyanov}), which is a generalization of the fact for harmonic functions in the previous proof: given a $C^2$-function $u$ on $\Omega_r$, if the metric $e^{2u(z)}|\dz|^2$ is complete and $\int_{\Omega_r}|\Delta u|<+\infty$ (\ie the metric has finite total curvature), then $r=0$ and $u$ can be written as
$$
u(z)=\re h(z)+\beta\log|z|-\frac{1}{2\pi}\int_{\Omega_0}\log|z-\zeta|\Delta u(\zeta)\dif\mu(\zeta)
$$
for a constant $\beta\leq-1$ and a holomorphic function $h$ on the whole disk $\{|z|\leq 1\}$, where $\mu$ denotes the Lebesgue measure.
\end{proof}

The next lemma shows that when $r=0$, the finiteness of volume also implies regularity. For applications in Section \ref{subsec_finitvolume}, we allow $f$ to have zeros:
\begin{lemma}\label{lemma_area}
Let $f$ be a holomorphic function on the punctured disk $\Omega_0$ and $\alpha>0$ be a constant. Then we have $\int_{\Omega_0}|f(z)|^{2\alpha}\dif\mu(z)<+\infty$ (where $\mu$ is the Lebesgue measure on $\mathbb{C}$) if and only if $f(z)$ has a pole of order $<1/\alpha$ at $z=0$. 
\end{lemma}
\begin{proof}
The statement is elementary if $f(z)$ have non-essential singularity at $z=0$, so we only need to show that if $|f(z)|^{2\alpha}$ has finite integral then the singularity of $f$ at $z=0$ is not essential.  To this end, we work with the coordinate $w=1/z$, so that the former condition becomes
\begin{equation}\label{eqn_proofarea}
\int_{0<|z|\leq1}|f(z)|^{2\alpha}\dif\mu(z)=\int_{|w|\geq 1}\frac{1}{|w|^4}|f(\tfrac{1}{w})|^{2\alpha}\dif\mu(w)<+\infty.
\end{equation}
We claim that for any nonnegative subharmonic function $F(w)$ on $\{|w|\geq 1\}$, if 
$$
\int_{|w|\geq 1}\frac{1}{|w|}F(w)\dif\mu(w)<+\infty,
$$
then $F$ is bounded. To prove the claim, we consider the harmonic function $H_R$ on the disk $\{|w|<R\}$ whose boundary values coincide with the values of $F$ on $\{|w|=R\}$, for every $R\geq1$. The Poisson formula gives 
$$
H_R(w)=\frac{1}{2\pi R}\int_{|\zeta|=R}\frac{R^2-|w|^2}{|w-\zeta|^2}F(\zeta)|\dif\zeta|\geq0.
$$
As a consequence, for any $1\leq r<R$ we have
$$
\max_{|w|=r}H_R(w)\leq \frac{R+r}{2\pi R(R-r)}\int_{|\zeta|=R}F(\zeta)|\dif\zeta|.
$$
Integrating this inequality with respect to $R\in[2r,+\infty)$ yields
$$
\int_{2r}^{+\infty}\max_{|w|=r}H_R(w)\dif R\leq \frac{3}{2\pi}\int_{2r}^\infty\frac{\dif R}{R}\int_{|\zeta|=R}F(\zeta)|\dif\zeta|= \frac{3}{2\pi}\int_{|w|\geq{2r}}\frac{F(w)}{|w|}\dif\mu(w).
$$
By assumption, the last integral converges, hence so does the first one. This implies 
\begin{equation}\label{eqn_proofarea2}
\liminf_{R\rightarrow+\infty}\max_{|w|=r}H_R(w)=0.
\end{equation}
On the other hand, $H_R$ plus the constant $\max_{|w|=1}F(w)$ majorizes $F$ on the boundary of the annulus  $\{1\leq|w|\leq R\}$, hence also on the whole annulus. It follows that
$$
\max_{|w|=r}F(w)\leq \max_{|w|=r}H_R(w)+\max_{|w|=1}F(w).
$$
With a fixed $r\geq1$, this holds for all $R>r$. Therefore, taking Inequality (\ref{eqn_proofarea2}) into account, we get $\max_{|w|=r}F(w)\leq \max_{|w|=1}F(w)$, hence the claim.

Therefore, Condition (\ref{eqn_proofarea}) implies that $F(w)=|w|^{-3}|f(\frac{1}{w})|^{2\alpha}=|z|^3|f(z)|^{2\alpha}$ is bounded as $z$ runs over $\Omega_0$. It then follows from the Big Picard Theorem that $z=0$ is not an essential singularity of $f$, as required.
\end{proof}

%\begin{remark}
%Lemma \ref{lemma_huber} implies that if $f(z)$ is a holomorphic function on $\Omega_0$ with essential singularity at $z=0$ and has finitely many zeros, then the metric (\ref{eqn_gfalpha}) is incomplete. However, if $f(z)$ has a sequence of zeros tending to $z=0$, the metric can be either complete or incomplete depending on $f$ and $\alpha$.
%Examples can be obtained by lifting complete or incomplete metrics of form $|\varphi(w)|^{2\alpha}|\dif w|^2$ on $\mathbb{C}^*$ through the universal cover $\exp:\mathbb{C}\rightarrow\mathbb{C}^*$, where $\varphi(w)$ has poles at $w=0$ and $w=\infty$, and completeness of the latter metric is governed by the order of these poles.
%\end{remark}

\subsection{Invariants and classification}\label{subsec_classification}
%Denote the Euclidean plane and its orientation preserving isometry group by
%$$\mathbb{E}^2:=(\mathbb{C},|\dz|^2),\quad\Isom(\mathbb{E}^2)=\{z\mapsto az+b\mid |a|=1,\,b\in\mathbb{C}\}
%$$ 
%A flat end $F$ has a \emph{developing pair} (see \cite{goldman_gx}) consisting of a developing map $\delta:\widetilde{F}\rightarrow\mathbb{E}^2$ and a holonomy representation $\rho:\pi_1(F)\rightarrow\Isom(\mathbb{E}^2)$, well defined up to $\Isom(\mathbb{E}^2)$-conjugacy. In particular, let $[\pa F]$ denote the generator of the fundamental group $\pi_1(F)\cong\mathbb{Z}$ carried by the boundary $\pa F$ with the induced orientation, then the conjugacy class of $\rho([\pa F])\in\Isom(\mathbb{E}^2)$ is an invariant of $F$ under the equivalence relation. 
%But every element $\sigma\in\Isom(\mathbb{E}^2)$ is determined up to conjugacy by its rotation part $\rot(\sigma)\in\SO(2)$ and translation length $\tran(\sigma):=\min\{|\sigma(z)-z|\mid z\in\mathbb{C}\}$ (the latter vanishes unless the former is trivial), the invariant above boils down to $\rot\left(\rho([\pa\Omega])\right)$ and $\tran\left(\rho([\pa\Omega])\right)$. While we refer to the latter as the \emph{monodromy translation} of $F$ and denote it by $t(F)$,
An orientation preserving isometry $\sigma:z\mapsto az+b$ ($|a|=1$) of the Euclidean plane $\mathbb{E}^2:=(\mathbb{C},|\dz|^2)$ is determined up to conjugation by two invariants:
\begin{itemize}
	\item the \emph{rotation} $\rho(\sigma):=a\in\mathbb{S}^1:=\{a\in\mathbb{C}\mid |a|=1\}$;
	\item the \emph{translation length} $t(\sigma):=\min_{z\in\mathbb{C}}|\sigma(z)-z|\in\mathbb{R}_{\geq0}$, which equals $0$ (resp. $|b|$) when $a\neq1$ (resp. $a=1$).
\end{itemize}

Viewing a flat metric on a surface as an $\mathbb{E}^2$-structure (in the sense of \cite{goldman_gx}), we can consider monodromy invariants of a flat end $F$, similarly as for $\frac{1}{k}$-translation ends (see Section \ref{subsec_normalform}). In particular, we define the \emph{monodromy rotation} and \emph{monodromy translation length} of $F$, denoting them by $\rho(F)\in\mathbb{S}^1$ and
$t(F)\in\mathbb{R}_{\geq0}$, respectively, with a slight abuse of notation. 
%The only difference with the case of $\frac{1}{k}$-translation ends is that $\rho(F)$ now takes values in $\mathbb{S}^1$ and $t(F)$ in $\mathbb{R}_{\geq0}$.

While these invariants capture the monodromy of $F$, the monodromy does not completely determine $F$ up to equivalence. For example, the following nonequivalent  flat ends have the same monodromy, namely the identity:
$$
F_1=(\{0<|z|\leq 1\},|\dz|^2),\ F_2=\text{the double cover of }F_1,\ F_3=(\{|z|\geq 1\},|\dz|^2).
$$ 
Nevertheless, there is an invariant $\theta(F)\in\mathbb{R}$, the \emph{total boundary curvature}, such that
\begin{itemize}
\item $\theta(F)$ covers $\rho(F)$, in the sense that $e^{\theta(F)\ima}=\rho(F)$;
\item $\theta(F)$ and $t(F)$ determine $F$ up to equivalence as long as $F$ is regular.
\end{itemize}
The definition is as follows: If the boundary $\pa F$ is of class $C^2$, so that the geodesic curvature $\kappa_g$ of $\pa F$, with respect to the inward pointing normal vector field on $\pa F$, is a continuous function $\pa F\rightarrow\mathbb{R}$,  and we put
$$
\theta(F):=\int_{\pa F}\kappa_g\dif s,
$$ 
where $s$ is the length parameter. Note that our choice of normal vectors makes $\kappa_g>0$ (resp. $<0$) at $p\in\pa F$ if and only if $F$ is locally convex (resp. concave) at $p$.  For certain $\pa F$ with less regularity, we can still define the measure $\kappa_g\dif s$ and take its total mass as $\theta(F)$. 
For instance, if $\pa F$ is piecewise geodesic, then $\kappa_g\dif s$ is supported at the vertices, and the mass at a vertex $p$ is the angle of $F$ at $p$ minus $\pi$. 

\begin{remark}\label{remark_gaussbonnet}
The fact  that $\theta(F)$ is invariant under the equivalence relation can be proved with the following observation: for any flat end $F'\subset F$ such that $F\setminus F'$ is a topological annulus bounded by $\pa F$ and $\pa F'$, we have $\theta(F)=\theta(F')$ by the Gauss-Bonnet formula applied to that annulus.
\end{remark}
When a developing map of $F$ is known, an effective way of computing $\theta(F)$ is to take $F'\subset F$ as in the above remark, with piecewise geodesic boundary, so that $\theta(F')$ can be read off geometrically. For example, for above $F_1$, $F_2$ and $F_3$, we get
$$
\theta(F_1)=2\pi,\ \theta(F_2)=4\pi,\ \theta(F_3)=-2\pi.
$$

The aforementioned fact that $\theta(F)$ and $t(F)$ determine $F$ up to equivalence can be stated more precisely as:
\begin{theorem}[\textbf{Classification of regular flat ends}]
\label{thm_classification}
For any regular flat end $F$, the pair $(\theta(F),t(F))$ belongs to one of the four cases in the first column of the table below. Conversely, given $(\theta,t)$ belonging to these cases, there exists a regular flat end $F$, unique up to equivalence, such that $(\theta(F),t(F))=(\theta,t)$. An concrete example of such an $F$ is given in the second column (see Figures \ref{figure_geo1} and \ref{figure_geo2} for the construction), and the third column displays a flat metric on $\{0<|w|<\varepsilon\}$ which also yields such an $F$.

\vspace{5pt}

\begin{tabular}{|c|c|c|}
	\hline
	invariants&geometric model&analytic model\\
	\hline
	\rule[-4pt]{0pt}{16pt} $\theta>0$, $t=0$&cone of angle $\theta$& \\
	\cline{1-2}
	\rule[-4pt]{0pt}{16pt} $\theta<0$, $t=0$&funnel of angle $-\theta$&\raisebox{9pt}[0pt]{$|w|^{2\left(\frac{\theta}{2\pi}-1\right)}|\dif w|^2$}\\
	\hline
	\rule[-6pt]{0pt}{20pt} $\theta=0$, $t>0$&cylinder of perimeter $t$&$\left(\frac{t}{2\pi}\right)^2\frac{|\dif w|^2}{|w|^2}$\\
	\hline
	\rule[-6pt]{0pt}{20pt}  $\theta=-2\pi l$ ($l\in\mathbb{Z}_+$), $t>0$&$t$-grafted funnel of angle $2\pi l$&$\left|\frac{1}{w^l}+\frac{t}{2\pi}\right|^2\frac{|\dif w|^2}{|z|^2}$\\
	\hline
\end{tabular}
\end{theorem}

\begin{figure}[h]
	\includegraphics[width=4.7in]{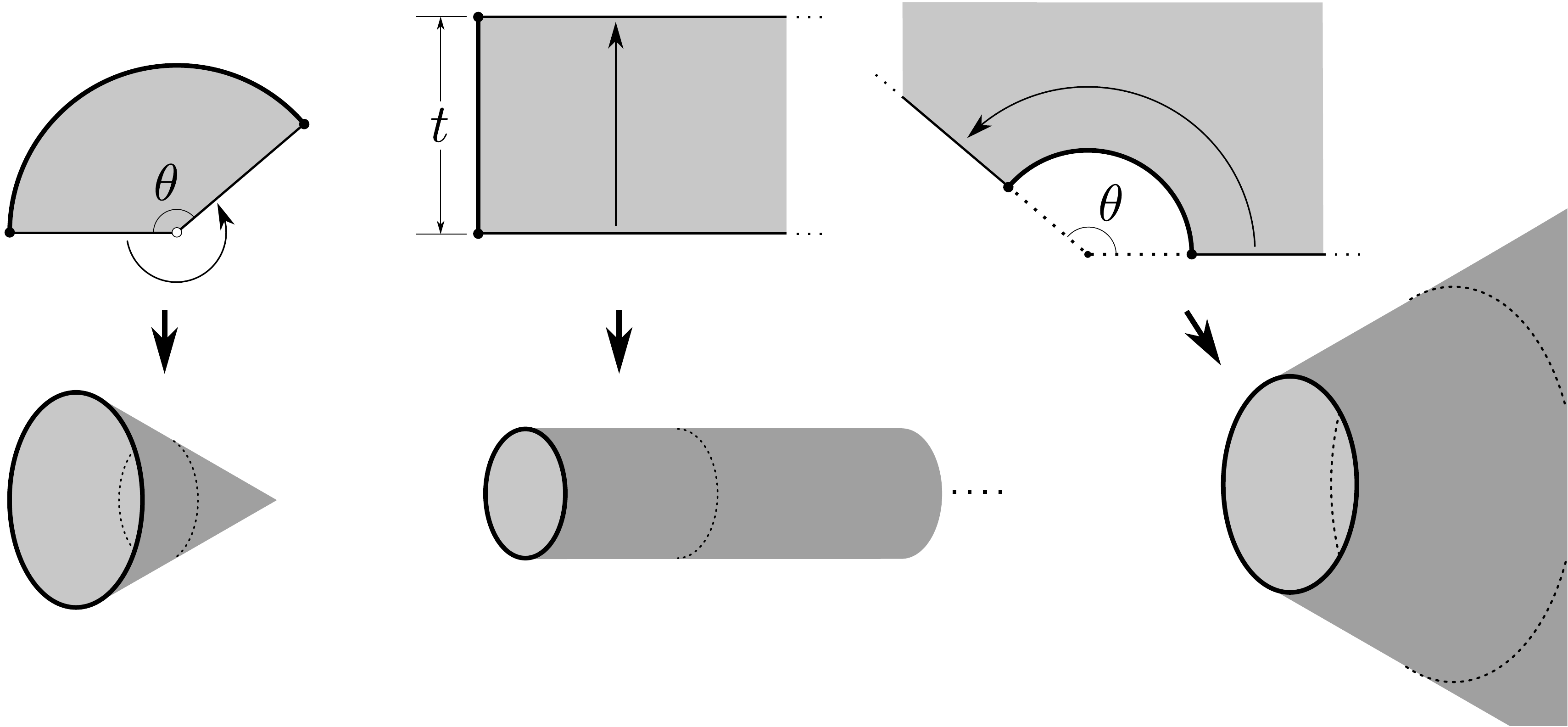}
	\caption{From left to right: cone of angle $\theta$,  cylinder of perimeter $t$ and funnel of angle $\theta$. Each of them is constructed from a flat surface with boundary by gluing boundary rays. 
	}\label{figure_geo1}
\end{figure}
\begin{figure}[h]
	\includegraphics[width=2.8in]{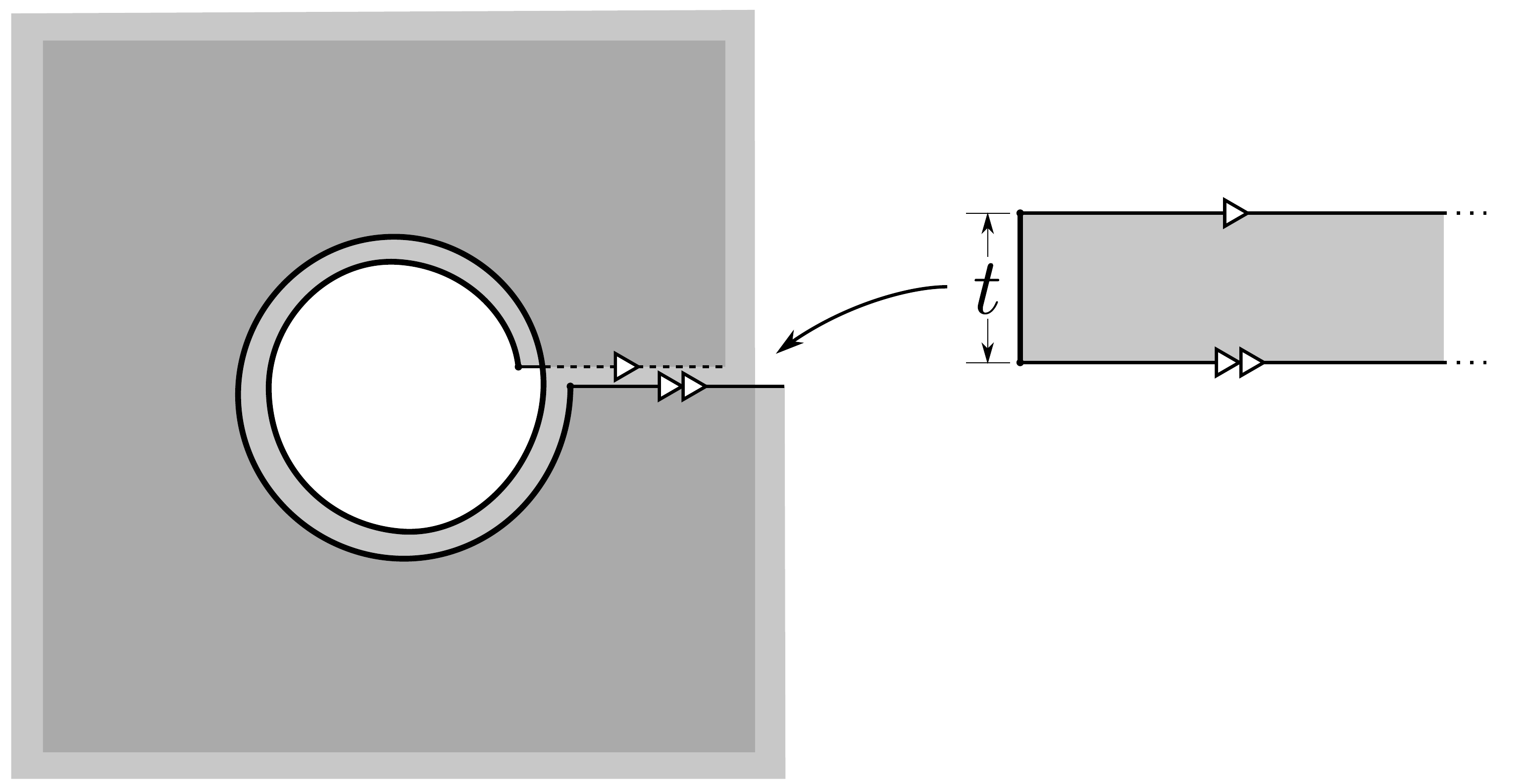}
	\caption{$t$-grafting of a funnel of angle $2\pi l$ when $l=2$: On the left is a funnel of angle $4\pi$, cut open along a ray. We add to it a half-infinite strip of width $t$ by gluing along boundary rays.}
	\label{figure_geo2}
\end{figure}
%On the other hand, each metric in the third column of the table also defines a regular flat end around $z=0$ with the prescribed invariants (compare Remark \ref{remark_troyanov}). Indeed, $\{0<|z|\leq1\}$ endowed with the first two metrics is exactly isometric to the respective geometric models in the second column, with isometry given by a map of the form $z\mapsto w(z)=\frac{2\pi}{|\theta|}z^\frac{\theta}{2\pi}$ and $z\mapsto w(z)=\frac{t}{2\pi\ima}\log z$ for the first and second metric, respectively.
%As for the last metric, we fix $0<\varepsilon<\left(2\pi/t\right)^{1/l}$ and check that the universal cover of the flat end
%$F=\left(\{0<|z|\leq\varepsilon\},\,\left|\frac{1}{z^l}+\frac{t}{2\pi}\right|^2\frac{|\dz|^2}{|z|^2}\right)$ is
%$$
%\widetilde{F}:=\left(\{\zeta\in\mathbb{C}\mid \re(\zeta)\geq0\}\,,\, \left|\frac{1}{\varepsilon^l}e^{l\zeta}+\frac{t}{2\pi }\right|^2|\dif\zeta|^2\right)\rightarrow F,\quad \zeta\mapsto z(\zeta)=\varepsilon e^{-\zeta}
%$$
%with deck translation $\zeta\mapsto \zeta+2\pi\ima$, while the map 
%$$
%\delta:\widetilde{F}\rightarrow \mathbb{E}^2=(\mathbb{C},|\dif w|^2),\quad \zeta\mapsto w(\zeta)=\frac{1}{l\varepsilon^l}e^{l\zeta}+\frac{t}{2\pi}\zeta
%$$
%is an isometric immersion, hence a developing map of $F$. It maps the fundamental interval on $\pa\widetilde{F}$ from $0$ to $2\pi\ima$ to a cycloid in $\mathbb{E}^2$ traced out by a point wrapping around a circle $l$ times and 	
%simultaneously shifting upwards by length $t$. It follows that $\theta(F)=2\pi l$ and $t(F)=t$. 
\begin{proof}
	%[Proof of Theorem \ref{thm_classification}]
	We first check that the geometric and analytic models in the table do give regular flat ends with the prescribed invariants. For the geometric models, the regularity follows from Lemmas \ref{lemma_huber} and \ref{lemma_area}, and the invariants can be seen directly from the construction. The analytic models are regular by definition, and for each of them we can construct an explicit developing map from the universal cover of $\{0<|w|<\varepsilon\}$ to $\mathbb{E}^2$ similarly as in the proof of Lemma \ref{lemma_invariants}, so as to read off the invariants. In fact, these developing maps can be interpreted as isomorphisms from the analytic models to the geometric models.
	
	To prove the theorem, we now only need to show that any regular flat end is equivalent to some $(\{0<|w|<\varepsilon\},g)$, where $g$ is one of the metrics in the third column. This is a consequence of the following lemma.
\end{proof}
%As the main ingredient in the proof of Theorem \ref{thm_classification}, we establish a local normal form for flat metrics around a puncture:
\begin{lemma}\label{lemma_normalform}
Given any $\alpha,\beta\in\mathbb{R}$ and  nowhere vanishing holomorphic function ${f}(z)$ defined on a neighborhood of $0$ in $\mathbb{C}$, there exists a conformal local coordinate $w=w(z)$ centered at $0$ such that the flat metric $|z|^{2\alpha}|f(z)|^{2\beta}|\dz|^2$, define on a punctured neighborhood of $0$, is written in the coordinate $w$ as
$$
|z|^{2\alpha}|f(z)|^{2\beta}|\dz|^2=
\begin{cases}
|w|^{2\alpha}|\dif w|^2&\text{if }\alpha \notin\{-1,-2,\cdots\},\\[5pt]
R|w|^{-2}|\dif w|^2\ \ (R>0)&\text{if }\alpha =-1,\\[5pt]
|w|^{-2}|w^{-l}+A|^{2}|\dif w|^2\ \ (A\geq0)&\text{if } \alpha=-(l+1)\text{ with }l\in\mathbb{Z}_+.
\end{cases}
$$
\end{lemma}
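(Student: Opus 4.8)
The plan is to reduce the statement to a standard local-coordinate normalization for the abelian differential $\eta := (z^d f(z))^{\alpha}\,\dz$, viewed as a (possibly multivalued) holomorphic $1$-form near $z=0$, and then translate the resulting normal form for $\eta$ back into the normal form for $|\eta|^2 = |z^d f(z)|^{2\alpha}|\dz|^2$. Write $f(z)=f(0)\prod(1-z/z_i)$ with $f(0)\neq 0$, so on a punctured disk $f(z)=f(0)g(z)$ with $g$ holomorphic, $g(0)=1$; a fixed branch of $g(z)^{\alpha}$ is then holomorphic and nonvanishing near $0$. Thus $\eta = c\, z^{\alpha d} h(z)\,\dz$ with $c=f(0)^{\alpha}$ a nonzero constant and $h(z):=g(z)^{\alpha}$ holomorphic, $h(0)=1$. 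Absorbing $|c|$ into the final $w$-rescaling (which only changes $R$ and $A$ by a positive factor), we may work with $\eta_0 := z^{\alpha d}h(z)\,\dz$ and must find $w$ with $|\eta_0| = |w^{\alpha d}\,\dif w|$, $|w^{-1}\sqrt{R}\,\dif w|$, or $|w^{-1}(w^{-l}+A)\,\dif w|$ in the three regimes.

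The three cases are handled by solving an ODE $\frac{\dif w}{\dif z}$ of the prescribed form and checking the solution is a genuine local coordinate. First, if $\alpha d \notin\{-1,-2,\dots\}$: set $\lambda := \alpha d + 1 \neq 0$; seek $w$ with $w^{\alpha d}\dif w = z^{\alpha d}h(z)\,\dz$ up to a unimodular constant, i.e. $\dif(w^{\lambda}) = \lambda z^{\alpha d}h(z)\,\dz$. The right side, $\lambda z^{\alpha d}h(z)\,\dz$, has a primitive of the form $z^{\lambda}k(z)$ with $k$ holomorphic and $k(0)=\tfrac{\lambda}{\lambda}h(0)\cdot(\text{const})\neq 0$ — here one uses that $\int_0^z \zeta^{\alpha d}h(\zeta)\,\dif\zeta = \zeta^{\lambda}\cdot(\text{holomorphic, nonzero at }0)$ because $\lambda\neq 0$ (expand $h$ in power series and integrate term by term). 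Then $w := \big(z^{\lambda}k(z)\big)^{1/\lambda} = z\,k(z)^{1/\lambda}$ is holomorphic with $w'(0)\neq 0$, hence a local coordinate, and by construction $|w^{\alpha d}\dif w| = |\eta_0|$. Second, if $\alpha d = -1$: here $z^{\alpha d}h(z)\,\dz = h(z)\,\dz/z = \big(h(0)/z + (\text{holo})\big)\dz$; its "residue" $h(0)=1$ is nonzero, and the classical normalization of a meromorphic $1$-form with a simple pole (as in \cite[\S 6]{strebel} for $k=2$, or directly: write $h(z)/z\,\dz = \dif(\log z^{\rho} + (\text{holo}))$ and exponentiate) produces $w$ with $\dif w/w = h(z)\dz/z$ up to a unimodular constant, giving $|\eta_0| = |\dif w/w|$; a final positive rescaling of $w$ introduces the constant $\sqrt{R}>0$. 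Third, if $\alpha d = -(l+1)$ with $l\in\mathbb{Z}_+$: now $\eta_0 = z^{-(l+1)}h(z)\,\dz$, a meromorphic $1$-form with a pole of order $l+1\geq 2$; its normal form is governed by the residue, and the standard argument (solve $\dif w = $ [appropriate expression], using that the "principal part minus residue term" has a holomorphic primitive after a coordinate change) brings it to $(w^{-l-1} + A w^{-1})\dif w = w^{-1}(w^{-l}+A)\dif w$, where $A$ is a fixed cube... (rather: fixed $l$-th) power determined by the residue; replacing $w$ by a unimodular multiple makes $A\geq 0$.

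The main obstacle, and the step I would spend the most care on, is the last case: exhibiting the local coordinate $w$ for a pole of order $l+1\geq 2$ and verifying that the invariant $A$ (more precisely $A^l$, or equivalently the holonomy translation) is well-defined and can be normalized to be real and nonnegative. For $l\geq 1$ the form $z^{-(l+1)}h(z)\,\dz$ is not integrable in the naive sense and the coordinate change mixes the order-$(l+1)$ part with the residue; the cleanest route is to first kill all intermediate-order terms $z^{-j}$, $2\leq j\leq l$, by a coordinate change tangent to the identity (a standard inductive argument), reducing to $\big(\beta z^{-(l+1)} + \gamma z^{-1}\big)\dz$ with $\beta\neq 0$, then rescale $z$ to set $\beta=1$ and argue that $\gamma$ (the residue) is a conformal invariant whose $l$-th root controls $A$. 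I would lean on the $k=2$ treatment in \cite[\S 6]{strebel} as the template and on Lemma \ref{lemma_huber} / the discussion in Appendix \ref{subsec_classification} to confirm that the resulting flat metric matches the geometric models (funnel, grafted funnel), which cross-checks the invariants $\theta$ and $t$ against $d$ and $A$. The cases $\alpha d\geq 0$ and $-1<\alpha d<0$ are genuinely routine once $\lambda\neq 0$ is exploited, and the $\alpha d=-1$ case is the classical simple-pole normalization, so the write-up should be short there and concentrated on the higher-order-pole case.
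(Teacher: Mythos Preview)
Your plan for Cases 1 and 2 is essentially the paper's: integrate $z^{\alpha d}h(z)$ term by term to get $z^{\lambda}k(z)$ with $k(0)\neq 0$ and take the $\lambda$-th root (Case 1), or exponentiate a primitive of $(h(z)-h(0))/z$ (Case 2). One small slip in Case 1: you justify the term-by-term primitive by ``$\lambda\neq 0$'', but you actually need $\lambda+k\neq 0$ for every $k\geq 0$, which is precisely the hypothesis $\alpha d\notin\{-1,-2,\dots\}$; this is what guarantees the integrated series has no $\log$ term and no vanishing denominators.

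In Case 3 your plan diverges from the paper and has a real gap. Killing the intermediate pole orders $z^{-j}$, $2\leq j\leq l$, by finitely many coordinate changes tangent to the identity does \emph{not} leave you with $(\beta z^{-(l+1)}+\gamma z^{-1})\,\dz$: the holomorphic tail $c_0+c_1 z+\cdots$ survives, and removing it is exactly the nontrivial part of the problem. To kill that tail you must solve an equation of the shape
\[
-\tfrac{1}{l}z^{-l}\big(e^{-lg(z)}-1\big)+\gamma\, g(z)=(\text{given holomorphic, vanishing at }0)
\]
for a holomorphic $g$ with $g(0)=0$, and this requires an honest Implicit Function Theorem step (or an equivalent convergence proof for the formal power series you would build inductively). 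So your preliminary cleanup buys nothing: you still owe the same analytic argument at the end. The paper skips the cleanup entirely: it writes down the multivalued primitive $a_l\log z + z^{-l}\eta(z)$ of $z^{-(l+1)}f(z)^\alpha$ directly, applies the holomorphic IFT once to solve $a_l z^l h(z)+e^{-lh(z)}=\eta(z)$ for $h$ (the derivative in $h$ at $z=0$ is $-l\eta(0)\neq 0$), sets $w=ze^{h(z)}$, and then rescales $w$ by a constant to reach the stated normal form with $A\geq 0$.
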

The lemma gives a local normal form for flat metrics around punctures. The proof below is adapted from \cite[\S 6]{strebel}, where a similar normal form for quadratic differentials is given (\cf Section \ref{subsec_normalform}).
\begin{proof}
Fix a sufficiently small disk $D=\{|z|<\varepsilon\}$, on which we pick a single-valued branch of $\log{f}(z)$ so as to define arbitrary powers of $f(z)$. In particular, ${f}(z)^\beta$ is well defined on $D$ and is given by a convergent series
$$
{f}(z)^\beta=a_0+a_1z+a_2z^2+\cdots\quad (a_0\neq 0).
$$

\textbf{Case 1: $\alpha\notin\{-1,-2,\cdots\}$.}
We define a holomorphic function ${g}(z)$ on $D$ with ${g}(0)\neq0$ as the convergent power series
$$
{g}(z):=\frac{a_0}{\alpha +1}+\frac{a_1z}{\alpha+2}+\frac{a_2z^2}{\alpha +3}+\cdots.
$$
Fix a sufficiently small sub-disk $D'=\{|z|<\varepsilon'\}\subset D$, on which  we pick a single-valued branch of $\log{g}(z)$ so as to define arbitrary powers of $g(z)$. Then 
$$w(z):=z{g}(z)^\frac{1}{\alpha+1}$$ 
is a conformal local coordinate on $D'$. Viewing $z^{\alpha}$ and $w(z)^{\alpha}$ as functions on the universal cover of $D'\setminus\{0\}$, we can write
$$
w(z)^{\alpha+1}=z^{\alpha +1}{g}(z)=\frac{a_0z^{\alpha +1}}{\alpha +1}+\frac{a_1z^{\alpha +2}}{\alpha +2}+\frac{a_2z^{\alpha +3}}{\alpha +3}+\cdots.
$$
Differentiating this series and comparing with the series of $f(z)^\beta$, we get $\left(w(z)^{\alpha+1}\right)'=z^{\alpha}{f}(z)^\beta$. But on the other hand
$\left(w(z)^{\alpha+1}\right)'=(\alpha  +1)w(z)^{\alpha}w'(z)$. Therefore,
$$
|\alpha+1||w|^{\alpha}|\dif w|=|\alpha+1||w(z)|^{\alpha }|w'(z)||\dz|=|z|^{\alpha}|{f}(z)|^\beta|\dz|.
$$
We then get the required coordinate after scaling $w$ by a constant.

\textbf{Case 2: $\alpha=-1$.} Put
$$
w(z):=z\exp\left(\frac{a_1}{a_0}z+\frac{a_2}{2a_0}z^2+\frac{a_3}{3a_0}z^3+\cdots\right).
$$
Then $w$ is a conformal local coordinate around $0$ and satisfies
$a_0w(z)^{-1}w'(z)=\big(a_0\log w(z)\big)'=z^{-1}{f}(z)^\beta$. We have $|z|^{-1}|{f}(z)|^\beta|\dz|=|a_0||w|^{-1}|\dif w|$, as required.

\textbf{Case 3: $\alpha=-(l+1)$ with $l\in\mathbb{Z}_+$.} Consider the following function on $D\setminus\{0\}$:
$$
\frac{{f}(z)^\beta}{z^{l+1}}=\frac{a_0}{z^{l+1}}+\cdots +\frac{a_l}{z}+a_{l+1}+a_{l+2}z+\cdots.
$$
Its primitive (defined on the universal cover of $D\setminus\{0\}$) is
$$
a_l\log z+\frac{1}{z^l}(b_0+b_1z+b_2z^2+\cdots)=:a_l\log z+\frac{1}{z^l}\eta(z),
$$
where $b_l=0$,
$b_n=a_n/(n-l)$ (for $n\neq l$). Using Implicit Function Theorem, we find a holomorphic function $h(z)$ on a neighborhood of $z=0$ such that 
$$
a_lz^lh(z)+e^{-lh(z)}=\eta(z).
$$
Setting $w(z):=ze^{h(z)}$, we have
$a_l\log w+\frac{1}{w^l}=a_l\log z+\frac{1}{z^l}\eta(z)$, hence
$$
\left(\frac{a_l}{w(z)}-\frac{l}{w(z)^{l+1}}\right)w'(z)=\left(a_l\log z+\frac{1}{z^l}\eta(z)\right)'=\frac{{f}(z)^\beta}{z^{l+1}}.
$$
Therefore,
$$
\left|\frac{l}{w^l}-a_l\right|\frac{|\dif w|}{|w|}=\left|\frac{a_l}{w(z)}-\frac{l}{w(z)^{l+1}}\right||w'(z)||\dz|=\frac{|{f}(z)|^\beta}{|z|^{l+1}}|\dz|=|z|^\alpha|f(z)|^\beta|\dz|.
$$
We then get the required coordinate after scaling $w$ by a constant.
\end{proof}

\bibliographystyle{amsalpha} \bibliography{ends}

\end{document}